\newcommand{\LL}{{\mathbf{L}}}
\newcommand{\cF}{{\mathcal{F}}}
\newcommand{\bT}{\mathbf{T}}
\newcommand{\Par}{{\mathscr{P}}}
\newcommand{\gd}{\mathrm{gd}}
\newcommand{\dd}{\mathrm{dd}}
\newcommand{\tto}{\twoheadrightarrow}
\newcommand{\cB}{\mathcal B}
\newcommand{\cR}{\mathcal{R}}
\newcommand{\plus}{\scalebox{0.6}{{\rm+}}}
\newcommand{\RR}{\mathscr{R}}
\newcommand{\AAAA}{\mathscr{A}}
\newcommand{\id}{\mathrm{id}}
\newcommand{\Id}{\mathrm{Id}}
\newcommand{\Hom}{\mathrm{Hom}} 
\newcommand{\Ext}{\mathrm{Ext}}
\newcommand{\minus}{\scalebox{0.9}{{\rm -}}}
\newcommand{\End}{\mathrm{End}}
\newcommand{\dfs}{{/\kern-2pt/}}
\newcommand{\JJJ}{\mathscr{J}}
\newcommand{\op}{{\rm op}}
\newcommand{\mk}{\Bbbk}
\newcommand{\cC}{\mathcal{C}}
\newcommand{\cL}{\mathcal{L}}
\newcommand{\cA}{\mathcal{A}}
\newcommand{\Ob}{{\rm{Ob}}}
\newcommand{\charr}{{\rm{char}}}
\newcommand{\pe}{\mathfrak{pe}}
\newcommand{\Str}{\mathrm{STr}}
\newcommand{\St}{\mathrm{St}}
\newcommand{\fn}{\mathfrak{n}}
\newcommand{\fh}{\mathfrak{h}}
\newcommand{\mN}{\mathbb{N}}
\newcommand{\mZ}{\mathbb{Z}}
\newcommand{\mF}{\mathbb{F}}
\newcommand{\mS}{\mathbb{S}}
\numberwithin{equation}{section}
\newcommand{\con}{{\rm con}}
\newcommand{\cQ}{\mathcal{Q}}
\newcommand{\Rep}{{\rm Rep}}
\newtheoremstyle{notes} {} {} {} {} {\bfseries} {.} {.5em} {}
\theoremstyle{plain}
\newtheorem{prop}[subsubsection]{Proposition}
\newtheorem{lemma}[subsubsection]{Lemma}
\newtheorem{cor}[subsubsection]{Corollary}
\newtheorem{thm}[subsubsection]{Theorem}
\newtheorem{thmA}{Theorem}
\theoremstyle{remark}
\newtheorem{qu}[subsubsection]{Question} 
\newtheorem{rem}[subsubsection]{Remark} 
\newtheorem{ddef}[subsubsection]{Definition} 
\pretocmd{\appendix}{\addtocontents{toc}{\protect\addvspace{10\p@}}}{}{}
\theoremstyle{remark}
\newtheorem{ex}[subsubsection]{Example}
\newtheoremstyle{construction} {} {} {} {} {\bfseries} { } {0pt} {}
\theoremstyle{construction}
\title[The periplectic Brauer algebra]{The periplectic Brauer algebra}
\author{Kevin~Coulembier}
\newcommand{\ind}{{\rm Ind}}
\newcommand{\res}{{\rm Res}}
\newcommand{\resi}{{\rm res}}
\keywords{marked Brauer category, diagram algebra, quasi-hereditary algebras and covers, Jucys-Murphy elements, periplectic Lie superalgebra, block decomposition, standard systems, Murphy bases}
\subjclass[2010]{16G10, 17B10, 20C05, 81R05}
\begin{document} 
\date{} 
\begin{abstract}
We study the periplectic Brauer algebra introduced by Moon in the study of invariant theory for periplectic Lie superalgebras. We determine when the algebra is quasi-hereditary, when it admits a quasi-hereditary 1-cover and, for fields of characteristic zero, describe the block decomposition. To achieve this, we also develop theories of Jucys-Murphy elements, Bratteli diagrams, Murphy bases, obtain a Humphreys-BGG reciprocity relation and determine some decomposition multiplicities of cell modules. As an application, we determine the blocks in the category of finite dimensional integrable modules of the periplectic Lie superalgebra.
	\end{abstract}

\maketitle 


\section{Introduction} 

In \cite{Moon}, Moon introduced algebras $A_n$ for all positive integers $n$, bearing resemblance to Brauer algebras. Brauer algebras $B_n(\delta)$ appear naturally in invariant theory of Lie (super)algebras preserving an {\em even} bilinear form, {\it viz.} orthogonal and symplectic Lie algebras and, more generally, orthosymplectic Lie superalgebras, see {\it e.g.}~\cite{Ram, ES, FFT, Vera}. Similarly, the algebra~$A_n$ was introduced to study the invariant theory for a Lie superalgebra preserving an {\em odd} bilinear form, known as the periplectic, or strange or peculiar, Lie superalgebra.

The algebra~$A_n$ hence acts as an odd analogue of~$B_n(0)$. This was made explicit by Kujawa and Tharp in~\cite{Kujawa} and by Serganova in~\cite{Vera}. It is pointed out in~\cite{Kujawa, Vera} that~$A_n$, contrary to~$B_n(\delta)$, fails to be cellular in the sense of \cite{CellAlg} in any obvious way. Acquiring better understanding of the structure of~$A_n$ is essential for the study of the representation theory of the periplectic superalgebra. This is our main motivation to study~$A_n$ and we already apply our results to solve the long-standing open problem of describing linkage in the category of finite dimensional modules over the periplectic superalgebra. 

We study several aspects of the ring-theoretic behaviour and representation theory of the periplectic Brauer algebra~$A_n$. Despite the resemblance to other diagram algebras such as Iwahori-Hecke, Brauer, Temperley-Lieb and BMW algebras, there are some peculiarities of the periplectic Brauer algebra which complicate its study. As already mentioned, $A_n$ is not cellular. Furthermore we demonstrate that, as can be expected through its connection with periplectic Lie superalgebras, the centre is very small. Finally, the periplectic Brauer algebra is not part of a known family of algebras which are generically semisimple. Alternatively, we have no logical realisation of~$A_n$ as the specialisation of a semisimple algebra over some commutative ring.

Now we review the main results on~$A_n$ which will be obtained in this paper. Consider an algebraically closed field $\mk$ of characteristic~$p\ge 0$.
 We use the sets
$$\JJJ^0(n):=\{n-2i\,|\, 0\le i< n/2\}\qquad \mbox{and}\qquad \JJJ(n):=\{n-2i\,|\,0\le i\le n/2\}.$$
\begin{thmA}[Block decomposition]
The simple modules of~$A_n$ are labelled by the set $\Lambda_A$ of~$p$-restricted partitions of~$i$, for all $i\in\JJJ^0(n)$.
Assume $p\not\in[2,n]$, the simple modules $L(\lambda)$ and $L(\mu)$ belong to the same block if and only if~$\lambda$ and $\mu$ have the same $2$-core. An equivalent condition is that the number of even minus the number of odd contents of~$\lambda$ equals that of~$\mu$.
\end{thmA}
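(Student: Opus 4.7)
The plan is to build on the structural results developed earlier in the paper (Jucys--Murphy elements, Bratteli diagram, cell modules, quasi-hereditary covers, and Humphreys--BGG reciprocity). The labelling of simples by $p$-restricted partitions of sizes in $\JJJ^0(n)$ follows the usual pattern for diagram algebras: the standard modules of the quasi-hereditary cover are parameterised by partitions of $n-2i$ with $i\in\JJJ^0(n)/2$-shifts, their heads give all simples, and the passage to $A_n$ itself preserves the indexing since $p$-restrictedness accounts for the restriction to $p$-regular heads. For the block decomposition I would then argue in two directions.

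For the necessary direction (same block $\Rightarrow$ same $2$-core), the plan is to produce a quantity that is constant on each block but separates partitions with different $2$-cores. Using the Jucys--Murphy elements $y_1,\dots,y_n$ developed earlier, one shows that a suitable symmetric combination (for instance $\sum_i y_i$, or a central class-sum inside the cover) acts on the standard module $\Delta(\lambda)$ by a scalar equal to the content statistic of $\lambda$, namely the number of even minus the number of odd contents. Since this scalar is central on the quasi-hereditary cover, it is constant on each block of the cover, and the block decomposition transfers to $A_n$. The hypothesis $p\notin[2,n]$ is precisely what ensures these content statistics remain distinct in $\mk$.

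For the sufficient direction (same $2$-core $\Rightarrow$ same block), I would use the combinatorial fact that two partitions with the same $2$-core are connected by a chain of $2$-hook (domino) additions and removals, and realise each such move as a non-zero $\Ext^1$ (or common composition factor) between corresponding standard modules. Using the Bratteli diagram together with the restriction/induction adjunction along $A_{n-2}\subset A_{n-1}\subset A_n$, a single box move corresponds to an edge of the Bratteli diagram, and the composition of four single-box moves along the tower implements one domino move. BGG reciprocity from earlier turns each such combinatorial edge into a genuine block connection by exhibiting a common composition factor in a common indecomposable projective of the cover. The combinatorial equivalence between "same $2$-core" and "equal content difference" is then automatic: a domino contributes exactly one box of each content parity, while the $2$-core of a partition is a staircase whose size and content difference uniquely determine each other.

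The main obstacle will be the "same $2$-core $\Rightarrow$ same block" direction, because $A_n$ is not cellular and its centre is too small to read blocks off directly. One is forced to work inside the quasi-hereditary $1$-cover, track composition multiplicities via BGG reciprocity, and then transfer the resulting equivalence relation back down to $A_n$, verifying that no blocks of the cover collapse or split in the process. Keeping careful track of where the hypothesis $p\notin[2,n]$ is actually needed---both to keep contents distinct modulo $p$ and to guarantee that the extensions constructed via the Bratteli diagram genuinely survive---will require the most attention.
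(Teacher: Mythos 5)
Your overall architecture (pass to the cover, use Jucys--Murphy elements for the ``only if'' direction, domino moves plus reciprocity for the ``if'' direction) matches the paper's, but two of your key steps do not hold as stated. First, in the necessary direction you want a central element whose scalar on $\Delta(\lambda)$ is the content-parity statistic. No such element exists: the paper shows the centre of $A_n$ is essentially $\mk 1\oplus\Theta\,\mk[\underline{x}]^{\mS_{n\minus1}}$ with $\Theta$ acting by zero on almost every simple, and that no linear combination of the JM elements is central when $p\neq2$. Worse, symmetric functions of the JM elements do not act by a single scalar on a cell module: the generalized eigenvalue tuples $c_{\mathfrak{t}}$ depend on the path $\mathfrak{t}$, not just on $\lambda$ (already on $W_4(\varnothing)$ the element $\sum_l x_l$ has three distinct generalized eigenvalues $0,2,4$). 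The invariant that actually works is not an eigenvalue of any algebra element but a function of the joint spectrum of the commutative subalgebra $\Gamma_n$: one matches the full tuples $c_{\mathfrak{t}}=c_{\mathfrak{s}}$ between modules sharing a composition factor (Corollary~\ref{CorResi}), uses $p\notin[2,n]$ to lift residues to contents and conclude that $\mu\backslash\lambda$ pairs into boxes of adjacent content (Corollary~\ref{CorResi2}, Lemma~\ref{LemRed1}), and only then reads off that the even-minus-odd content count is preserved; the chain through projectives then uses the reciprocity of Theorem~\ref{ThmSB}(4).

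Second, in the sufficient direction the Bratteli diagram cannot do the work you assign to it: it records filtrations of \emph{restrictions} of cell modules along $A_{n-1}\subset A_n$, and composing its edges gives no information about composition factors or extensions inside a single $A_n$-module. The essential input is the explicit non-vanishing $[\Delta_n(\lambda):L_n(\mu)]=1$ when $\mu$ is obtained from $\lambda$ by adding a \emph{horizontal} domino (Proposition~\ref{PropMult1}); this is proved by a genuine diagram computation, showing the $L^0(\mu)$-isotypic component of $e_n^\ast\Delta_n(\lambda)$ is annihilated by the cap generator because $c^{\mu}_{\lambda,(1,1)}=0$. The vertical domino case is then obtained not directly but via the transpose twist in the reciprocity $(P(\lambda):\Delta(\mu))=[\Delta(\mu^t):L(\lambda^t)]$ --- a twist your proposal omits, and which is exactly the feature reflecting that $A_n$ is standardly based but not cellular. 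Without Proposition~\ref{PropMult1} (or a substitute) your ``realise each domino move as a non-zero $\Ext^1$'' remains an unproved assertion, and there is also a small extra step needed for the empty $2$-core, since $\varnothing\notin\Lambda_A$ and one must link $L_A(2)$ to $L_A(1,1)$ directly (Lemma~\ref{LemBlock1}).
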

The labelling of simple modules by~$\Lambda_A$ is originally due to Kujawa and Tharp, see~\cite[Theorem~4.3.1]{Kujawa}, but we provide an alternative proof. 

\begin{thmA}[Quasi-heredity]
If~$n$ is odd and $p\not\in[2,n]$, the algebra~$(A_n,\le)$ is quasi-hereditary, in the sense of \cite{CPS}. Here, $\le$ is the partial order on~$\Lambda_A$ determined by~$\mu<\lambda$ if and only if~$|\lambda|<|\mu|$.
\end{thmA}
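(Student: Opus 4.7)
The plan is to construct a heredity chain for $A_n$ using the natural ideal filtration by the number of through-strands. For a basis diagram $d$ of $A_n$, let $t(d)$ denote the number of propagating strands. Since through-strands cannot be created by diagram composition, the $\mk$-linear span $J_r$ of all diagrams $d$ with $t(d)\le r$ is a two-sided ideal of $A_n$. Because $n$ is odd, $t(d)$ ranges over $\{1,3,\ldots,n\}$, yielding a chain $0=J_{-1}\subset J_1\subset J_3\subset\cdots\subset J_n=A_n$. I would prove that each subquotient $J_{2k+1}/J_{2k-1}$ is a heredity ideal in $A_n/J_{2k-1}$ in the sense of \cite{CPS}, so that concatenating these yields a heredity chain witnessing that $(A_n,\le)$ is quasi-hereditary.

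For each $k\in\{0,1,\ldots,(n-1)/2\}$, I would fix a diagram $\xi_k$ with exactly $2k+1$ propagating strands at central positions and paired cups/caps on either side, suitably rescaled so that, modulo $J_{2k-1}$, it becomes idempotent. Standard diagrammatic manipulations in the marked Brauer category then give $(A_n/J_{2k-1})\xi_k(A_n/J_{2k-1})=J_{2k+1}/J_{2k-1}$. Next, analyse the corner algebra $\xi_k(A_n/J_{2k-1})\xi_k$: an element $\xi_k d\xi_k$ either has $t(d)<2k+1$ (and vanishes in the quotient) or, up to signs coming from the periplectic relations, records a permutation of the $2k+1$ through-strands. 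This yields an isomorphism $\xi_k(A_n/J_{2k-1})\xi_k\cong\mk S_{2k+1}$, possibly twisted by the sign character. The hypothesis $p\notin[2,n]$ then guarantees that this corner is semisimple for every $k$.

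With these identifications in place, heredity of $J_{2k+1}/J_{2k-1}$ reduces to two assertions: (i) the multiplication map $(A_n/J_{2k-1})\xi_k\otimes_{\xi_k(A_n/J_{2k-1})\xi_k}\xi_k(A_n/J_{2k-1})\to J_{2k+1}/J_{2k-1}$ is an isomorphism, and (ii) $J_{2k+1}/J_{2k-1}$ is projective as a module over the (semisimple) corner. Both are dimension counts carried out via the Bratteli diagram developed earlier in the paper. The induced standard modules $\Delta(\lambda)$ for $\lambda\vdash 2k+1$ line up with the partial order in the statement: higher $k$ produces larger $|\lambda|$, which is lower in the order $\mu<\lambda\iff|\lambda|<|\mu|$, matching the descent along the ideal chain.

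The main obstacle will be the sign bookkeeping in the corner computation: in $A_n$, crossings and interactions of cups and caps introduce signs absent from the ordinary Brauer algebra, so the identification with a (possibly twisted) symmetric group algebra requires careful diagrammatic analysis, together with the Jucys-Murphy element formalism set up earlier. The assumption that $n$ is odd is essential here. For even $n$ the chain would be forced to include the layer $t=0$, but the corresponding ``full cup'' diagram $\xi$ satisfies $\xi^2\in J_{-2}=0$ (internal loops carry the periplectic value $0$), so it cannot be normalised to an idempotent and the bottom layer fails to be a heredity ideal. This is precisely the obstruction that, for even $n$, requires passing to a quasi-hereditary 1-cover rather than proving quasi-heredity of $A_n$ itself.
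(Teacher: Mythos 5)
Your proposal is correct but takes a genuinely different route from the paper. You build a heredity chain directly inside $A_n$ from the through-strand filtration $0\subset J_1\subset J_3\subset\cdots\subset J_n=A_n$, in the K\"onig--Xi style familiar from ordinary Brauer algebras; the paper instead first proves quasi-heredity of the larger algebra $C_n=\bigoplus_{i,j\in\JJJ(n)}\Hom_{\cA}(i,j)$ via its triangular (``Borelic'') decomposition $C=\overline{N}HN$ and then transports the result to $A_n$ through the Morita equivalence $F=e_n^\ast-$, which holds for odd $n$ precisely because $0\notin\JJJ(n)$ (Theorem~\ref{ThmMor}(1) via Lemma~\ref{lemeta}(1)). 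The ingredients your plan needs are all genuinely available: your $\xi_k$ can be taken to be $c_{2k+1}^\ast=a_{2k+1}b_{2k+1}$ from Section~\ref{SecIdNil}, which is exactly idempotent by Lemma~\ref{Lemab} (no rescaling or passage to a quotient is needed); the corner algebra modulo lower layers is $\mk\mS_{2k+1}$ by the half-diagram factorisation underlying Corollary~\ref{CorTriang}, and is semisimple since $p\notin[2,n]$; and bijectivity of the multiplication map is the usual dimension count on half-diagrams, which the periplectic signs do not disturb. Two small imprecisions: heredity requires projectivity of the layer over $A_n/J_{2k-1}$, not over the corner --- this follows from your item (i) together with semisimplicity of the corner --- and the identification of the layers with symmetric group algebras is purely diagrammatic, needing no Jucys--Murphy input. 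Your diagnosis of the parity hypothesis is also the right one: for even $n$ the bottom layer consists of diagrams with no propagating lines and squares to zero (the paper's $c_0^\ast$ is nilpotent), so it cannot be a heredity ideal (though note $A_2$ is still quasi-hereditary for a different order). What the paper's longer route buys is that $C_n$ and the adjoint pair $(F,G)$ are needed anyway for the even case, the standardly based structure and the reciprocity of Theorem~3; your argument is more elementary and self-contained, avoiding the external machinery of \cite{Borelic}, but yields only the odd-$n$ statement.
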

The algebra~$A_2$ is hereditary and hence quasi-hereditary for any partial order. However, this is expected to be an exception, as we show that~$A_4$ is already of infinite global dimension (in fact, even the injective dimension of the left regular module is infinite).

Even though the periplectic Brauer algebras are not cellular, we prove they have an interesting standardly based structure, which is a weaker notion introduced in~\cite{JieDu}. In particular, the cell modules almost always form a standard system, in analogy with the result of Kleshchev and Nakano in~\cite{Nakano} for symmetric groups and of Hartmann and Paget in~\cite{Paget} for Brauer algebras.
\begin{thmA}[Cell modules]
The algebra~$A_n$ is standardly based for~$\mathbf{L}_A:=\sqcup_{i\in\JJJ(n)}\{\lambda\,|\,\lambda\vdash i\}$. The cell modules $\{W(\lambda)\,|\,\lambda\in\mathbf{L}_A\}$ form a standard system if and only if~$p\not\in\{2,3\}$ and $n\not\in\{2,4\}$.

Assume that~$p\not\in[2,n]$, the multiplicities in the cell filtration of the projective covers (these multiplicities do not depend on the chosen filtration if $n\not\in\{2,4\}$) satisfy
$$(P(\mu):W(\lambda))\;=\; [W(\lambda^t ):L(\mu^t )],\qquad\mbox{for }\; \mu\in \Lambda_A\quad\mbox{and}\quad \lambda\in \mathbf{L}_A.$$
where~$\lambda^t $ and $\mu^t $ denote the transpose partitions.
\end{thmA}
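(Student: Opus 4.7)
The plan is to establish the three claims of the theorem in sequence: the standardly based structure, the standard-system criterion, and the transposed BGG reciprocity.

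\textbf{Standardly based structure.}
I would construct a Murphy-type basis of $A_n$ by combining the natural periplectic diagram basis with the Murphy bases of the group algebras $\mk S_j$ for $j\in\JJJ(n)$. Every periplectic Brauer diagram decomposes uniquely as a triple consisting of a bottom half-diagram with $(n-j)/2$ horizontal arcs, a permutation $w\in S_j$ on the $j$ propagation strands, and a top half-diagram. Expanding each such permutation in the Murphy basis of $\mk S_j$ yields a basis of $A_n$ indexed by pairs $(\mathfrak{S},\mathfrak{T})$, where $\mathfrak{S}$ encodes the bottom half-diagram together with a standard $\lambda$-tableau and $\mathfrak{T}$ the analogous top datum, for $\lambda\in\mathbf{L}_A$. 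Verification of the standardly based axioms in the sense of Du and Rui reduces to showing that left multiplication by a diagram acts on the $\mathfrak{S}$-part modulo the span of basis elements with strictly smaller $|\lambda|$, or for equal size, strictly larger in dominance. The arc-number filtration on $A_n$ controls the coarse part of this claim, while Murphy-basis triangularity for $\mk S_j$ handles the refinement within each layer.

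\textbf{Standard system.}
I would verify the defining axioms $\End_{A_n}W(\lambda)\simeq\mk$ and $\Ext^1_{A_n}(W(\mu),W(\lambda))=0$ unless $\mu>\lambda$ by restricting cell modules to the symmetric-group subalgebra generated by the propagation strands. Under this restriction $W(\lambda)$ specialises to the Specht module $S^\lambda$, so the symmetric-group contribution is governed by the Kleshchev--Nakano theorem, which is exactly what forces the condition $p\not\in\{2,3\}$. Induction/restriction functors between $A_n$ and $A_{n-2}$ (adding or removing an arc), compatible with the arc filtration, then propagate the standard-system property across the different layers. The excluded cases $n=2$ and $n=4$ must be handled by direct computation: for $n=2$ the algebra is small enough to be fully described and the axioms shown to fail, and for $n=4$ one produces an explicit non-vanishing $\Ext^1(W(\mu),W(\lambda))$ with $\mu<\lambda$. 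Constructing these counterexamples by hand, and ruling them out for all larger $n$, is where I expect the main technical obstacle to lie.

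\textbf{BGG reciprocity.}
Under the hypothesis $p\not\in[2,n]$, quasi-heredity of $A_n$ (when $n$ is odd) or its quasi-hereditary $1$-cover (when $n$ is even), together with the standard-system property, makes the multiplicities $(P(\mu):W(\lambda))$ well-defined and yields the usual reciprocity $(P(\mu):\Delta(\lambda))=[\nabla(\lambda):L(\mu)]$. The transpose appearing in the stated formula is characteristic of the periplectic setting, and I would explain it via a sign-twisted anti-involution of $A_n$ obtained by reflecting diagrams horizontally and multiplying by the sign of the permutation on propagation strands. The induced contravariant duality $\mathbb{D}$ on $A_n$-modules should satisfy $\mathbb{D}\,L(\mu)\simeq L(\mu^t)$, mirroring the classical fact that twisting a Specht module by the sign character yields the Specht module of the transpose partition, and should identify $\mathbb{D}\,W(\lambda^t)$ with the costandard module $\nabla(\lambda)$; substituting these identifications into the ordinary reciprocity produces the stated identity. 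The main technical step is the precise identification of $\mathbb{D}$ on cell and simple modules, after which the reciprocity follows formally.
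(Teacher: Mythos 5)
Your first and third parts are broadly viable and differ from the paper mainly in where the work is done. For the standardly based structure, the paper does not build a Murphy basis of the algebra directly (it explicitly declines to pursue that route); instead it first shows that the larger algebra $C_n=\bigoplus_{i,j\in\JJJ(n)}\Hom_{\cA}(i,j)$ has a triangular decomposition $\overline N H N$ making it standardly based, and then transfers the structure to $A_n=e_n^\ast C_n e_n^\ast$ through the Schur functor $F=e_n^\ast-$. Your direct diagram-times-Murphy-basis construction is plausible (note you must check right multiplication as well as left), but it buys you nothing for the harder parts of the theorem. For the reciprocity, your sign-twisted horizontal reflection is essentially the paper's anti-automorphism $\varphi_n$, and the transpose enters exactly as you predict via $\Upsilon_\psi(W^0(\lambda))\cong W^0(\lambda^t)$; the paper, however, proves the reciprocity on the quasi-hereditary algebra $C_n$ (where $\nabla$-modules exist) and pushes it down to $A_n$ with the exact functor $F$. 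Your formulation on $A_n$ itself is problematic for $n$ even, since $A_n$ is then not quasi-hereditary and has no costandard modules to identify $\mathbb{D}\,W(\lambda^t)$ with.

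The genuine gap is in the standard-system argument. Your claim that restricting $W(\lambda)$ to the symmetric-group subalgebra yields the Specht module is false except in the top layer $\lambda\vdash n$: for $\lambda\vdash i<n$ one gets the induced module $\ind^{\mS_n}(W^0(\lambda)\boxtimes K)$ with $K=\Hom_{\cA}(0,n-i)$, so Kleshchev--Nakano only controls the top layer, and the real difficulty is the vanishing of $\Hom$ and $\Ext^1$ \emph{between} layers. Your proposed mechanism ("induction/restriction functors between $A_n$ and $A_{n-2}$ propagate the standard-system property across layers") is not substantiated and, as stated, would not detect the one place where the property genuinely threatens to fail for all even $n$: the layer $\lambda=\varnothing$, which $A_n$ sees only through the \emph{nilpotent} element $c_0^\ast=a_0b_0$ rather than an idempotent. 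The paper's resolution is to prove that $C_n$ is a quasi-hereditary $1$-cover of $A_n$ for $n\notin\{2,4\}$, i.e.\ that $F$ induces isomorphisms on $\Hom$ and $\Ext^1$ between modules filtered by $\overline\Delta$'s; this rests on a double-centraliser theorem and on explicit diagram computations (the short exact sequence $0\to Ax\to Ac_2^\ast\to Ac_0^\ast\to 0$ and the vanishing of $\Hom_A(Ax,\,Ac_{2j}^\ast/Ac_{2j-2}^\ast Ac_{2j}^\ast)$), which is precisely where $n=4$ fails and $n>4$ succeeds. Without an argument of this kind your proof establishes the standard-system property only within the layer $\lambda\vdash n$ and leaves the cross-layer extensions, hence the "if" direction of the criterion, unproven.
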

By \cite[Lemma~5.3.1]{Borelic}, the statement that the cell modules constitute a standard system implies that~$A_n$ admits a quasi-hereditary 1-cover, in the sense of \cite{Rou}. We construct this cover if $p\not\in[2,n]$.

Theorems 1, 2 and 3 and other results in the paper lift the knowledge of the periplectic Brauer algebras, concerning quasi-heredity, standardly based structures, block decomposition, Jucys-Murphy elements and Bartelli diagrams to the same level as that of the ordinary Brauer algebra. For $B_n(\delta)$, the corresponding topics were investigated in~\cite{blocks, Enyang, Paget, CellAlg, LeducRam, Koenig, Naz}.

The final main result is an application to the representation theory of the periplectic Lie superalgebra, which completes a recent partial result by Chen in~\cite{Chen}.
\begin{thmA}\label{ThmBlocks}
The category of integrable modules over $\mathfrak{pe}(m)$ contains precisely $m+1$ blocks.
\end{thmA}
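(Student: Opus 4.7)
My plan is to invoke the Schur--Weyl duality between the periplectic Lie superalgebra $\mathfrak{pe}(m)$ and the periplectic Brauer algebra $A_n$, acting on $V^{\otimes n}$ for $V$ the natural $\mathfrak{pe}(m)$-module, as established in~\cite{Kujawa, Vera}. This yields, for each $n\ge 0$, an exact functor $F_n$ from $A_n$-modules to $\mathfrak{pe}(m)$-modules, sending an $A_n$-module $M$ to $V^{\otimes n}\otimes_{A_n} M$. Its essential image consists of integrable $\mathfrak{pe}(m)$-modules, and a standard argument shows that every simple integrable $\mathfrak{pe}(m)$-module arises as a summand of some $F_n(L_{A_n}(\lambda))$. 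Since $F_n$ is exact, it sends each indecomposable $A_n$-module to a direct sum of $\mathfrak{pe}(m)$-modules in a single $\mathfrak{pe}(m)$-block, so simples in the same $A_n$-block are carried to simples in a single $\mathfrak{pe}(m)$-block.

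Combining this with Theorem 1 in characteristic zero, any two simple integrable $\mathfrak{pe}(m)$-modules whose partition labels share a $2$-core lie in a common $\mathfrak{pe}(m)$-block. Since the $2$-core operation does not increase the number of parts, and integrable simples are indexed by partitions with $\ell(\lambda)\le m$, the possible $2$-cores are exactly the $m+1$ staircase partitions $\emptyset,(1),(2,1),\ldots,(m,m-1,\ldots,1)$. This produces an upper bound of $m+1$ blocks.

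For the matching lower bound I would appeal to the partial result of Chen~\cite{Chen}, which separates these $m+1$ classes via central-character data; alternatively one can construct a Casimir-type central operator whose eigenvalue on $L_{\mathfrak{pe}(m)}(\lambda)$ depends only on the $2$-core of $\lambda$ and takes distinct values on distinct staircases of length $\le m$. Witnesses for all $m+1$ classes are provided by the staircase partitions themselves, each of which is integrable for $\mathfrak{pe}(m)$.

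The main obstacle is the first step: verifying that $F_n$ transfers block linkage faithfully rather than merely central characters. Concretely, one must show that if $\lambda$ and $\mu$ share a $2$-core there is a chain of non-split extensions of indecomposable $\mathfrak{pe}(m)$-modules linking $L_{\mathfrak{pe}(m)}(\lambda)$ to $L_{\mathfrak{pe}(m)}(\mu)$; this should follow from the existence of such chains on the $A_n$ side (implied by their lying in a common $A_n$-block) together with the exactness of $F_n$, provided that one can track the simple constituents of $F_n$-images of $A_n$-indecomposables. The standardly based structure together with the BGG-type reciprocity from Theorem 3 give the control needed over composition series of $F_n$-images of projective $A_n$-modules to carry out this tracking, and the explicit description of simple $\mathfrak{pe}(m)$-summands of $V^{\otimes n}$ available from Kujawa--Tharp ensures that no simple integrable module is missed in the argument.
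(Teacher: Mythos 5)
You have the difficulty of this theorem exactly backwards, and the direction you treat as routine is precisely the one that was open. Chen's contribution (Lemma~\ref{LemChen}) is the \emph{linkage} direction: if $\alpha\sim\beta$ then $\cL(\alpha)$ and $\cL(\beta)$ lie in the same block, which yields the upper bound of $m+1$ blocks; it separates nothing. Your fallback of a ``Casimir-type central operator whose eigenvalue depends only on the $2$-core'' cannot work: the centre of $U(\mathfrak{pe}(m))$ is trivial (Gorelik \cite{Go}), and the restriction of the Killing form to $\mathfrak{pe}(m)$ vanishes, so not even a quadratic Casimir exists (see~\ref{Cas}); this absence of central characters is exactly why the block decomposition was a long-standing problem. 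The real content of the theorem is the lower bound, and the paper proves it by a mechanism absent from your sketch: the surjection $\widetilde\pi_n\oplus\widetilde\pi_{n-1}\colon C_n\oplus C_{n-1}\tto \End_{\mathfrak{pe}(m)}\bigl(\bigoplus_{j\le n}V^{\otimes j}\bigr)$ does not annihilate the primitive idempotents $e_{\partial^i}$ (Lemma~\ref{CorIdAnn}, which rests on $\mathfrak{gl}(m|m)$-weight considerations), so the $2$-core block decomposition of $C_n$ (Theorem~\ref{ThmBlock}(2)) forces $\bigoplus_{j\le n}V^{\otimes j}$ to split into $m+1$ summands admitting no morphisms between them (Proposition~\ref{Propim}); since every injective object of $\cF_m$ is a direct summand of a tensor power (Lemma~\ref{LemITP}), these summands separate $m+1$ blocks (Proposition~\ref{Propm1}).

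Your upper-bound argument is also incomplete on its own terms. The simple objects of $\cF_m$ are indexed by all of $X^+$, i.e.\ by arbitrary weakly decreasing integer sequences, not only by partitions with at most $m$ parts; linking the partition-labelled simples to the staircases says nothing about the remaining simples, and reaching them requires Chen's relation $\alpha\sim\alpha+2\delta_k$ in full generality. Moreover, $V^{\otimes n}\otimes_{A_n}-$ is not obviously exact ($V^{\otimes n}$ need not be flat over $A_n$), and an additive functor does not in general carry an indecomposable module into a single block of the target category, so even the transfer of linkage from $A_n$-mod to $\cF_m$ that you assert would require a substantial argument that you have not supplied.
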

We also find an explicit description of which simple modules are contained in each block. This the last classical Lie superalgebras for which the block decompositions were unknown. The periplectic case is complicated by the vanishing of the centre of the universal enveloping algebra, see~\cite{Go}. This is also the only case for which there are only finitely many blocks. The result in Theorem~\ref{ThmBlocks} has been obtained independently at the same time in \cite{gang}.

The paper is organised as follows. In Section~\ref{SecPrel} we introduce the periplectic Brauer algebra and category following \cite{Kujawa} and fix some conventions for symmetric groups. In Section~\ref{SecCover} we use the periplectic Brauer category to introduce an algebra $C_n$  which we can easily study using the general theory of \cite{Borelic}. In Section~\ref{SecAC} we show that $C_n$ is closely related to the periplectic Brauer algebra, by a double centraliser property, which allows to obtain the results in Theorems~2 and 3 above. In Section~\ref{SecBasis} we study the iterative restriction of cell modules from $A_{n}$ to~$A_{n-1}$, leading to a Bratteli diagram. Then we define the Murphy basis of the cell modules in terms of this Bratteli diagram. In Section~\ref{SecJM} we introduce mutually commuting elements in~$A_n$, which we call the Jucys-Murphy elements. We use these to study the centre of~$A_n$ and show that their action on the Murphy basis adheres to the general theory of families of JM elements developed in~\cite{MathasCrelle}. This allows to prove one direction of the claim in Theorem~1. In Section~\ref{SecDM} we determine some decomposition multiplicities which allow to conclude the proof of Theorem~1. In Section~\ref{Secpm} we determine the blocks in the category of intgrable modules over the periplectic Lie superalgebra, yielding Theorem~4. We also give some interpretations of other results in the paper. Finally, in Section~\ref{SecEx} determine all decomposition multiplicities for $A_n$ with~$n\le 5$ and the path algebra description for $n<5$. We briefly investigate Koszul and Ringel duality for these examples.  
 
\section{Preliminaries}\label{SecPrel}

We set $\mN=\{0,1,2,\ldots\}$. Throughout the paper, we let $\mk$ be an arbitrary {\em algebraically closed} field. By an ``algebra'', we mean an algebra over~$\mk$ which is associative, finite dimensional and has an identity element $1$ for multiplication. Unless explicitly stated otherwise, modules are assumed to be finite dimensional, unital and left modules. The category of such modules over~$A$ is denoted by~$A$-mod. The isoclasses of simple modules are labelled by~$\Lambda=\Lambda_A$. For $\lambda\in\Lambda$ we write $L(\lambda)$ or $L_A(\lambda)$ for the corresponding simple module and $P(\lambda)=P_A(\lambda)$ for its projective cover in~$A$-mod. For a $\mZ$-graded object $V$, the degree $j$ part is~$V_{(j)}$. For any~$A$-module $M$, we denote by~$\cF(M)$ the category of modules admitting a filtration where the sections are direct summands of~$M$.

\subsection{The periplectic Brauer category}
In this section, we introduce the {\em periplectic Brauer category} $\cA$, following~\cite{Kujawa}. This is a $\mk$-linear small skeletal category.

\subsubsection{}The periplectic Brauer category $\cA$ is an analogue of the Brauer category $\cB(\delta)$ of \cite{BrCat} at $\delta=0$. In fact, when~$\charr(\mk)=2$, the two coincide. The category $\cA$ has been introduced as $\Rep_0 P$ in~\cite[\S4.5]{Vera}, and independently as the ``marked Brauer category'' $\cB(0,\minus 1)$ in~\cite[\S3]{Kujawa}.

\subsubsection{Objects and morphisms}\label{ObHom} The set of objects of the periplectic Brauer category $\cA$ is $\mN$. The $\mk$-vector space $\Hom_{\cA}(i,j)$ is spanned by all {\em $(i,j)$-Brauer diagrams} of \cite[Definition~2.1]{BrCat}. These diagrams correspond to all partitions of a set of~$i+j$ dots into pairs. They are graphically represented by imagining the $i$ dots on a horizontal line and the $j$ dots on a second horizontal line, above the first one. The Brauer diagram then consists of~$(i+j)/2$ lines, connecting the dots belonging to the same pair. An example of a $(6,8)$-Brauer diagram is
$$
\begin{tikzpicture}[scale=1,thick,>=angle 90]
\begin{scope}[xshift=4cm]
\draw  (2.8,-0.5) -- +(0,1.5);
\draw (4,-0.5) to [out=90, in=180] +(0.3,0.3);
\draw (4.6,-0.5) to [out=90, in=0] +(-0.3,0.3);

\draw (3.4,-0.5) to [out=90, in=180] +(0.9,0.6);
\draw (5.2,-0.5) to [out=90, in=0] +(-0.9,0.6);

\draw (4,1) to [out=-90, in=180] +(0.3,-0.3);
\draw (4.6,1) to [out=-90, in=0] +(-0.3,-0.3);

\draw (5.2,1) to [out=-90, in=-90] +(1.2,0);

\draw (5.8,-0.5) to [out=120, in=-60] +(-2.4,1.5);

\draw (5.8,1) to [out=-90, in=-90] +(1.2,0);

\end{scope}
\end{tikzpicture}
$$ The lines in Brauer diagrams which connect the lower and upper horizontal line will be referred to as {\em propagating lines}. Lines connecting two points on the lower line are called {\em caps} and lines connecting two points on the upper line are {\em cups}. Imaginary horizontal lines in between the two horizontal lines containing the dots are referred to as {\em lines of latitude}.

The composition of two morphisms is the `usual one' in~\cite[Definition~2.3]{BrCat}, up to a possible minus sign. These minus signs originate firstly because the cups and caps are `odd' morphisms, so they only commute up to a minus sign and, secondly, the caps are skew symmetric, see \cite{Kujawa} or Section~\ref{Secpm}. This implies that one will need to keep track of the `relative order' of cups and caps, as well as assign a `direction' to caps. This is made concrete in the next subsection.

\subsubsection{Marked diagrams}\label{SecMaDi}
For a Brauer diagram, we will choose a linear order on the set of its cups and caps. Further, we will declare any cap to be either left-handed or right-handed.

Graphically, this is achieved by {\em marking} the Brauer diagram. Any cup is marked with a diamond~$\Diamond$ and any cap with a left $\lhd$ or right $\rhd$ arrow, where left means against the sense of moving from the left dot to the right along the line. The height of the lines of latitude of these markings is then the graphical realisation of the linear order on the set of cups and caps, where the maximal cup/cap in the order corresponds to the highest marking. This implies that we do not allow two symbols to be on the same line of latitude. The result is a {\em marked Brauer diagram}.

 An example of a marked Brauer diagram, for the Brauer diagram of \ref{ObHom} is
$$
\begin{tikzpicture}[scale=1,thick,>=angle 90]
\begin{scope}[xshift=4cm]
\draw  (2.8,-0.5) -- +(0,1.5);
\draw (4,-0.5) to [out=90, in=180] +(0.3,0.3);
\draw (4.6,-0.5) to [out=90, in=0] +(-0.3,0.3);
\draw (4.3,-0.2) node[]{$\rhd$};

\draw (3.4,-0.5) to [out=90, in=180] +(0.9,0.7);
\draw (5.2,-0.5) to [out=90, in=0] +(-0.9,0.7);
\draw (4.3,0.2) node[]{$\lhd$};

\draw (4,1) to [out=-90, in=180] +(0.3,-0.4);
\draw (4.6,1) to [out=-90, in=0] +(-0.3,-0.4);
 \draw (4.3,0.6) node[] {$\Diamond$};

\draw (5.2,1) to [out=-70, in=180] +(0.6,-0.2);
\draw (6.4,1) to [out=-110, in=0] +(-0.6,-0.2);
 \draw (5.68,0.8) node[] {$\Diamond$};

\draw (5.8,1) to [out=-90, in=180] +(0.6,-1.08);
\draw (7,1) to [out=-90, in=0] +(-0.6,-1.08);
 \draw (6.4,-0.08) node[] {$\Diamond$};

\draw (5.8,-0.5) to [out=120, in=-60] +(-2.4,1.5);

\end{scope}
\end{tikzpicture}
$$

Given a Brauer diagram, we define a {\em standard order} on the set of cups and caps. For this, all cups are bigger than all caps; a cup is bigger than another cup if its left-most dot is to the left of the left-most dot of the other one; a cap is bigger than another cap if its left-most dot is to the right of the left-most dot of the other one. The {\em standard marking} then corresponds to this standard order together with the choice that every cap is marked by a right arrow. The standard marking for the Brauer diagram in~\ref{ObHom} is
$$
\begin{tikzpicture}[scale=1,thick,>=angle 90]
\begin{scope}[xshift=4cm]
\draw  (2.8,-0.5) -- +(0,1.5);
\draw (4,-0.5) to [out=90, in=180] +(0.3,0.7);
\draw (4.6,-0.5) to [out=90, in=0] +(-0.3,0.7);
\draw (4.3,0.2) node[]{$\rhd$};

\draw (3.4,-0.5) to [out=90, in=180] +(0.9,0.4);
\draw (5.2,-0.5) to [out=90, in=0] +(-0.9,0.4);
\draw (4.3,-0.1) node[]{$\rhd$};

\draw (4,1) to [out=-70, in=180] +(0.3,-0.2);
\draw (4.6,1) to [out=-110, in=0] +(-0.3,-0.2);
 \draw (4.3,0.8) node[] {$\Diamond$};

\draw (5.2,1) to [out=-90, in=180] +(0.6,-0.4);
\draw (6.4,1) to [out=-90, in=0] +(-0.6,-0.4);
 \draw (5.8,0.6) node[] {$\Diamond$};

\draw (5.8,1) to [out=-90, in=180] +(0.6,-0.6);
\draw (7,1) to [out=-90, in=0] +(-0.6,-0.6);
 \draw (6.4,0.4) node[] {$\Diamond$};

\draw (5.8,-0.5) to [out=120, in=-60] +(-2.4,1.5);

\end{scope}
\end{tikzpicture}
$$

\subsubsection{Composition of morphisms}
Now consider a $(j,k)$-Brauer diagram $d_1$ and an~$(i,j)$-Brauer diagram $d_2$. We will define their composition~$d_1\circ d_2$ as a morphism in~$\cA$.
We create a diagram $d_1\ast d_2$, by drawing $d_1$ on top of~$d_2$ and identifying the $j$ dots on the lower line of~$d_1$ with those on the upper line of~$d_2$. If the corresponding diagram contains closed loops, we set $d_1\circ d_2=0$. If there are 
no loops, $d_1\ast d_2$ can be identified with a partitioning into pairs of~$i+k$ dots, so an~$(i,k)$-Brauer diagram. Then we set $d_1\circ d_2=(-1)^{\gamma(d_1,d_2)}d_1\ast d_2$, where~$\gamma(d_1,d_2)\in\mN$ will be defined using marked diagrams.

We equip $d_1$ and $d_2$ with their standard marking, which gives some decoration of~$d_1\ast d_2$ obtained by keeping the lines of latitude of all markings fixed. We can turn this decoration into a (standard) marking by using two types of operations: (i) permuting adjacent lines of latitudes of two markings and (ii) cancelling a diamond with an arrow which is on the same line of the diagram and lies on an adjacent line of latitude. We can choose a finite number of such operations to obtain a diagram with standard marking. Then~$\gamma(d_1,d_2)\in\mN$ is the sum of the number of operations we used of the first type and the number of operations of the second type where the arrow pointed away from the diamond. It is proved in \cite{Kujawa} that $\gamma$ is independent of the chosen procedure. As an example, we demonstrate
$$\begin{tikzpicture}[scale=1,thick,>=angle 90]
\begin{scope}[xshift=4cm]
\draw (0,0) to [out=90, in=180] +(0.3,0.4);
\draw (0.6,0) to [out=90, in=0] +(-0.3,0.4);
\draw (1.2,0) to [out=120, in=-60] +(-1.2,1);
\draw (1.8,0) to [out=120, in=-60] +(-1.2,1);
\draw (1.2,1) to [out=-90, in=180] +(0.3,-0.4);
\draw (1.8,1) to [out=-90, in=0] +(-0.3,-0.4);

\draw (2.8,0.5) node[]{$\circ$};

\draw (3.8,1) to [out= -90, in =180] +(0.9, -0.7);
\draw (5.6,1) to [out= -90, in =0] +(-0.9, -0.7);
\draw (4.4,1) to [out= -90, in =180] +(0.3, -0.4);
\draw (5,1) to [out= -90, in =0] +(-0.3, -0.4);

\draw (6.6,0.5) node[]{$=$};

\draw (7.6,1) to [out= -90, in =180] +(0.3, -0.4);
\draw (8.2,1) to [out= -90, in =0] +(-0.3, -0.4);
\draw (8.8,1) to [out= -90, in =180] +(0.3, -0.4);
\draw (9.4,1) to [out= -90, in =0] +(-0.3, -0.4);

\end{scope}
\end{tikzpicture}
$$
Indeed, the procedure can be summarised as
$$\begin{tikzpicture}[scale=1,thick,>=angle 90]
\begin{scope}[xshift=4cm]

\draw (0.3,0.4) node[]{$\rhd$};
\draw (0,0) to [out=90, in=180] +(0.3,0.4);
\draw (0.6,0) to [out=90, in=0] +(-0.3,0.4);
\draw (1.2,0) to [out=120, in=-60] +(-1.2,1);
\draw (1.8,0) to [out=120, in=-60] +(-1.2,1);
\draw (1.2,1) to [out=-90, in=180] +(0.3,-0.4);
\draw (1.8,1) to [out=-90, in=0] +(-0.3,-0.4);
\draw (1.5,0.6) node[]{$\diamond$};

\draw (0,0) to [out=-60, in = 180] +(0.9, -0.3);
\draw (1.8,0) to [out=-120, in = 0] +(-0.9, -0.3);
\draw (0.6,0) to [out=-90, in =180] +(0.3, -0.7);
\draw (1.2,0) to [out=-90, in =0] +(-0.3, -0.7);
\draw (0.9,-0.3) node[]{$\diamond$};
\draw (0.9,-0.7) node[]{$\diamond$};

\draw (2.7,0.1) node[]{$=$};

\draw (3.6,0.1) node[]{$-$};
\draw (4,0) to [out=90, in=180] +(0.3,0.4);
\draw (4.6,0) to [out=90, in=0] +(-0.3,0.4);
\draw (5.2,0) to [out=120, in=-60] +(-1.2,1);
\draw (5.8,0) to [out=120, in=-60] +(-1.2,1);
\draw (5.2,1) to [out=-90, in=180] +(0.3,-0.4);
\draw (5.8,1) to [out=-90, in=0] +(-0.3,-0.4);
\draw (5.5,0.6) node[]{$\diamond$};

\draw (4,0) to [out=-60, in = 180] +(0.9, -0.3);
\draw (5.8,0) to [out=-120, in = 0] +(-0.9, -0.3);
\draw (4.6,0) to [out=-90, in =180] +(0.3, -0.7);
\draw (5.2,0) to [out=-90, in =0] +(-0.3, -0.7);
\draw (4.9,-0.7) node[]{$\diamond$};

\draw (6.7,0.1) node[]{$=$};

\draw (7.6,0.1) node[]{$-$};

\draw (8,1) to [out=-90, in=180] +(0.3,-0.8);
\draw (8.6,1) to [out=-90, in=0] +(-0.3,-0.8);
\draw (8.3,0.2) node[]{$\diamond$};

\draw (9.2,1) to [out=-90, in=180] +(0.3,-0.4);
\draw (9.8,1) to [out=-90, in=0] +(-0.3,-0.4);
\draw (9.5,0.6) node[]{$\diamond$};

\draw (10.7,0.1) node[]{$=$};

\draw (11.4,1) to [out=-90, in=180] +(0.3,-0.4);
\draw (12,1) to [out=-90, in=0] +(-0.3,-0.4);
\draw (11.7,0.6) node[]{$\diamond$};

\draw (12.6,1) to [out=-90, in=180] +(0.3,-0.8);
\draw (13.2,1) to [out=-90, in=0] +(-0.3,-0.8);
\draw (12.9,0.2) node[]{$\diamond$};

\end{scope}
\end{tikzpicture}
$$
 

Clearly, the identity morphism of~$i\in\Ob\cA$ is represented by the diagram with~$i$ non-intersecting propagating lines. We denote this isomorphism by~$e_i^\ast$.

\subsubsection{The periplectic Brauer algebra}
The algebras of \cite{Moon} are obtained as the endomorphism algebras in~$\cA$. For $n\in \mZ_{\ge 2}$, we define the {\em periplectic Brauer algebra} as
$$A_n:=\End_{\cA}(n).$$
We can of course also define $A_i=\End_{\cA}(i)\cong\mk$ for $i\in\{0,1\}$, but unless included explicitly we do not consider these cases. This allows for easier formulation of precise statements.

A definition of~$A_n$ in terms of generators and relations is given in~\cite[Definition~2.2]{Moon}, \cite[\S3.5]{Vera} and \cite[\S4.1]{Kujawa}. 
In particular, $A_n$ is generated by the simple reflections $s_i\in\mS_n$, for $i\in\{1,\ldots,n-1\}$, where~$s_i$ interchanges the dots in positions $i$ and $i+1$ from the left, and any diagram with precisely one cup and cap. We also introduce $\varepsilon_i\in A_n$, for $i\in\{1,\ldots,n-1\}$, as
$$
\begin{tikzpicture}[scale=0.7,thick,>=angle 90]
\begin{scope}[xshift=4cm]

\draw (1.5,0.5) node[] {$\varepsilon_i:=$};
\draw  (2.5,0) -- +(0,1);
\draw  (3.1,0) -- +(0,1);

\draw [dotted] (3.4,.5) -- +(0.4,0);

\draw  (4,0) -- +(0,1);

\draw (4.6,0) to [out=90, in=180] +(0.3,0.3);
\draw (5.2,0) to [out=90, in=0] +(-0.3,0.3);

\draw (4.6,1) to [out=-90, in=180] +(0.3,-0.3);
\draw (5.2,1) to [out=-90, in=0] +(-0.3,-0.3);

\draw  (5.8,0) -- +(0,1);
\draw  (6.4,0) -- +(0,1);
\draw [dotted] (6.7,.5) -- +(0.4,0);
\draw  (7.3,0) -- +(0,1);

\end{scope}
\end{tikzpicture}
$$
where we have $i-1$ propagating lines to the left of the cup. We denote by~$(j,i)\in\mS_n\subset A_n$ the elementary permutation, exchanging $i$ and $j$, and by~$ \overline{(j,i)}\in A_n$ the Brauer diagram containing only non-crossing propagating lines, except for one cup and cap, both connecting the $i$th and $j$th dot from the left. In particular, $(i,i+1)=s_i$ and $\overline{(i,i+1)}=\varepsilon_i$.

\subsubsection{Monoidal structure} \label{anti}
It is proved in~\cite[Theorem~3.2.1]{Kujawa} that~$\cA$ is a strict $\mk$-linear monoidal {\em super} category and is generated by four morphisms (diagrams). These are
\begin{enumerate}
\item $I$, the identity morphism of~$1\in\Ob \cA$, represented by a straight line;
\item $X$, the endomorphism $(1,2)$ of~$2\in\Ob \cA$ corresponding to the generator of $\mS_2$;
\item $\cup$, the unique diagram in~$\Hom_{\cA}(0,2)$; and
\item $\cap$, the unique diagram in~$\Hom_{\cA}(2,0)$.
\end{enumerate}
The explicit relations are given in~\cite[Theorem~3.2.1]{Kujawa}.

Using that result, we can define, by checking the consistency with the relations, a {\em contravariant} auto-equivalence $\varphi:\cA\to\cA$, which satisfies $\varphi(i)=i$ for all $i\in\Ob\cA$ and
$$I\mapsto I,\;\;\; X\mapsto -X,\;\;\; \cup\mapsto -\cap\;\;\;\mbox{and }\;\;\;\cap\mapsto \cup,$$ 
and $\varphi(\alpha\otimes\beta)=\varphi(\alpha)\otimes \varphi(\beta)$, for morphisms $\alpha$ and $\beta$.
Note that we also impose~$\varphi(\alpha\circ\beta)=\varphi(\beta)\circ \varphi(\alpha)$, meaning we ignore the $\mF_2$-grading on $\cA$.
 This equivalence restricts to the involutive algebra anti-automorphism of~$A_n$, mentioned in~\cite[Section~3.5]{Vera} and \cite[Lemma~4.1.2(2)]{Kujawa}. However, $\varphi$ differs from the endofunctor in~\cite[Proposition~3.4.1(3)]{Kujawa}, which is contravariant in an $\mF_2$-graded sense.

\subsubsection{}\label{embed} For $0\le k\le n$, we fix throughout the paper the embedding $A_k\hookrightarrow A_n$ by mapping a $(k,k)$-Brauer diagram~$d$ to the $(n,n)$-Brauer diagram $d\otimes e^\ast_{n-k}$.

\subsection{Quasi-hereditary algebras and standard systems}
An algebra~$A$ with some partial order~$\le$ on~$\Lambda$ will be denoted as $(A,\le)$.
\begin{ddef}[\cite{CPS}] The algebra~$(A,\le)$ is {\em quasi-hereditary} if there are modules $\{{\Delta}(\lambda),\lambda\in \Lambda\}$ in~$A${\rm-mod}, such that
\begin{itemize}
\item we have $[\Delta(\lambda):L(\lambda)]=1$ and $[\Delta(\lambda):L(\mu)]=0$ unless $\mu\le \lambda$,
\item there is a surjection~$P(\lambda)\tto \Delta(\lambda)$, such that the kernel has a filtration where the section are isomorphic to modules $\Delta(\nu)$ for~$\nu>\lambda $.
\end{itemize}
\end{ddef}
The modules $\Delta(\lambda)=\Delta_A(\lambda)$ are the {\em standard modules}. Note that $\Delta(\lambda)$ is the maximal quotient of $P(\lambda)$ for which $[\Delta(\lambda):L(\mu)]=0$ unless $\mu\le \lambda$.

\subsubsection{}If an algebra $A$ admits an anti-algebra automorphism~$\phi$, we obtain a contravariant equivalence of categories
\begin{equation}\label{eqTheta}\Upsilon_{\phi}:A\mbox{-mod}\;\stackrel{\sim}{\to}\; A\mbox{-mod},\qquad M\mapsto M^\ast=\Hom_{\mk}(M,\mk),\end{equation}
where the action of~$a\in A$ on~$\alpha\in M^\ast$ is given by
$$(a\alpha)(v)\,=\, \alpha(\phi(a)v),\qquad\mbox{for all}\; v\in M.$$

\begin{prop}\label{NewProp}
Let $(A,\le)$ be a quasi-hereditary algebra with anti-algebra automorphism~$\phi$. There exists a (unique and involutive) bijection $\lambda\mapsto\lambda^\phi$ of $\Lambda$ such that
$$\dim\Hom_A(\Delta(\lambda),\Upsilon_\phi(\Delta(\mu)))\;=\; \delta_{\lambda,\mu^\phi}.$$
Furthermore, we have
$$(P(\lambda):\Delta(\mu))\;=\;[\Delta(\mu^\phi):L(\lambda^{\phi})],\quad\mbox{for all }\lambda,\mu\in\Lambda.$$
\end{prop}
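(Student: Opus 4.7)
The strategy is to identify $\Upsilon_\phi \Delta(\mu)$ with the costandard module $\nabla(\mu^\phi)$ of the quasi-hereditary algebra $(A,\le)$, and then deduce both formulas from classical quasi-hereditary theory. I would first define $\lambda \mapsto \lambda^\phi$ by $\Upsilon_\phi(L(\lambda)) \cong L(\lambda^\phi)$, which makes sense because $\Upsilon_\phi$ is a contravariant exact $\mk$-linear self-equivalence of $A$-mod and therefore permutes the isomorphism classes of simple modules. Uniqueness of the bijection is forced by the displayed formula. Involutivity is equivalent to $\Upsilon_\phi^2 \cong \id$ on simples, which follows from the canonical natural isomorphism $M \cong M^{**}$ on finite dimensional modules, being $A$-linear as soon as $\phi^2 = \id$, the relevant case in the applications.

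Next I would show $\Upsilon_\phi \Delta(\mu) \cong \nabla(\mu^\phi)$. Since $\Upsilon_\phi$ is a contravariant exact equivalence, the module $\Upsilon_\phi \Delta(\mu)$ has simple socle $L(\mu^\phi)$, dual to the simple top $L(\mu)$ of $\Delta(\mu)$; its composition multiplicities are $[\Upsilon_\phi \Delta(\mu): L(\kappa)] = [\Delta(\mu): L(\kappa^\phi)]$ for each $\kappa \in \Lambda$; and, dualising the surjection $P(\mu) \tto \Delta(\mu)$, it embeds into $\Upsilon_\phi P(\mu)$, which is the injective envelope of $L(\mu^\phi)$. The standard characterisation of the costandard $\nabla(\mu^\phi)$ as the maximal submodule of this injective hull whose composition factors $L(\kappa)$ satisfy $\kappa \le \mu^\phi$, combined with order-preservation of $\phi$ on $\Lambda$, then gives the desired identification. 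The first displayed identity now follows from the classical $\dim \Hom(\Delta(\lambda), \nabla(\nu)) = \delta_{\lambda,\nu}$ of quasi-hereditary theory, applied with $\nu = \mu^\phi$.

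For the reciprocity formula I would invoke standard BGG reciprocity $(P(\lambda):\Delta(\mu)) = [\nabla(\mu): L(\lambda)]$, obtained by computing $\dim \Hom(P(\lambda), \nabla(\mu))$ two ways: via projectivity of $P(\lambda)$, and via a $\Delta$-filtration of $P(\lambda)$ together with the vanishing $\Ext^1(\Delta(\kappa), \nabla(\mu)) = 0$ and the Hom formula above. Substituting $\nabla(\mu) \cong \Upsilon_\phi \Delta(\mu^\phi)$ and using that $\Upsilon_\phi$ is an exact equivalence that sends $L(\kappa)$ to $L(\kappa^\phi)$ (and hence preserves composition multiplicities up to this relabeling), the right-hand side becomes $[\Delta(\mu^\phi): L(\lambda^\phi)]$, which is the claimed formula.

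The principal obstacle is the order-compatibility $\nu \le \mu \iff \nu^\phi \le \mu^\phi$ needed for the identification $\Upsilon_\phi \Delta(\mu) \cong \nabla(\mu^\phi)$. In the setting of this paper this will be automatic because $\phi$ is induced by the involution $\varphi$ of $\cA$ (Section~\ref{anti}), which preserves the number of propagating lines of any diagram and therefore the relevant partial order on $\Lambda$; in the abstract setting, one may alternatively refine $\le$ to any compatible $\phi$-stable order without changing the quasi-hereditary structure.
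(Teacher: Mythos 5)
Your proof follows essentially the same route as the paper's: define $\lambda\mapsto\lambda^\phi$ on simple modules, identify $\Upsilon_\phi(\Delta(\mu))$ with the costandard module $\nabla(\mu^\phi)$, and conclude from $\dim\Hom_A(\Delta(\lambda),\nabla(\nu))=\delta_{\lambda,\nu}$ together with the Dlab--Ringel reciprocity $(P(\lambda):\Delta(\mu))=[\nabla(\mu):L(\lambda)]$. The order-compatibility point you isolate is genuine---the identification $\Upsilon_\phi(\Delta(\mu))\cong\nabla(\mu^\phi)$ does require the bijection $(\cdot)^\phi$ to respect $\le$ (for an arbitrary anti-automorphism the displayed Hom formula can fail, e.g.\ already for the path algebra of $\bullet\to\bullet$ with the vertex-swapping anti-automorphism), and you handle this hypothesis, which the paper's own proof passes over silently, correctly by noting that it holds in the intended application.
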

\begin{proof}
Define the (involutive) bijection $\lambda\mapsto\lambda^\phi$ of $\Lambda$ by $\Upsilon_\phi(L(\lambda))\cong L(\lambda^\phi)$. It then follows that $\Upsilon_\phi(P(\lambda))\cong I(\lambda^\phi)$, with $I(\mu)$ the injective hull of $L(\mu)$, for all $\mu\in\Lambda$. We define $\nabla(\mu)$ as the maximal submodule of $I(\mu)$ such that $[\nabla(\mu):L(\nu)]=0$ unless $\nu\le \mu$.
We then find $\Upsilon_\phi(\Delta(\lambda))\cong \nabla(\lambda^\phi)$.
 By construction, we have
$$\dim\Hom_A(\Delta(\lambda),\nabla(\mu))\,=\,\delta_{\lambda,\mu},\qquad\mbox{for all}\;\; \lambda,\mu\in\Lambda,$$
which proves that $(\cdot)^\phi$ is the bijection in the proposition.
By \cite[Lemma~2.5]{DR}, we have the general reciprocity relation
$$(P(\lambda):\Delta(\mu))\;=\; [\nabla(\mu):L(\lambda)],\qquad\mbox{for all}\;\; \lambda,\mu\in\Lambda,$$
which implies the one in the proposition, by application of $\Upsilon_{\phi}$.
\end{proof}


\subsubsection{} Consider an abelian~$\mk$-linear category $\cC$ and a partially ordered set $(S,\le)$. 
A {\em standard system in~$\cC$ for $S$} is a set of objects $\{X(\alpha)\,|\, \alpha\in S\}$ in~$\cC$, such that for all $\alpha,\beta\in S$:
\begin{enumerate}
\item $\End_{\cC}(X(\alpha))=\mk$;
\item $\Hom_{\cC}(X(\alpha),X(\beta))=0$ unless $\alpha\le \beta$;
\item  $\Ext^1_{\cC}(X(\alpha),X(\beta))=0$ unless $\alpha< \beta$.
\end{enumerate}
For a quasi-hereditary algebra~$(A,\le)$, the standard modules $\{\Delta(\lambda)\,|\,\lambda\in\Lambda\}$ form a standard system in~$A$-mod for $(\Lambda,\le)$, see {\it e.g.} \cite{DR}.

\subsection{Standardly based algebras}
As observed in~\cite{Kujawa, Vera}, $A_n$ fails to be cellular by lack of a suitable involution. Omitting this involution in the definition of cellular algebras in~\cite[Definition~1.1]{CellAlg} leads to the concept of standardly based algebras in~\cite[Definition~1.1.2]{JieDu}. 

\subsubsection{}A standardly based algebra~$A$ has a special basis in terms of a poset $(\mathbf{L},\unlhd)$. The algebra $A$ admits (left) {\em cell modules} $W_A(\alpha)=W(\alpha)$, $\alpha\in\mathbf{L}$, introduced as `standard modules' in~\cite[\S2.1]{JieDu}.

By \cite[Theorem~2.4.1]{JieDu}, $\Lambda$ can be naturally identified with a subset of~$\mathbf{L}$. We consider~$\Lambda$ as a poset for the inherited partial order from~$\mathbf{L}$. For~$\lambda\in\Lambda\subset \mathbf{L}$, we have 
$$[W(\lambda):L(\lambda)]=1\qquad\mbox{and}\qquad [W(\alpha):L(\lambda)]=0\quad\mbox{unless } \lambda\unlhd \alpha,\;\;\mbox{for all $\alpha\in\LL$}.$$
By \cite[Proposition 2.4.4]{JieDu}, every indecomposable projective module $P(\lambda)$ with~$\lambda\in\Lambda$ has a filtration with sections given by modules $W(\alpha)$, with~$\alpha\in \mathbf{L}$, such that
$$(P(\lambda):W(\lambda))=1\qquad\mbox{and}\qquad (P(\lambda):W(\alpha))=0\quad\mbox{unless }\lambda\unlhd \alpha,\;\;\mbox{for all $\alpha\in\LL$}.$$
Note that, in general, the above multiplicities depend on the chosen filtration, see {\it e.g.} Section~\ref{SecA2}.

\subsubsection{}\label{StBQH} If~$\mathbf{L}=\Lambda$, the standardly based algebra is quasi-hereditary with standard modules~$W(\lambda)$. Conversely,
every quasi-hereditary algebra is standardly based for $\mathbf{L}=\Lambda$ (as posets) and $W(\lambda)=\Delta(\lambda)$, by \cite[Theorem~4.2.3]{JieDu}.

\subsection{Centraliser algebras}
Fix an algebra $C$ with idempotent $e\in C$ and set $A=eCe$.
\subsubsection{} We have a pair of adjoint functors $(F,G)$ given by
$$
\xymatrix{ 
C\text{-}\mathrm{mod}\ar@/^/[rrrrrr]^{F=e-\,\cong\,eC\otimes_C-}&&&&&& 
A\text{-}\mathrm{mod}.\ar@/^/[llllll]^{G=\Hom_{A}(eC,-)}
}
$$
We have $F\circ G\cong \Id$ on $A$-mod, and~$F$ is exact while $G$ is left exact. 
The following properties are well-known and immediate consequences of the properties of the above functors.
\begin{lemma}\label{LemTriv}
We can choose $\Lambda_A\subset\Lambda_C$ such that $FL_C(\lambda)$ is either isomorphic to $L_A(\lambda)$ or zero. For all $\lambda\in\Lambda_A$, we have $P_A(\lambda)\cong FP_C(\lambda)$.
\end{lemma}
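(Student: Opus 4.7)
The plan is to take
$$\Lambda_A \;:=\; \{\lambda\in\Lambda_C \,|\, eL_C(\lambda)\neq 0\}\;\subset\;\Lambda_C,$$
and to establish both claims by identifying $FP_C(\lambda)$ with $P_A(\lambda)$ via the corner-algebra structure of $A=eCe$ inside $C$. First I would record the standard identity $\rad(A)=e\rad(C)e$: the inclusion $e\rad(C)e\subseteq\rad(A)$ is by nilpotence, while the reverse follows because $A/e\rad(C)e$ embeds into the corner $\bar e(C/\rad(C))\bar e$ of the semisimple algebra $C/\rad(C)$. This formula is what lets us compare tops of modules before and after applying $F$.

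For $\lambda\in\Lambda_A$ the key step is to realise $P_C(\lambda)$ as $Cf_\lambda$ for some primitive idempotent $f_\lambda\in C$ that already lies in $A$. The hypothesis $\Hom_C(Ce,L_C(\lambda))\cong eL_C(\lambda)\neq 0$, combined with the projectivity of $Ce$, forces $P_C(\lambda)$ to be a direct summand of $Ce$; hence $P_C(\lambda)=Cf_\lambda$ with $ef_\lambda=f_\lambda=f_\lambda e$, so $f_\lambda\in A$. The identity $f_\lambda A f_\lambda=f_\lambda C f_\lambda$ (which is immediate from $ef_\lambda=f_\lambda=f_\lambda e$) then ensures that $f_\lambda$ is primitive in $A$ as well.

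Now $FP_C(\lambda)=eCf_\lambda=(eCe)f_\lambda=Af_\lambda$ is an indecomposable projective $A$-module, and applying the exact functor $F$ to the short exact sequence $\rad(C)f_\lambda\hookrightarrow Cf_\lambda\twoheadrightarrow L_C(\lambda)$ gives
$$Af_\lambda/\rad(A)f_\lambda \;=\; eCf_\lambda/e\rad(C)f_\lambda \;=\; eL_C(\lambda).$$
Thus $eL_C(\lambda)$ is a simple $A$-module and $FP_C(\lambda)$ is its projective cover; declaring $L_A(\lambda):=eL_C(\lambda)$ for $\lambda\in\Lambda_A$ yields both the labelling with $FL_C(\lambda)\cong L_A(\lambda)$ and the isomorphism $P_A(\lambda)\cong FP_C(\lambda)$. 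For $\lambda\in\Lambda_C\setminus\Lambda_A$ we have $FL_C(\lambda)=0$ by definition. To see that every simple $A$-module is hit, reverse the argument: any such module is $Af/\rad(A)f$ for some primitive idempotent $f\in A$, which remains primitive in $C$ by the same $fAf=fCf$ identity, so $Cf=P_C(\mu)$ for a unique $\mu\in\Lambda_C$, necessarily in $\Lambda_A$.

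The main technical point is the transfer of primitivity between $C$ and $A=eCe$ recorded in $fAf=fCf$ for $f\in A$; once combined with the radical formula $\rad(A)=e\rad(C)e$, the remainder is a direct manipulation of projective covers and tops under the exact functor $F$.
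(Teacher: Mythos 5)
Your argument is correct and complete. The paper itself offers no proof of this lemma --- it is asserted as a ``well-known and immediate consequence'' of the properties of the adjoint pair $(F,G)$ with $F\circ G\cong\Id$ --- so your self-contained idempotent-theoretic derivation is a legitimate route rather than a reproduction of the paper's (non-existent) argument; what it buys is an explicit identification $P_C(\lambda)=Cf_\lambda$ with $f_\lambda\in A$, from which both claims fall out by exactness of $F$. Two small points deserve tightening. First, in the radical computation the natural map $A/e\rad(C)e\to \bar e\,(C/\rad(C))\,\bar e$ is not merely an embedding but an isomorphism (it is visibly surjective and its kernel is $eCe\cap\rad(C)=e\rad(C)e$); you need surjectivity, or at least semisimplicity of the image, because a subalgebra of a semisimple algebra need not be semisimple, whereas a corner $\bar e S\bar e$ of a semisimple $S$ always is. Second, for $\Lambda_A$ to genuinely label the isoclasses of simple $A$-modules you should record injectivity of $\lambda\mapsto[eL_C(\lambda)]$ on your set $\Lambda_A$: if $eL_C(\lambda)\cong eL_C(\mu)$ then $Af_\lambda\cong Af_\mu$, hence $Cf_\lambda\cong Ce\otimes_A Af_\lambda\cong Ce\otimes_A Af_\mu\cong Cf_\mu$, so $\lambda=\mu$. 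With these two points made explicit, your proof is the standard correspondence between simple $C$-modules not annihilated by $e$ and simple $A$-modules, together with $eP_C(\lambda)\cong P_A(\lambda)$, which is precisely what the lemma asserts.
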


\subsection{Partitions and Young diagrams}
We review some basic combinatorics of partitions and Young diagrams, to fix the conventions that we will follow.

\subsubsection{}\label{SecPn}Fix $n\in \mN$ and consider~$$\mathscr{P}_n:=\{\lambda=(\lambda_1,\lambda_2,\ldots)\,|\,\lambda \vdash n\},$$ the set of partitions of~$n$. For a partition~$\lambda\vdash n$, we set $|\lambda|=n$. 
We extend the dominance order on $\Par_n$ to a partial order on the set of partitions of all numbers, $\sqcup_{i\in\mN}\Par_i$, by setting
\begin{equation}\label{BigDO}
\mu \unlhd \lambda\;\Leftrightarrow \;\begin{cases}|\mu|>|\lambda| &\mbox{ or}\\ 
|\mu|=|\lambda| \mbox{ and }\sum_{j=1}^k \mu_j \,\le\, \sum_{j=1}^k\lambda_j,\qquad \forall\; k.\end{cases}
\end{equation}

\subsubsection{}For any partition~$\lambda$, we let $\lambda^t $ denote the {transpose}. 
For $p\in\mZ_{>0}$, a partition~$\lambda$ is $p$-restricted if~$\lambda_{i}-\lambda_{i+1}<p$ for all $i$. Any partition is $0$-restricted. We denote the set of~$p$-restricted partitions by~$\Par_n^p$. 

\subsubsection{}\label{SecResi} We will identify a partition with its Young diagram, using English notation. For instance, the partition~$(3,1)$ is represented by the diagram {\tiny$\yng(3,1)$}. Each box or node in the diagram has coordinates $(i,j)$, meaning that the box is in row $i$ and column~$j$. The above diagram has boxes with coordinates $(1,1)$, $(1,2)$, $(1,3)$ and $(2,1)$.
The content of a box $b$ in position~$(i,j)$ in a Young diagram is $\con(b):=j-i\in\mZ$.
For a fixed field $\mk$, we define the residue $\resi(b)\in\mk$ of a box $b$ as the image of $\con(b)$ under the ring morphism~$\overline{\cdot}:\mZ\to \mk$. For a partition~$\lambda$ we also define $|\resi(\lambda)|\in\mk$ as the sum of all residues of the boxes in~$\lambda$.

\subsubsection{}\label{DefAR} Define the set $\RR(\lambda)$ for $\lambda\vdash t$ as the subset of~$\Par_{t-1}$ of all partitions which can be obtained by removing one box in the Young diagram of~$\lambda$. Similarly, $\AAAA(\lambda)$ consists of all partitions of~$t+1$ which can be obtained by adding a box to the Young diagram of~$\lambda$. We will write explicitly $\mu=\lambda\cup b$, if~$\mu$ is a partition for which its Young diagram can be obtained by adding the box $b$ to~$\lambda$.

\subsection{Representations of the symmetric group} Let $\mS_t$ be the symmetric group on $t$ symbols.


\subsubsection{}\label{simplesSG}The group algebra~$\mk \mS_t$ is cellular and hence standardly based, see \cite[Example~1.2]{CellAlg} or \cite[Chapter~3]{Mathas}. For $p:=\charr(\mk)$, we have $$\mathbf{L}=(\Par_t,\unlhd)\quad\mbox{ and }\quad\Lambda=\Par_t^p\subseteq \mathbf{L}.$$ For each $\alpha\vdash t$, the cell module is the {\em dual Specht module} $W^0(\lambda)$, see~\cite[\S 3.2]{Mathas}. When~$\lambda$ is $p$-restricted, $W^0(\lambda)$ has simple top $L^0(\lambda)$.
When~$p\not\in[2,t]$, the algebra~$\mk \mS_t$ is semisimple and then we have $\Par^p_t=\Par_t$ and $L^0(\lambda)=W^0(\lambda)$ for all $\lambda\vdash t$.

The following proposition, stated as \cite[Lemma~8.4.6]{Borelic}, is due to Kleshchev and Nakano \cite{Nakano}.
\begin{prop}\label{PropHN}
The modules $\{W^0(\lambda)\}$ form a standard system for $(\Par_t,\unlhd)$ if and only if~$$\charr(\mk)\not\in \{2,3\}\,\,\,\mbox{ or}\quad \charr(\mk)=3 \,\mbox{ and }\,t=2.$$
\end{prop}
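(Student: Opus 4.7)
The plan splits the statement into a sufficiency direction, where the three standard-system axioms are verified under the hypothesis on the characteristic, and a necessity direction, where one exhibits explicit counterexamples otherwise. The main technique for sufficiency is to reduce the problem from the group algebra to the quasi-hereditary Schur algebra via the Schur functor, where the corresponding result is a standard feature of quasi-hereditary theory.

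For sufficiency, first assume $\charr(\mk) = 0$ or $\charr(\mk) > t$, so that $\mk\mS_t$ is semisimple by Maschke; then every $W^0(\lambda)$ is a distinct simple module and the three axioms are immediate. The boundary case $\charr(\mk) = 3$ with $t = 2$ is covered by the same argument, since $\mk\mS_2$ is semisimple whenever $\charr(\mk)\neq 2$. For the remaining range $5 \leq p = \charr(\mk) \leq t$ the plan is to invoke the Hemmer-Nakano theorem: fix $n\geq t$ and consider the Schur algebra $S = S(n,t)$, which is quasi-hereditary with respect to $(\Par_t, \unlhd)$ with costandard modules $\nabla(\lambda)$. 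The Schur functor $F\colon S\text{-mod}\to \mk\mS_t\text{-mod}$ satisfies $F\nabla(\lambda)\cong W^0(\lambda)$, and for $p\geq 5$ it restricts to a fully faithful functor $\cF(\nabla)\to\cF(W^0)$ that preserves $\Hom$ and $\Ext^1$. Since the costandard modules of any quasi-hereditary algebra form a standard system inside $\cF(\nabla)$, the dual Specht modules inherit the standard-system property in $\mk\mS_t\text{-mod}$.

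For necessity, I would exhibit small-case counterexamples and propagate them. At $\charr(\mk)=2$ with $t=2$, both one-dimensional modules $W^0((2))$ and $W^0((1,1))$ coincide with the trivial representation, so $\Hom(W^0((2)), W^0((1,1)))\neq 0$ while $(2)\not\unlhd (1,1)$; axiom (2) fails. At $\charr(\mk)=3$ with $t\geq 3$ a direct computation on $\mk\mS_3$ (or $\mk\mS_4$) produces a pair $(\lambda,\mu)$ with $\Ext^1(W^0(\lambda), W^0(\mu))\neq 0$ yet $\lambda\not\lhd \mu$, violating axiom (3). To lift these small-case failures to all larger $t$ at the same characteristic I would use that the exact Young induction $\ind_{\mS_t\times\mS_k}^{\mS_{t+k}}(-\boxtimes \mk)$ sends $\cF(W^0)$ into $\cF(W^0)$ by the Pieri rule, and invoke Frobenius reciprocity to transport the offending $\Hom$ or $\Ext^1$ to a pair of partitions of $t+k$ whose dominance relation still violates the relevant axiom.

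The technically hardest step is this propagation argument in the necessity direction: one must ensure that the nonvanishing $\Hom$ or $\Ext^1$ group is not accidentally absorbed into a different section of the induced dual-Specht filtration and that the two partitions one lands on indeed retain the wrong dominance relation. The sufficiency direction essentially reduces to a citation of Hemmer-Nakano and classical quasi-hereditary theory, both of which are well documented; the necessity requires explicit bookkeeping of Pieri sections combined with Frobenius reciprocity to identify precisely which $\Hom$/$\Ext^1$ group inherits the failure.
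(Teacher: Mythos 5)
The paper does not actually prove this proposition: it is quoted from \cite[Lemma~8.4.6]{Borelic} and attributed to Kleshchev--Nakano \cite{Nakano}, so your proposal must be measured against that literature rather than against an argument in the text. Your sufficiency direction is exactly the route taken there: semisimplicity disposes of $\charr(\mk)=0$, $\charr(\mk)>t$ and $(\charr(\mk),t)=(3,2)$, and for $5\le p\le t$ the Kleshchev--Nakano/Hemmer--Nakano comparison transports the standard-system property of the (co)standard modules of the Schur algebra through the Schur functor. The only point to watch is bookkeeping: depending on whether the Schur functor sends $\Delta(\lambda)$ or $\nabla(\lambda)$ to $W^0(\lambda)$ you may pick up an order reversal and/or a transpose $\lambda\mapsto\lambda^t$, and you must check that you land on the order $\unlhd$ of the statement (transposition reverses dominance, so the family $\{W^0(\lambda)\}$ is stable under the required relabelling). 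This half is sound.

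The necessity direction contains a genuine gap, and it is precisely the step you flag. Even granting $\Ext^1_{\mS_{t+k}}\bigl(\ind(W^0(\lambda)\boxtimes\mk),\,\ind(W^0(\mu)\boxtimes\mk)\bigr)\neq 0$, the long exact sequences attached to the two dual-Specht filtrations only tell you that \emph{some} pair of sections $(W^0(\lambda'),W^0(\mu'))$ has nonvanishing $\Ext^1$; nothing forces that pair to satisfy $\lambda'\not\lhd\mu'$, so the failure may be witnessed by an allowed pair and prove nothing. Excluding this requires controlling all the other $\Ext^1$ groups between sections, which is exactly the information one lacks at $p=3$ --- the argument as sketched is circular. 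Two repairs: for $p=2$ no propagation is needed, since $\Ext^1_{\mS_t}(W^0(1^t),W^0(1^t))=H^1(\mS_t,\mk)\neq 0$ for all $t\ge 2$ (the abelianisation of $\mS_t$ is $\mZ/2$ and the sign representation is trivial in characteristic $2$), and axiom (3) forbids self-extensions. For $p=3$ one must instead produce, for every $t\ge 3$, an explicit nonvanishing $\Hom$ or $\Ext^1$ group between dual Specht modules violating the order condition (Carter--Payne type homomorphisms, or the computations recorded in \cite{Nakano}); a purely formal Frobenius-reciprocity transport of the $\mS_3$ example will not close the argument.
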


\subsubsection{}\label{psikS}Let $\psi :\mk\mS_t\to \mk \mS_t$ denote the algebra anti-automorphism given by~$w\mapsto (-1)^{l(w)}w^{-1},$
for all $w\in\mS_t$ of length $l(w)$. Under the corresponding equivalence $\Upsilon_{\psi}$ of equation~\eqref{eqTheta} we have
\begin{equation}\label{eqtrans}\Upsilon_{\psi}\left(W^0(\lambda)\right)\;\cong\; W^0(\lambda^t ),\quad\mbox{for all}\;\,\lambda\vdash t,\end{equation}
see {\it e.g.}~\cite[Ex. 3.14(iii)]{Mathas}.

\subsubsection{}\label{SecLR} Consider partitions $\lambda\vdash a$ and $\nu\vdash b$ and a field $\mk$ with~$\charr(\mk)=0$ or $\charr(\mk)>a+b$. Then there must be some $c^\nu_{\lambda,\mu}\in\mN$ for which
$$\ind^{\mk \mS_{a+b}}\left(L^0(\lambda)\boxtimes L^0(\mu)\right)\;\cong\; \bigoplus_{\nu\vdash a+b}L^0(\nu)^{\oplus c^\nu_{\lambda,\mu}}.$$
The multiplicities $c^\nu_{\lambda,\mu}$ are the Littlewood-Richardson (LR) coefficients, see~\cite[\S 16]{James}. By adjunction, the same coefficients appear, for any~$\nu\vdash a+b$, as 
$$\res_{\mk \mS_{a}\times \mS_b}L^0(\nu)=\bigoplus_{\lambda\vdash a,\mu\vdash b}\left(L^0(\lambda)\boxtimes L^0(\mu)\right)^{\oplus c_{\lambda,\mu}^\nu}.$$


\section{The covers $C_n$}\label{SecCover}

For $n\in\mZ_{\ge 2}$, we define the $\mk$-algebra
$$C_n:=\bigoplus_{i,j\in\JJJ(n)}\Hom_{\cA}(i,j), \qquad\mbox{with }\quad  \JJJ(n):=\{n-2i\,|\, 0\le i\le n/2\}.$$

 \subsection{Main results on~$C_n$}
 We assume $n\in\mZ_{\ge 2}$ and set $p:=\charr(\mk)$.
 \begin{thm}\label{Thmisocl}
The isoclasses of simple $C_n$-modules are labelled by 
 $\Lambda_{C}:=\bigsqcup_{i\in\JJJ(n)} \Par_i^p.$
 \end{thm}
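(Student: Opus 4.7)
The plan is to exploit the natural filtration of $C_n$ by the number of propagating lines in its diagrams. For each $k\in \JJJ(n)$, let $J_{\le k}\subset C_n$ be the $\mk$-span of the Brauer diagrams in $\bigoplus_{i,j\in\JJJ(n)}\Hom_\cA(i,j)$ having at most $k$ propagating lines. Since composition of Brauer diagrams cannot increase the number of propagating lines, each $J_{\le k}$ is a two-sided ideal of $C_n$, yielding a chain
\[
0\;\subset\;J_{\le k_0}\;\subset\;J_{\le k_0+2}\;\subset\;\cdots\;\subset\;J_{\le n}\;=\;C_n,\qquad k_0:=\min\JJJ(n)\in\{0,1\}.
\]

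The key step is to identify each section $J_{\le k}/J_{\le k-2}$ as a $C_n$-bimodule. Any diagram with exactly $k$ propagating lines can be cut along a line of latitude placed above every cap and below every cup, producing a triple (cap-half, permutation of the $k$ strands, cup-half). After absorbing the signs arising from the standard marking of \S\ref{SecMaDi} into the factors, one obtains an isomorphism of $C_n$-bimodules
\[
J_{\le k}/J_{\le k-2}\;\cong\;\bigoplus_{i,j\in\JJJ(n)} V_{j,k}\otimes_{\mk\mS_k} W_{k,i},
\]
where $V_{j,k}$ (resp.\ $W_{k,i}$) is the $\mk$-space spanned by cap-free (resp.\ cup-free) half-diagrams between $k$ and $j$ (resp.\ between $i$ and $k$), carrying the natural one-sided $\mk\mS_k$-action permuting the $k$ strands at the cut. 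In particular, $e_k^*(J_{\le k}/J_{\le k-2})e_k^*\cong \mk\mS_k$. Transferring the Murphy cellular basis of $\mk\mS_k$ (recalled in~\ref{simplesSG}) through this decomposition equips $C_n$ with a standardly based structure in the sense of~\cite{JieDu}, with labelling set $\bigsqcup_{k\in\JJJ(n)}\Par_k$ ordered by the restriction of~\eqref{BigDO}.

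Given such a structure, the classification of simple $C_n$-modules reduces, by \cite[Theorem~2.4.1]{JieDu}, to identifying those labels whose cell module has a nonzero head; these are precisely the labels of simple modules in the successive sections, i.e.\ of the symmetric group algebras $\mk\mS_k$ for $k\in\JJJ(n)$. Invoking~\ref{simplesSG}, the simple $\mk\mS_k$-modules are indexed by $\Par_k^p$, so the isoclasses of simple $C_n$-modules are indexed by $\Lambda_C=\bigsqcup_{k\in\JJJ(n)}\Par_k^p$, as claimed.

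The main technical obstacle is the bimodule identification in the second paragraph: the signs arising from the skew-symmetry of caps and the odd-super\-commutation of cups and caps in $\cA$ must be tracked carefully. However, these signs are absorbed into the $(C_n,\mk\mS_k)$-bimodule structure and therefore do not disturb the bijection of simples. In view of the paper's pointer to the general theory of~\cite{Borelic}, I expect the actual proof to package exactly this filtration argument via the construction of a Borelic pair inside $C_n$, whose ``Cartan'' quotient $\bigoplus_{k\in\JJJ(n)}\mk\mS_k$ controls the simples.
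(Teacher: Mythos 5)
Your proposal is correct and follows essentially the same route as the paper: the cut-along-a-line-of-latitude decomposition of your second paragraph is exactly Corollary~\ref{CorTriang}, and the reduction of the simple $C_n$-modules to those of $H\cong\bigoplus_{i\in\JJJ(n)}\mk\mS_i$ is the content of the paper's argument. The only difference is packaging, which you anticipate yourself: the paper exhibits $B=HN$ as a graded pre-Borelic subalgebra and invokes \cite{Borelic} rather than running the propagating-line ideal filtration through \cite{JieDu}.
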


 \begin{thm}\label{ThmQH}
Assume $p\not\in[2,n]$. 
 The algebra~$(C_n,\le)$ is quasi-hereditary, for partial order~$\le$ on~$\Lambda$ given by
 $\mu<\lambda$ if and only if~$|\mu|>|\lambda|$. The following reciprocity relation holds:
 \begin{equation}\label{eqBGGC}(P(\lambda): \Delta(\mu))\;=\;[{\Delta}(\mu^t ):L(\lambda^t )],\quad\mbox{for $\lambda,\mu\in\Lambda_{C}=\sqcup_{i\in\JJJ(n)}\Par_i$.}\end{equation}
 \end{thm}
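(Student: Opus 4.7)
The approach is to exploit a filtration of $C_n$ by two-sided ideals indexed by the propagation number of diagrams. For each $i\in\JJJ(n)$, let $J_{\ge i}\subset C_n$ denote the span of morphisms that factor through some object $j\in\JJJ(n)$ with $j\le i$, equivalently diagrams with at most $i$ propagating lines. This is a two-sided ideal, and the chain $0\subset J_{\ge 0\text{ or }1}\subset\cdots\subset J_{\ge n}=C_n$ is compatible with $\le$: smaller $i$ (larger $|\lambda|$) sits deeper in the filtration, in line with the convention $\mu<\lambda\Leftrightarrow|\mu|>|\lambda|$.

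First I would show that each subquotient $J_{\ge i}/J_{\ge i-2}$ is Morita equivalent to the group algebra $\mk\mS_i$. Any diagram with exactly $i$ through-strands factorizes (up to a sign that is controlled by the standard marking of \ref{SecMaDi}) into a cap-half below, a permutation $\sigma\in\mS_i$ in the middle, and a cup-half above; this gives an isomorphism of the subquotient with a matrix algebra over $\mk\mS_i$. Under the hypothesis $p\notin[2,n]$, each $\mk\mS_i$ with $i\le n$ is semisimple, so each subquotient is semisimple and its simple summands are labelled by $\Par_i$. Combining this with the idempotent description shows that, with idempotents $e_i^*$ lifting the units of the subquotients, the standard filtration
$$0\subset C_n e_n^* C_n\subset C_n e_{n-2}^* C_n\subset\cdots\subset C_n$$
is a heredity chain for $(C_n,\le)$. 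Moreover Lemma~\ref{LemTriv} applied to each layer yields $\Lambda_{C}=\sqcup_{i\in\JJJ(n)}\Par_i$ (recovering Theorem~\ref{Thmisocl} in this setting) and the standard module $\Delta(\lambda)$ for $\lambda\vdash i$ is obtained by inducing the simple $\mk\mS_i$-module $W^0(\lambda)=L^0(\lambda)$ through the cap-half factorization.

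For the reciprocity relation I would apply Proposition~\ref{NewProp} to $\phi:=\varphi|_{C_n}$, the anti-automorphism induced from \ref{anti}, and identify the bijection $\lambda\mapsto\lambda^\phi$ with transposition. Since $\varphi$ fixes objects and satisfies $X\mapsto -X$, $\cup\mapsto -\cap$, $\cap\mapsto\cup$, its restriction to the $\mk\mS_i$-factor of the subquotient $J_{\ge i}/J_{\ge i-2}$ acts on simple reflections by $s_j\mapsto -s_j$. This is exactly the anti-automorphism $\psi$ of \ref{psikS}, under which $\Upsilon_\psi(W^0(\lambda))\cong W^0(\lambda^t )$ by \eqref{eqtrans}. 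Compatibility of $\varphi$ with the induction construction then yields $\Upsilon_\phi(\Delta(\lambda))\cong\nabla(\lambda^t )$, so $\lambda^\phi=\lambda^t $, and the reciprocity \eqref{eqBGGC} follows directly from Proposition~\ref{NewProp}.

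The main obstacle is the precise identification of the subquotients as matrix algebras over $\mk\mS_i$ and of $\varphi$ with $\psi$ on those subquotients. The sign bookkeeping from \ref{SecMaDi} (odd cups and caps, skew-symmetric caps, marking-dependent signs in composition) must be threaded through both the factorization of diagrams into cup-half/$\sigma$/cap-half and the action of $\varphi$, so that the induced anti-automorphism on each $\mk\mS_i$-subquotient really picks up the sign $(-1)^{l(w)}$ and not merely inversion. Once this is secured, both the quasi-heredity and the reciprocity are formal consequences of the general machinery.
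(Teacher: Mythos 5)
Your reciprocity argument is essentially the paper's: it also applies Proposition~\ref{NewProp} to the anti-automorphism $\phi$ inherited from $\varphi$, identifies $\phi|_H$ with $\psi$ from \ref{psikS} (indeed $\varphi(w)=(-1)^{l(w)}w^{-1}$ on each $\mS_i$), and deduces $\lambda^\phi=\lambda^t$ from $\Upsilon_\psi(W^0(\lambda))\cong W^0(\lambda^t)$. The one step you wave at --- ``compatibility of $\varphi$ with the induction construction'' --- is where the paper's Lemma~\ref{LemDN} does real work: it computes the $B_{\plus}$-invariants of $\Upsilon_\phi(\Delta(\mu))$ via the identification $\overline{B}_{\minus}C\backslash C/CB_{\plus}\cong H$, and that computation is what turns $\Upsilon_\psi$ of the input $H$-module into the socle of the costandard module. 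For quasi-heredity your route genuinely differs in packaging: you build an explicit heredity chain $C_ne_{\le i}^\ast C_n$ in the K\"onig--Xi style, whereas the paper feeds the triangular decomposition $C\cong\bigoplus_i\overline{N}e_i^\ast\otimes e_i^\ast H\otimes e_i^\ast N$ (Corollary~\ref{CorTriang}) into the general machinery of \cite{Borelic}, or equivalently invokes \ref{StBQH} once $\mathbf{L}_C=\Lambda_C$. Both rest on the same structural input, namely the unique cup/permutation/cap factorisation of diagrams plus semisimplicity of $\mk\mS_i$.

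There is, however, one concretely false claim in your heredity-chain step: the layers $J_{\ge i}/J_{\ge i-2}$ are \emph{not} matrix algebras over $\mk\mS_i$ in general. The composition pairing between cap-halves and cup-halves is degenerate, because a cap-half composed with its mirror cup-half produces closed loops and hence $0$ (this is exactly the $\delta=0$ phenomenon that makes $B_n(0)$ fail to be quasi-hereditary for $n$ even). For instance the bottom layer $Ce_0^\ast C$ of $C_2$ is the $4$-dimensional span of $\id_0$, $\cup$, $\cap$, $\cup\circ\cap$, in which $(\cup\circ\cap)^2=0$; this is not $M_2(\mk)$. What is true, and is all you need, is the weaker statement that $\bar e_i\overline{C}\bar e_i\cong\mk\mS_i$ is semisimple and that multiplication $\overline{C}\bar e_i\otimes_{\mk\mS_i}\bar e_i\overline{C}\to J_{\ge i}/J_{\ge i-2}$ is bijective --- which follows from the unique factorisation of a diagram with exactly $i$ propagating lines and \emph{does} hold for $C_n$ precisely because the objects $j<n$ (including $0$) are present, so that $\id_0$ pairs nontrivially with itself even though $\cap\circ\cup=0$. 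With the claim weakened in this way each layer is a heredity ideal and your chain argument goes through. (Also, a small slip: in your layer $J_{\ge i}/J_{\ge i-2}$ the simples are labelled by $\Par_i$, so smaller $i$ means \emph{smaller} $|\lambda|$, i.e.\ larger in the order $\le$; your parenthetical ``larger $|\lambda|$'' has it backwards, though the chain itself is ordered correctly.)
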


  \begin{thm}\label{ThmCp}
Set $\mathbf{L}_{C}:=\bigsqcup_{i\in\JJJ(n)} \Par_i,$ with partial order~$\unlhd$ of equation~\eqref{BigDO}.
 \begin{enumerate} 
 \item The algebra~$C_n$ is standardly based for $(\mathbf{L}_C,\unlhd)$. 
 \item The cell modules form a standard system for $(\mathbf{L}_C,\unlhd)$ if and only if~$p\not\in\{2,3\}$ or $p=2=n$.
 \end{enumerate}
  \end{thm}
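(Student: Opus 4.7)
The plan is to first establish the standardly based structure directly from the combinatorics of marked Brauer diagrams, and then transport the Kleshchev--Nakano standard system result for dual Specht modules (Proposition \ref{PropHN}) from the symmetric group to $C_n$ by means of a Schur-type truncation functor.

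For part (1), I factor morphisms in the periplectic Brauer category through a propagating layer. Every $(i,j)$-Brauer diagram with $k$ propagating lines, equipped with its standard marking of~\ref{SecMaDi}, can be written uniquely, up to sign, as $b\circ \pi\circ a$, where $a\in\Hom_{\cA}(i,k)$ is a ``bottom half'' consisting of $(i-k)/2$ caps together with $k$ propagators, $\pi\in\mS_k$, and $b\in\Hom_{\cA}(k,j)$ is a ``top half'' with $(j-k)/2$ cups. Substituting Murphy's cellular basis $\{m^{\lambda}_{st}\mid \lambda\vdash k,\ s,t\in\mathrm{Std}(\lambda)\}$ of $\mk\mS_k$ for the middle permutation, I obtain a basis of $C_n$ labelled by quadruples $(\lambda,(a,s),(b,t))$ with $\lambda\vdash k$ for some $k\in\JJJ(n)$. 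Its multiplication rules are inherited from Murphy's basis together with the elementary observation that composing the cup layer of one diagram against the cap layer of another either produces a scalar or yields a diagram with strictly fewer propagators, hence with strictly larger partition in $(\mathbf{L}_C,\unlhd)$. The periplectic signs from~\ref{SecMaDi} affect only the structure constants and not the triangular structure, so the axioms of \cite[Definition 1.1.2]{JieDu} are verified.

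For the forward direction of part (2), the cell module $W_C(\lambda)$ with $\lambda\vdash k$ admits an explicit description in this basis: its bottom component $e_k^\ast W_C(\lambda)$ is the dual Specht module $W^0(\lambda)$ as a $\mk\mS_k$-module, and each higher component $e_i^\ast W_C(\lambda)$ ($i>k$) is a direct sum of copies of $W^0(\lambda)$ indexed by bottom half-diagrams from $i$ to $k$, obtained from the bottom by the action of cups. In particular, $\End_{C_n}(W_C(\lambda))\cong \End_{\mk\mS_k}(W^0(\lambda))=\mk$ whenever the latter holds. Under the hypothesis $p\notin\{2,3\}$ or $p=2=n$, Proposition \ref{PropHN} guarantees that $\{W^0(\lambda)\mid \lambda\vdash k\}$ is a standard system for every $k\in\JJJ(n)$. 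The standard-system axioms then follow in two steps: the exact truncation $e_k^\ast(-)$ matches same-level $\Hom$ and $\Ext^1$ of cell modules of $C_n$ with those of $\mk\mS_k$; and for $\lambda\vdash k$, $\mu\vdash l$ with $l>k$, one uses $e_k^\ast W_C(\mu)=0$ together with the generation of $W_C(\lambda)$ over $C_n$ by $e_k^\ast W_C(\lambda)$ to conclude $\Hom_{C_n}(W_C(\lambda),W_C(\mu))=0=\Ext^1_{C_n}(W_C(\lambda),W_C(\mu))$ (the case $l<k$ places no constraint, since then $\lambda<\mu$ in $\unlhd$).

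For the converse direction, I exhibit a failure of the standard system in each excluded case. When $p=3$ and $n\geq 2$, Proposition \ref{PropHN} provides a failure inside $\mk\mS_k$ for some $k\in\JJJ(n)$ with $k\geq 3$, which transfers to $C_n$ through the exact functor $e_k^\ast(-)$. When $p=2$ and $n>2$, the failure is detected by a direct computation at small $n$ (e.g.\ $n=4$), while the borderline case $p=2=n$ is inspected by hand since $\dim C_2$ is small. I expect the main obstacle to be the cross-level $\Ext^1$-vanishing in the forward direction of part (2): for ordinary Brauer algebras the analogous step uses an anti-involution to identify $W(\lambda)^\ast$ with a co-standard module, which simplifies Ext computations; no such anti-involution exists for $C_n$ (this is precisely why it fails to be cellular), so the $\Ext^1$ computations must be carried out directly on the cellular filtration while tracking the periplectic signs from~\ref{SecMaDi}, and verifying that those signs do not destroy the triangularity needed for the inductive vanishing argument is the key technical point.
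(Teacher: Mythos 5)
Your overall route---decompose diagrams through the propagating layer, insert Murphy bases of $\mk\mS_k$, and reduce the standard-system question to the symmetric-group levels---is in substance the route the paper takes, which packages the decomposition $C=\overline{N}HN$ as the statement that $B=HN$ is a graded pre-Borelic subalgebra (Lemma~\ref{LemPropB}) and then quotes the general theory of \cite{Borelic} together with Proposition~\ref{PropHN}. Part~(1) of your argument is fine. The genuine gap is in the $\Ext^1$ comparisons of part~(2). The idempotent truncation $e_k^\ast(-)$ lands in $A_k$-mod, and Schur functors of this kind do not in general induce isomorphisms on $\Ext^1$ (this is exactly the ``1-cover'' issue studied later in the paper), so your same-level matching of $\Ext^1$ with $\mk\mS_k$ is unproved; and in the cross-level case your ``generation'' argument only yields $\Hom$-vanishing, while you explicitly leave the $\Ext^1$-vanishing open. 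The missing ingredient is the adjunction isomorphism $\Ext^i_C(C\otimes_B M,N)\cong\Ext^i_B(M,\res_B N)$, which holds because $C\otimes_B-$ is exact and preserves projectives by Lemma~\ref{LemPropB}(4). Once there, the $\mZ$-grading does all the work: $B_{\plus}$ strictly lowers the number of dots while $\res_B W(\mu)$ lives in degrees $\ge|\mu|$, so every cross-level $B$-extension splits, and the same-level ones reduce to $\Ext^1_{\mk\mS_k}(W^0(\lambda),W^0(\mu))$. No sign-tracking on a cellular filtration is needed.

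Second, your treatment of the boundary cases is inconsistent with Proposition~\ref{PropHN}. That proposition says the dual Specht modules of $\mS_2$ do \emph{not} form a standard system when $p=2$ (indeed $W^0((2))\cong W^0((1,1))$ there, whence $W_{C_2}((2))\cong W_{C_2}((1,1))$), so it cannot ``guarantee'' a standard system at every level $k\in\JJJ(n)$ under the hypothesis $p=2=n$. Conversely, your ``only if'' argument for $p=3$ requires a level $k\ge 3$ in $\JJJ(n)$, which does not exist when $n=2$; in fact $C_2$ at $p=3$ is hereditary with the cell modules as standard modules, so they \emph{do} form a standard system there. The level-by-level reduction you are running actually yields the condition ``$p\notin\{2,3\}$, or $p=3$ and $n=2$'', mirroring Proposition~\ref{PropHN}; the clause ``$p=2=n$'' in the statement appears to be a misprint for this, and you should have flagged the discrepancy rather than asserting that Proposition~\ref{PropHN} covers it.
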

  
  The following sections are devoted to the proofs of the three theorems. We assume $n$ fixed and write $C$ for $C_n$

  \subsection{The triangular decomposition of $C$}
  We define three subalgebras of~$C$. 
  \begin{itemize}
  \item The subalgebra~$H$ is spanned by all diagrams with only propagating lines.
  \item The algebra~$N$ is spanned by all diagrams which consists only of caps and non-crossing propagating lines.
   \item The algebra~$\overline{N}$ is spanned by all diagrams which consists only of cups and non-crossing propagating lines.
  \end{itemize}
  
  \subsubsection{}\label{simplesH}With the identity morphisms $e_i^\ast\in \End_{\cA}(i)$ interpreted as idempotents in $C_n$, we have 
\begin{equation}\label{eqHS}H\;=\;\bigoplus_{i\in\JJJ(n)} He_i^\ast\;\cong\;  \bigoplus_{i\in\JJJ(n)} \mk\mS_i.\end{equation}
Thus, by~\ref{simplesSG}, the simple modules of~$H$ are labelled by~$\Lambda_C$ and we denote the simple $H$-modules by~$L^0(\lambda)$.
  
 \subsubsection{} We consider a $\mZ$-grading on~$C$, as $C_{(j)}=\bigoplus_{i} e^\ast_i C e^\ast_{i+j}.$
We set
$$C_{\plus}:=\bigoplus_{j>0}C_{(j)}\;\mbox{ and }\;C_{\minus}:=\bigoplus_{j<0}C_{(j)},$$
and use similar notation for any graded subalgebra of~$C$.
We set $B:= HN$ and $\overline{B}:=\overline{N}H$. It is easily checked that these are subalgebras, with~$B_{\minus}=0=\overline{B}_{\plus}$ and $B_{(0)}=H=\overline{B}_{(0)}$.

\begin{lemma}\label{LemPropB}
For the subalgebra~$B$ of~$C$, we have that
\begin{enumerate}
\item $C=B\oplus C_{\minus}B$;
\item $C=\overline{N}B$;
\item $B$ is projective as a left $H$-module;
\item $C$ is projective as a right $B$-module.
\end{enumerate}
\end{lemma}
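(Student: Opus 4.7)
The plan is to base the argument on a PBW-type triangular decomposition of marked Brauer diagrams. With the standard markings of Section~\ref{SecMaDi} fixed, each basis diagram $d$ of $C$ admits a unique factorisation
\[
 d \;=\; \pm\, \overline{d}_0 \cdot h \cdot n_0,
\]
in which $\overline{d}_0 \in \overline{N}$ consists of cups and non-crossing propagating lines, $h \in H$ is a permutation of the propagating strands, and $n_0 \in N$ consists of caps and non-crossing propagating lines. This is the periplectic lift of the classical Brauer-algebra decomposition; the only additional input is the composition sign from Section~\ref{SecMaDi}, which is irrelevant for the vector-space counting arguments below. Part~(2) is then immediate, since $hn_0 \in HN = B$.

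For part~(1), decompose $\overline{N} = \overline{N}_{(0)} \oplus \overline{N}_{\minus}$ with $\overline{N}_{(0)} = \bigoplus_i \mk\, e_i^* \subset H$. Combined with~(2),
\[
 C \;=\; \overline{N}\, B \;=\; B + \overline{N}_{\minus}\, B \;\subseteq\; B + C_{\minus}\, B,
\]
so it suffices to verify that $B \cap C_{\minus}B = 0$. Every basis diagram of $C_{\minus}$ has strictly more cups than caps, in particular at least one cup; multiplying such a diagram on the right by a cupless element of $B$ leaves those cups at the top of the product unaffected. Hence $C_{\minus}\,B$ lies in the span of diagrams with at least one cup, which is a vector-space complement of $B$.

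For part~(3), the triangular decomposition gives
\[
 B \;=\; \bigoplus_{n_0} H \cdot n_0
\]
as a left $H$-module, the sum running over the basis elements $n_0$ of $N$. If $n_0$ has $t$ top dots, then $H \cdot n_0 = He_t^* \cdot n_0$, and the map $He_t^* \to H \cdot n_0$, $h \mapsto hn_0$, is an isomorphism of left $H$-modules, since distinct permutations of the $t$ top dots produce distinct basis diagrams. As each $He_t^*$ is a direct summand of $H$ and hence projective, so is $B$.

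For part~(4), symmetrically we have
\[
 C \;=\; \bigoplus_{\overline{d}_0} \overline{d}_0 \cdot B
\]
as right $B$-modules, summing over the basis elements $\overline{d}_0$ of $\overline{N}$. For each $\overline{d}_0$ with $s$ bottom dots, $\overline{d}_0 \cdot B = \overline{d}_0 \cdot e_s^* B$, and the nontrivial step is to check that right multiplication by $\overline{d}_0$ defines an isomorphism $e_s^* B \xrightarrow{\sim} \overline{d}_0 \cdot B$ of right $B$-modules. Injectivity amounts to the absence of closed loops in any composition $\overline{d}_0 \cdot b$, which holds because every top dot of $\overline{d}_0$ is an endpoint of either a cup or a propagating line (but not both), and likewise every bottom dot of $b$ is an endpoint of either a cap or a propagating line; any path in the composed diagram must therefore terminate at the top of $\overline{d}_0$ or the bottom of $b$ and cannot close up. Since $e_s^*\,B$ is a direct summand of the regular right $B$-module, it is projective, and summing over $\overline{d}_0$ yields projectivity of $C$. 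The main obstacle throughout is confirming that the sign conventions on marked diagrams do not disturb these $\mk$-linear decompositions, but only the unsigned combinatorial decomposition of Brauer diagrams enters the arguments.
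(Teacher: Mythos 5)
Your proof is correct and follows essentially the same route as the paper: both rest on the unique triangular factorisation of each diagram into a cup-part in $\overline{N}$, a permutation in $H$, and a cap-part in $N$, from which (2) is immediate, (1) follows by counting cups, and (3), (4) follow from the resulting direct-sum decompositions $B=\bigoplus_{n_0}Hn_0$ and $C=\bigoplus_{\overline{d}_0}\overline{d}_0B$ with $Hn_0\cong He_t^\ast$ and $\overline{d}_0B\cong e_s^\ast B$. Your write-up is in fact somewhat more explicit than the paper's (e.g.\ the no-closed-loops check in (4)), but no new idea is involved.
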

 \begin{proof}
 The subspace $B$ of~$C$ is spanned by all diagrams without cups and the subspace $C_{\minus}B$ is spanned by all diagrams with at least one cup.
This proves claim (1).

We can decompose any diagram $d$ as the product $d_1d_2d_3$ of three diagrams. We let $d_1$ be made out of all the cups of~$d$ and non-crossing propagating lines, starting in the remaining dots on the upper line of~$d_1$. The diagram $d_3$ is similarly defined as the diagram containing the caps of~$d_3$. Finally, $d_2$ is the unique diagram consisting solely of propagating lines such that~$d=d_1d_2d_3$.
This implies part~(2). Furthermore, to each diagram we can associate the number of propagating lines. This leads to
 a decomposition
$$C=\bigoplus_{i\in\JJJ(n)}\overline{N} e_i^\ast B.$$
 For any diagram $d$ in~$\overline{N} e_i^\ast$ we have $dB\cong e_i^\ast B$, proving that~$C_B$ is projective. Similarly it follows that for any diagram $d$ in~$e^\ast_i B$ we have $Hd\cong He_i^\ast$, which concludes the proof.
 \end{proof}
 
 The reasoning in the proof of Lemma~\ref{LemPropB} has the following consequence.
\begin{cor}\label{CorTriang}
Multiplication provides an isomorphism
$$\bigoplus_{i\in\JJJ(n)} \overline{N}e_i^\ast\,\otimes\, e_i^\ast H\,\otimes\, e_i^\ast N\;\;\stackrel{\sim}{\to}\; \;C.$$
\end{cor}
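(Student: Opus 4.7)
The plan is to exhibit a two-sided inverse to the multiplication map on the level of the diagrammatic bases of both sides, thereby proving bijectivity. First I would establish that every Brauer diagram $d$ contributing to $C$ admits a unique triangular decomposition of the form $d = d_1 \ast d_2 \ast d_3$: here $i$ is the number of propagating lines of $d$, the bottom block $d_3 \in e_i^\ast N$ consists of the caps of $d$ together with $i$ non-crossing propagating lines rising to $i$ intermediate dots (labelled left-to-right), the top block $d_1 \in \overline{N} e_i^\ast$ is built dually from the cups of $d$ and $i$ non-crossing propagating lines descending from the same intermediate dots, and $d_2 \in e_i^\ast H e_i^\ast = \mk\mS_i$ is the permutation describing how the propagating ends are identified. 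This decomposition, already foreshadowed in the proof of Lemma~\ref{LemPropB}(2), is manifestly unique once $i$ is fixed, since $d_1$ and $d_3$ are determined by the positions of cups and caps of $d$ while $d_2$ is determined by the connectivity of the propagating strands.

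Next I would verify that the composition $d_1 \circ d_2 \circ d_3$ inside $\cA$ equals $\pm d$. The key point is that no closed loops arise in the composite: all cups sit inside the topmost block $d_1$ and all caps inside the bottommost block $d_3$, while $d_2$ is propagating only, so every curve retains a free endpoint on the top or bottom boundary of $d_1 \ast d_2 \ast d_3$. Applying the composition rule of \S\ref{SecMaDi} then yields $d_1 \circ d_2 \circ d_3 = (-1)^{\gamma}\, d$ for some sign $\gamma \in \mN$, which in particular is nonzero.

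Combining the two, the multiplication map carries the pure tensor $d_1 \otimes d_2 \otimes d_3$ to the nonzero scalar multiple $\pm d$ of the corresponding basis diagram of $C$, and distinct triples go to distinct diagrams. Hence multiplication sends the diagrammatic basis of $\bigoplus_i \overline{N} e_i^\ast \otimes e_i^\ast H \otimes e_i^\ast N$ bijectively, up to nonzero scalars, onto the diagrammatic basis of $C$, and is therefore a $\mk$-linear isomorphism. The only point that requires any care is the sign bookkeeping from \S\ref{SecMaDi}, but since we only need bijectivity and not an equality of basis vectors, the signs are immaterial; no serious obstacle is expected.
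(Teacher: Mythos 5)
Your proof is correct and takes essentially the same route as the paper, which deduces the corollary directly from the unique decomposition $d=d_1d_2d_3$ constructed in the proof of Lemma~\ref{LemPropB}(2). (In fact no sign arises at all: with all cups in the top block and all caps in the bottom block, the stacked decoration is already the standard marking, so $d_1\circ d_2\circ d_3=d$ on the nose --- but, as you observe, this is immaterial for bijectivity.)
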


\subsubsection{} In the terminology of \cite[Definition~3.2.3]{Borelic}, Lemma~\ref{LemPropB} states that~$B$ is a `graded pre-Borelic subalgebra' of~$C$. By \cite[Lemma~3.2.2 and Definition~3.1.1]{Borelic}, it follows that there is a correspondence between isoclasses of simple modules for $C$ and $H$. Theorem~\ref{Thmisocl} thus follows from equation \eqref{eqHS}. Theorem~\ref{ThmCp} then follows immediately from \cite[Corollary~5.2.2]{Borelic} and Proposition~\ref{PropHN}.

\subsubsection{} \label{SecDefMod}  In the following we will freely interpret $H$-modules as $B$-modules with trivial $B_{\plus}$-action.
By \cite[Proposition~5.2.3]{Borelic}, for any~$\mu\in \mathbf{L}_C$, the cell module is given by
\begin{equation}\label{eqWC}W(\mu):= C\otimes_B W^0(\mu).\end{equation}
For any~$\lambda\in \Lambda_C$, we also define
\begin{equation}
\label{defDover}
\overline{\Delta}(\lambda):= C\otimes_B L^0(\lambda)\qquad\mbox{and}\qquad \overline{\Delta}:=\bigoplus_{\lambda\in\Lambda}\overline{\Delta}(\lambda).
\end{equation}
The simple $C$-module $L(\lambda)$ is the simple top of the module $\overline{\Delta}(\lambda)$, see~\cite[Lemma 3.1.4]{Borelic}.

\subsubsection{}\label{Maschke} By Maschke's theorem, the algebra~$H$ in~\eqref{eqHS} is semisimple if~$p\not\in[2,n]$. The quasi-heredity in Theorem~\ref{ThmQH} thus follows from \cite[Corollary~4.5.4(3)]{Borelic}. The standard module are given by
$$\Delta(\lambda)\;:=\;\overline{\Delta}(\lambda)\;\cong\;W(\lambda)\qquad\mbox{if $p\not\in[2,n]$, $\;$ for $\lambda\in \Lambda_C=\mathbf{L}_C$.}$$
All of this also follows immediately from~\ref{StBQH}, by Theorems~\ref{Thmisocl} and~\ref{ThmCp}(1).

\subsection{Humphreys-BGG reciprocity}

In this section we assume that~$p=\charr(\mk)\not\in[2,n]$.

\subsubsection{}\label{StandNonsense}

 
The algebra~$C$ inherits an anti-automorphism~$\phi$ from the anti-autoequivalence $\varphi$ of~$\cA$ in~\ref{anti}. We consider the corresponding
 contravariant equivalence $\Upsilon_{\phi}$ of~$C\mbox{-mod}$ of equation \eqref{eqTheta}. Equation~\eqref{eqBGGC} then follows from Proposition~\ref{NewProp} and the following lemma.


\begin{lemma}\label{LemDN}
For any~$\lambda,\mu\in\Lambda$, we have
$$\dim\Hom_C\left(\Delta(\lambda),\Upsilon_{\phi}(\Delta(\mu))\right)\;=\;\delta_{\lambda,\mu^t }.$$
 \end{lemma}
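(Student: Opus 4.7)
The plan is to reduce the Hom computation to the semisimple group algebra $H$ via Frobenius reciprocity together with Corollary~\ref{CorTriang}, and then to invoke the classical transposition formula \eqref{eqtrans}. Since $p\notin[2,n]$, \ref{Maschke} gives $\Delta(\lambda)=C\otimes_B L^0(\lambda)$, so tensor-Hom adjunction yields
$$\Hom_C\bigl(\Delta(\lambda),\Upsilon_\phi(\Delta(\mu))\bigr)\;\cong\;\Hom_B\bigl(L^0(\lambda),\Upsilon_\phi(\Delta(\mu))|_B\bigr).$$
Because $B_{\plus}$ acts as zero on $L^0(\lambda)$ (cf.\ \ref{SecDefMod}), every such $B$-homomorphism takes values in the $B_{\plus}$-invariants, and the right-hand side equals $\Hom_H\bigl(L^0(\lambda),\Upsilon_\phi(\Delta(\mu))^{B_{\plus}}\bigr)$.

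The central step is to identify this invariant space as an $H$-module. From \ref{anti} the auto-equivalence $\varphi$ is contravariant and reverses the $\mZ$-grading on $C$, so $\phi(B_{\plus})=\overline{B}_{\minus}$. Consequently $\alpha\in\Delta(\mu)^\ast$ is $B_{\plus}$-invariant under the $\phi$-twisted action iff $\alpha$ vanishes on $\overline{B}_{\minus}\Delta(\mu)$. Using Corollary~\ref{CorTriang} together with the observation that $N_{\plus}$ annihilates $L^0(\mu)$, one obtains $\Delta(\mu)\cong \overline{N}e_{|\mu|}^\ast\otimes_{\mk\mS_{|\mu|}} L^0(\mu)$; this module is non-positively $\mZ$-graded with top piece isomorphic to $L^0(\mu)$ as an $H$-module. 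Hence $\Delta(\mu)/\overline{B}_{\minus}\Delta(\mu)\cong L^0(\mu)$, and the $\phi|_H$-twisted dual identifies
$$\Upsilon_\phi(\Delta(\mu))^{B_{\plus}}\;\cong\;\Upsilon_{\phi|_H}(L^0(\mu))$$
as $H$-modules.

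To finish, I would identify $\phi|_H$ with the anti-automorphism $\psi$ of \ref{psikS}. From $\varphi(X)=-X$ in \ref{anti} and the fact that $X$ is the nontrivial element of $\mS_2$, we get $\phi(s_k)=-s_k$ for every Coxeter generator, so $\phi(w)=(-1)^{l(w)}w^{-1}$ on each $\mk\mS_i\subset H$; this is precisely $\psi$. Since $\mk\mS_{|\mu|}$ is semisimple, $W^0(\mu)=L^0(\mu)$, and \eqref{eqtrans} gives $\Upsilon_\psi(L^0(\mu))\cong L^0(\mu^t )$. Combining the three steps,
$$\Hom_C\bigl(\Delta(\lambda),\Upsilon_\phi(\Delta(\mu))\bigr)\;\cong\;\Hom_H\bigl(L^0(\lambda),L^0(\mu^t )\bigr)\;=\;\delta_{\lambda,\mu^t }\,\mk.$$

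The main obstacle is the middle paragraph: verifying that under the $\phi$-twist the $B_{\plus}$-invariants inside $\Delta(\mu)^\ast$ really pick out the dual of the top-degree piece $L^0(\mu)$ and carry precisely the $\phi|_H$-twisted $H$-action. This hinges on the explicit triangular decomposition in Corollary~\ref{CorTriang} and on $\phi$ reversing degree; the remaining ingredients are routine adjunction and the classical transpose formula \eqref{eqtrans}.
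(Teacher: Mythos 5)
Your proof is correct and follows essentially the same route as the paper's: adjunction reduces the Hom-space to $\Hom_H(L^0(\lambda),\Upsilon_\phi(\Delta(\mu))^{B_{\plus}})$, the identity $\phi(B_{\plus})=\overline{B}_{\minus}$ together with the triangular decomposition identifies the invariants with $\Upsilon_\psi(L^0(\mu))\cong L^0(\mu^t)$, and semisimplicity of $H$ finishes the computation. The only cosmetic difference is that you dualize $\Delta(\mu)/\overline{B}_{\minus}\Delta(\mu)$ directly where the paper identifies the double coset space $\overline{B}_{\minus}C\backslash C/CB_{\plus}$ with the regular $H$-module; these are the same calculation (and your tensor product should be read as $\overline{N}e_{|\mu|}^\ast\otimes_{\mk}L^0(\mu)$, not over $\mk\mS_{|\mu|}$).
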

 \begin{proof}
 It is clear that~$\phi(H)=H$, $\phi(B_{\plus})=\overline{B}_{\minus}$ and $\phi(\overline{B}_{\minus})=B_{\plus}$. Moreover, the restriction of~$\phi$ to~$H$ corresponds to the anti-automorphisms $\psi$ of the $\mk\mS_i$ in~\ref{psikS}.

 By equations~\eqref{eqTheta} and~\eqref{defDover} we have an isomorphism of vector spaces
 $$\Upsilon_{\phi}(\Delta(\mu))\,\cong\,\Hom_{B}(L^0(\mu),\Hom_{\mk}(C,\mk))\,\cong\, \Hom_{H}(L^0(\mu), \Hom_{\mk}(C/CB_{\plus},\mk)).$$
 The inherited $C$-module structure on the right-hand side is described as follows. The action of~$x\in C$ on~$\alpha : L^0(\mu)\to \Hom_{\mk}(C/CB_{\plus},\mk)$ is given by
 $$((x\alpha)(v))(\overline{a})=(\alpha(v))(\phi(x)\overline{a}),\qquad\mbox{for all }\; v\in L^0(\mu)\mbox{ and }a\in C\mbox{ with }\overline{a}=a+CB_{\plus}.$$
 Furthermore, we have
\begin{equation}\label{eqLemDN}\Hom_C\left(\Delta(\lambda),\Upsilon_{\phi}(\Delta(\mu))\right)\,\cong\, \Hom_{H}(L^0(\lambda), \Upsilon_{\phi}(\Delta(\mu))^{B_{\plus}}).\end{equation}
Using $\phi(B_{\plus})=\overline{B}_{\minus}$, it follows that
 $$\Upsilon_{\phi}(\Delta(\mu))^{B_{\plus}}\,\cong\,\Hom_{H}\left(L^0(\mu), \Hom_{\mk}( \overline{B}_{\minus}C \backslash C/CB_{\plus},\mk)\right).$$
 
As~$\overline{B}_{\minus}C$ is the subspace of~$C$ spanned by all diagrams containing cups and $CB_{\plus}$ the subspace spanned by all diagrams containing caps, the right $H$-module $\overline{B}_{\minus}C \backslash C/CB_{\plus}$ is isomorphic to the right regular $H$-module.
This implies
 $$\Upsilon_{\phi}(\Delta(\mu))^{B_{\plus}}\cong\Hom_{H}\left(L^0(\mu), \Hom_{\mk}( H,\mk)\right)\cong \Hom_{\mk}(L^0(\mu),\mk)\cong \Upsilon_{\psi}(L^0(\mu)),$$
 as $H$-modules. The conclusion hence follows from equations~\eqref{eqLemDN} and~\eqref{eqtrans}.
 \end{proof}
 

 \section{The periplectic Brauer algebras $A_n$}\label{SecAC}
 In this section we will transfer results from $C_n$ to~$A_n=e_n^\ast C_ne_n^\ast$ through the exact functor
 \begin{equation}\label{FunF}
 F=e_n^\ast-=e_n^\ast C\otimes_C-\,:\;\; C_n\mbox{-mod}\;\to\;A_n\mbox{-mod}.
 \end{equation}
 
 \subsection{Main results on~$A_n$}
 We fix $n\in \mZ_{\ge 2}$ and set $\JJJ^0(n)=\JJJ(n)\backslash\{0\}$ and $p:=\charr(\mk)$.
 \begin{thm}\label{ThmMor}${}$
 \begin{enumerate}
 \item If $n$ is odd, $A_n$ is Morita equivalent to~$C_n$ through $F$.
 \item If $n$ is even and $n\not\in \{2,4\}$, the functor $F$ induces isomorphisms
 $$\Ext^i_C(M,N)\;\stackrel{\sim}{\to}\;\Ext^i_A(FM,FN),\qquad\mbox{for all }\;M,N\in\cF(\overline{\Delta})\quad\mbox{and}\quad i\in\{0,1\}.$$
 If $n\not\in \{2,4\}$ and $p\not\in[2,n]$, $C_n$ is thus a quasi-hereditary $1$-cover of~$A_n$, in the sense of \cite[Definition~4.37]{Rou}, and a Schur algebra, see {\it e.g.}~\cite[Definition~2.9.4]{Borelic}.
 \end{enumerate}
 \end{thm}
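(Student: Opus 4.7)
My plan is to analyze the functor $F = e_n^\ast\cdot$ together with its left adjoint $L = C_n e_n^\ast\otimes_{A_n}-$ and the corresponding counit.

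For part (1), the key observation is that $F$ is a Morita equivalence between $C_n$ and $A_n = e_n^\ast C_n e_n^\ast$ if and only if $e_n^\ast$ is a \emph{full idempotent}, i.e.\ $C_n e_n^\ast C_n = C_n$. This is equivalent to $e_k^\ast \in C_n e_n^\ast C_n$ for every $k\in\JJJ(n)$. Since $n$ is odd we have $k\ge 1$ for all $k\in\JJJ(n)$, and I will construct explicit morphisms $f\in\Hom_{\cA}(k,n)$ and $g\in\Hom_{\cA}(n,k)$ with $g\circ f = \pm e_k^\ast$ in $C_n$. A convenient construction takes $f$ with $k-1$ short vertical propagating lines plus one ``long'' propagating line from bottom position $k$ to top position $n$, together with nested cups between the remaining top positions; $g$ is a mirror image with nested caps, arranged so that the cups of $f$ and the caps of $g$ thread through one another in a single zigzag that terminates at the long propagating line rather than forming any closed loop.

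For part (2), when $n$ is even the analogous construction fails for $k=0\in\JJJ(n)$: any factorisation of $e_0^\ast$ through $n$ must involve only cups and caps, which necessarily produces closed loops. Hence $L_C(\emptyset)$ is the unique simple $C_n$-module killed by $F$, and $F$ cannot be a Morita equivalence. To obtain the weaker 1-cover statement, I will adapt the general criterion from \cite{Rou} (or its refinement in \cite{Borelic}): $F$ gives a 1-faithful quasi-hereditary cover provided $F$ preserves projectives (Lemma~\ref{LemTriv}) and induces isomorphisms on $\Hom$ and $\Ext^1$ between the cell modules $\overline{\Delta}(\mu)$. The $\Hom$ isomorphism reduces to showing the counit $\epsilon_{\overline{\Delta}(\mu)}:LF\overline{\Delta}(\mu)\to\overline{\Delta}(\mu)$ is iso, which I would verify using the triangular decomposition together with the description $\overline{\Delta}(\mu)=C_n\otimes_B L^0(\mu)$ from \eqref{eqWC}.

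The main obstacle is the $\Ext^1$ statement. Extensions that $F$ could collapse must involve the only simple it kills, $L_C(\emptyset)$, appearing in the right position inside the cells. The hypothesis $n\notin\{2,4\}$ guarantees that $\emptyset\in\Lambda_C$ is ``isolated'' from the other labels in the cell order, so that no spurious extension appears on the $A_n$ side. Concretely, I would reduce the problem to showing $\Ext^1_C(\overline{\Delta}(\mu),L_C(\emptyset))=0$ for every $\mu\neq\emptyset$, together with a dual vanishing, which can be attacked via an explicit projective presentation of $\overline{\Delta}(\mu)$ built from the triangular decomposition and tracked under $\Hom_C(-,L_C(\emptyset))$. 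The exclusion of $n=2,4$ is consistent with the fact that $A_2$ is already hereditary while $A_4$ has infinite global dimension, placing both outside the 1-cover pattern. Finally, the conclusion that $C_n$ is a quasi-hereditary 1-cover of $A_n$ and a Schur algebra for $p\notin[2,n]$ follows by combining these $\Ext$ computations with Theorem~\ref{ThmQH} and the definitions in \cite{Rou} and \cite{Borelic}.
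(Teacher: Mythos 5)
Your treatment of part (1) is correct and is essentially the paper's argument in different language: fullness of the idempotent $e_n^\ast$ is equivalent to the unit of the adjunction $(F,G)$ being an isomorphism, and both reduce to factoring $e_k^\ast$ through the object $n$ for every $k\in\JJJ(n)$, which is exactly what the explicit pair $a_k,b_k$ with $b_ka_k=e_k^\ast$ (your ``zigzag'') accomplishes; the case $k=0$ is the unique failure when $n$ is even.

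Part (2) has two genuine gaps. First, the counit $\epsilon_{\overline{\Delta}(\mu)}:LF\overline{\Delta}(\mu)\to\overline{\Delta}(\mu)$ is \emph{not} an isomorphism when $\mu=\varnothing$: its image is $Ce_n^\ast Ce_0^\ast$, and $e_0^\ast Ce_n^\ast Ce_0^\ast=0$ because composing a diagram consisting only of cups with one consisting only of caps always creates closed loops, so the counit is not even surjective onto $\overline{\Delta}(\varnothing)=Ce_0^\ast$. Hence the $\Hom$-comparison cannot be extracted from the left adjoint in the way you propose; one must instead show that the unit $\eta_{\overline{\Delta}(\lambda)}$ of the adjunction with $G=\Hom_A(e_n^\ast C,-)$ is an isomorphism, and the nontrivial content is the upper bound $\dim\Hom_A(Ac_0^\ast,e_n^\ast\overline{\Delta}(\lambda))\le\dim e_0^\ast\overline{\Delta}(\lambda)$, which requires a concrete diagram argument (Lemma~\ref{NewLemmaNew} and Proposition~\ref{PropHom}).

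Second, reducing the $\Ext^1$-statement to $\Ext^1_C(\overline{\Delta}(\mu),L_C(\varnothing))=0$ is not the correct reduction: the obstruction does not live in $C$-mod. Since $e_n^\ast C=\bigoplus_j Ac_j^\ast$ with $Ac_j^\ast$ projective for $j\neq0$, the condition one actually needs is $\cR_1G(F\overline{\Delta}(\lambda))=\Ext^1_A(Ac_0^\ast,e_n^\ast\overline{\Delta}(\lambda))=0$, an extension group over $A$ involving $Ac_0^\ast=FP_C(\varnothing)$, which is \emph{not} projective over $A$ (this is precisely how non-liftable extensions on the $A$-side could arise, and your proposal never confronts them --- your remark about ``collapsed'' extensions only addresses injectivity of the comparison map, not surjectivity). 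The paper kills these groups via the explicit presentation $0\to Ax\to Ac_2^\ast\to Ac_0^\ast\to0$ and hands-on computations with the diagrams $y_1,y_2$ (Lemmata~\ref{AxAc}, \ref{LemHomAx}, \ref{noExt}); this is also where the hypothesis $n\notin\{2,4\}$ genuinely enters (Lemma~\ref{LemHomAx} requires $n>4$), rather than through any ``isolation'' of $\varnothing$ in the cell order.
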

 
 \begin{thm}\label{ThmSB}
${}$
 \begin{enumerate}
 \item The isoclasses of simple $A_n$-modules are labelled by $\Lambda_{A}:=\bigsqcup_{i\in\JJJ^0(n)}\Par_{i}^p.$
 \item  The algebra~$A_n$ is standardly based, for $(\mathbf{L}_A,\unlhd):=(\mathbf{L}_C,\unlhd)$ as in Theorem~\ref{ThmCp}.
 \item The cell modules $\{W(\mu)\}$ of~$A_n$ form a standard system for $(\mathbf{L},\unlhd)$ if and only if~$n\not\in\{2,4\}$ and $p\not\in\{2,3\}$. In this case, multiplicities in cell filtrations are unambiguous.
 \item If $n\not\in\{2,4\}$ and $p\not\in[2,n]$, we have
 $$(P(\lambda):W(\mu))\,=\, [W(\mu^t ):L(\lambda^t )],\qquad\mbox{for $\lambda\in\Lambda_A=\bigsqcup_{i\in\JJJ^0(n)}\Par_{i}\;$ and $\;\mu\in \mathbf{L}_A=\bigsqcup_{i\in\JJJ(n)}\Par_{i}$}.$$
  \end{enumerate}
 \end{thm}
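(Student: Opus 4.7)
The strategy is to transfer results from $C_n$ to $A_n = e_n^\ast C_n e_n^\ast$ through the exact functor $F$ of \eqref{FunF}, using Theorem~\ref{ThmMor} as the principal bridge.

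For Part~(1), Lemma~\ref{LemTriv} identifies $\Lambda_A$ with $\{\lambda \in \Lambda_C : F L_C(\lambda) \neq 0\}$. When $n$ is odd, Theorem~\ref{ThmMor}(1) yields a Morita equivalence, so $\Lambda_A = \Lambda_C = \bigsqcup_{i \in \JJJ^0(n)} \Par_i^p$ (note $0 \notin \JJJ(n)$ for odd $n$). When $n$ is even, I would analyse $\overline{\Delta}_C(\emptyset) \cong \overline{N} e_0^\ast$ directly from Corollary~\ref{CorTriang}: its basis consists of stacked-cup diagrams $\{d_j : j \in \JJJ(n)\}$, and any $x \in e_0^\ast C e_j^\ast$ with $j > 0$ has $j/2$ caps on its lower line which, composed with the $j/2$ cups of $d_j$, inevitably produce at least one closed loop and hence evaluate to $0$ in $\cA$. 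Thus $\bigoplus_{j \in \JJJ^0(n)} \mk d_j$ is a proper $C$-submodule, forcing $L_C(\emptyset) \cong L^0(\emptyset)$ to be one-dimensional and concentrated at $e_0^\ast$, so $F L_C(\emptyset) = 0$. For $\lambda \in \Par_i^p$ with $i \geq 2$, non-vanishing $F L_C(\lambda) \neq 0$ can be obtained either by invoking \cite[Theorem~4.3.1]{Kujawa}, or alternatively by a more delicate direct argument.

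For Part~(2), the standardly based structure on $C_n$ from Theorem~\ref{ThmCp}(1) descends to $A_n$ by idempotent truncation, with cell modules $W_A(\mu) := F W_C(\mu)$. Corollary~\ref{CorTriang} gives $e_n^\ast W_C(\mu) \cong e_n^\ast \overline{N} e_{|\mu|}^\ast \otimes_\mk W^0(\mu)$, which is nonzero for every $\mu \in \mathbf{L}_C$ because $e_n^\ast \overline{N} e_{|\mu|}^\ast$ contains an evident nonzero cup-diagram whenever $|\mu| \in \JJJ(n)$. Hence $\mathbf{L}_A = \mathbf{L}_C$ as posets.

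For Parts~(3) and~(4), each cell module $W_C(\lambda)$ lies in $\cF(\overline{\Delta})$: the functor $C \otimes_B -$ is exact by Lemma~\ref{LemPropB}(4), so a composition series of the dual Specht module $W^0(\lambda)$ lifts to a $\overline{\Delta}$-filtration. Theorem~\ref{ThmMor}(2) then yields $\Ext^i_A(W_A(\lambda),W_A(\mu)) \cong \Ext^i_C(W_C(\lambda),W_C(\mu))$ for $i \in \{0,1\}$, so the standard-system property of Theorem~\ref{ThmCp}(2) transfers directly to $A_n$ whenever $n \notin \{2,4\}$ and $p \notin \{2,3\}$, with unambiguity of cell-filtration multiplicities as a formal consequence of the standard-system axioms. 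The exceptional cases $n \in \{2,4\}$ admit explicit counterexamples given in Section~\ref{SecEx}. For Part~(4), under $p \notin [2,n]$ (which forces $p \notin \{2,3\}$ when $n \notin \{2,4\}$, and moreover gives $\mathbf{L}_C = \Lambda_C$ and $W_C = \Delta_C$), Lemma~\ref{LemTriv} gives $P_A(\lambda) \cong F P_C(\lambda)$, and exactness of $F$ combined with the Humphreys--BGG reciprocity~\eqref{eqBGGC} for $C_n$ yields
$$(P_A(\lambda):W_A(\mu)) \,=\, (P_C(\lambda):W_C(\mu)) \,=\, [W_C(\mu^t):L_C(\lambda^t)] \,=\, [W_A(\mu^t):L_A(\lambda^t)],$$
where the last step uses $\lambda^t \in \Lambda_A$ so that $F L_C(\lambda^t) = L_A(\lambda^t)$. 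The main obstacle is the second half of Part~(1) — verifying non-vanishing $e_n^\ast L_C(\lambda) \neq 0$ for $\lambda \neq \emptyset$ in the even case — which requires either the input from \cite{Kujawa} or a careful submodule analysis of $\overline{\Delta}_C(\lambda)$; the remaining parts reduce mechanically to Theorem~\ref{ThmMor} once the labelling sets are pinned down.
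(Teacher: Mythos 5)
Your overall architecture coincides with the paper's: everything is transferred from $C_n$ along $F=e_n^\ast-$, using the Morita equivalence for odd $n$, the $1$-cover property of Theorem~\ref{ThmMor}(2) for even $n\notin\{2,4\}$, the descent of the standardly based structure under idempotent truncation (this is \cite[Proposition~3.5]{Yang}, which you should cite rather than assert), the counterexamples of Section~\ref{SecEx} for $n\in\{2,4\}$, and the application of the exact functor $F$ to the reciprocity~\eqref{eqBGGC}. Parts (2)--(4) of your argument are essentially the paper's proof verbatim.

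The one genuine gap is the point you yourself flag: for $n$ even you never prove that $e_n^\ast L_C(\lambda)\neq 0$ for $\lambda\neq\varnothing$, deferring either to \cite[Theorem~4.3.1]{Kujawa} or to an unspecified ``more delicate direct argument.'' The citation route can be made to work (the Kujawa--Tharp count of simples, together with Lemma~\ref{LemTriv} and your computation $FL_C(\varnothing)=0$, forces $F$ to kill no other simple), but the paper explicitly advertises an independent proof, and its argument is a one-liner you are missing: by Lemma~\ref{Lemab} one has $b_ia_i=e_i^\ast$ for every $i\in\JJJ^0(n)$, so for any $C$-module $M$ the map $v\mapsto a_iv$ embeds $e_i^\ast M$ into $e_n^\ast M$ (with left inverse $b_i\cdot$); applying this with $i=|\lambda|>0$ and $M=L_C(\lambda)$, where $e_{|\lambda|}^\ast L_C(\lambda)\neq 0$ by construction of $L_C(\lambda)$ as the top of $\overline{\Delta}(\lambda)$, gives $e_n^\ast L_C(\lambda)\neq 0$ directly (this is Corollary~\ref{coretaj} and the paragraph following it, culminating in~\eqref{eqCorLambda0}). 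Separately, a small inaccuracy in your treatment of $\overline{\Delta}_C(\varnothing)\cong\overline{N}e_0^\ast$: its degree-$j$ piece is spanned by \emph{all} $(j-1)!!$ all-cups diagrams in $\Hom_{\cA}(0,j)$, not by a single stacked-cup diagram $d_j$; your closed-loop argument showing that $\bigoplus_{j>0}e_j^\ast\overline{\Delta}(\varnothing)$ is a proper submodule, and hence that $L_C(\varnothing)$ is one-dimensional and concentrated at $e_0^\ast$, survives unchanged once the basis is corrected.
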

 
 \begin{rem}\label{RemSB}
In a sense, theorem~\ref{ThmSB}(4) remains valid for $n\in\{2,4\}$, see Lemma~\ref{LemBGGA}. 
\end{rem}

 \begin{rem}\label{3rem}
 The twist by $\lambda\mapsto\lambda^t $ in the Humphreys-BGG reciprocity relation shows that
 the standardly based structure of $A$ is not a cell datum in the sense of \cite[Definition~1.1]{CellAlg}.
 
The anti-autoequivalence $\varphi$ of $\cA$ in~\ref{anti} restricts to an anti-automorphism $\varphi_n$ of $A_n$, such that ${}^{\varphi_n}L(\lambda)\cong L(\lambda^t)$.

 Any algebra admits at least one standardly based structure, see \cite[Corollary~1]{Borelic}. The reason the standardly based structure in Theorem~\ref{ThmSB}(2) is nonetheless interesting lies in the exceptional properties in Theorem~\ref{ThmSB}(3) and (4) and the fact that the cell modules are cyclic.
 \end{rem}
 
 The rest of this section is devoted to the proofs of the two theorems. 
  
\subsection{Some morphisms in~$\cA$}
\subsubsection{}\label{Defab}For any~$i\in\JJJ(n)$ we set
\begin{equation*}\label{defA}
\begin{tikzpicture}[scale=0.76,thick,>=angle 90]
\begin{scope}[xshift=8cm]
\node at (0,0.5) {$a_i:=$};
\draw (.7,0) -- +(0,1);
\draw (1.3,0) -- +(0,1);
\draw [dotted] (1.6,.5) -- +(1,0);
\draw (2.9,0) -- +(0,1);
\draw (3.5,0) -- +(0,1);
\draw (4.1,1) to [out=-90,in=-180] +(.3,-.3) to [out=0,in=-90] +(.3,.3);
\draw (5.3,1) to [out=-90,in=-180] +(.3,-.3) to [out=0,in=-90] +(.3,.3);
\draw [dotted] (6.2,.9) -- +(1,0);
\draw (8,1) to [out=-90,in=-180] +(.3,-.3) to [out=0,in=-90] +(.3,.3);
\node at (11.6,0.5) {$\in\, \Hom_{\cA}(i,n)=e_n^\ast Ce_i^\ast$.};
\end{scope}
\end{tikzpicture}
\end{equation*}
For any~$i\in\JJJ^0(n)$ we set
\begin{equation*}\label{defBB}
\begin{tikzpicture}[scale=0.76,thick,>=angle 90]
\begin{scope}[xshift=8cm]
\node at (0,0.5) {$b_i:=$};
\draw (.7,0) -- +(0,1);
\draw (1.3,0) -- +(0,1);
\draw [dotted] (1.6,.5) -- +(1,0);
\draw (2.9,0) -- +(0,1);
\draw (3.5,0) to [out=90,in=-180] +(.3,.3) to [out=0,in=90] +(.3,-.3);
\draw (4.7,0) to [out=90,in=-180] +(.3,.3) to [out=0,in=90] +(.3,-.3);
\draw [dotted] (5.7,.1) -- +(1,0);
\draw (7.4,0) to [out=90,in=-180] +(.3,.3) to [out=0,in=90] +(.3,-.3);
\draw (8.6,0) to [out=145,in=-35] +(-5.1,1);
\node at (11.6,0.5) {$\in\, \Hom_{\cA}(n,i)=e_i^\ast Ce_n^\ast$.};
\end{scope}
\end{tikzpicture}
\end{equation*}
Finally, in case $n$ is even, we set
\begin{equation*}\label{defOA}
\begin{tikzpicture}[scale=0.9,thick,>=angle 90]
\begin{scope}[xshift=8cm]
\node at (3,0.2) {${b}_0:=$};

\draw (4.1,0) to [out=90,in=-180] +(.3,.3) to [out=0,in=90] +(.3,-.3);
\draw (5.3,0) to [out=90,in=-180] +(.3,.3) to [out=0,in=90] +(.3,-.3);
\draw [dotted] (6.2,.1) -- +(1,0);
\draw (8,0) to [out=90,in=-180] +(.3,.3) to [out=0,in=90] +(.3,-.3);
\node at (11.6,0.2) {$\in\, \Hom_{\cA}(n,0)=e_0^\ast Ce_n^\ast$.};
\end{scope}
\end{tikzpicture}
\end{equation*}
A direct computation then proves the following lemma.
\begin{lemma}\label{Lemab} For $i\in \JJJ^0(n)$, we have $b_ia_i= e_i^\ast$.
\end{lemma}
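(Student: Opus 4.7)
The plan is to verify the identity directly by stacking $b_i$ on top of $a_i$ and resolving the resulting decorated diagram via the marked-diagram calculus of~\ref{SecMaDi}. Two things must be checked: the underlying Brauer diagram $b_i \ast a_i$ coincides with $e_i^*$, and the sign $(-1)^{\gamma(b_i,a_i)}$ equals $+1$.

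For the underlying diagram, I trace each line of $b_i \ast a_i$. The first $i-1$ straight propagating lines of $a_i$ and $b_i$ match up to give $i-1$ straight propagating lines in the composite. The remaining line starts at the $i$-th bottom dot of $a_i$, rises to middle position $i$, and then zigzags by alternating through the caps $(i,i+1),(i+2,i+3),\ldots,(n-2,n-1)$ of $b_i$ and the cups $(i+1,i+2),(i+3,i+4),\ldots,(n-1,n)$ of $a_i$, arriving at middle position $n$. From there, the long diagonal of $b_i$ carries it up to the top at position $i$. No closed loops arise, so $b_i \ast a_i = e_i^*$.

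For the sign, equip $a_i$ and $b_i$ with their standard markings: every cup of $a_i$ carries $\Diamond$ and every cap of $b_i$ carries $\rhd$, with the leftmost cup and the rightmost cap receiving the highest markings. After stacking, every $\rhd$ lies above every $\Diamond$ in latitude, and all markings live on the single zigzag line. The pair of adjacent markings straddling the middle is the cap $(i,i+1)$ together with the cup $(i+1,i+2)$; operation~(ii) cancels them. Because the diamond lies slightly to the right of the arrow in $x$-coordinate, the $\rhd$ points toward the diamond, so this cancellation contributes nothing to $\gamma$. Proceeding inductively, each subsequent adjacent pair --- the cap $(i+2k,i+2k+1)$ with the cup $(i+2k+1,i+2k+2)$ --- cancels in exactly the same manner, and no type-(i) permutations are ever required. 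Hence $\gamma(b_i,a_i)=0$ and $b_i \circ a_i = e_i^*$.

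The main obstacle is the careful bookkeeping of the marking conventions: correctly reading off the standard arrow directions, comparing the $x$-coordinates of each diamond-arrow pair, and verifying that at every stage the pair to be cancelled remains adjacent in latitude and lies on the same arc. Once these details are laid out, the lemma follows, matching the paper's description of the proof as ``a direct computation''.
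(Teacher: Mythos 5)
Your computation has the right structure and reaches the correct conclusion: the underlying diagram $b_i\ast a_i$ is indeed $e_i^\ast$ (your zigzag trace is correct), the markings cancel pairwise at the interface between the cups of $a_i$ and the caps of $b_i$ without any type-(i) permutations, and $\gamma(b_i,a_i)=0$. Since the paper offers nothing beyond ``a direct computation'', this is essentially the intended argument.

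The one flaw is the criterion you use to decide that each cancellation is sign-free. Whether ``the arrow points away from the diamond'' is not determined by comparing the $x$-coordinates of the two markings; it is determined by which endpoint of the cap is joined, along the line of the composite diagram, to the cup carrying the diamond: a right arrow $\rhd$ contributes no sign precisely when that shared endpoint is the \emph{right} dot of the cap. The paper's own worked example of composition shows that the $x$-coordinate heuristic fails: there the cap on dots $1,2$ (marked $\rhd$) is cancelled against the cup on dots $1,4$, whose diamond sits to the \emph{right} of the arrow, yet the cancellation produces a minus sign, because the two arcs meet at dot $1$, the \emph{left} endpoint of the cap, so the right arrow points away from the diamond. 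In the present lemma your conclusion survives for the correct reason: the cap $(i+2k,i+2k+1)$ of $b_i$ meets the cup $(i+2k+1,i+2k+2)$ of $a_i$ at the dot $i+2k+1$, which is the right endpoint of the cap, so the standard right arrow does point toward the diamond and no sign is picked up at any stage. With that correction the proof is complete.
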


For every $M\in C${\rm-mod} and $j\in\JJJ(n)$, we have a $\mk$-linear morphism
\begin{equation}\label{eqetaj}
\nu_j:\;e_j^\ast M\;\to\; \Hom_{A}(e_n^\ast Ce_j^\ast,e_n^\ast M),\qquad v\mapsto \alpha_v\quad\mbox{with }\;\alpha_v(x)=xv.\end{equation}

\begin{cor}\label{coretaj}
For $j\in \JJJ^0(n)$, the morphism $\nu_j$ in~\eqref{eqetaj}
is an isomorphism.
\end{cor}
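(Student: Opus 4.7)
The plan is to construct an explicit two-sided inverse to $\nu_j$, leveraging the splitting datum provided by Lemma~\ref{Lemab}. Since $j\in\JJJ^0(n)$, the elements $a_j\in e_n^\ast C e_j^\ast$ and $b_j\in e_j^\ast C e_n^\ast$ of~\ref{Defab} satisfy $b_ja_j=e_j^\ast$. Define
\begin{equation*}
\mu_j\,:\,\Hom_A(e_n^\ast C e_j^\ast,\,e_n^\ast M)\;\to\;e_j^\ast M,\qquad \alpha\;\mapsto\;b_j\cdot\alpha(a_j).
\end{equation*}
This is well defined: $\alpha(a_j)\in e_n^\ast M$ because $a_j\in e_n^\ast C e_j^\ast$, and then $b_j\alpha(a_j)\in e_j^\ast M$ because $b_j\in e_j^\ast C e_n^\ast$.

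The two compositions are immediate to verify. For $v\in e_j^\ast M$, Lemma~\ref{Lemab} yields
\begin{equation*}
\mu_j(\nu_j(v))\;=\;b_j\cdot\nu_j(v)(a_j)\;=\;b_ja_jv\;=\;e_j^\ast v\;=\;v.
\end{equation*}
Conversely, fix $\alpha\in\Hom_A(e_n^\ast Ce_j^\ast,e_n^\ast M)$ and $x\in e_n^\ast C e_j^\ast$. Since $xb_j\in e_n^\ast C e_n^\ast=A$, the $A$-linearity of $\alpha$, combined with $x=xe_j^\ast$, gives
\begin{equation*}
\nu_j(\mu_j(\alpha))(x)\;=\;x\bigl(b_j\alpha(a_j)\bigr)\;=\;(xb_j)\alpha(a_j)\;=\;\alpha(xb_ja_j)\;=\;\alpha(xe_j^\ast)\;=\;\alpha(x),
\end{equation*}
so $\nu_j\circ\mu_j=\id$ as well.

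There is no real obstacle; the corollary is essentially a restatement of Lemma~\ref{Lemab}. Conceptually, the lemma exhibits $a_jb_j\in A$ as an idempotent with $e_n^\ast C e_j^\ast = A\cdot a_j$ cyclic and isomorphic, as a left $A$-module, to the direct summand $A(a_jb_j)$ of $A$; the statement of the corollary is then just the standard Hom-with-projective identification $\Hom_A(Ae,N)\cong eN$ applied to $e=a_jb_j$ and $N=e_n^\ast M$, of which the formulas above are the concrete incarnation. The restriction to $j\in\JJJ^0(n)$ enters only through the use of Lemma~\ref{Lemab}, which is not available for $j=0$ (the element $b_0$ of~\ref{Defab} being of a different nature).
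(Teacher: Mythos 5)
Your proof is correct and is exactly the paper's argument: the paper's one-line proof also states that an inverse is given by $\alpha\mapsto b_j\alpha(a_j)$, relying on Lemma~\ref{Lemab}. You have simply spelled out the verification of both compositions, which the paper leaves to the reader.
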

\begin{proof}
By Lemma \ref{Lemab}, an inverse is given by mapping $\alpha\in \Hom_{A}(e_n^\ast Ce_j^\ast,e_n^\ast M)$ to~$b_j\alpha(a_j)$.
\end{proof}

\subsubsection{}By definition of~$L_C(\lambda)$ in~\ref{SecDefMod}, we have $e_j^\ast L_C(\lambda)\not=0$ with~$j=|\lambda|$, for all~$\lambda\in \Lambda_C$. Corollary~\ref{coretaj} thus implies that~$e_n^\ast L_C(\lambda)\not=0$ if~$\lambda\not=\varnothing$. On the other hand, $L_C(\varnothing )$ is the one-dimensional $C$-module which satisfies $e_0^\ast L_C(\varnothing )=L_C(\varnothing ),$ so $e_n^\ast L_C(\lambda)=0$. 
We choose the convention in Lemma~\ref{LemTriv}, which thus yields for $F=e_n^\ast-$
\begin{equation}
\label{eqCorLambda0}
FL_{C_n}(\lambda)=L_{A_n}(\lambda)\quad\mbox{and}\quad FP_{C_n}(\lambda)\cong P_{A_n}(\lambda),\qquad\mbox{for all $\lambda\in\Lambda_C\backslash\{\varnothing\}$,}
\end{equation}
This implies Theorem~\ref{ThmSB}(1). We will simplify the notation of $L_{A_n}(\lambda)$ to $L_A(\lambda)$, $L_{n}(\lambda)$ or $L(\lambda)$ depending on which information is not clear from context.


\subsection{A pair of adjoint functors}\label{SecAdj}
We consider the left exact functor
\begin{equation}\label{eqG}G=\Hom_{A_n}(e_n^\ast C_n,-):\;A_n\mbox{-mod}\to C_n\mbox{-mod},\end{equation}
which is right adjoint to the exact functor $F$ in equation \eqref{FunF}. We have the two corresponding adjoint natural transformations, the unit $\eta:\Id\Rightarrow G\circ F$ and the counit $\varepsilon: F\circ G\Rightarrow \Id$. The counit~$\varepsilon$ is clearly an isomorphism.

\begin{lemma}\label{lemeta}
${}$
\begin{enumerate}
\item If $n$ is odd, $F$ and $G$ are mutually inverse equivalences between~$C_n${\rm -mod} and $A_n${\rm-mod}.
\item If $n$ is even, $\eta_M: M\to G\circ F(M)$ for $M\in C_n${\rm -mod} is an isomorphism if and only if $\nu_0$ in~\eqref{eqetaj}
is an isomorphism.
\end{enumerate}
\end{lemma}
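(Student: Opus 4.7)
The plan is to reduce $\eta_M$ to the family $\{\nu_j\}_{j \in \JJJ(n)}$ via the idempotent decomposition $1 = \sum_{j \in \JJJ(n)} e_j^*$ in $C_n$, and then invoke Corollary~\ref{coretaj}.

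First I would decompose $M = \bigoplus_{j \in \JJJ(n)} e_j^* M$ and $e_n^* C_n = \bigoplus_{j \in \JJJ(n)} e_n^* C_n e_j^*$. Applying $\Hom_{A}(-,e_n^* M)$ gives
\begin{equation*}
GF(M) \;=\; \Hom_{A}(e_n^* C_n,\, e_n^* M) \;\cong\; \bigoplus_{j \in \JJJ(n)} \Hom_{A}(e_n^* C_n e_j^*,\, e_n^* M).
\end{equation*}
Unravelling the $(F,G)$-adjunction shows that $\eta_M(m)(x) = xm$, so for $m_j \in e_j^* M$ the element $\eta_M(m_j)$ vanishes on $e_n^* C_n e_i^*$ whenever $i \ne j$ (since $e_i^* m_j = 0$) and restricts on $e_n^* C_n e_j^*$ to exactly $\nu_j(m_j)$. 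Hence under the identifications above $\eta_M$ is the direct sum of the morphisms $\nu_j$ attached to $M$.

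By Corollary~\ref{coretaj}, $\nu_j$ is an isomorphism for every $j \in \JJJ^0(n) = \JJJ(n) \setminus \{0\}$. If $n$ is odd, every element of $\JJJ(n)$ has the parity of $n$, so $0 \notin \JJJ(n)$ and $\JJJ(n) = \JJJ^0(n)$; thus $\eta_M$ is an isomorphism for every $M \in C_n\text{-mod}$. Combined with the fact that the counit $\varepsilon$ is already an isomorphism, this gives part~(1). If $n$ is even, then $\JJJ(n) = \JJJ^0(n) \sqcup \{0\}$, and the only summand of $\eta_M$ that is not automatically invertible is $\nu_0$; this yields part~(2).

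The main (mild) obstacle is the bookkeeping that identifies $\eta_M$ with $\bigoplus_{j} \nu_j$ under the decomposition of $GF(M)$; once the orthogonality and completeness of the $e_j^*$ are used carefully, everything else is formal.
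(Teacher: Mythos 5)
Your proposal is correct and follows essentially the same route as the paper: decompose $\eta_M$ over the idempotents $e_j^\ast$ into the maps $\nu_j$ of \eqref{eqetaj}, apply Corollary~\ref{coretaj} to handle all $j\in\JJJ^0(n)$, and use that the counit is already an isomorphism for part~(1). The extra bookkeeping you supply is exactly what the paper leaves implicit.
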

\begin{proof}
For any~$M$ in~$C$-mod, we have that~$G\circ F(M)$ is given by~$\Hom_{A}(e_n^\ast C, e_n^\ast M)$. The morphism~$\eta_M$ decomposes into vector space morphisms from $e_j^\ast M$ to~$e_j^\ast G\circ F(M)$ for all $j\in \JJJ(n)$. These morphisms are precisely the ones in equation \eqref{eqetaj}. 

If $n$ is odd, we have $0\not\in\JJJ(n)$ and Corollary~\ref{coretaj} thus implies that~$\eta_M$ is an isomorphism for any module $M$, proving part~(1). Similarly, for $n$ even, Corollary~\ref{coretaj} implies that~$\eta_M$ is an isomorphism if and only if~$e_0^\ast M\to e_0^\ast G\circ F(M)$ is an isomorphism, proving part~(2).
\end{proof}

Lemma~\ref{lemeta}(1) implies Theorem~\ref{ThmMor}(1) and everything in Theorem~\ref{ThmSB} for the case $n$ odd, by Theorems~\ref{ThmQH} and~\ref{ThmCp}.

\subsubsection{}\label{sseqI}The subalgebra of~$A_n$ spanned by diagrams without cups is isomorphic to~$\mk\mS_n$. The algebra~$\mk\mS_n$ is also naturally a quotient of~$A_n$, with respect to the two-sided ideal $I$ spanned by all diagrams which contain at least one cup (or cap). As vector spaces $A_n=\mk\mS_n\oplus I$. 

\begin{cor}\label{CorLAn}
The module $L_A(\lambda)$, for $\lambda\in\Par_n^p\subset\Lambda_A$, is equal to~$L^0(\lambda)$ as a $\mk\mS_n$-module and has trivial action of~$I$.
\end{cor}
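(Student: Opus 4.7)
The plan is to identify the cell module $W_A(\lambda)$ explicitly for $\lambda\vdash n$ and then read off its simple head.

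First, combining~\eqref{eqWC} with the Schur-type functor $F=e_n^\ast-$ of~\eqref{FunF}, the cell module of $A_n$ is $W_A(\lambda)=FW_C(\lambda)=e_n^\ast C_n\otimes_B W^0(\lambda)$. The key structural observation is that, for $\lambda\vdash n$, the dual Specht module $W^0(\lambda)$ is concentrated at level $n$ in the sense of~\eqref{eqHS}: one has $e_n^\ast W^0(\lambda)=W^0(\lambda)$ and $e_j^\ast W^0(\lambda)=0$ for all $j\in\JJJ(n)$ with $j<n$. Moreover, $B_+$ acts trivially on $W^0(\lambda)$ by construction (see~\ref{SecDefMod}).

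Second, I would apply the triangular decomposition of Corollary~\ref{CorTriang} to write each diagram in $e_n^\ast C_n$ as a product (cup-part)$\cdot$(permutation)$\cdot$(cap-part). Any nontrivial cap-part lies in $B_+$ and therefore annihilates $W^0(\lambda)$ upon passage through the $B$-tensor. The permutation is then forced to lie in $\mk\mS_n=e_n^\ast He_n^\ast$ in order to act nontrivially on the support level of $W^0(\lambda)$. Finally, the cup-part in $e_n^\ast\bar{N}$ must equal $e_n^\ast$, because no object of $\JJJ(n)$ exceeds $n$ and hence no cup can be attached on top of a diagram with $n$ bottom dots. The upshot is the identification $W_A(\lambda)\cong W^0(\lambda)$ as vector spaces, with the $A_n$-action factoring through the quotient $A_n\twoheadrightarrow A_n/I\cong\mk\mS_n$ of~\ref{sseqI}.

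To conclude, since $\lambda$ is $p$-restricted, $W^0(\lambda)$ has simple head $L^0(\lambda)$ as an $\mk\mS_n$-module (see~\ref{simplesSG}), while the standardly based theory underlying Theorem~\ref{ThmSB} identifies $L_A(\lambda)$ with the simple head of $W_A(\lambda)$. Together these yield $L_A(\lambda)\cong L^0(\lambda)$ as an $\mk\mS_n$-module, with $I$ acting trivially. The main technical point is the dot-counting identification $\bar{N}e_n^\ast=\mk e_n^\ast$, which relies on the truncation $\JJJ(n)\subseteq\{0,1,\ldots,n\}$ to rule out any cups in diagrams with $n$ bottom dots; once this is granted, the remainder is routine manipulation of the defining tensor product and the compatibility of $F$ with simple tops from~\eqref{eqCorLambda0}.
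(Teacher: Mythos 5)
Your argument is correct and follows essentially the same route as the paper: both reduce to the observation that for a level-$n$ module $M$ over $H$, the induced module $C\otimes_B M$ satisfies $e_n^\ast(C\otimes_B M)\cong M$ with trivial $I$-action, via the triangular decomposition and the fact that $e_n^\ast\overline{N}e_n^\ast=\mk e_n^\ast$. The only (harmless) variation is that you apply this to $W_C(\lambda)=C\otimes_B W^0(\lambda)$ and then pass to the simple head, whereas the paper applies it directly to $\overline{\Delta}(\lambda)=C\otimes_B L^0(\lambda)$ and uses $L_A(\lambda)\cong e_n^\ast L_C(\lambda)$ from~\eqref{eqCorLambda0}.
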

\begin{proof}
By equation~\eqref{eqCorLambda0}, we have 
$$L_A(\lambda)\cong e_n^\ast L_C(\lambda)\cong e_n^\ast \overline{\Delta}(\lambda).$$
The result thus follows from \eqref{defDover} and the fact that~$I\subset CB_{\plus}$, where~$A$ is interpreted inside~$C$.
\end{proof}


\subsection{Some idempotents and a nilpotent}\label{SecIdNil} Since the results in Section~\ref{SecAdj} already prove everything in Theorems~\ref{ThmMor} and~\ref{ThmSB} for $n$ odd, from now on we will restrict to {\em the case $n$ even.}
We introduce elements $c_i^\ast\in A_n$, for $i\in \JJJ(n)$, defined as $c_i^\ast:= a_ib_i$, with $a_i,b_i$ as in \ref{Defab}. These are given explicitly by
$$
\begin{tikzpicture}[scale=0.9,thick,>=angle 90]
\begin{scope}[xshift=4cm]
\node at (0.4,0.5) {$c_0^\ast=$};
\draw (1.9,0) to [out=90,in=-180] +(.3,.3) to [out=0,in=90] +(.3,-.3);
\draw (3.1,0) to [out=90,in=-180] +(.3,.3) to [out=0,in=90] +(.3,-.3);
\draw (4.3,0) to [out=90,in=-180] +(.3,.3) to [out=0,in=90] +(.3,-.3);
\draw [dotted] (5.2,.1) -- +(1,0);
\draw (6.6,0) to [out=90,in=-180] +(.3,.3) to [out=0,in=90] +(.3,-.3);

\draw (1.9,1) to [out=-90,in=-180] +(.3,-.3) to [out=0,in=-90] +(.3,.3);
\draw (3.1,1) to [out=-90,in=-180] +(.3,-.3) to [out=0,in=-90] +(.3,.3);
\draw (4.3,1) to [out=-90,in=-180] +(.3,-.3) to [out=0,in=-90] +(.3,.3);
\draw [dotted] (5.1,.9) -- +(1,0);
\draw (6.6,1) to [out=-90,in=-180] +(.3,-.3) to [out=0,in=-90] +(.3,.3);

\node at (8.4,0.5) {$c_i^\ast=$};
\draw (9.7,0) -- +(0,1);
\draw [dotted] (10,.5) -- +(0.7,0);
\draw (10.9,0) -- +(0,1);
\draw (11.5,0) to [out=90,in=-180] +(.3,.3) to [out=0,in=90] +(.3,-.3);

\draw (12.7,0) to [out=90,in=-180] +(.3,.3) to [out=0,in=90] +(.3,-.3);
\draw [dotted] (13.6,.1) -- +(1,0);
\draw (15,0) to [out=90,in=-180] +(.3,.3) to [out=0,in=90] +(.3,-.3);
\draw (16.2,0) to [out=148,in=-32] +(-4.7,1);
\draw (12.1,1) to [out=-90,in=-180] +(.3,-.3) to [out=0,in=-90] +(.3,.3);
\draw (13.3,1) to [out=-90,in=-180] +(.3,-.3) to [out=0,in=-90] +(.3,.3);
\draw [dotted] (14.1,.9) -- +(1,0);
\draw (15.6,1) to [out=-90,in=-180] +(.3,-.3) to [out=0,in=-90] +(.3,.3);
\end{scope}
\end{tikzpicture}
$$
for $i>0$.
In particular $c_n^\ast=1_{A_n}$. We also write $c_{-2}^\ast=0$. We have
$$c_j^\ast c^\ast_i=c_i^\ast,\qquad\mbox{if } j\ge i\mbox{ and }(j,i)\not=(0,0).$$
So, for $i>0$, $c_i^\ast$ is an idempotent, and we also have $(c_0^\ast)^2=0$. 
\begin{lemma}\label{LemECA}
For each $i\in \JJJ(n)$, we have an isomorphism of left $A$-modules
\begin{equation}\label{eqinjiso}e_n^\ast Ce_i^\ast\;\stackrel{\sim}{\to}\; Ac_i^\ast,\quad y\mapsto yb_i,\end{equation}
which restricts to an isomorphism
$ e_n^\ast Ce_{i-2}^\ast Ce_i^\ast\;\cong Ac_{i-2}^\ast Ac_i^\ast.$
\end{lemma}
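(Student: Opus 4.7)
The plan is to construct an explicit two-sided inverse for $y\mapsto yb_i$ on the unrestricted spaces and then show it transports the ``factorization through $e_{i-2}^\ast$'' structure on both sides.

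\textbf{Unrestricted isomorphism.} The map $y\mapsto yb_i$ is left $A$-linear by associativity, and lands in $e_n^\ast Ce_n^\ast=A$. For $i\in\JJJ^0(n)$ I will verify that $x\mapsto xa_i$ is a two-sided inverse using Lemma~\ref{Lemab}: for $y\in e_n^\ast Ce_i^\ast$, we have $(yb_i)a_i=y(b_ia_i)=ye_i^\ast=y$; for $\alpha c_i^\ast\in Ac_i^\ast$, $(\alpha c_i^\ast)a_ib_i=\alpha a_i(b_ia_i)b_i=\alpha c_i^\ast$. In particular $yb_i=(yb_i)c_i^\ast\in Ac_i^\ast$, confirming the target. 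For $i=0$ this calculation breaks since $(c_0^\ast)^2=0$; instead I will use that $e_n^\ast Ce_0^\ast=\mk\mS_n\cdot a_0$ (any $(0,n)$-Brauer diagram arises from the adjacent-pair pattern of $a_0$ by a signed permutation on top). Thus each $y=\alpha a_0$ satisfies $yb_0=\alpha c_0^\ast\in Ac_0^\ast$, giving a surjection; equality of dimensions (both sides have dimension $(n-1)!!$) then yields bijectivity.

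\textbf{Restricted isomorphism, backward direction.} For $i\ge 2$ and any $x=\alpha c_{i-2}^\ast\beta c_i^\ast\in Ac_{i-2}^\ast Ac_i^\ast$, the identity $c_i^\ast a_i=a_i$ (valid since $i>0$) together with the definition $c_{i-2}^\ast=a_{i-2}b_{i-2}$ gives
$$xa_i\;=\;\alpha c_{i-2}^\ast\beta a_i\;=\;(\alpha a_{i-2})\cdot(b_{i-2}\beta a_i)\;\in\; e_n^\ast Ce_{i-2}^\ast\cdot e_{i-2}^\ast Ce_i^\ast\;\subseteq\; e_n^\ast Ce_{i-2}^\ast Ce_i^\ast,$$
uniformly for every $i\ge 2$.

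\textbf{Restricted isomorphism, forward direction.} Write $y=uv$ with $u\in e_n^\ast Ce_{i-2}^\ast$ and $v\in e_{i-2}^\ast Ce_i^\ast$. When $i-2>0$, substituting $e_{i-2}^\ast=b_{i-2}a_{i-2}$ gives
$$yb_i\;=\;u(b_{i-2}a_{i-2})vb_i\;=\;(ub_{i-2})\,c_{i-2}^\ast\,(a_{i-2}vb_i)\,c_i^\ast\;\in\;Ac_{i-2}^\ast Ac_i^\ast,$$
where the inserted $c_{i-2}^\ast$ and $c_i^\ast$ are absorbed back via $(ub_{i-2})c_{i-2}^\ast=ub_{i-2}$ and $(a_{i-2}vb_i)c_i^\ast=a_{i-2}vb_i$. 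The main obstacle is the boundary case $i-2=0$ (i.e. $i=2$), where $b_0a_0=0\ne e_0^\ast$ invalidates this factorization trick. I plan to circumvent it by combining the backward inclusion $Ac_0^\ast Ac_2^\ast\hookrightarrow\Phi_2(e_n^\ast Ce_0^\ast Ce_2^\ast)$ with a direct dimension count: both $e_n^\ast Ce_0^\ast Ce_2^\ast$ (the rank-zero $(2,n)$-Brauer diagrams, parameterised by an arbitrary pairing of the $n$ bottom dots together with the unique cup on top) and $Ac_0^\ast Ac_2^\ast$ (rank-zero $(n,n)$-diagrams with bottom pattern dictated by $c_0^\ast$ and $c_2^\ast$, varying only in the top cup pattern) have dimension $(n-1)!!$, which forces equality.
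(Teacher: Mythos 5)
Your proof is correct, but it takes a genuinely different route from the paper's. The paper's argument is purely diagrammatic and uniform in $i$: injectivity of $y\mapsto yb_i$ holds because attaching $b_i$ below a basis diagram of $e_n^\ast Ce_i^\ast$ creates no closed loops and sends distinct diagrams to distinct diagrams (up to sign); surjectivity follows by matching the diagram bases of the two sides; and the restricted isomorphism is obtained by observing that $e_n^\ast Ce_{i-2}^\ast Ce_i^\ast$ is precisely the span of the diagrams with at least $(n-i+2)/2$ cups and that the cup count is preserved under $y\mapsto yb_i$. You instead construct an explicit two-sided inverse $x\mapsto xa_i$ from $b_ia_i=e_i^\ast$ and transport the factorisations $e_{i-2}^\ast=b_{i-2}a_{i-2}$ and $c_i^\ast=a_ib_i$ back and forth; this is cleaner where it applies, but forces separate treatment of the degenerate cases $i=0$ and $i-2=0$ (since $b_0a_0=0$), where you fall back on exactly the kind of diagram counting the paper uses throughout. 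The one step you should spell out is the claim $\dim Ac_0^\ast Ac_2^\ast=(n-1)!!$: it rests on $c_0^\ast Ac_2^\ast=a_0(b_0Aa_2)b_2$ being one-dimensional, because $b_0Aa_2\subseteq e_0^\ast Ce_2^\ast=\mk\,{\cap}$, and nonzero, which requires exhibiting some $w\in\mS_n$ with $b_0wa_2\neq0$ (take $w$ so that the cups of $wa_2$ and the caps of $b_0$ assemble into a single path through all $n$ dots; note $b_0a_2=0$, so $w=1$ does not work). Granting this, $Ac_0^\ast Ac_2^\ast=(e_n^\ast Ce_0^\ast)\cdot(\cap\, b_2)$ and the count follows. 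In short, the paper's proof buys uniformity at the price of leaving the basis comparison implicit, while yours buys explicit inverses at the price of boundary cases.
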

\begin{proof}
The form of the diagram $b_i$ shows that the morphism in~\eqref{eqinjiso} is injective.
From considering the bases of diagrams of both spaces, it then follows that it is an isomorphism.

Inside $e_n^\ast Ce_i^\ast $, the submodule $e_n^\ast Ce_{i-2}^\ast Ce_i^\ast$ is spanned by all diagrams with at least $(n-i+2)/2$ cups. The number of cups of a diagram is preserved under the isomorphism \eqref{eqinjiso}. This proves the second isomorphism.
\end{proof}

\begin{cor}\label{CorDeltaA}
For each $i\in \JJJ(n)$, we have an isomorphism of~$A$-modules
$$e_n^\ast C\otimes_B He_i^\ast\;\cong Ac_i^\ast/Ac_{i-2}^\ast Ac_i^\ast.$$
\end{cor}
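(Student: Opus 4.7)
The plan is to apply Lemma~\ref{LemECA} first to turn the right-hand side into the manifestly $C$-theoretic quotient
$$Ac_i^\ast/Ac_{i-2}^\ast Ac_i^\ast \;\cong\; e_n^\ast C e_i^\ast \,/\, e_n^\ast C e_{i-2}^\ast C e_i^\ast,$$
as left $A$-modules, via right multiplication by $b_i$. After this reduction, I would construct the candidate map
$$\Phi\;:\; e_n^\ast C \otimes_B He_i^\ast \;\longrightarrow\; e_n^\ast C e_i^\ast \,/\, e_n^\ast C e_{i-2}^\ast C e_i^\ast,\qquad x\otimes h \;\mapsto\; xh \pmod{e_n^\ast C e_{i-2}^\ast C e_i^\ast},$$
and show it is a well-defined $A$-module isomorphism. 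Left $A$-linearity is immediate from the fact that the $A$-action on both sides is by left multiplication in $C$, and this commutes with both the quotient and the tensor construction.

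For well-definedness, the $H$-balance is automatic since $H\subset B$ acts on $He_i^\ast$ by genuine left multiplication in $C$. For $b\in B_{\plus}$, we have $bh=0$ in $He_i^\ast$ by the convention recalled in \S\ref{SecDefMod}, so I must check $xbh$ lies in the ideal $e_n^\ast C e_{i-2}^\ast C e_i^\ast$. Because $b$ contains at least one cap, and a cap at the bottom of $b$ survives composition with $x$ on top (up to a sign that is irrelevant for ideal membership), every diagrammatic summand of $xb$ has at most $i-2$ propagating lines between its $n$ top and $i$ bottom dots, and therefore factors through $e_{i-2}^\ast$.

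Surjectivity is then clear: in the quotient, every representative may be taken to be a sum of cap-free diagrams (necessarily with exactly $i$ propagating lines), and each such diagram factors as an element of $e_n^\ast \overline{N} e_i^\ast$ (cups and propagating lines on top) times a permutation in $e_i^\ast H e_i^\ast$, so it is in the image of $\Phi$. For injectivity I would match dimensions: using $C=\overline{N}B$ from Lemma~\ref{LemPropB}(2), together with the fact that $e_j^\ast\in B$ acts on $He_i^\ast$ as $\delta_{ji}\cdot\mathrm{id}$ after passing through $B\twoheadrightarrow H$, the source collapses to $e_n^\ast \overline{N} e_i^\ast \otimes_{\mk} e_i^\ast H e_i^\ast$, of dimension $\dim(e_n^\ast \overline{N} e_i^\ast)\cdot i!$; this agrees with the diagram count for the target.

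The main obstacle is the well-definedness verification, and within it the combinatorial claim that a Brauer diagram with $n$ top dots, $i$ bottom dots, and strictly fewer than $i$ propagating lines lies in $e_n^\ast C e_{i-2}^\ast C e_i^\ast$. This is a routine factorisation through an intermediate $e_{i-2}^\ast$, but in the periplectic setting one has to keep track of the standard marking when inserting that intermediate identity; fortunately the question is ideal membership, which is insensitive to the global sign produced by reordering markings.
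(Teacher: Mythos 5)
Your argument is correct, but it takes a more self-contained route than the paper. The paper's proof is two lines: it quotes \cite[Lemma~3.2.8]{Borelic} for the $C$-module isomorphism $C\otimes_B He_i^\ast\cong Ce_i^\ast/Ce_{i-2}^\ast Ce_i^\ast$ (a general fact about graded pre-Borelic subalgebras, available here by Lemma~\ref{LemPropB}), and then applies the idempotent truncation $e_n^\ast(-)$ together with Lemma~\ref{LemECA}. You instead use Lemma~\ref{LemECA} first and then prove the truncated isomorphism by hand: the multiplication map $\Phi$, well-definedness via the diagrammatic observation that any summand of $xb$ with $b\in B_{\plus}$ retains a cap and hence has at most $i-2$ propagating lines (so factors through $e_{i-2}^\ast$, with signs irrelevant for ideal membership), surjectivity from the basis of cap-free diagrams in the quotient, and injectivity by a dimension count. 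Your reasoning is sound throughout; the only small imprecision is that the dimension count needs slightly more than the bare equality $C=\overline{N}B$ of Lemma~\ref{LemPropB}(2) — you are implicitly tensoring the surjection $\bigoplus_j \overline{N}e_j^\ast\otimes_{\mk}e_j^\ast B\to C$ with $He_i^\ast$ over $B$, which yields the upper bound $\dim(e_n^\ast C\otimes_B He_i^\ast)\le\dim(e_n^\ast\overline{N}e_i^\ast)\cdot i!$; this is exactly what Corollary~\ref{CorTriang} makes precise, and an upper bound is indeed all you need given surjectivity of $\Phi$. What your approach buys is independence from the external reference \cite{Borelic}; what it costs is re-deriving, in this special case, the structural content (triangular decomposition and behaviour of induced modules) that the cited lemma packages once and for all.
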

\begin{proof}
As a special case of \cite[Lemma~3.2.8]{Borelic}, we have
$$C\otimes_B He_i^\ast \;\cong\; Ce_i^\ast/Ce_{i-2}^\ast Ce_i^\ast.$$
The result hence follows from Lemma~\ref{LemECA}.
\end{proof}
By equation \eqref{defDover}, this corollary, or Lemma~\ref{LemECA}, implies an isomorphism
\begin{equation}\label{enD0}e_n^\ast\overline{\Delta}(\varnothing)\;\stackrel{\sim}{\to}\; Ac_0^\ast,\qquad y\mapsto yb_0.\end{equation}

\begin{lemma}\label{NewLemmaNew} Assume that $n>2$.
For left ideals $N\subset M\subset A$, we have a monomorphism
$$\Hom_A(Ac_0^\ast, M/N)\;\hookrightarrow\; \left((c_0^\ast A\cap M)+N\right)/N,\quad \alpha\mapsto \alpha(c_0^\ast).$$
\end{lemma}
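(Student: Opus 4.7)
The map $\alpha\mapsto \alpha(c_0^\ast)$ from $\Hom_A(Ac_0^\ast, M/N)$ to $M/N$ sends an $A$-linear map to the image of the generator of the cyclic module $Ac_0^\ast$; we must show both that this map is injective and that its image lies in the subspace $((c_0^\ast A\cap M)+N)/N$. Since $\alpha$ is determined by $\alpha(c_0^\ast)$, and since the condition on $m\in M$ representing $\alpha(c_0^\ast)$ is precisely $\mathrm{lAnn}_A(c_0^\ast)\cdot m\subseteq N$, the statement reduces to the two points: (i) $\alpha(c_0^\ast)\in N$ forces $\alpha=0$; (ii) any such $m$ admits a representative in $c_0^\ast A$ modulo $N$.

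For \emph{injectivity}, no hypothesis on $n$ is needed: if $\alpha(c_0^\ast)=m+N$ with $m\in N$, then for every $x\in A$ one has $\alpha(xc_0^\ast)=x\alpha(c_0^\ast)=xm+N\in xN+N\subseteq N$, using that $N$ is a left ideal; by cyclicity of $Ac_0^\ast$ the map $\alpha$ vanishes on the whole of $Ac_0^\ast$.

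For the \emph{image containment}, the pivotal preliminary identity is $c_0^\ast A c_0^\ast=0$. Pictorially, in the triple composition $c_0^\ast\,x\,c_0^\ast$ the $2n$ ``meeting'' vertices (top of the lower $c_0^\ast$ and bottom of the upper $c_0^\ast$) each sit on exactly two strands: one arc from the adjacent $c_0^\ast$'s cup/cap, and one from a strand of $x$. The resulting multigraph on these vertices is thus $2$-regular and nonempty, hence a disjoint union of cycles; each cycle is a closed loop in the Brauer composition and contributes the periplectic loop value $\delta=0$. Consequently $c_0^\ast A\subseteq\mathrm{lAnn}_A(c_0^\ast)$, and in particular $c_0^\ast A\cdot m\subseteq N$ for every lift $m$ of $\alpha(c_0^\ast)$. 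Now we invoke the relation $c_2^\ast c_0^\ast = c_0^\ast$ from Section~\ref{SecIdNil}, which is available precisely when $2\in\JJJ^0(n)$, i.e.~when $n>2$; applying $\alpha$ gives $m\equiv c_2^\ast m\pmod{N}$. Writing $c_2^\ast=a_2 b_2$ and exploiting the identities $b_2 a_0=\cup$ (the generator of the one-dimensional space $e_2^\ast C e_0^\ast$) and $a_2\cup=a_0$, we decompose $c_2^\ast m=a_2(b_2 m)$ with $b_2 m\in e_2^\ast C e_n^\ast$, and reduce to showing that $b_2 m$ agrees, modulo terms whose image under $a_2\cdot(-)$ lies in $N$, with an element of the summand $\cup\cdot b_0 A=\cup\cdot e_0^\ast C e_n^\ast$ of $e_2^\ast C e_n^\ast$. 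Once this is achieved, $a_2\cdot\cup\cdot b_0 a=a_0 b_0 a=c_0^\ast a$, and so $m\equiv c_0^\ast a\pmod{N}$ with $c_0^\ast a\in c_0^\ast A\cap M$ (the latter since $M\supseteq N$ is a left ideal and $c_0^\ast a=m-n'$ with both terms in $M$). The modular law then gives $m+N\in((c_0^\ast A\cap M)+N)/N$.

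The \emph{main obstacle} is the last step: controlling the ``non-$\cup$'' components of $b_2 m$ under the triangular decomposition of $e_2^\ast C e_n^\ast$ provided by Corollary~\ref{CorTriang}. Concretely, one must verify that any summand of $b_2 m$ lying outside $e_2^\ast C e_0^\ast C e_n^\ast$ (i.e.~coming from ``propagating'' morphisms rather than ones factoring through the object~$0$) contributes, after left-multiplication by $a_2$, an element of $\mathrm{lAnn}_A(c_0^\ast)\cdot m$ and hence of $N$. Since the periplectic Brauer algebras lack an obvious Frobenius-type structure (as pointed out in \cite{Kujawa, Vera} and reiterated in the introduction), no general double-annihilator theorem can be invoked; the argument is instead a hands-on analysis using the explicit structure of $c_2^\ast$ as a nontrivial idempotent (available only for $n>2$), together with the diagrammatic characterization of the annihilator of $c_0^\ast$.
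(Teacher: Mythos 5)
Your injectivity argument and the reduction $m\equiv c_2^\ast m\pmod N$ via $c_2^\ast c_0^\ast=c_0^\ast$ are correct and agree with the paper, and your identity $c_0^\ast A c_0^\ast=0$ is true (though the paper never needs it). However, the step you yourself flag as the ``main obstacle'' --- that the component of $b_2m$ with two propagating lines dies, after applying $a_2\cdot(-)$, modulo $N$ --- is precisely the content of the lemma, and it is not established by anything you have written. Worse, it cannot be established from the only facts about $m$ you have extracted, namely $(1-c_2^\ast)m\in N$ and $c_0^\ast A\,m\subseteq N$. Take $N=0$, $M=A$ and $n=4$: the set of $m$ satisfying these two conditions is $c_2^\ast A\cap\mathrm{rAnn}_A(c_0^\ast)$, which has dimension at least $\dim c_2^\ast A+\dim\mathrm{rAnn}_A(c_0^\ast)-\dim A=15+102-105=12$, whereas $\dim c_0^\ast A=(n-1)!!=3$. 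So the left ideal of $\mathrm{lAnn}_A(c_0^\ast)$ you are using is genuinely too small, and no manipulation of the triangular decomposition of $e_2^\ast Ce_n^\ast$ can close the gap without feeding in more relations satisfied by $m$.

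The missing ingredient is the paper's diagram $w$ (one cup joining top dots $1,2$, one cap joining bottom dots $2,3$, a propagating line from bottom dot $1$ to top dot $3$, and identity strands elsewhere), which satisfies $wc_0^\ast=-c_0^\ast$, i.e.\ $1+w\in\mathrm{lAnn}_A(c_0^\ast)$. Applying $\alpha$ to this relation gives $m\equiv -wm\pmod N$, so $m$ may be replaced by a linear combination of diagrams all carrying the cup $\{1,2\}$ on their upper line; multiplying by $c_2^\ast$ then supplies the remaining cups $\{3,4\},\dots,\{n-1,n\}$, and any diagram whose upper line consists of exactly the cups of $c_0^\ast$ lies in $a_0\Hom_{\cA}(n,0)=c_0^\ast A$. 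This is exactly the extra annihilator element your ``diagrammatic characterization of the annihilator of $c_0^\ast$'' would have to produce; without it (or an equivalent relation involving diagrams with $n-2$ propagating lines) the proof is incomplete.
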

\begin{proof}
Take~$\alpha\in \Hom_A(Ac_0^\ast,M/N)$
with~$\alpha(c_0^\ast)=v+N$, for some $v\in M\subset A$. Consider the diagram
\begin{equation*}\label{Defw}
\begin{tikzpicture}[scale=0.9,thick,>=angle 90]

\begin{scope}[xshift=8cm]
\node at (-3,0.5) {$w:=$};
\draw (-1.7,1) to [out=-90,in=-180] +(.3,-.3) to [out=0,in=-90] +(.3,.3);
\draw (-1.7,0) to [out=50, in=-130] +(1.2,1);
\draw (-1.1,0) to [out=90,in=-180] +(.3,.3) to [out=0,in=90] +(.3,-.3);
\draw (.1,0) -- +(0,1);
\draw (.7,0) -- +(0,1);
\draw (1.3,0) -- +(0,1);
\draw [dotted] (1.6,.5) -- +(1,0);
\draw (2.9,0) -- +(0,1);
\draw (3.5,0) -- +(0,1);

\end{scope}
\end{tikzpicture}
\end{equation*}
Since $wc_0^\ast=-c_0^\ast$ we must have $v+ N=-wv+N$, so we can replace $v$ by $-wv$. In other words, we can assume $v$ is a linear combination of diagrams which have the same cup as $w$.
Using $c_2^\ast c_0^\ast=c_0^\ast$, we can then assume that $v$ is a linear combination of diagrams which have all the cups of $c_0^\ast$, from which the conclusion follows.
\end{proof}

\begin{thm}[Double centralisers]\label{ThmDC}
If $n\in\mZ_{>2}$, the $(A,C)$-bimodule $X:=e_n^\ast C$ satisfies
$$A\cong \End_C(X)\quad\mbox{ and }\quad C\cong \End_A(X)^{\op}.$$
\end{thm}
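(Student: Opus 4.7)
The plan is to prove the two isomorphisms separately, the first being formal and the second requiring genuine input from the preceding lemmas.

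For $A \cong \End_C(X)$, I would use the standard identification available whenever one has an idempotent: since $(e_n^\ast)^2 = e_n^\ast$ in $C$ and $A = e_n^\ast C e_n^\ast$, evaluation at $e_n^\ast$ gives an algebra isomorphism $\End_C(e_n^\ast C) \to A$, $\phi \mapsto \phi(e_n^\ast)$, with inverse sending $a \in A$ to left multiplication by $a$. This requires nothing beyond the idempotency of $e_n^\ast$.

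For $C \cong \End_A(X)^{\op}$, I would define $\rho: C \to \End_A(X)^{\op}$ by $c \mapsto \rho_c$ with $\rho_c(x) = xc$; a direct check gives $\rho_{c_1 c_2} = \rho_{c_2}\circ \rho_{c_1}$, so $\rho$ is indeed an algebra homomorphism into the opposite ring. The key observation is that $\rho$ coincides with the unit $\eta_C$ of the adjunction $(F,G)$ of Section~\ref{SecAdj} evaluated at the left regular $C$-module $M=C$: one has $F(C) = e_n^\ast C = X$ and $GF(C) = \Hom_A(X,X) = \End_A(X)$, and $\eta_C(c)(x) = xc$. Hence $\rho$ is bijective if and only if $\eta_C$ is, and by Lemma~\ref{lemeta}(2) this reduces to showing that the map
$$\nu_0 : \; e_0^\ast C \;\longrightarrow\; \Hom_A\bigl(e_n^\ast C e_0^\ast,\; e_n^\ast C\bigr)$$
from~\eqref{eqetaj} is an isomorphism.

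To analyse $\nu_0$, I would decompose $e_n^\ast C = \bigoplus_{i\in\JJJ(n)} e_n^\ast C e_i^\ast$ and use Lemma~\ref{LemECA} to identify each summand with $Ac_i^\ast$ as a left $A$-module; correspondingly $e_0^\ast C = \bigoplus_i e_0^\ast C e_i^\ast$ and $\nu_0$ splits as a direct sum of maps
$$\nu_0^i:\; e_0^\ast C e_i^\ast \;\longrightarrow\; \Hom_A(Ac_0^\ast,\, Ac_i^\ast),$$
where $\nu_0^i(c)$ is the $A$-linear morphism determined by $c_0^\ast \mapsto a_0 c b_i$. Composing with the monomorphism of Lemma~\ref{NewLemmaNew} (this is where the hypothesis $n>2$ enters, via the auxiliary element $w$ needing at least one spare propagating strand) yields an injection $e_0^\ast C e_i^\ast \hookrightarrow c_0^\ast A \cap Ac_i^\ast$, $c \mapsto a_0 c b_i$.

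The main obstacle is upgrading this injection to a bijection — equivalently, proving simultaneously that Lemma~\ref{NewLemmaNew} gives an isomorphism in our case and that $\nu_0^i$ is surjective. I expect this to be a concrete diagrammatic verification: an element of $A = e_n^\ast C e_n^\ast$ lies in $c_0^\ast A$ exactly when it admits a representative with the maximal cup pattern of $c_0^\ast$ on top, and lies in $Ac_i^\ast$ exactly when it admits a representative with the cap pattern of $c_i^\ast$ on the bottom, so $c_0^\ast A \cap Ac_i^\ast$ should be precisely $a_0 \cdot(e_0^\ast C e_i^\ast)\cdot b_i$, with the $(i,0)$-Brauer diagrams parametrising a basis. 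The real subtlety is that Lemma~\ref{Lemab} fails at $i=0$ (indeed $b_0 a_0 = 0$ because the resulting closed loops evaluate to the periplectic parameter $\delta=0$), so $c_0^\ast$ is nilpotent and the standard identification $\Hom_A(Ae,M)\cong eM$ for idempotents $e$ is unavailable; the proof must navigate this nilpotency through the explicit form of $w$ in Lemma~\ref{NewLemmaNew}, which is precisely where the restriction $n>2$ is essential.
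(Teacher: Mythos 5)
Your proposal is correct and follows essentially the same route as the paper: the first isomorphism is the formal identification $\End_C(e_n^\ast C)\cong e_n^\ast Ce_n^\ast$, and the second reduces (via the adjunction unit, equivalently componentwise through Lemma~\ref{LemECA}) to showing that the injective maps $e_0^\ast Ce_j^\ast\to\Hom_A(Ac_0^\ast,Ac_j^\ast)$ are bijective, with the upper bound on the target supplied by Lemma~\ref{NewLemmaNew} exactly as in the paper. The one step you only flag as an expected diagrammatic verification — that $c_0^\ast A\cap Ac_j^\ast$ coincides with $a_0(e_0^\ast Ce_j^\ast)b_j$, being spanned by the diagrams carrying all cups of $c_0^\ast$ on top and the caps of $b_j$ on the bottom — is precisely what the paper also leaves implicit in its appeal to Lemma~\ref{NewLemmaNew}, and your expectation there is accurate.
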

\begin{proof}
The first isomorphism follows from the definition~$A=e_n^\ast Ce_n^\ast$. We will prove that the canonical algebra morphism $C\to \End_A(e_n^\ast C)^{\op}$ is an isomorphism for $n>2$. This morphism restricts to $\mk$-linear morphisms for all $i,j\in \JJJ(n)$
$$e_i^\ast C e_j^\ast\;\to\;\Hom_A(e_n^\ast C e_i^\ast, e_n^\ast C e_j^\ast),\quad x\mapsto \beta_x\quad\mbox{with}\quad \beta_x(v)=vx.$$
We can compose these with the isomorphisms in Lemma~\ref{LemECA}, yielding morphisms
\begin{equation}\label{isoij}e_i^\ast Ce_j^\ast\, \to\, \Hom_A(Ac_i^\ast,Ac_j^\ast),\qquad x\mapsto \gamma_x\; \mbox{ with } \gamma_x(c_i^\ast)=a_i x b_j.
\end{equation}
It thus suffices to prove that these are all isomorphisms. It follows from the shape of the diagrams $a_i$ and $b_j$ that the morphisms in~\eqref{isoij} are injective.

Assume first that $i\not=0$. Then $c_i^\ast$ is an idempotent and it follows easily that 
$$\dim \Hom_A(Ac_i^\ast,Ac_j^\ast)\;=\;  \dim c_i^\ast Ac_j^\ast      \;=\;\dim e_i^\ast C e_j^\ast,$$ showing that \eqref{isoij} is an isomorphism.

Now assume $i=0$. We can use Lemma~\ref{NewLemmaNew} to show
$$\dim\Hom_A(Ac_0^\ast, Ac_j^\ast)\le \dim e_0^\ast Ce_j^\ast,$$
which concludes the proof.
\end{proof}



\subsection{Faithfulness of the covers}
We define $x\in A_n$ as
\begin{equation*}\label{Defx}
\begin{tikzpicture}[scale=0.9,thick,>=angle 90]
\begin{scope}[xshift=8cm]
\node at (1.4,0.5) {$x:=$};
\draw (2.9,0) -- +(0,1);
\draw (3.5,0) to [out=90,in=-180] +(.3,.3) to [out=0,in=90] +(.3,-.3);
\draw (4.7,0) to [out=90,in=-180] +(.3,.3) to [out=0,in=90] +(.3,-.3);
\draw [dotted] (5.6,.1) -- +(1,0);
\draw (6.9,0) to [out=90,in=-180] +(.3,.3) to [out=0,in=90] +(.3,-.3);
\draw (8.2,0) to [out=147,in=-33] +(-4.7,1);
\draw (4.1,1) to [out=-90,in=-180] +(.3,-.3) to [out=0,in=-90] +(.3,.3);
\draw (5.2,1) to [out=-90,in=-180] +(.3,-.3) to [out=0,in=-90] +(.3,.3);
\draw [dotted] (6.1,.9) -- +(1,0);
\draw (7.5,1) to [out=-90,in=-180] +(.3,-.3) to [out=0,in=-90] +(.3,.3);
\node at (9.5,0.5) {$+$};
\draw (10.9,0)  to [out=70,in=-110] +(1.2,1);
\draw (11.5,0) to [out=90,in=-180] +(.3,.3) to [out=0,in=90] +(.3,-.3);
\draw (12.7,0) to [out=90,in=-180] +(.3,.3) to [out=0,in=90] +(.3,-.3);
\draw [dotted] (13.6,.1) -- +(1,0);
\draw (14.9,0) to [out=90,in=-180] +(.3,.3) to [out=0,in=90] +(.3,-.3);
\draw (16.2,0) to [out=136,in=-44] +(-3.5,1);
\draw (10.9,1) to [out=-90,in=-180] +(.3,-.3) to [out=0,in=-90] +(.3,.3);
\draw (13.2,1) to [out=-90,in=-180] +(.3,-.3) to [out=0,in=-90] +(.3,.3);
\draw [dotted] (14.1,.9) -- +(1,0);
\draw (15.5,1) to [out=-90,in=-180] +(.3,-.3) to [out=0,in=-90] +(.3,.3);
\end{scope}
\end{tikzpicture}
\end{equation*}
One checks easily that the kernel of the epimorphism~$Ac_2^\ast\tto Ac_0^\ast$, given by~$a\mapsto ac_0^\ast$ for any~$a\in Ac_2^\ast$, is generated by~$x\in Ac_2^\ast$. This implies the following lemma.
\begin{lemma}\label{Lemx}
We have a short exact sequence
$$0\to Ax\to Ac_2^\ast\to Ac_0^\ast \to 0.$$
\end{lemma}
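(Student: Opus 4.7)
The plan is to establish the sequence in three stages. First, surjectivity of the map $\pi\colon Ac_2^\ast\to Ac_0^\ast$, $a\mapsto ac_0^\ast$, is immediate from the relation $c_2^\ast c_0^\ast=c_0^\ast$ recorded in Section~\ref{SecIdNil}: every $ac_0^\ast$ equals $(ac_2^\ast)c_0^\ast$ with $ac_2^\ast\in Ac_2^\ast$, giving exactness at $Ac_0^\ast$. Second, I verify $xc_0^\ast=0$ directly. Writing $x=c_2^\ast+d$ with $d$ the second diagram in the definition, one has $c_2^\ast c_0^\ast=c_0^\ast$; for $dc_0^\ast$, tracing the composition geometrically shows that the two diagonals of $d$ (from bottom position $1$ to top position $3$ and from bottom position $n$ to top position $4$), together with the alternating zigzag through the interface formed by $c_0^\ast$'s top arcs at $(1,2),(3,4),\ldots$ and $d$'s bottom arcs at $(2,3),(4,5),\ldots$, close up precisely into the arc structure of $c_0^\ast$. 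The underlying Brauer diagram of $dc_0^\ast$ thus coincides with that of $c_0^\ast$, but the standard marking of the composite differs by a single sign flip from commuting a diamond past an arrow during the reduction procedure of Section~\ref{SecMaDi}, yielding $dc_0^\ast=-c_0^\ast$ and hence $xc_0^\ast=0$. This gives $Ax\subseteq\ker\pi$.

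For the reverse inclusion $\ker\pi\subseteq Ax$, I identify $Ac_i^\ast\cong e_n^\ast Ce_i^\ast$ via $y\mapsto yb_i$ (Lemma~\ref{LemECA}), so that $\pi$ corresponds to right multiplication by $b_2a_0\in e_2^\ast Ce_0^\ast$. Since $\Hom_{\cA}(0,2)$ is one-dimensional, spanned by the cup, we have $b_2a_0=\sigma\cup$ for some nonzero $\sigma\in\mk$. Applying the triangular decomposition (Corollary~\ref{CorTriang}), the kernel of $\cdot\cup$ on $e_n^\ast Ce_2^\ast$ splits into the $i=0$ piece $e_n^\ast\overline{N}e_0^\ast\cdot\cap$, entirely in the kernel because the closed loop $\cap\cup$ vanishes when $\delta=0$, and an $i=2$ piece controlled by the odd skew-symmetry relation $s\cup=-\cup$ on $\mk\mS_2$. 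A dimension count then recovers $\dim\ker\pi=\dim Ac_2^\ast-\dim Ac_0^\ast$. It remains to verify that the element $xa_2=a_2+da_2$, which corresponds to $x$ under the isomorphism, generates this kernel as a left $A$-module, by expressing each natural spanning element---the $(1+s)$-symmetric diagrams in the $i=2$ piece and the $\cap$-factoring diagrams in the $i=0$ piece---as explicit left $A$-multiples of $xa_2$.

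The main obstacle is this final cyclicity verification: constructing suitable left multipliers in $A$ to reach every kernel basis element from $xa_2$. The sign computation in the second stage is a secondary but delicate technical point requiring careful application of the marking conventions from Section~\ref{SecMaDi}.
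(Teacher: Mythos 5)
Your first two stages are fine and agree with what the paper (very tersely) asserts: surjectivity follows from $c_2^\ast c_0^\ast=c_0^\ast$, and your tracing of the zigzag in $d\ast c_0^\ast$ correctly identifies the underlying Brauer diagram as $c_0^\ast$; the sign $dc_0^\ast=-c_0^\ast$ is the same kind of computation as the relation $wc_0^\ast=-c_0^\ast$ used in the proof of Lemma~\ref{NewLemmaNew}, so $xc_0^\ast=0$ and $Ax\subseteq\ker\pi$. Note, however, that the paper itself offers no more than ``one checks easily that the kernel \dots is generated by $x$'', so the reverse inclusion is the entire content of the lemma.

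It is in stage 3 that there is a genuine gap, and it is twofold. First, a sign: in this category the cap is skew and the cup is symmetric, i.e.\ $\cap\circ X=-\cap$ (this is $\varepsilon_ks_k=-\varepsilon_k$ from the proof of Lemma~\ref{Lemexx}) while $X\circ\cup=+\cup$ (apply $\varphi$ from \ref{anti}, or check the marking rules directly: a cup carries only a $\Diamond$, which has no handedness to flip). Hence the kernel of $h\mapsto h\circ\cup$ on $\mk\mS_2$ is spanned by $1-s$, not $1+s$. Second, and more seriously, your proposed spanning set of $\ker\pi$ does not span it. Writing $N$ for the number of $(0,n)$-Brauer diagrams, the $\cap$-factoring piece contributes $N$ dimensions and the antisymmetrised elements $u(1-s)$, $u\in e_n^\ast\overline{N}e_2^\ast$, contribute $\dim e_n^\ast\overline{N}e_2^\ast=\binom{n}{2}(n-3)!!=\tfrac{n}{2}N$ dimensions, for a total of $\tfrac{n+2}{2}N$; but $\dim\ker\pi=\dim\Hom_{\cA}(2,n)-\dim\Hom_{\cA}(0,n)=nN$, leaving a deficit of $\tfrac{n-2}{2}N$ for every $n\ge 4$ (for $n=4$: your set spans $3+6=9$ dimensions of a $12$-dimensional kernel). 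The missing elements are differences $u\mp u'$ of \emph{distinct} diagrams $u,u'\in e_n^\ast\overline{N}e_2^\ast$ with $u\circ\cup=\pm u'\circ\cup$ --- capping off the two propagating lines of different cup-diagrams can yield the same element of $\Hom_{\cA}(0,n)$ (e.g.\ for $n=4$, the diagram with top cup at $(1,2)$ and the one with top cup at $(3,4)$). These lie in the $e_n^\ast\overline{N}e_2^\ast\otimes\mk 1$ component of the triangular decomposition and cannot be rewritten as combinations of your listed elements. Consequently, even carrying out your final cyclicity verification for every element you list would not prove $\ker\pi\subseteq Ax$; the plan as stated would fail to close the argument.
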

As a special case of Lemma~\ref{NewLemmaNew}, we have
\begin{lemma}\label{AxAc}
If $n\ge 4$, we have $$\Hom_A(Ac_0^\ast,A/Ac_0^\ast A)=0.$$
\end{lemma}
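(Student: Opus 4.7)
The plan is to invoke Lemma~\ref{NewLemmaNew} directly, with the choices $M = A$ (the full left regular module viewed as a left ideal in itself) and $N = Ac_0^\ast A$. Since the hypothesis $n > 2$ of that lemma is satisfied whenever $n \ge 4$, we obtain a monomorphism
$$\Hom_A(Ac_0^\ast, A/Ac_0^\ast A) \;\hookrightarrow\; \bigl((c_0^\ast A \cap A) + Ac_0^\ast A\bigr)/Ac_0^\ast A.$$
So it suffices to show that the right-hand side vanishes.

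The only thing to verify is the containment $c_0^\ast A \subseteq Ac_0^\ast A$. This is immediate, because for any $a \in A$ we have $c_0^\ast a = 1_A \cdot c_0^\ast \cdot a \in Ac_0^\ast A$. Consequently $c_0^\ast A \cap A = c_0^\ast A \subseteq Ac_0^\ast A$, so $(c_0^\ast A \cap A) + Ac_0^\ast A = Ac_0^\ast A$, and the target of the monomorphism is trivial.

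Thus no real computation is required; the entire content of Lemma~\ref{AxAc} is that the image $\alpha(c_0^\ast)$ of any $\alpha \in \Hom_A(Ac_0^\ast, A/Ac_0^\ast A)$ must lie in the image of $c_0^\ast A$ modulo $Ac_0^\ast A$, and the latter is zero. The only potentially subtle point is ensuring the hypothesis of Lemma~\ref{NewLemmaNew} holds, which it does precisely when $n>2$, matching the assumption $n \ge 4$ (recall we are in the case $n$ even from \ref{SecIdNil}). Since $\alpha$ was arbitrary, the $\Hom$-space vanishes.
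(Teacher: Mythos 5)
Your argument is correct and is precisely the paper's: Lemma~\ref{AxAc} is stated there as "a special case of Lemma~\ref{NewLemmaNew}", obtained exactly as you do by taking $M=A$ and $N=Ac_0^\ast A$ and observing that $c_0^\ast A\subseteq Ac_0^\ast A$ forces the target of the monomorphism to vanish.
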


\begin{lemma}\label{Leminc}
For any~$\lambda\in\Lambda_C$, with~$|\lambda|=i$, we have a monomorphism of~$A$-modules
$$e_n^\ast \overline{\Delta}(\lambda)\hookrightarrow Ac_i^\ast/Ac_{i-2}^\ast Ac_i^\ast.$$
The cokernel has a filtration with sections of the form $e_n^\ast \overline{\Delta}(\mu)$ with~$\mu\in \Par^p_i$.
\end{lemma}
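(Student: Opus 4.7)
The plan is to deduce this from Corollary~\ref{CorDeltaA} combined with the exactness of the functor $e_n^\ast C\otimes_B(-)$ applied to a suitable composition series of the $H$-module $He_i^\ast$.

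First I would apply Corollary~\ref{CorDeltaA} to identify $Ac_i^\ast/Ac_{i-2}^\ast Ac_i^\ast$ with $e_n^\ast C\otimes_B He_i^\ast$. By Lemma~\ref{LemPropB}(4), $C$ is projective (hence flat) as a right $B$-module, so the functor $e_n^\ast C\otimes_B(-)$ is exact on $B$-modules, where any $H$-module is viewed as a $B$-module with trivial $B_{\plus}$-action (as in \ref{SecDefMod}). Combined with \eqref{defDover}, this gives $e_n^\ast C\otimes_B L^0(\mu)\cong e_n^\ast\overline{\Delta}(\mu)$ for every $\mu\in\Lambda_C$.

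Next I would analyse $He_i^\ast$ as a left $H$-module. The decomposition \eqref{eqHS} shows that $He_i^\ast\cong \mk\mS_i$, acted upon by $H$ through the projection onto its $\mk\mS_i$-component. Consequently every composition factor of $He_i^\ast$ in $H$-mod has the form $L^0(\mu)$ for some $\mu\in\Par_i^p$. The crucial point is the existence of an $H$-submodule of $He_i^\ast$ isomorphic to $L^0(\lambda)$. This holds because $\mk\mS_i$ is a symmetric (in particular, self-injective) algebra: the indecomposable projective cover of $L^0(\lambda)$ is also its injective hull, so $L^0(\lambda)$ embeds into its socle, and this projective is a direct summand of the regular $\mk\mS_i$-module $He_i^\ast$. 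Using this, I would choose a composition series of $H$-modules
$$0=M_0\subset M_1\subset M_2\subset\cdots\subset M_k=He_i^\ast$$
with $M_1\cong L^0(\lambda)$ and $M_j/M_{j-1}\cong L^0(\mu_j)$ for some $\mu_j\in\Par_i^p$.

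Finally, applying the exact functor $e_n^\ast C\otimes_B(-)$ to this series produces a filtration of $Ac_i^\ast/Ac_{i-2}^\ast Ac_i^\ast$ whose bottom term is $e_n^\ast\overline{\Delta}(\lambda)$ and whose remaining sections are modules $e_n^\ast\overline{\Delta}(\mu_j)$ with $\mu_j\in\Par_i^p$; this yields both the required monomorphism and the description of the cokernel. The only non-formal step is the embedding $L^0(\lambda)\hookrightarrow He_i^\ast$, and this is precisely what the symmetric-algebra structure of $\mk\mS_i$ supplies uniformly in $p$ and $\lambda$.
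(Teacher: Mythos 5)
Your proposal is correct and follows essentially the same route as the paper: the embedding $L^0(\lambda)\hookrightarrow He_i^\ast\cong\mk\mS_i$ via self-injectivity of the group algebra, exactness of $e_n^\ast C\otimes_B(-)$ from Lemma~\ref{LemPropB}(4), and the identification of the target via Corollary~\ref{CorDeltaA}. The only difference is that you spell out the composition series explicitly, which the paper leaves implicit.
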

\begin{proof}
Since $He_i^\ast\cong\mk\mS_i$ is a group algebra and therefore self-injective, we have an inclusion~$L^0(\lambda)\hookrightarrow \mk\mS_i$ for each $\lambda\in\Par^p_i$.
Equation~\eqref{defDover} and Lemma~\ref{LemPropB}(4) thus imply that we have an inclusion
\begin{equation*}\overline{\Delta}(\lambda)\hookrightarrow C\otimes_B He_i^\ast,\end{equation*}
where the cokernel has a filtration with sections of the form $\overline{\Delta}(\mu)$ with~$\mu\in \Par^p_i$.
The conclusion then follows from the exactness of the functor $e_n^\ast-$ and Corollary~\ref{CorDeltaA}.
\end{proof}

\begin{prop}\label{PropHom}
If $n\ge 4$, the evaluation of the unit $\eta_{\overline{\Delta}(\lambda)}$ is an isomorphism for all~$\lambda\in\Lambda_C$.\end{prop}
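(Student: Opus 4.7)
The approach is to verify the criterion in Lemma~\ref{lemeta}(2): since $n$ is even, $\eta_M$ is an isomorphism exactly when the $j=0$ component of $\nu_j$ in~\eqref{eqetaj} is an isomorphism. Via the identification $e_n^\ast C e_0^\ast \cong Ac_0^\ast$ from Lemma~\ref{LemECA}, this amounts to showing that
$$\nu_0\colon\ e_0^\ast \overline{\Delta}(\lambda)\;\to\;\Hom_A(Ac_0^\ast,\, e_n^\ast \overline{\Delta}(\lambda))$$
is an isomorphism for every $\lambda\in\Lambda_C$. I would then split into two cases depending on whether $\lambda$ is the empty partition.

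For $\lambda\ne\varnothing$, the level $i=|\lambda|$ is at least $2$, so the source $e_0^\ast \overline{\Delta}(\lambda)$ vanishes and it suffices to show the target vanishes as well. By Lemma~\ref{Leminc} and the left-exactness of $\Hom_A(Ac_0^\ast,-)$, the target embeds into $\Hom_A(Ac_0^\ast,\, Ac_i^\ast/Ac_{i-2}^\ast A c_i^\ast)$. Applying Lemma~\ref{NewLemmaNew} with $M=Ac_i^\ast$ and $N=Ac_{i-2}^\ast A c_i^\ast$ further reduces the problem to the ideal-theoretic containment
$$c_0^\ast A \cap Ac_i^\ast\;\subseteq\; Ac_{i-2}^\ast A c_i^\ast\qquad (i\ge 2).$$
To establish this, I would first observe that for $x\in c_0^\ast A\cap Ac_i^\ast$, the idempotency of $c_i^\ast$ (available for $i\ge 2$) together with $x=c_0^\ast z$ yields $x = c_0^\ast z c_i^\ast\in Ac_0^\ast A c_i^\ast$. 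Second, the relation $c_{i-2}^\ast c_0^\ast=c_0^\ast$ (a case of the composition rules in~\ref{SecIdNil} with $j=i-2\ge 0$ and $(j,i)\ne(0,0)$) exhibits $c_0^\ast\in Ac_{i-2}^\ast A$, giving $Ac_0^\ast A\subseteq Ac_{i-2}^\ast A$ as two-sided ideals. Combining the two containments delivers the desired vanishing.

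For $\lambda=\varnothing$, both sides of $\nu_0$ are one-dimensional: the source is $L^0(\varnothing)=\mk$, while the target is $\Hom_A(Ac_0^\ast, Ac_0^\ast)$, which by Lemma~\ref{NewLemmaNew} (with $M=Ac_0^\ast$, $N=0$) is bounded by $c_0^\ast A\cap Ac_0^\ast$. A direct inspection of diagram shapes shows this intersection is spanned by $c_0^\ast$ alone, since any element must simultaneously have the top caps of $c_0^\ast$ and the bottom cups of $c_0^\ast$, leaving no room for propagating lines or alternative pairings. It then remains only to verify that $\nu_0$ is nonzero, which I would do by a direct computation: for a generator $v_0$ of $L^0(\varnothing)=e_0^\ast \overline{\Delta}(\varnothing)$, the value $\nu_0(v_0)(a_0)=a_0\otimes v_0\in e_n^\ast\overline{\Delta}(\varnothing)$ corresponds via~\eqref{enD0} to $a_0 b_0=c_0^\ast\ne 0$ in $Ac_0^\ast$.

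The main obstacle is the ideal-theoretic containment in the generic case. Its subtlety lies in the fact that $c_0^\ast$ is nilpotent rather than idempotent ($(c_0^\ast)^2=0$), so one cannot directly absorb a factor of $c_0^\ast$. Instead one uses the genuinely idempotent $c_{i-2}^\ast$ and the identity $c_{i-2}^\ast c_0^\ast=c_0^\ast$ to witness $c_0^\ast$ inside the larger two-sided ideal $Ac_{i-2}^\ast A$, which is precisely what makes the hypothesis $n\ge 4$ (so that $i-2\ge 0$ with $i\ge 2$ available) essential.
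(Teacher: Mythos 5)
Your proof is correct, and for $\lambda\ne\varnothing$ it takes a genuinely more uniform route than the paper. The paper splits that case in two: for $i=|\lambda|>2$ it first trades $Ac_0^\ast$ for $Ac_2^\ast$ via the presentation $Ax\to Ac_2^\ast\to Ac_0^\ast$ of Lemma~\ref{Lemx}, and then uses that the idempotent $c_2^\ast$ lies in $Ac_{i-2}^\ast A$; for $i=2$ it invokes Lemma~\ref{AxAc} separately. You instead apply Lemma~\ref{NewLemmaNew} directly to $M/N=Ac_i^\ast/Ac_{i-2}^\ast Ac_i^\ast$ and verify the single containment $c_0^\ast A\cap Ac_i^\ast\subseteq Ac_{i-2}^\ast Ac_i^\ast$ for all $i\ge 2$ at once; this bypasses Lemmata~\ref{Lemx} and~\ref{AxAc} entirely and makes transparent where the hypothesis $n\ge 4$ enters. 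For $\lambda=\varnothing$ your argument is essentially the paper's (which routes the same computation through the isomorphism \eqref{isoij} established in Theorem~\ref{ThmDC}). One small imprecision: for $i=2$ the identity $c_{i-2}^\ast c_0^\ast=c_0^\ast$ that you cite is false, since $(c_0^\ast)^2=0$ and the pair $(i-2,0)=(0,0)$ is exactly the excluded case of the composition rules in~\ref{SecIdNil}. This does no harm, because for $i=2$ the containment $Ac_0^\ast A\subseteq Ac_{i-2}^\ast A$ you are after is a tautology (and indeed your first step $x=c_0^\ast zc_2^\ast\in Ac_0^\ast Ac_2^\ast$ already finishes that case), but the justification should be phrased so as not to appeal to the relation outside its range of validity.
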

\begin{proof}
By Lemma~\ref{lemeta}(2) and the isomorphism in equation~\eqref{eqinjiso}, it suffices to prove that 
$$e_0^\ast \overline{\Delta}(\lambda)\;\to\; \Hom_{A}(Ac_0^\ast,e_n^\ast \overline{\Delta}(\lambda)),\qquad v\mapsto \alpha_v,\qquad\mbox{with }\;\alpha_v(c_0^\ast)=a_0v$$
is an isomorphism. For $\lambda= \varnothing$, we have $\overline{\Delta}(\varnothing )\cong Ce_0^\ast$. By equation~\eqref{enD0}, the above morphism for this case can be rewritten as
$$ e_0^\ast Ce_0^\ast \to \Hom_{A}(Ac_0^\ast,Ac_0^\ast),\quad v\mapsto\beta_v\quad\mbox{with}\quad \beta_v(c_0^\ast)=a_0vb_0.$$
This is precisely the isomorphism in equation~\eqref{isoij}.

For $\lambda\vdash i$ with~$i>0$, we have $e_0^\ast \overline{\Delta}(\lambda)=0$, so it suffices to show that 
\begin{equation}\label{eqmb0}\Hom_{A}(Ac_0^\ast,e_n^\ast \overline{\Delta}(\lambda))=0,\qquad\forall i>0.
\end{equation} By Lemmata~\ref{Lemx} and~\ref{Leminc} we have monomorphisms
$$\Hom_A(Ac_0^\ast, e_n^\ast \overline{\Delta}(\lambda))\hookrightarrow \Hom_A(Ac_2^\ast, e_n^\ast \overline{\Delta}(\lambda) )\hookrightarrow  \Hom_A(Ac_2^\ast, Ac_i^\ast/Ac_{i-2}^\ast Ac_i^\ast).$$
As~$c_2^\ast$ is an idempotent included in~$Ac_{i-2}^\ast A$ when~$i>2$, this implies equation~\eqref{eqmb0} for $i>2$. 

Finally, we focus on~$i=2$.
By Lemma~\ref{Leminc} we have an inclusion
$$\Hom_A(Ac_0^\ast,e_n^\ast\overline{\Delta}(\lambda))\hookrightarrow \Hom_A(Ac_0^\ast,Ac_2^\ast/Ac_0^\ast Ac_2^\ast).$$
The right term is zero by Lemma~\ref{AxAc}. So \eqref{eqmb0} holds true, which concludes the proof.\end{proof}

\begin{lemma}\label{LemHomAx}
If $n>4$, we have
$$\Hom_A(Ax,Ac_0^\ast)\;=\;0\;=\;\Hom_A(Ax,Ac_{2j}^\ast/Ac_{2j-2}^\ast Ac_{2j}^\ast),\quad\mbox{for}\; j\ge 2.$$
\end{lemma}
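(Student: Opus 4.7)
The plan is to transfer the computation of $\Hom_A(Ax,M)$ to the $C$-module side via the adjunction $F\dashv G$ of Section~\ref{SecAdj}. First I would note that $\dim_\mk\Hom_C(Ce_2^\ast, Ce_0^\ast)=\dim_\mk e_2^\ast Ce_0^\ast=1$, so there is, up to scalar, a unique nonzero $C$-module map $Ce_2^\ast\to Ce_0^\ast$; let $Y$ denote its kernel. Applying the exact functor $F$ to $0\to Y\to Ce_2^\ast\to Ce_0^\ast\to 0$ and comparing with Lemma~\ref{Lemx} identifies $Ax\cong FY$, since the one-dimensionality of $\Hom_A(Ac_2^\ast,Ac_0^\ast)$ (Theorem~\ref{ThmDC}) pins down the quotient map up to scalar. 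Then adjunction gives $\Hom_A(Ax,M)\cong\Hom_C(Y,GM)$ for any $M\in A\text{-mod}$.

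Next I would apply $\Hom_C(-,GM)$ to $0\to Y\to Ce_2^\ast\to Ce_0^\ast\to 0$. Since $e_0^\ast$ is idempotent, $Ce_0^\ast$ is a projective $C$-module, so $\Ext^1_C(Ce_0^\ast,GM)=0$ and
\[
\Hom_C(Y,GM)\;\cong\;\coker\bigl(\Hom_C(Ce_0^\ast,GM)\to\Hom_C(Ce_2^\ast,GM)\bigr).
\]
For each idempotent $e_i^\ast$ one has $\Hom_C(Ce_i^\ast,GM)\cong e_i^\ast GM\cong\Hom_A(e_n^\ast Ce_i^\ast,M)\cong\Hom_A(Ac_i^\ast,M)$ by Lemma~\ref{LemECA}. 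Hence the task reduces to showing that the natural map $\Hom_A(Ac_0^\ast,M)\to\Hom_A(Ac_2^\ast,M)$, induced by the quotient $Ac_2^\ast\twoheadrightarrow Ac_0^\ast$, is surjective for each $M$ in the lemma.

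For $M=Ac_0^\ast$ both Hom-spaces are one-dimensional, identified via Theorem~\ref{ThmDC} with $e_0^\ast Ce_0^\ast=\mk$ (empty diagram) and $e_2^\ast Ce_0^\ast=\mk$ (single cup), and the induced map is nonzero, hence an isomorphism. For $M=Ac_{2j}^\ast/Ac_{2j-2}^\ast Ac_{2j}^\ast$ with $j\ge 2$, I expect both Hom-spaces to vanish. For $\Hom_A(Ac_2^\ast,M)\cong c_2^\ast M$, the relation $c_{2j-2}^\ast c_2^\ast=c_2^\ast$ (valid for $j\ge 2$) gives $c_2^\ast\in Ac_{2j-2}^\ast A$, so that $c_2^\ast Ac_{2j}^\ast\subseteq Ac_{2j-2}^\ast Ac_{2j}^\ast$. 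For $\Hom_A(Ac_0^\ast,M)$, Lemma~\ref{NewLemmaNew} embeds it into $(c_0^\ast A\cap Ac_{2j}^\ast+Ac_{2j-2}^\ast Ac_{2j}^\ast)/Ac_{2j-2}^\ast Ac_{2j}^\ast$, and the central diagrammatic step will be the inclusion $c_0^\ast A\cap Ac_{2j}^\ast\subseteq Ac_{2j-2}^\ast Ac_{2j}^\ast$. This follows from $c_0^\ast=c_{2j-2}^\ast c_0^\ast$ (giving $c_0^\ast A\subseteq Ac_{2j-2}^\ast A$) combined with the elementary identity $Ac_{2j-2}^\ast A\cap Ac_{2j}^\ast=Ac_{2j-2}^\ast Ac_{2j}^\ast$, obtained by right-multiplying any relation $\sum_ia_ic_{2j-2}^\ast a_i'=ac_{2j}^\ast$ by $c_{2j}^\ast$ and using $(c_{2j}^\ast)^2=c_{2j}^\ast$.
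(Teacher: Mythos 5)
There is a genuine gap at the very first step. The map $\rho:Ce_2^\ast\to Ce_0^\ast$ given by right multiplication with the cup is \emph{not} surjective: its component landing in $e_0^\ast Ce_0^\ast$ is $e_0^\ast Ce_2^\ast\cdot u=\mk(\cap\circ\cup)=0$, because $\cap\circ\cup$ is a closed loop and closed loops are zero in $\cA$. The image of $\rho$ is $\bigoplus_{j\ge 2}e_j^\ast Ce_0^\ast$ and the cokernel is the one-dimensional module $L_C(\varnothing)$ --- precisely the simple $C$-module killed by $F$, whose presence is the source of all the difficulty in the even case. (Applying $F$ still gives $FY\cong Ax$, since $F(L_C(\varnothing))=0$, but that is the only part of your first step that survives.) Since $0\to Y\to Ce_2^\ast\to Ce_0^\ast\to 0$ is not exact, you may not compute $\Hom_C(Y,GM)$ as the cokernel of $\Hom_C(Ce_0^\ast,GM)\to\Hom_C(Ce_2^\ast,GM)$; the correct sequence $0\to Y\to Ce_2^\ast\to\mathrm{im}\,\rho\to 0$ has a non-projective third term (the maximal submodule of the cell module $Ce_0^\ast\cong W_C(\varnothing)$), so the $\Ext^1$-term you discard does not vanish and the reduction to surjectivity of $\Hom_A(Ac_0^\ast,M)\to\Hom_A(Ac_2^\ast,M)$ collapses.

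This is not a repairable technicality. Your argument nowhere uses $n>4$, yet the second equality fails for $n=4$, $j=2$: there $M=A_4/A_4c_2^\ast A_4\cong\mk\mS_4$, the sequence~\eqref{esHE} gives $\Hom_{A_4}(A_4x,M)\cong\Ext^1_{A_4}(A_4c_0^\ast,M)$, and this is nonzero since $A_4c_0^\ast\cong L(2)$ and $\Ext^1_{A_4}(L(2),L(4))\ne 0$ by Theorem~\ref{ThmA4} and Corollary~\ref{CorA4}. More structurally, by~\eqref{esHE} the cokernel you compute is only $\ker\bigl(\Hom_A(Ax,M)\tto\Ext^1_A(Ac_0^\ast,M)\bigr)$, so identifying it with all of $\Hom_A(Ax,M)$ silently assumes $\Ext^1_A(Ac_0^\ast,M)=0$ --- which is Lemma~\ref{noExt}, the very statement the present lemma is used to prove. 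The paper avoids this circle by a direct diagrammatic argument: it exhibits $y_1,y_2$ with $(y_1+y_2)x=-x$ and $c_4^\ast x=x$ and shows that any equivariant image of $x$ in the stated targets must vanish (resp.\ lie in $Ac_{2j-2}^\ast A$); these diagrams require $n\ge 6$, which is where $n>4$ enters. Your computations of the individual spaces $\Hom_A(Ac_2^\ast,M)=c_2^\ast M=0$ and $\Hom_A(Ac_0^\ast,M)=0$ for $j\ge2$ are correct (they parallel Lemma~\ref{AxAc} and the proof of Lemma~\ref{noExt}), but they do not imply the vanishing of $\Hom_A(Ax,M)$.
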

\begin{proof}
We introduce diagrams $y_1,y_2\in A$ as
$$
\begin{tikzpicture}[scale=1,thick,>=angle 90]
\begin{scope}[xshift=4cm]

\node at (7,0.5) {$y_1:=$};

\draw (7.9,0) -- +(0,1);
\draw (8.5,0) -- +(0,1);
\draw (9.1,0) to [out=60,in=-110] +(1.2,1);
\draw (9.7,0) to [out=90,in=-180] +(.3,.3) to [out=0,in=90] +(.3,-.3);
\draw [dotted] (10.6,.1) -- +(1,0);
\draw (12,0) to [out=90,in=-180] +(.3,.3) to [out=0,in=90] +(.3,-.3);
\draw (13.2,0) to [out=120,in=-60] +(-2.3,1);

\draw (9.1,1) to [out=-90,in=-180] +(.3,-.3) to [out=0,in=-90] +(.3,.3);
\draw [dotted] (12.14,.9) -- +(0.4,0);
\draw (12.6,1) to [out=-90,in=-180] +(.3,-.3) to [out=0,in=-90] +(.3,.3);
\draw (11.5,1) to [out=-90,in=-180] +(.3,-.3) to [out=0,in=-90] +(.3,.3);

\node at (15,0.5) {$y_2:=$};

\draw (15.9,0) to [out=60,in=-110] +(1.2,1);
\draw (16.5,0) to [out=60,in=-110] +(1.2,1);
\draw (17.1,0) to [out=60,in=-110] +(1.2,1);
\draw (17.7,0) to [out=90,in=-180] +(.3,.3) to [out=0,in=90] +(.3,-.3);
\draw [dotted] (18.6,.1) -- +(1,0);
\draw (20,0) to [out=90,in=-180] +(.3,.3) to [out=0,in=90] +(.3,-.3);
\draw (21.2,0) to [out=120,in=-60] +(-2.3,1);

\draw (15.9,1) to [out=-90,in=-180] +(.3,-.3) to [out=0,in=-90] +(.3,.3);
\draw [dotted] (20.14,.9) -- +(0.4,0);
\draw (20.6,1) to [out=-90,in=-180] +(.3,-.3) to [out=0,in=-90] +(.3,.3);
\draw (19.5,1) to [out=-90,in=-180] +(.3,-.3) to [out=0,in=-90] +(.3,.3);

\end{scope}
\end{tikzpicture}
$$
We have $(y_1+y_2)x=-x$ and $c_4^\ast x=x$. If $a\in Ac_0^\ast$ is the image of~$x$ under a morphism~$Ax\to Ac_0^\ast$, then we must have $(y_1+y_2)a=-a$ and $c_4^\ast a=a$. This implies that~$a$ is the sum of diagrams which contain all the $n/2-2$ cups of~$c_4^\ast$ and either a cup connecting dots $1$ and $2$ (as in~$y_2$), or a cup connecting dots $3$ and $4$ (as in~$y_1$). As all diagrams in~$Ac_0^\ast$ contain~$n$ cups it follows that~$a$ is proportional to~$c_0^\ast$. However, we have $y_2c_0^\ast=c_0^\ast=-y_1c_0^\ast$. This implies that~$a=0$, proving the left equation.

Now we consider~$a\in Ac_{2j}^\ast$ such that~$a+ Ac_{2j-2}^\ast Ac_{2j}^\ast$ is the image of~$x$ under a morphism from $Ax$. Again we can assume $(y_1+y_2)a=-a$ and $c_4^\ast a=a$, which implies that~$a$ is a linear combination of diagrams with at least $n/2-1$ cups. Since $j\ge 2$, this implies that~$a\in Ac_{2j-2}^\ast A$.
\end{proof}

\begin{lemma}\label{noExt}
If $n>4$, we have
$$\Ext^1_A(Ac_0^\ast,Ac_{2j}^\ast/Ac_{2j-2}^\ast Ac_{2j}^\ast)=0,\quad\mbox{for}\;\,0\le j\le n/2.$$
\end{lemma}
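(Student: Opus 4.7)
The plan is to build the entire argument around the short exact sequence
$$0 \to Ax \to Ac_2^\ast \to Ac_0^\ast \to 0$$
of Lemma \ref{Lemx}, exploiting the fact that $Ac_2^\ast$ is projective. Since $c_2^\ast$ is an idempotent (Section \ref{SecIdNil}), $Ac_2^\ast$ is a summand of the left regular module, so $\Ext^1_A(Ac_2^\ast,-)=0$. Applying $\Hom_A(-,M)$ with $M := Ac_{2j}^\ast/Ac_{2j-2}^\ast A c_{2j}^\ast$ to the sequence therefore gives
$$\Ext^1_A(Ac_0^\ast,M)\;\cong\;\coker\!\bigl(\iota^\ast:\Hom_A(Ac_2^\ast,M)\to\Hom_A(Ax,M)\bigr),$$
where $\iota:Ax\hookrightarrow Ac_2^\ast$ is inclusion. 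The proof then reduces to analysing this cokernel case by case in $j$.

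For $j=0$ (so $M=Ac_0^\ast$) and for $j\ge 2$, Lemma \ref{LemHomAx} directly yields $\Hom_A(Ax,M)=0$, and the cokernel is automatically zero. So the only non-trivial case is $j=1$, where $M=Ac_2^\ast/Ac_0^\ast A c_2^\ast$. Here $\Hom_A(Ax,M)$ is genuinely non-zero: the canonical projection $\pi:Ac_2^\ast\twoheadrightarrow M$ restricts to the non-zero morphism $x\mapsto \bar x := x+Ac_0^\ast A c_2^\ast$ in $M$ (the element $x$ having exactly $(n-2)/2$ cups, so it is not in $Ac_0^\ast A c_2^\ast$). To finish, I need to show $\iota^\ast$ is surjective for $j=1$.

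For this final surjectivity, I would argue that every $\phi\in \Hom_A(Ax,M)$ is a scalar multiple of $\pi|_{Ax}$. Given such $\phi$, set $m:=\phi(x)\in M$. The relations $(y_1+y_2)x=-x$ and $c_4^\ast x=x$ established in the proof of Lemma \ref{LemHomAx} carry over to force $(y_1+y_2)m=-m$ and $c_4^\ast m=m$. Representatives of $m$ in $Ac_2^\ast$ can be taken to be linear combinations of diagrams with exactly $(n-2)/2$ cups, all of which must, by $c_4^\ast m=m$, contain the $(n-4)/2$ cups of $c_4^\ast$. One then runs a diagrammatic case-analysis analogous to the one in Lemma \ref{LemHomAx}, using the $(y_1+y_2)$-eigenvalue equation to eliminate all candidates except those proportional to $\bar x$ in $M$. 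This yields $m=\lambda\bar x$ for some $\lambda\in\mk$, and hence $\phi=\iota^\ast(\lambda\pi)$.

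The main obstacle is precisely this $j=1$ uniqueness statement: unlike in the cases $j\ge 2$ or $j=0$, the eigenvalue constraints do not wipe out $M^{\ell}$ entirely, and one must identify the one-dimensional line spanned by $\bar x$ as the exact solution space, rather than merely showing the solution space collapses to zero. The diagrammatic argument needed for this is in the spirit of, but genuinely more delicate than, the two such arguments already carried out in Lemma \ref{LemHomAx}, because it has to keep track of \emph{which} additional cup appears alongside the cups of $c_4^\ast$ and how the $(y_1+y_2)$-action permutes those possibilities modulo $Ac_0^\ast A c_2^\ast$.
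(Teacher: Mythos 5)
Your overall framework is the same as the paper's: apply $\Hom_A(-,M)$ to the short exact sequence of Lemma~\ref{Lemx}, use projectivity of $Ac_2^\ast$ to identify $\Ext^1_A(Ac_0^\ast,M)$ with the cokernel of the restriction map $\iota^\ast$, and dispose of all $j\ne 1$ via Lemma~\ref{LemHomAx}. The problem is your treatment of $j=1$. The claim that every $\phi\in\Hom_A(Ax,M)$, for $M=Ac_2^\ast/Ac_0^\ast Ac_2^\ast$, is a scalar multiple of $\pi|_{Ax}$ is false, and no diagrammatic case-analysis can rescue it. Indeed, by Lemma~\ref{AxAc} the left term $\Hom_A(Ac_0^\ast,M)$ of the four-term exact sequence vanishes, so $\iota^\ast$ is \emph{injective} on $\Hom_A(Ac_2^\ast,M)\cong c_2^\ast\left(A/Ac_0^\ast A\right)c_2^\ast\cong\mk\mS_2$, which is two-dimensional (spanned by the classes of $c_2^\ast$ and $s\,c_2^\ast$ with $s$ the transposition of the two leftmost dots). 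Hence $\Hom_A(Ax,M)$ has dimension at least $2$, and your proposed elimination down to the line $\mk\,\overline{x}$ would be attempting to prove something that is not true. This is a genuine gap: the surjectivity of $\iota^\ast$ for $j=1$ is exactly the remaining content of the lemma, and your route to it fails.

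The repair is a dimension count rather than a uniqueness statement, and it is what the paper does. The same eigenvalue constraints you already identify ($c_4^\ast v=v$ and $(y_1+y_2)v=-v$ for $v=\phi(x)$) bound $\dim\Hom_A(Ax,M)$ \emph{above} by $2$: they force representatives of $v$ to be combinations of diagrams containing the cups of $c_4^\ast$ plus one further cup on dots $1,2$ or on dots $3,4$, and modulo $Ac_0^\ast Ac_2^\ast$ this solution space is two-dimensional. Combined with $\dim\Hom_A(Ac_2^\ast,M)=2$ and $\Hom_A(Ac_0^\ast,M)=0$, exactness of
$$0\to \Hom_A(Ac_0^\ast,M)\to \Hom_A(Ac_2^\ast,M)\to \Hom_A(Ax,M)\to\Ext^1_A(Ac_0^\ast,M)\to 0$$
forces $\Ext^1_A(Ac_0^\ast,M)=0$ (and, incidentally, $\dim\Hom_A(Ax,M)=2$ exactly). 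If you prefer to keep your surjectivity formulation, you must exhibit a \emph{second} element of the image of $\iota^\ast$, namely the restriction of $ac_2^\ast\mapsto a\,s\,c_2^\ast+Ac_0^\ast Ac_2^\ast$, and show the two restrictions span the full solution space of the eigenvalue equations; that is equivalent to, but no easier than, the dimension count.
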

\begin{proof}
The short exact sequence in Lemma~\ref{Lemx}, implies an exact sequence
\begin{equation}\label{esHE}0\to \Hom_A(Ac_0^\ast,M)\to \Hom_A(Ac_2^\ast,M)\to \Hom_A(Ax,M)\to\Ext^1_A(Ac_0^\ast,M)\to 0,\end{equation}
for an arbitrary $A$-module $M$. Lemma~\ref{LemHomAx} hence implies the vanishing of extensions with~$M= Ac_{2j}^\ast/Ac_{2j-2}^\ast Ac_{2j}^\ast$ for all $j\not=1$. We thus focus on~$M:= Ac_2^\ast/Ac_0^\ast Ac_2^\ast$.

For this $M$, the left space in~\eqref{esHE} is zero by Lemma~\ref{AxAc}, the second space from the left is isomorphic to~$c_2^\ast \left(A/Ac_0^\ast A\right)c_2^\ast\cong\mk\mS_2$, which has dimension 2. Using the same reasoning as in the proof of Lemma~\ref{LemHomAx}, the dimension of the third space is bounded by the dimension of the space of elements~$v$ in~$Ac_2^\ast/Ac_0^\ast Ac_2^\ast$ which satisfy $c_4^\ast v=v$ and $(y_1+y_2)v=-v$, which is also $2$. This shows that the extension vanishes, which concludes the proof.
\end{proof}

We denote the first right derived functor of the left exact functor $G$ in~\eqref{eqG} by $\cR_1G$.

\begin{prop}\label{PropExt}
For $n>4$, we have $\cR_1G\circ F(\overline{\Delta}(\lambda))=0$ for all $\lambda\in\Lambda_C$.
\end{prop}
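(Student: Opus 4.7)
The plan is to decompose the derived functor and isolate the only nontrivial contribution. Since $\cR_1G(N)=\Ext^1_A(e_n^\ast C,N)$ and Lemma~\ref{LemECA} gives $e_n^\ast C\cong\bigoplus_{j\in\JJJ(n)}Ac_j^\ast$ as left $A$-modules, the summands with $j>0$ are projective ($c_j^\ast$ being an idempotent) and contribute nothing to $\Ext^1$. It therefore suffices to prove, for every $\lambda\in\Lambda_C$,
$$\Ext^1_A\bigl(Ac_0^\ast,\,e_n^\ast\overline{\Delta}(\lambda)\bigr)=0.$$

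Next, I would feed the short exact sequence $0\to Ax\to Ac_2^\ast\to Ac_0^\ast\to 0$ of Lemma~\ref{Lemx} through $\Hom_A(-,M)$. Since $Ac_2^\ast$ is projective, this yields
$$\Ext^1_A(Ac_0^\ast,M)\;\cong\;\coker\!\bigl(\Hom_A(Ac_2^\ast,M)\to \Hom_A(Ax,M)\bigr),$$
so vanishing of $\Hom_A(Ax,M)$ alone is already sufficient. I would then split on $i=|\lambda|$. For $i=0$, equation~\eqref{enD0} identifies $e_n^\ast\overline{\Delta}(\varnothing)$ with $Ac_0^\ast$, and Lemma~\ref{LemHomAx} gives $\Hom_A(Ax,Ac_0^\ast)=0$. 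For $i\ge 4$, Lemma~\ref{Leminc} embeds $e_n^\ast\overline{\Delta}(\lambda)$ into $Ac_i^\ast/Ac_{i-2}^\ast Ac_i^\ast$, and the second statement of Lemma~\ref{LemHomAx} (applied with $j=i/2\ge 2$) kills the Hom from $Ax$ to the containing module, hence also to the submodule. Both cases collapse immediately.

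The genuine obstacle is the remaining case $i=2$, precisely because Lemma~\ref{LemHomAx} is silent for $j=1$. To handle it I would instead apply $\Hom_A(Ac_0^\ast,-)$ to the Lemma~\ref{Leminc} sequence
$$0\to e_n^\ast\overline{\Delta}(\lambda)\to Ac_2^\ast/Ac_0^\ast Ac_2^\ast\to Q\to 0,$$
where $Q$ admits a filtration with sections $e_n^\ast\overline{\Delta}(\mu)$ for $\mu\in\Par_2^p$. Lemma~\ref{noExt} shows that $\Ext^1_A(Ac_0^\ast,-)$ vanishes on the middle term, so $\Ext^1_A(Ac_0^\ast,e_n^\ast\overline{\Delta}(\lambda))$ is a quotient of $\Hom_A(Ac_0^\ast,Q)$. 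Applying $e_0^\ast$ to the isomorphism of Proposition~\ref{PropHom} identifies $\Hom_A(Ac_0^\ast,e_n^\ast\overline{\Delta}(\mu))$ with $e_0^\ast\overline{\Delta}(\mu)$, which vanishes whenever $|\mu|>0$ because any diagram in $e_0^\ast Ce_{|\mu|}^\ast$ contains a cap and is therefore annihilated modulo $CB_\plus$. An induction along the filtration of $Q$ then yields $\Hom_A(Ac_0^\ast,Q)=0$, and the argument closes. The main conceptual difficulty is indeed this $i=2$ case: the ``$\Hom_A(Ax,-)=0$'' tactic of the easy cases is unavailable, and one must trade $\Ext^1$ against $\Hom$ through the cell-filtration sequence, cashing in the degree-zero data supplied by Proposition~\ref{PropHom}.
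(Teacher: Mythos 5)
Your proposal is correct and takes essentially the same route as the paper: the same reduction to $\Ext^1_A(Ac_0^\ast,e_n^\ast\overline{\Delta}(\lambda))$ via Lemma~\ref{LemECA} and projectivity of $Ac_j^\ast$ for $j>0$, the same use of Lemmata~\ref{Lemx}, \ref{LemHomAx}, \ref{Leminc}, \ref{noExt} and equation~\eqref{eqmb0}, with the case $\lambda=\varnothing$ handled identically through~\eqref{enD0}. The only cosmetic difference is that for $|\lambda|\ge 4$ you kill $\Hom_A(Ax,-)$ of the ambient module $Ac_i^\ast/Ac_{i-2}^\ast Ac_i^\ast$ directly, whereas the paper treats all $|\lambda|>0$ uniformly by the Hom--Ext squeeze you reserve for $|\lambda|=2$; both are valid.
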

\begin{proof}
Equation~\eqref{eqG} and Lemma~\ref{LemECA} imply that for any~$C$-module $N$ we have
$$\cR_1G\circ F(N)=0\quad\Leftrightarrow\quad \Ext^1_A(Ac_0^\ast,e_n^\ast N)=0.$$
We consider~$N=\overline{\Delta}(\lambda)$. For $\lambda=\varnothing$, the extension vanishes by equation~\eqref{enD0} and Lemma~\ref{noExt}. Consider $\lambda\vdash i$ with~$i>0$, Lemma~\ref{Leminc} yields an exact sequence
$$\Hom_A(Ac_0^\ast,K)\to \Ext^1_A(Ac_0^\ast,e_n^\ast \overline{\Delta}(\lambda))\to \Ext^1_A(Ac_0^\ast,Ac_i^\ast/Ac_{i-2}^\ast Ac_i^\ast), $$
where~$K$ has a filtration with sections of the form $e_n^\ast \overline{\Delta}(\mu)$ with~$\mu\vdash i$. The left-hand space is zero by equation~\eqref{eqmb0}, the right-hand space is zero by Lemma~\ref{noExt}.
Hence the middle space is zero, which concludes the proof.\end{proof}

The conclusion in Theorem~\ref{ThmMor}(2) then follows from Propositions~\ref{PropHom} and~\ref{PropExt}, precisely as in the proof of \cite[Theorem~9.2.2]{Borelic}. 

\subsection{Projective and cell modules}
By \cite[Proposition~3.5]{Yang} and Theorem~\ref{ThmCp}(1), $A$ is standardly based for poset $(\mathbf{L},\unlhd)$, with cell modules
\begin{equation}\label{eqWA}W_A(\lambda)\;=\;F(W_C(\lambda))\;=\;e_n^\ast C\otimes_B W^0(\lambda),\end{equation}
for any~$\lambda\in \mathbf{L}_C$. As none of these modules is zero we have $\mathbf{L}_A=\mathbf{L}_C$.
This proves Theorem~\ref{ThmSB}(2).

\subsubsection{}The cell modules of~$C$ are included in~$\cF(\overline{\Delta})$, by definitions in~\ref{SecDefMod} and Lemma~\ref{LemPropB}(4).
If $n>4$, Theorem~\ref{ThmMor}(2) thus implies that the cell modules of~$A_n$ form a standard system if and only if the cell modules of~$C_n$ form a standard system. Theorem~\ref{ThmCp}(2) thus implies that for $n>4$, the cell modules form a standard system if and only if~$\charr(\mk)\not\in\{2,3\}$. For $n\in\{2,4\}$, Corollaries~\ref{CorA2}(1) and~\ref{CorA4}(2) below imply that the cell modules do not form a standard system.
The fact that multiplicities in filtrations with sections given by modules forming a standard system are well-defined follows immediately from~\cite{DR}. This concludes the proof of \ref{ThmSB}(3).



\begin{lemma}\label{LemBGGA}
If $p\not\in [2,n]$ and $\lambda\in\Lambda_A$, the module $P_A(\lambda)$ has a filtration with sections given by cell modules with multiplicities
$$(P(\lambda):W(\mu))\,=\, [W(\mu^t ):L(\lambda^t )],\qquad\mbox{for $\mu\in \mathbf{L}_A$}.$$
\end{lemma}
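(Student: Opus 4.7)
The plan is to transfer the Humphreys--BGG reciprocity for $C_n$ (equation~\eqref{eqBGGC}) to $A_n$ via the exact functor $F = e_n^\ast\,\cdot\,:\; C_n\mbox{-mod} \to A_n\mbox{-mod}$. Under the hypothesis $p\not\in[2,n]$, paragraph~\ref{Maschke} gives $\Delta_C(\mu) = W_C(\mu)$ for every $\mu\in\Lambda_C=\mathbf{L}_C$, so \eqref{eqBGGC} reads $(P_C(\lambda):W_C(\mu)) = [W_C(\mu^t):L_C(\lambda^t)]$. For $n$ odd the conclusion follows instantly, as $F$ is then an equivalence by Theorem~\ref{ThmMor}(1); the real work is in the even case, where $\Lambda_A = \Lambda_C\setminus\{\varnothing\}$ and $F$ annihilates precisely $L_C(\varnothing)$.

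Assume $n$ is even and fix $\lambda\in\Lambda_A$. I would start from a standard filtration of $P_C(\lambda)$ (guaranteed by Theorem~\ref{ThmQH}) and apply the exact functor $F$. By~\eqref{eqCorLambda0} the total space becomes $P_A(\lambda)$, and by~\eqref{eqWA} each section $W_C(\sigma)$ becomes $W_A(\sigma)$. For $\sigma\neq\varnothing$ this section is nonzero since it has simple top $L_A(\sigma)$, while in the remaining case a direct diagrammatic inspection of $W_A(\varnothing)\cong e_n^\ast Ce_0^\ast$ (for instance, via the class of the diagram with $n/2$ cups) shows this section is nonzero as well. Consequently no step collapses and $(P_A(\lambda):W_A(\mu)) = (P_C(\lambda):W_C(\mu))$ for every $\mu\in\mathbf{L}_A$.

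It remains to compare composition factor multiplicities. Applying $F$ to a composition series of $W_C(\mu^t)$ yields a filtration of $FW_C(\mu^t)=W_A(\mu^t)$ whose sections are $L_A(\sigma)$ for $\sigma\neq\varnothing$ (and zero otherwise). Since $\lambda\in\Lambda_A$ forces $\lambda^t\neq\varnothing$, the multiplicity of $L_A(\lambda^t)$ in $W_A(\mu^t)$ equals that of $L_C(\lambda^t)$ in $W_C(\mu^t)$. Combined with the preceding step and~\eqref{eqBGGC}, this delivers the desired equality. The main obstacle is purely bookkeeping at $\sigma=\varnothing$, where $L_A(\varnothing)$ does not exist: the argument succeeds because $\lambda^t\neq\varnothing$ shields the right-hand side from this ambiguity, while the explicit nonvanishing of $W_A(\varnothing)$ shields the left. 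No use is made of the finer Ext-vanishing of Theorem~\ref{ThmMor}(2); only exactness of $F$ and the two identifications $FP_C(\lambda)\cong P_A(\lambda)$, $FW_C(\mu)= W_A(\mu)$ are invoked.
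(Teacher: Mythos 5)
Your proposal is correct and is essentially the paper's own argument: the paper's proof simply applies the exact functor $F$ to the standard filtration of $P_C(\lambda)$ from~\eqref{eqBGGC} and invokes~\eqref{eqWA} and~\eqref{eqCorLambda0}. Your additional checks (nonvanishing of every section, including $W_A(\varnothing)$, and the preservation of the multiplicity $[W(\mu^t):L(\lambda^t)]$ because $\lambda^t\neq\varnothing$) are exactly the bookkeeping the paper leaves implicit, the first point having already been recorded after~\eqref{eqWA} where it is noted that none of the $W_A(\mu)$ vanish.
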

\begin{proof}
We can apply the exact functor $F$ to the filtration of $P_C(\lambda)$ in equation~\eqref{eqBGGC}. The conclusion follows from equations~\eqref{eqWA} and~\eqref{eqCorLambda0}.
\end{proof}
The above lemma implies in particular Theorem~\ref{ThmSB}(4).

\begin{lemma}\label{LemConnectP}
Consider $\lambda\in \Par_i^p$ with~$i\in\JJJ^0(n)$. The idempotent
$a_i e^{(i)}_\lambda b_i\in A_n,$
with~$e_\lambda^{(i)}$ a primitive idempotent in~$A_i=e_i^\ast C e_i^\ast$ corresponding to~$L_{A_i}(\lambda)$ and $a_i,b_i$ as defined in~\ref{Defab}, is primitive and corresponds to $L_{A_n}(\lambda)$.
\end{lemma}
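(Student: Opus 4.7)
My strategy is to identify the cyclic left $A_n$-module generated by $e' := a_i e_\lambda^{(i)} b_i$ with the projective cover $P_{A_n}(\lambda)$; idempotency, primitivity and the assignment to $L_{A_n}(\lambda)$ then all follow at once. Lemma~\ref{Lemab} immediately gives $e'^{\,2} = a_i (e_\lambda^{(i)})^2 b_i = e'$. The left $A_n$-linear isomorphism $y \mapsto y b_i$ of Lemma~\ref{LemECA} restricts to $A_n a_i e_\lambda^{(i)} \cong A_n e'$, and the identity $y = (y b_i) a_i$ for $y \in e_n^* C e_i^*$ (coming from $b_i a_i = e_i^*$) shows $A_n a_i = e_n^* C e_i^*$. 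Hence
\[
A_n e' \;\cong\; e_n^* C e_\lambda^{(i)} \;=\; F(C e_\lambda^{(i)}),
\]
with $F = e_n^*(-)$ from \eqref{FunF}.

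It thus suffices to show $C e_\lambda^{(i)} \cong P_{C_n}(\lambda)$; then equation~\eqref{eqCorLambda0} (applicable because $i \in \JJJ^0(n)$ forces $\lambda \neq \varnothing$) yields $A_n e' \cong P_{A_n}(\lambda)$. The module $C e_\lambda^{(i)}$ is projective as a summand of $C e_i^*$, and, using $e_\lambda^{(i)} \in A_i = e_i^* C e_i^*$,
\[
\End_{C}(C e_\lambda^{(i)})^{\op} \;=\; e_\lambda^{(i)} C e_\lambda^{(i)} \;=\; e_\lambda^{(i)} A_i e_\lambda^{(i)},
\]
which is a local ring by primitivity of $e_\lambda^{(i)}$ in $A_i$. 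Therefore $C e_\lambda^{(i)} \cong P_{C_n}(\mu)$ for a unique $\mu \in \Lambda_{C_n}$, to be identified. I would pin down $\mu = \lambda$ by producing a nonzero map $C e_\lambda^{(i)} \to L_{C_n}(\lambda)$, i.e.\ by showing $e_\lambda^{(i)} L_{C_n}(\lambda) \neq 0$. The canonical surjection $\overline{\Delta}(\lambda) \twoheadrightarrow L_{C_n}(\lambda)$ is injective on the generating subspace $1 \otimes L^0(\lambda) \subseteq e_i^* \overline{\Delta}(\lambda)$: the intersection of the kernel with $1 \otimes L^0(\lambda)$ is an $H$-submodule of the simple $H$-module $L^0(\lambda)$, hence is zero or everything, and it cannot be everything since $\overline{\Delta}(\lambda)$ is generated as a $C$-module by $1 \otimes L^0(\lambda)$.

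To upgrade this $H$-inclusion to an $A_i$-inclusion, I use Corollary~\ref{CorTriang} together with the triviality of the $B_{\plus}$-action on $L^0(\lambda)$: writing any $z \in A_i$ as $\sum_{j\le i} \overline{n}_j h_j n_j$ with $\overline{n}_j \in e_i^* \overline{N} e_j^*$, $h_j \in e_j^* H e_j^*$, $n_j \in e_j^* N e_i^*$, the summands with $j < i$ vanish on $1 \otimes v$ because $n_j \in B_{\plus}$ kills $v$, while the $j = i$ summand acts as $h_i \in \mk \mS_i$. Thus $z$ acts through the projection $A_i \twoheadrightarrow A_i/I \cong \mk\mS_i$ of~\ref{sseqI}, and $L^0(\lambda) \hookrightarrow L_{C_n}(\lambda)$ is an $A_i$-submodule isomorphic to $L_{A_i}(\lambda)$ by Corollary~\ref{CorLAn}. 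Since $e_\lambda^{(i)}$ is primitive for $L_{A_i}(\lambda)$, $e_\lambda^{(i)} L^0(\lambda) \neq 0$, finishing the identification. The only delicate step is precisely this last one: recognising the generating subspace not merely as an $H$-submodule but as the $A_i$-module $L_{A_i}(\lambda)$; the remainder is a direct consequence of Lemmata~\ref{Lemab} and~\ref{LemECA}, the double centraliser setup, and \eqref{eqCorLambda0}.
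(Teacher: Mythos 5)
Your proposal is correct and follows essentially the same route as the paper: transfer $e_n^\ast C e_\lambda^{(i)} \cong A_n a_i e_\lambda^{(i)} b_i$ via Lemma~\ref{LemECA} and conclude with equation~\eqref{eqCorLambda0}, after noting that $e_\lambda^{(i)}$ stays primitive in $C_n$ because $e_\lambda^{(i)} C_n e_\lambda^{(i)} = e_\lambda^{(i)} A_i e_\lambda^{(i)}$ is local. The only difference is that you carefully justify the identification $C e_\lambda^{(i)} \cong P_{C_n}(\lambda)$ (via $e_\lambda^{(i)} L_{C_n}(\lambda) \neq 0$, using the triangular decomposition and Corollary~\ref{CorLAn}), a point the paper treats as obvious; your argument for it is sound.
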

\begin{proof}
It is obvious that $e^{(i)}_\lambda\in A_i=e_i^\ast C_ne_i^\ast$ remains primitive as an idempotent in $C_n$.
By construction, the isomorphism $e_n^\ast C_n e_i^\ast\cong A_n c_i^\ast$ of~Lemma~\ref{LemECA} restricts to
$$e_n^\ast Ce_\lambda^{(i)}\cong A_n a_i e_\lambda^{(i)} b_i.$$
This module is isomorphic to $P_{A_n}(\lambda)$, by equation~\eqref{eqCorLambda0}, which concludes the proof.
\end{proof}


\section{The Bratteli diagram and Murphy bases}\label{SecBasis}
We construct the Bratteli diagram related to the sequence of standardly based algebras given by the inclusion of the periplectic Brauer algebras in~\ref{embed}. The analogue for the Brauer algebras can be found in~\cite[\S6]{LeducRam} or \cite[Proposition~2.7]{blocks}. This will allow us to construct a Murphy basis for each cell module of~$A_n$, similarly to the case of the Brauer algebra in~\cite[\S 9]{Enyang} or the Iwahori-Hecke algebra in~\cite[\S 3.2]{Mathas}. We could use this construction to prove the standardly based structure of~$A_n$ more directly than in Section~\ref{SecAC}, but we do not pursue this.

\subsection{Bratteli diagrams} First we introduce our Bratteli diagram, and some general terminology.
\subsubsection{}\label{SecBraPro}Consider a chain 
$$R_1\subset R_2\subset \cdots\subset R_i\subset R_{i+1}\subset \cdots$$ of standardly based algebras, with~$R_1=\mk$. Assume that the restriction of an arbitrary cell module of~$R_{i+1}$ to~$R_{i}$ admits a filtration with sections given by~$R_i$-cell modules, such that no cell module of~$R_i$ appears more than once for a fixed cell module of~$R_{i+1}$. Corresponding to that chain of algebras (and that choice of filtrations) we can then define a multiplicity free Bratteli diagram as follows. On the $i$th row we place the elements of~$\mathbf{L}_{R_i}$, these are the vertices of the diagram. Then we draw an edge between an element of~$\mathbf{L}_{R_i}$ and one of~$\mathbf{L}_{R_{i+1}}$ if the corresponding cell module of~$R_i$ appears in the filtration of the restriction of the cell module of~$R_{i+1}$.

\subsubsection{}
The set of vertices on row $i$ in the Bratteli diagram which appears in the study of BMW and Brauer algebras in~\cite{Enyang, LeducRam} is $\mathbf{L}_{A_i}=\mathbf{L}_{C_i}$. The edges in the Bratteli diagram are then given by connecting any partition~$\lambda$ on the $i$th row with all partitions on the $i+1$-th row which are in~$\RR(\lambda)\sqcup\AAAA(\lambda)$, see~\ref{DefAR}. The top part of this diagram diagram, see \cite[Figure 2]{LeducRam}, is given by
\begin{align}\label{Brat}
\begin{matrix}
\xymatrix{ 
 & \text{\tiny\Yvcentermath1$\yng(1)$}\ar@{-}[dl]\ar@{-}[d]\ar@{-}[dr] & & & \\
 \varnothing\ar@{-}[dr] & \text{\tiny\Yvcentermath1$\yng(2)$}\ar@{-}[d]\ar@{-}[dr]\ar@{-}[drr]&
 \text{\tiny\Yvcentermath1$\yng(1,1)$}\ar@{-}[dl]\ar@{-}[dr]\ar@{-}[drr]& & \\
& \text{\tiny\Yvcentermath1$\yng(1)$}\ar@{-}[dl]\ar@{-}[d]\ar@{-}[dr] &
\text{\tiny\Yvcentermath1$\yng(3)$}\ar@{-}[dl]\ar@{-}[dr]\ar@{-}[drr]&
 \text{\tiny\Yvcentermath1$\yng(2,1)$}\ar@{-}[dll]\ar@{-}[dl]\ar@{-}[dr]\ar@{-}[drr]\ar@{-}[drrr]
 & \text{\tiny\Yvcentermath1$\yng(1,1,1)$}\ar@{-}[dll]\ar@{-}[drr]\ar@{-}[drrr]\\
  \varnothing & \text{\tiny\Yvcentermath1$\yng(2)$}&
 \text{\tiny\Yvcentermath1$\yng(1,1)$}& \text{\tiny\Yvcentermath1$\yng(4)$} & \text{\tiny\Yvcentermath1$\yng(3,1)$} &\text{\tiny\Yvcentermath1$\yng(2,2)$} &\text{\tiny\Yvcentermath1$\yng(2,1,1)$}&\text{\tiny\Yvcentermath1$\yng(1,1,1,1)$}}
\end{matrix}
\end{align}

\subsubsection{}\label{SecPath}
A {\em path} in the Bratteli diagram is a sequence of vertices $\mathfrak{t}=(\mathfrak{t}^{(1)},\mathfrak{t}^{(2)},\cdots, \mathfrak{t}^{(k)})$ such that~$\mathfrak{t}^{(l)}$ is on the $l$th row and there is an edge between~$\mathfrak{t}^{(l)}$ and $\mathfrak{t}^{(l+1)}$.
Denote the set of all paths in the Bratteli diagram \eqref{Brat} ending in a partition~$\lambda$ on the $i$th row by  $\St_i(\lambda)$. Note that in case $\lambda\vdash i$, the set $\St_i(\lambda)$ for diagram~\ref{Brat} also corresponds to a set of paths in the Young lattice. In this case, $\St_i(\lambda)$ can thus be identified with the set $\St(\lambda)$ of standard tableaux of shape $\lambda$, see \cite[\S4]{LeducRam}, justifying the notation. For any~$\mathfrak{t}\in \St_i(\lambda)$, we let $\mathfrak{t}'\in\St_{i-1}(\mathfrak{t}^{(i-1)})$ be defined by~$\mathfrak{t}=(\mathfrak{t}',\lambda)$. 

An example of a path in~\eqref{Brat} is given as
$$(\text{    \tiny{$\yng(1)$}        },\text{    \tiny{$\yng(2)$}        },\text{    \tiny{$\yng(1)$}        },\text{    \tiny{$\yng(1,1)$}        })\;\in\; \St_4(\text{    \tiny{$\yng(1,1)$}        })$$

\subsubsection{}Each row in the Bratteli diagram~\eqref{Brat} is equipped with the partial order~$\unlhd$ in equation~\eqref{BigDO}. We have drawn the Bratteli diagram in such a way that if~$ \lambda\rhd\mu$, then~$\lambda$ appears to the left of~$\mu$.

For a fixed partition~$\lambda$ on the $n$th row we introduce a lexicographic partial order~$\unlhd$ on the paths $\St_n(\lambda)$, by setting $\mathfrak{t}\lhd \mathfrak{s}$ if and only if there is a $k$, with~$1\le k\le n$, such that
$$\mathfrak{t}^{(k)}\lhd \mathfrak{s}^{(k)}\qquad\mbox{and}\qquad \mathfrak{t}^{(j)}=\mathfrak{s}^{(j)},\mbox{ for all }k<j\le n.$$
The partial order can equivalently be defined by iteration, for $\mathfrak{t},\mathfrak{s}\in\St_n(\lambda)$, we have
$$
\mathfrak{t}\lhd \mathfrak{s}\quad\Leftrightarrow\quad\begin{cases}\mathfrak{t}^{(n-1)}\lhd \mathfrak{s}^{(n-1)},&\mbox{or}\\ \mathfrak{t}^{(n-1)}= \mathfrak{s}^{(n-1)}\;\mbox{ and }\; \mathfrak{t}'\lhd \mathfrak{s}'.\end{cases}
$$

When~$|\lambda|=n$, the order~$\unlhd$ on~$\St_n(\lambda)=\St(\lambda)$ does not reduce to the partial order on standard $\lambda$-tableaux in~\cite[\S 3.1]{Mathas}, but to an extension of that order.
\subsection{Restriction of cell modules}\label{SecRes} We make no assumption on $\charr(\mk)$. We will use the short-hand notation~$W_n(\lambda):=W_{A_n}(\lambda)$, with~$\lambda\in\mathbf{L}_A$, for the cell modules of $A_n$.

\begin{thm}\label{ThmRes}
The restriction of the cell modules of $A_n$ to~$A_{n-1}$ have filtrations which lead to the Bratteli diagram~\eqref{Brat} under the procedure in \ref{SecBraPro}.

\end{thm}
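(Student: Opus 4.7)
My plan is to realise the cell module via the triangular decomposition of Corollary~\ref{CorTriang}: for $\lambda\vdash i$ with $i\in\JJJ(n)$, viewing $W^0(\lambda)$ as a $B$-module with trivial $B_{\plus}$-action, multiplication in $C_n$ gives a $\mk$-vector space isomorphism
\[
W_n(\lambda)\;=\;e_n^\ast C_n\otimes_B W^0(\lambda)\;\cong\;e_n^\ast\overline{N}e_i^\ast\otimes_\mk W^0(\lambda).
\]
A basis of the first tensor factor is given by half-diagrams with $(n-i)/2$ cups amongst the $n$ top points and $i$ non-crossing propagating lines from the remaining top points to the $i$ bottom points (the bijection with bottom points being forced order-preserving by the non-crossing condition). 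Split this basis according to the nature of the rightmost ($n$th) top point, and denote by $V^\mid, V^\Diamond\subset W_n(\lambda)$ the spans of basis vectors whose $n$th top point is respectively the endpoint of a propagating line (type~$\mid$) or of a cup (type~$\Diamond$).

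The first step is to verify that $V^\mid$ is an $A_{n-1}$-submodule of $\res_{A_{n-1}}^{A_n}\!W_n(\lambda)$. Under the embedding $A_{n-1}\hookrightarrow A_n$ of~\ref{embed}, every $\alpha\in A_{n-1}$ carries a trivial strand in column $n$. In the product $\alpha\cdot\beta$ with $\beta$ a basis half-diagram, the new $n$th top point first traverses this trivial strand to reach the $n$th top point of $\beta$; when $\beta\in V^\mid$ this path descends immediately via $\beta$'s propagating line to a bottom point, so the composition again has its $n$th top point propagating, and hence lies in $V^\mid$. By contrast, $V^\Diamond$ is not $A_{n-1}$-stable: a cap in $\alpha$ can ``open'' the cup at the $n$th top of $\beta$ and convert it into a propagating line, as small examples confirm.

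For the $A_{n-1}$-module structure of $V^\mid$: the non-crossing condition forces the propagating line at the $n$th top point to terminate at the rightmost ($i$th) bottom point, so deleting this strand (together with the $i$th bottom point) yields a vector space isomorphism
\[
V^\mid\;\cong\; e_{n-1}^\ast\overline{N}_{n-1}e_{i-1}^\ast\otimes_\mk\res^{\mk\mS_i}_{\mk\mS_{i-1}}\!W^0(\lambda),
\]
which is $A_{n-1}$-equivariant once the right-hand side is equipped with the $A_{n-1}$-action induced by the triangular decomposition of $C_{n-1}$ (as in the construction of its cell modules). The classical branching rule for dual Specht modules, valid in arbitrary characteristic (cf.~\ref{simplesSG}), furnishes a multiplicity-one filtration of $\res W^0(\lambda)$ with sections $W^0(\mu)$, $\mu\in\RR(\lambda)$; tensoring with $e_{n-1}^\ast\overline{N}_{n-1}e_{i-1}^\ast$ lifts this to a cell filtration of $V^\mid$ by $W_{n-1}(\mu)$, $\mu\in\RR(\lambda)$.

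Dually, for $V^\Diamond\cong W_n(\lambda)/V^\mid$, ``opening'' the cup at the $n$th top point so that its partner $j$ descends to a new rightmost bottom point yields
\[
V^\Diamond\;\cong\; e_{n-1}^\ast\overline{N}_{n-1}e_{i+1}^\ast\otimes_\mk\ind^{\mk\mS_{i+1}}_{\mk\mS_i\times\mk\mS_1}\!(W^0(\lambda)\boxtimes\mk),
\]
and Pieri's rule gives a multiplicity-one filtration of the induced module with sections $W^0(\mu)$, $\mu\in\AAAA(\lambda)$, which lifts to a cell filtration of $V^\Diamond$. Concatenating produces the claimed filtration of $\res_{A_{n-1}}^{A_n}\!W_n(\lambda)$ with sections $W_{n-1}(\mu)$ for $\mu\in\RR(\lambda)\cup\AAAA(\lambda)$, each appearing once, reproducing the edges of~\eqref{Brat}. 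I expect the main technical obstacle to be honestly tracking the signs arising from reordering markings (Section~\ref{SecMaDi}) through the identifications above and checking $A_{n-1}$-equivariance on generators; the boundary cases $\lambda=\varnothing$ (which forces $V^\mid=0$ and requires $n$ even for $\varnothing\in\mathbf{L}_A$) and $|\lambda|=n$ (where $V^\Diamond=0$) need only the symmetric group branching rule.
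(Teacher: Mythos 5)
Your proposal is correct and follows essentially the same route as the paper: the splitting of $e_n^\ast\overline{N}e_i^\ast$ according to whether the rightmost upper dot lies on a propagating line or a cup is exactly the paper's decomposition $N_{n-1,i-1}\oplus N_{n-1,i+1}$, your identifications of the submodule and quotient are the paper's Lemma~\ref{LemRes}, and the final step via the characteristic-free branching and Pieri rules for dual Specht modules is the paper's appeal to \ref{SecRevS} together with exactness of $I_{n-1}$. The sign issue you flag is absorbed in the paper by the factorisation $d=(d^{(1)}\otimes I)\circ(d^{(2)}\otimes I)\circ a$ inside the category $\cA$, where $d^{(1)}$ is a diagram up to sign, so no further bookkeeping is needed.
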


\subsubsection{}\label{SecNni}By equation~\eqref{eqWA} and Corollary~\ref{CorTriang}, for $\lambda\vdash i$, we have
$$W_n(\lambda)\;=\; e_n^\ast \overline{N} e_i^\ast \otimes W^0(\lambda),$$
as a vector space. To describe the restriction to~$A_{n-1}$, we will decompose $e_n^\ast \overline{N}e_i^\ast$. We write
$$e_n^\ast \overline{N}e_i^\ast\;=\; N_{n-1,i-1}\;\oplus N_{n-1,i+1},$$
where~$N_{n-1,i-1}$, resp. $N_{n-1,i+1}$, is spanned by all diagrams having a propagating line, resp. cup, ending in the right-most dot on the upper line. We have an isomorphism of vector spaces 
\begin{equation}\label{isomni-1}e_{n-1}^\ast \overline{N} e_{i-1}^\ast\;\stackrel{\sim}{\to}\; N_{n-1,i-1},\quad \alpha\mapsto \alpha\otimes I.\end{equation}
Define $a\in e_{i+2}^\ast \overline{N}e_i^\ast$ as 
\begin{equation*}
\begin{tikzpicture}[scale=0.6,thick,>=angle 90]
\begin{scope}[xshift=8cm]
\node at (0,0.5) {$a:=$};
\draw (.7,0) -- +(0,1);
\draw (1.3,0) -- +(0,1);
\draw [dotted] (1.6,.5) -- +(1,0);
\draw (2.9,0) -- +(0,1);
\draw (3.5,1) to [out=-90,in=-180] +(.3,-.3) to [out=0,in=-90] +(.3,.3);
\end{scope}
\end{tikzpicture}
\end{equation*} Any diagram $d\in N_{n-1,i+1}$ can be uniquely decomposed as
\begin{equation}\label{Defd1d2}d=(d^{(1)}\otimes I)\circ (d^{(2)}\otimes I)\circ a,\quad \mbox{with}\quad d^{(1)}\in e_{n-1}^\ast \overline{N}e^\ast_{i+1}\;\mbox{ and }\; d^{(2)}\in \mS_{i+1}=e_{i+1}^\ast He_{i+1}^\ast.\end{equation}
By construction, $d^{(2)}$ is a shortest representative of the coset $\mS_{i+1}/(\mS_i\times \mS_1)$. Note also that~$d^{(1)}$ is either a diagram, or a diagram multiplied with~$-1$. It can be easily observed graphically that this actually yields an isomorphism of vector spaces
$$N_{i-1,i+1}\;\stackrel{\sim}{\to}\; e_{n-1}^\ast \overline{N}e^\ast_{i+1}\otimes \mk  \mS_{i+1}/\mk\mS_i, \quad d\mapsto d^{(1)}\otimes d^{(2)}.$$

For all $k\in\mZ_{>0}$, we consider the exact functor 
$$I_k\;=\; e_k^\ast C_k\otimes_B-:\; H\mbox{-mod}\to A_k\mbox{-mod},$$
where modules of $H\cong\oplus_{i\in\JJJ(k)}\mk\mS_i$ are interpreted as $B$-modules with trivial $B_{\plus}$-action. In particular, equation~\eqref{eqWA} implies $W(\lambda)=I_k(W^0(\lambda))$, for all $\lambda\in\LL_{A_k}$.
If $\lambda\vdash i=n$, we assume the convention~$I_{n-1}(\ind^{\mk\mS_{n+1}} W^0(\lambda))=0$.
\begin{lemma}\label{LemRes}
The restriction to~$A_{n-1}$ of $W_n(\lambda)$, with $\lambda\vdash i\in\JJJ(n)$, satisfies a short exact sequence
$$0\;\to\; I_{n-1} (\res_{\mk\mS_{i-1}} W^0(\lambda))\;\to\; \res_{A_{n-1}}W_n(\lambda)\;\to\;I_{n-1}(\ind^{\mk\mS_{i+1}} W^0(\lambda)) \;\to\; 0.$$
\end{lemma}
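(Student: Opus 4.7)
The plan is to construct the short exact sequence via a natural $A_{n-1}$-stable filtration of $W_n(\lambda)$. Using the vector space identification $W_n(\lambda) = e_n^\ast \overline{N} e_i^\ast \otimes W^0(\lambda)$ from \ref{SecNni} together with the decomposition $e_n^\ast \overline{N} e_i^\ast = N_{n-1,i-1} \oplus N_{n-1,i+1}$ (according to whether the rightmost top dot of a diagram lies on a propagating line or on a cup), we obtain a vector space decomposition $W_n(\lambda) = V_- \oplus V_+$ with $V_\pm := N_{n-1,i\pm 1} \otimes W^0(\lambda)$. The key structural claim is that $V_-$ is a left $A_{n-1}$-submodule.

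To establish this, take $D \in N_{n-1,i-1}$ and $d \otimes I \in A_{n-1}$. When we stack $d \otimes I$ on top of $D$, the identity strand of $d \otimes I$ at position $n$ prolongs the propagating line of $D$ at its rightmost top upward, so the composite still has its rightmost top on a propagating line. After reducing via the triangular decomposition of Corollary \ref{CorTriang} to land in $e_n^\ast \overline{N} e_i^\ast$, the surviving contribution lies in $N_{n-1,i-1}$ (any corrections that change the number of propagating lines pass through higher $N_+$-strata that vanish when tensored with $W^0(\lambda)$). Hence $V_-$ is an $A_{n-1}$-submodule, and we obtain a short exact sequence $0 \to V_- \to W_n(\lambda) \to V_+ \to 0$ of $A_{n-1}$-modules.

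To identify these summands, I would exploit the explicit factorizations in \ref{SecNni}. Every diagram in $N_{n-1,i-1}$ factors uniquely as $(\alpha \otimes I)\sigma$ with $\alpha \in e_{n-1}^\ast \overline{N} e_{i-1}^\ast$ (encoding the diagram with the rightmost top propagating to bottom $i$) and $\sigma$ a shortest coset representative of $\mk\mS_i/\mk\mS_{i-1}$ (specifying which bottom dot actually receives the line). This yields
\[ V_- \;\cong\; e_{n-1}^\ast \overline{N} e_{i-1}^\ast \otimes_{\mk\mS_{i-1}} \res_{\mk\mS_{i-1}} W^0(\lambda) \;=\; I_{n-1}(\res_{\mk\mS_{i-1}} W^0(\lambda)). \]
Similarly, the factorization $d = (d^{(1)} \otimes I)(d^{(2)} \otimes I)a$ with $d^{(2)}$ ranging over coset representatives of $\mk\mS_{i+1}/\mk\mS_i$ produces
\[ V_+ \;\cong\; e_{n-1}^\ast \overline{N} e_{i+1}^\ast \otimes_{\mk\mS_{i+1}}\bigl(\mk\mS_{i+1} \otimes_{\mk\mS_i} W^0(\lambda)\bigr) \;=\; I_{n-1}(\ind^{\mk\mS_{i+1}} W^0(\lambda)). \]

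The main obstacle is verifying that these identifications respect the left $A_{n-1}$-action and the appropriate balance relations. The balance for $V_+$ reduces to the elementary identity $a\sigma = (\sigma \otimes I_2) \circ a$ for $\sigma \in \mk\mS_i$ (since the cup of $a$ occupies strands $i{+}1, i{+}2$ which $\sigma$ does not touch), combined with the routine rewriting $d^{(2)}(\sigma \otimes I) = e^{(2)}\tau$ with $e^{(2)}$ a coset representative and $\tau \in \mk\mS_i$. Compatibility with the induced left $A_{n-1}$-action then follows from Corollary \ref{CorTriang}, since left multiplication by $A_{n-1}$ affects only the $d^{(1)}$- (resp. $\alpha$-) component up to corrections that land in $V_-$ and are killed in the quotient. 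Finally, the edge cases are routine: for $i = 0$ one has $V_- = 0$ since $N_{n-1,-1} = 0$, and for $i = n$ one has $V_+ = 0$ since $N_{n-1,n+1} = 0$, matching the stated convention that $I_{n-1}(\ind^{\mk\mS_{n+1}} W^0(\lambda)) = 0$.
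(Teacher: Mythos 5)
Your proof is correct and is essentially the paper's own argument: the same splitting $e_n^\ast \overline{N} e_i^\ast = N_{n-1,i-1}\oplus N_{n-1,i+1}$, the same submodule $N_{n-1,i-1}\otimes W^0(\lambda)$, and the same identifications of sub and quotient via $\alpha\mapsto\alpha\otimes I$ and $d\mapsto d^{(1)}\otimes d^{(2)}$; you merely make explicit the equivariance checks the paper dismisses as ``easily identified''. One small slip: since diagrams in $\overline{N}$ have \emph{non-crossing} propagating lines, an element of $N_{n-1,i-1}$ factors simply as $\alpha\otimes I$ with no coset representative $\sigma$ of $\mS_i/\mS_{i-1}$ (the rightmost top dot is forced to connect to the $i$th bottom dot, and a nontrivial $\sigma$ would also give the wrong dimension count), but this does not affect your correct identification of $V_-$ with $I_{n-1}(\res_{\mk\mS_{i-1}}W^0(\lambda))$, where the $\mk\mS_{i-1}$-action enters only through the triangular decomposition of the left multiplication.
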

\begin{proof}
The subspace $N_{n-1,i-1}\otimes W^0(\lambda)$ of~$W_n(\lambda)$ constitutes an~$A_{n-1}$-submodule. Using the isomorphism in~\eqref{isomni-1}, this submodule is easily identified with
the left term in the short exact sequence.

Now we use~\eqref{Defd1d2} to construct the vector space isomorphism
\begin{equation}\label{eqddd}N_{n-1,i+1}\otimes W^0(\lambda)\;\stackrel{\sim}{\to}\; e_{n-1}^\ast \overline{N}e_{i+1}^\ast \otimes \ind^{\mk\mS_{i+1}}W^0(\lambda),\quad d\otimes v\mapsto d^{(1)}\otimes d^{(2)}v,\end{equation}
where~$v\in W^0(\lambda)$ is interpreted as $1\otimes v\in \mk\mS_{i+1}\otimes_{\mk\mS_i}W^0(\lambda)$. Note that $\mk\mS_{i+1}$ is free as a (right) $\mk\mS_i$-module.
This corresponds to the proposed isomorphism from the quotient of~$W_n(\lambda)$ with respect to the submodule of the previous paragraph.\end{proof}

As things simplify greatly, we will first briefly focus on the case $\charr(\mk)\not\in[2,n]$, before moving on to arbitrary characteristic. We assume $W_{n-1}(\nu)=0$ when $|\nu|>n-1$.
\begin{cor}\label{CorSS}
Assume $\charr(\mk)\not\in[2,n]$, we have a short exact sequence
$$0\;\to\;\bigoplus_{\mu\in \RR(\lambda)}W_{n-1}(\mu)\;\to\;\res_{A_{n-1}}W_n(\lambda)\;\to\; \bigoplus_{\nu\in\AAAA(\lambda)}W_{n-1}(\nu)\;\to\;0.$$
\end{cor}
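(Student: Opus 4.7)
The plan is to derive Corollary~\ref{CorSS} directly from Lemma~\ref{LemRes} by rewriting the two outer terms using the classical branching rule for the symmetric group, which is available under the characteristic assumption.

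First I would invoke Lemma~\ref{LemRes} to obtain, for $\lambda \vdash i$ with $i\in \JJJ(n)$, the short exact sequence
\begin{equation*}
0 \to I_{n-1}\!\left(\res_{\mk\mS_{i-1}} W^0(\lambda)\right) \to \res_{A_{n-1}} W_n(\lambda) \to I_{n-1}\!\left(\ind^{\mk\mS_{i+1}} W^0(\lambda)\right) \to 0.
\end{equation*}
Since $\charr(\mk)\notin[2,n]$, Maschke's theorem (as invoked in~\ref{Maschke}) ensures that $\mk\mS_j$ is semisimple for every $j\le n$. Consequently, each $W^0(\lambda)$ is simple, and the classical branching rule (Pieri / Young's rule; see e.g.~\cite[\S 3.3]{Mathas}, or equivalently the Littlewood-Richardson coefficients from~\ref{SecLR} applied to a single box) gives
\begin{equation*}
\res_{\mk\mS_{i-1}} W^0(\lambda) \;\cong\; \bigoplus_{\mu\in\RR(\lambda)} W^0(\mu), \qquad
\ind^{\mk\mS_{i+1}} W^0(\lambda) \;\cong\; \bigoplus_{\nu\in\AAAA(\lambda)} W^0(\nu).
\end{equation*}

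Next, I would apply the exact functor $I_{n-1}$ to both of these decompositions. Using the identity $W_{n-1}(\kappa) = I_{n-1}(W^0(\kappa))$ from equation~\eqref{eqWA}, the two outer terms in the sequence above are identified precisely with $\bigoplus_{\mu\in\RR(\lambda)} W_{n-1}(\mu)$ and $\bigoplus_{\nu\in\AAAA(\lambda)} W_{n-1}(\nu)$, which is exactly the claimed sequence.

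Finally I would check the boundary cases. If $i=n$, then $\AAAA(\lambda)$ consists of partitions of size $n+1$, all of which lie outside $\mathbf{L}_{A_{n-1}}$; this is consistent both with the convention $I_{n-1}(\ind^{\mk\mS_{n+1}} W^0(\lambda))=0$ made just before Lemma~\ref{LemRes} and with the stated convention $W_{n-1}(\nu)=0$ whenever $|\nu|>n-1$. Symmetrically, if $i=0$ then $\lambda=\varnothing$ and $\RR(\lambda)=\emptyset$, so the left-hand term vanishes. I do not expect any real obstacle here: the substantive content is already contained in Lemma~\ref{LemRes} (which assembles the algebraic decomposition of $e_n^\ast \overline{N} e_i^\ast$) together with the semisimple branching rule; the only thing left is this routine boundary bookkeeping.
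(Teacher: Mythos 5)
Your proposal is correct and follows exactly the paper's own argument: the paper's proof of this corollary is the one-line observation that it "follows immediately from Lemma~\ref{LemRes} and the description of the restriction and induction in~\ref{SecLR}", which is precisely the combination of Lemma~\ref{LemRes}, the semisimple branching rule, exactness of $I_{n-1}$, and the identification $W_{n-1}(\kappa)=I_{n-1}(W^0(\kappa))$ that you spell out. Your extra attention to the boundary conventions for $|\lambda|=n$ and $\lambda=\varnothing$ is consistent with the conventions stated just before the corollary.
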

\begin{proof}
This follows immediately from Lemma~\ref{LemRes} and the description of the restriction and induction in~\ref{SecLR}.
\end{proof}

\subsubsection{} If we fix $n$ and we assume that~$\charr(\mk)\not\in[2,n]$, we can define the Murphy basis of~$A_n$ easily, by iteration. For $A_1=\mk$ we let $v_{\mathfrak{t}}$, with~$\mathfrak{t}=(\Box)$, be any non-zero vector. Assume that we have defined a basis $\{v_{\mathfrak{t}}\,|\, \mathfrak{t}\in \St_{n-1}(\mu)\}$ of~$W_{n-1}(\mu)$ for any~$\mu\in \mathbf{L}_{A_{n-1}}$.

Now fix $\lambda\in \mathbf{L}_{A_n}$. For any~$\mathfrak{t}\in \St_n(\lambda)$ with~$|\mathfrak{t}^{(n-1)}|<|\lambda|$, we let $v_{\mathfrak{t}}\in W_n(\lambda)$ be the image of~$v_{\mathfrak{t}'}\in W_{n-1}(\mathfrak{t}^{(n-1)})$ under the $A_{n-1}$-monomorphism in Corollary~\ref{CorSS}. Similarly, if~$|\mathfrak{t}^{(n-1)}|>|\lambda|$, we define $v_{\mathfrak{t}}\in W_n(\lambda)$ to be the unique vector in~$N_{n-1,i+1}\otimes W^0(\lambda)$, which is in the preimage of~$v_{\mathfrak{t}'}\in W_{n-1}(\mathfrak{t}^{(n-1)})$ under the $A_{n-1}$-epimorphism in Corollary~\ref{CorSS}.

\subsubsection{}\label{SecRevS} Now we return to arbitrary $\charr(\mk)$. We formulate the result in~\cite[Theorem~9.3]{James} or \cite[Proposition~6.1]{Mathas} in a way which will be convenient later. For any~$\lambda\vdash i$, we have a vector space decomposition
$$W^0(\lambda)\;=\;\bigoplus_{\mu\in\RR(\lambda)}W^0(\lambda)^{\mu}.$$
We set 
$$W^0(\lambda)^{\widehat\mu}:=\bigoplus_{\kappa\rhd\mu} W^0(\lambda)^{\kappa}.$$ Then both $W^0(\lambda)^{\widehat\mu}$ and $W^0(\lambda)^{\widehat\mu}\oplus W^0(\lambda)^{\mu}$ are $\mk\mS_{i-1}$-submodules of~$W^0(\lambda)$ and 
$$(W^0(\lambda)^{\widehat\mu}\oplus W^0(\lambda)^{\mu})/W^0(\lambda)^{\widehat\mu}\;\cong\; W^0(\mu).$$
Similarly, by \cite[\S 17]{James}, we have a vector space decomposition
$$\ind^{\mk\mS_{i+1}}W^0(\lambda)\;=\;\bigoplus_{\nu\in\AAAA(\lambda)}IW^0(\lambda)^{\nu}, $$
such that, with similar notation, we have $\mk\mS_{i+1}$-module isomorphisms
\begin{equation}\label{isoIW}(IW^0(\lambda)^{\widehat\nu}\oplus IW^0(\lambda)^{\nu})/IW^0(\lambda)^{\widehat\nu}\;\cong\; W^0(\nu).\end{equation}

\begin{proof}[Proof of Theorem~\ref{ThmRes}]
As the functor $I_{n-1}$ is exact, the statement follows immediately from Lemma~\ref{LemRes} and the restriction and induction of Specht modules in~\ref{SecRevS}.
\end{proof}

\subsection{The Murphy bases for $A_n$}
Fix an arbitrary $\mk$. For each $\lambda\in \mathbf{L}_{A_n}$, we will fix a basis 
$$\{v_{\mathfrak{t}}\;|\; \mathfrak{t}\in\St_n(\lambda)\}\qquad\mbox{of}\qquad W_n(\lambda),$$
for $W_n(\lambda)$ the cell module over~$A_n$. This basis will be defined iteratively. For $A_1$ we take an arbitrary non-zero vector of the trivial module.
\subsubsection{}
First we define a vector space decomposition
$$W_n(\lambda)\;=\;\bigoplus_{\mu\in \RR(\lambda)}W_{n}(\mu)^{\mu}\;\oplus\;\bigoplus_{\nu\in\AAAA(\lambda)}W_{n}(\lambda)^{\nu}.$$
For $\mu\in\RR(\lambda)$, we set
$$W_{n}(\lambda)^{\mu}:=N_{n-1,i-1}\otimes W^0(\lambda)^{\mu}.$$
For $\nu\in\AAAA(\lambda)$, we set
\begin{equation}\label{eqCompl}W_n(\lambda)^{\nu}:= (e_{n-1}^\ast \overline{N}e_{i+1}^\ast)\otimes IW^0(\lambda)^\nu,\end{equation}
where the equality should be interpreted under isomorphism~\eqref{eqddd}. For any~$\mu\in \RR(\lambda)\sqcup \AAAA(\lambda)$ we set
$$W_n(\lambda)^{\widehat\mu}=\bigoplus_{\kappa\in \RR(\lambda)\sqcup \AAAA(\lambda)\,|\, \kappa\rhd\mu}W_n(\lambda)^{\kappa}.$$
By Section~\ref{SecRes} and exactness of $I_{n-1}$, we have an isomorphism of~$A_{n-1}$-modules
\begin{equation}\label{isoMB}(W_n(\lambda)^{\widehat\mu}\oplus W_n(\lambda)^{\mu})/W_n(\lambda)^{\mu}\;\cong \;W_{n-1}(\mu),\qquad \mbox{for $\mu\in \RR(\lambda)\sqcup\AAAA(\lambda)$}.\end{equation}

\subsubsection{} For any~$\mathfrak{t}\in\St_n(\lambda)$, the Murphy basis element $v_\mathfrak{t}$ is defined as the unique element of~$W_n(\lambda)^{\mathfrak{t}^{(n-1)}}$ such that its image under \eqref{isoMB} is $v_{\mathfrak{t}'}$. Consequently, we have
\begin{equation}\label{FPMB}W_n(\lambda)^\mu\;=\;\bigoplus_{\mathfrak{t}\in \St_n(\lambda)\,|\,\mathfrak{t}^{(n-1)}=\mu}\mk v_{\mathfrak{t}}\qquad\mbox{and}\qquad W_n(\lambda)^{\widehat\mu}\;=\;\bigoplus_{\mathfrak{s}\in \St_n(\lambda)\,|\,\mathfrak{s}^{(n-1)}\rhd\mu}\mk v_{\mathfrak{s}}.
\end{equation}


\section{Jucys-Murphy elements and the centre of~$A_n$}\label{SecJM}
In this section we will construct a ``family of JM elements'' in the sense of~\cite{MathasCrelle}, which is compatible with the Murphy basis. For appropriate characteristic of the ground field, this family will even be `complete' in the sense that it will separate between the different blocks of $A_n$.

\subsection{Definition}\label{SecDefJM} We introduce analogues of the Jucys-Murphy elements of the symmetric group, see {\it e.g.}~\cite[\S3.3]{Mathas}, or of the Brauer algebra, see {\it e.g.}~\cite[\S2]{Naz}.

\subsubsection{} We define the {\em Jucys-Murphy elements} $\{x_i\,|\, 1\le i\le n\}$ in~$A_n$ as $x_1:=0\in A_n$, and
 $$x_i:=\sum_{j=1}^{i-1} (j,i) + \overline{(j,i)}, \quad\mbox{for }\;2\le i\le n.$$
 We will also use the notation~$x^0_i:=\sum_{j=1}^{i-1} (j,i)$ for the corresponding Jucys-Murphy element of~$\mk\mS_n$.
  
 As an example, we depict the non-trivial Jucys-Murphy elements of~$A_3$:
 $$\begin{tikzpicture}[scale=0.8,thick,>=angle 90]
\begin{scope}[xshift=4cm]

\node at (11.8,0.5) {$x_2=$};
\draw (12.5,0) to [out=65,in=-115] +(0.6,1);
\draw (13.1,0) to [out=115,in=-65] +(-0.6,1);
\draw (13.7,0) -- +(0,1);
\node at (14.3,0.5) {$+$};
\draw (14.9,0 ) to [out=90,in=90] +(0.6,0);
\draw (14.9,1 ) to [out=-90,in=-90] +(0.6,0);
\draw (16.1,0) -- +(0,1);
\node at (16.25,0) {,};

\node at (17.6,0.5) {$x_3=$};
\draw (18.3,0) -- +(0,1);
\draw (18.9,0) to [out=65,in=-115] +(0.6,1);
\draw (19.5,0) to [out=115,in=-65] +(-0.6,1);

\node at (20.1,0.5) {$+$};
\draw (21.3,0 ) to [out=90,in=90] +(0.6,0);
\draw (21.3,1 ) to [out=-90,in=-90] +(0.6,0);
\draw (20.7,0) -- +(0,1);

\node at (22.5,0.5) {$+$};
\draw (23.1,0) to [out=55, in =-125]+(1.2,1);
\draw (24.3,0) to [out=125, in =-55]+(-1.2,1);
\draw (23.7,0) -- +(0,1);

\node at (24.9,0.5) {$+$};
\draw (25.5,0) to [out=90,in=90] +(1.2,0);
\draw (25.5,1) to [out=-90,in=-90] +(1.2,0);
\draw (26.1,0) -- +(0,1);
\node at (26.85,0) {.};
\end{scope}
\end{tikzpicture}
$$
 
 \begin{lemma}\label{LemJM}
 For each $1\le i \le n$, the element $x_i\in A_n$ commutes with every element in~$A_{i-1}\subset A_n$.
  Consequently, the elements $\{x_i\,|\, 1\le i\le n\}$ generate an abelian subalgebra $\Gamma_n$ of~$A_n$.
 \end{lemma}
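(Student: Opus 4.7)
The plan is direct: since every term in $x_i$ lies in $A_i$ (each of the diagrams $(j,i)$ and $\overline{(j,i)}$ with $j<i$ has trivial propagating lines on strands $i+1,\dots,n$), we have $x_i\in A_i\subset A_n$, and it suffices to show $x_i$ commutes with the subalgebra $A_{i-1}\subset A_i$. By the presentation of $A_n$ in \cite{Moon,Kujawa} (and Serganova), $A_{i-1}$ is generated by the simple reflections $s_1,\dots,s_{i-2}$ together with the cup--cap generators $\varepsilon_1,\dots,\varepsilon_{i-2}$, so I only need to verify $[s_k,x_i]=0$ and $[\varepsilon_k,x_i]=0$ for $1\le k\le i-2$.

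First I would handle the commutation with $s_k$. Writing $x_i=x_i^0+\bar x_i$ with $x_i^0=\sum_{j<i}(j,i)$ and $\bar x_i=\sum_{j<i}\overline{(j,i)}$, the identity $s_k x_i^0=x_i^0 s_k$ reduces to the classical statement that conjugation by $s_k$ permutes the transpositions $\{(j,i):j<i\}$ among themselves (since $s_k$ fixes $i$). For $\bar x_i$, a diagrammatic computation, using the standard marking of Section~\ref{SecMaDi} to track signs, shows that $s_k\overline{(j,i)}s_k=\overline{(s_k(j),i)}$ for $j\notin\{k,k+1\}$ and that conjugation simply swaps $\overline{(k,i)}\leftrightarrow\overline{(k+1,i)}$; the signs introduced when caps/cups slide past a crossing cancel in pairs on opposite sides, so that the sum $\bar x_i$ is preserved.

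Next I would verify $[\varepsilon_k,x_i]=0$. The terms $(j,i)$ and $\overline{(j,i)}$ with $j\notin\{k,k+1\}$ commute with $\varepsilon_k$ by the ``locality'' relations (these diagrams have all strands other than $i$ propagating trivially through the slots occupied by $\varepsilon_k$, which involves only positions $k$ and $k+1$). Hence the commutator reduces to the interaction between $\varepsilon_k$ and the four elements $(k,i)+(k+1,i)+\overline{(k,i)}+\overline{(k+1,i)}$. Each of the eight products $\varepsilon_k\cdot(-)$ and $(-)\cdot\varepsilon_k$ is a single marked diagram, so the identity $\varepsilon_k(x_i)-x_i\varepsilon_k=0$ is a finite check using the composition rule of Section~\ref{SecMaDi}: one shows the two permutation contributions and the two cup--cap contributions pair up on each side of the commutator with matching signs. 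This is exactly the analogue of the Brauer-algebra computation in \cite[\S2]{Naz}, but with the periplectic sign convention.

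With the commutation claim established, the ``consequently'' is immediate: for $i<j$ we have $x_i\in A_i\subset A_{j-1}$, so $x_i$ commutes with $x_j$; thus the $x_i$ pairwise commute and generate an abelian subalgebra $\Gamma_n$. The principal obstacle is the sign bookkeeping in the periplectic setting, where relations such as $s_k\varepsilon_k=-\varepsilon_k s_k$ and the skew behaviour of caps make the cancellations less transparent than in $B_n(\delta)$; the cleanest route is to fix the standard marking on every diagram and invoke the independence of $\gamma(d_1,d_2)$ of the chosen sequence of moves, so that the signs in the two sides of each commutator may be computed by the same recipe and then compared termwise.
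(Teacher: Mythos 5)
Your proposal is correct and takes essentially the same route as the paper: reduce to generators of $A_{i-1}$, use that conjugation by $\mS_{i-1}$ preserves $x_i$, invoke locality for the terms $(j,i)+\overline{(j,i)}$ with $j$ away from the $\varepsilon_k$ slot, and finish with a finite diagrammatic check on the four interacting terms. The paper streamlines this by noting that $\mS_{i-1}$ together with $\varepsilon_1$ alone generate $A_{i-1}$, so the whole verification collapses to the single identity $\varepsilon_1 x_3=0=x_3\varepsilon_1$ (both products actually vanish, the permutation and cup--cap terms cancelling in pairs), but this is only a minor economy over your term-by-term check.
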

 \begin{proof}
 The statement is trivial for $i\in\{1,2\}$, so we assume $i>2$.
 By construction, we have $wx_i w^{-1}=x_i$ for all $w\in \mS_{i-1}$. As~$A_{i-1}$ is generated by~$\mS_{i-1}$ and $\varepsilon_1$, it suffices to prove that~$\varepsilon_1x_i=x_i\varepsilon_1$. Clearly $\varepsilon_1$ commutes with~$\sum_{j=3}^{i-1} (j,i) + \overline{(j,i)}$, so we only need to prove that~$\varepsilon_1$ commutes with 
 $$(1,i)+(2,i)+\overline{(1,i)}+\overline{(2,i)}.$$
 This is essentially the claim that~$\varepsilon_1x_3=x_3\varepsilon_1$. A quick calculation shows that we even have
\begin{equation}\label{eq13}\varepsilon_1x_3=0=x_3\varepsilon_1,\end{equation}
 concluding the proof.
 \end{proof}
 

 \subsection{Action of the Jucys-Murphy elements on the Murphy basis}

 \subsubsection{} Fix a partition~$\lambda\vdash i$ for $i\in\JJJ(n)$. To any~$\mathfrak{t}\in \St_n(\lambda)$ as in~\ref{SecPath}, we associate a vector
 $$c_{\mathfrak{t}}:=\left(c_{\mathfrak{t}}(2),c_{\mathfrak{t}}(3),\ldots, c_{\mathfrak{t}}(n)\right)\in \mk^{n-1},\qquad\mbox{with}$$
  \begin{equation}\label{defct}c_{\mathfrak{t}}(l):=\begin{cases}\resi(b)& \mbox{if }\; \mathfrak{t}^{(l)}=\mathfrak{t}^{(l-1)}\cup b,\\ \resi(b)+1& \mbox{if }\; \mathfrak{t}^{(l-1)}=\mathfrak{t}^{(l)}\cup b,\end{cases}\qquad  \mbox{for }\;1<l\le n.
 \end{equation}
Here we used the residue of a box $b$ in a Young diagram as in~\ref{SecResi}. In particular, we have $c_{\mathfrak{t}}\in\mathbb{F}^{n-1}_p$ if $\charr(\mk)=p>0$, and $c_{\mathfrak{t}}\in\mathbb{Z}^{n-1}$ if $\charr(\mk)=0$. For the example in \ref{SecPath}, we have $c_{\mathfrak{t}}=(\overline{1},\overline{2},\overline{-1})$.

 \begin{thm}\label{ThmFamJM}
 Let $\mk$ be arbitrary and $W_n(\lambda)$ the cell module over~$A_n$, with~$\lambda\in \mathbf{L}_A$.
  For the Murphy basis $\{v_{\mathfrak{t}}\,|\, \mathfrak{t}\in\St_n(\lambda)\}$ and the Jucys-Murphy elements $\{x_l\,|\, 1< l\le n\}$, we find
 $$x_l v_{\mathfrak{t}}-c_{\mathfrak{t}}(l)v_{\mathfrak{t}}\;\in\; \bigoplus_{\mathfrak{s}\rhd\mathfrak{t}} \mk v_{\mathfrak{s}}.$$
 \end{thm}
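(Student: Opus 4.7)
The plan is to proceed by induction on $n$, the base case $n\le 2$ being a direct computation (cell modules are at most one-dimensional and the Jucys-Murphy elements are straightforward to evaluate). For the inductive step, fix $\lambda\in\mathbf{L}_A$, a path $\mathfrak{t}\in\St_n(\lambda)$, and set $\mu=\mathfrak{t}^{(n-1)}$. The cases $l<n$ and $l=n$ are handled separately.

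For $l<n$, the element $x_l$ lies in $A_l\subset A_{n-1}$ via the embedding of~\ref{embed}, so it acts $A_{n-1}$-equivariantly on $W_n(\lambda)$. The Murphy basis is compatible with the $A_{n-1}$-filtration: by~\eqref{FPMB}, $W_n(\lambda)^{\widehat{\mu}}$ is the $\mk$-span of those $v_\mathfrak{s}$ with $\mathfrak{s}^{(n-1)}\rhd\mu$, each of which satisfies $\mathfrak{s}\rhd\mathfrak{t}$ by the iterative form of the lex order in~\ref{SecPath}; moreover~\eqref{isoMB} identifies the image of $v_\mathfrak{t}$ in the corresponding quotient with $v_{\mathfrak{t}'}\in W_{n-1}(\mu)$. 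Since $c_\mathfrak{t}(l)=c_{\mathfrak{t}'}(l)$ for $l<n$, the inductive hypothesis applied inside $W_{n-1}(\mu)$ yields the statement modulo $W_n(\lambda)^{\widehat{\mu}}$; lifting back through~\eqref{isoMB}, each $\mathfrak{r}\rhd\mathfrak{t}'$ in $\St_{n-1}(\mu)$ extends to some $\mathfrak{s}\rhd\mathfrak{t}$ in $\St_n(\lambda)$, finishing this case.

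For $l=n$, decompose $x_n=x_n^0+y_n$ with $x_n^0=\sum_{j<n}(j,n)$ and $y_n=\sum_{j<n}\overline{(j,n)}$, and split on $\mu$. If $\mu\in\RR(\lambda)$ with $\lambda=\mu\cup b$, then $v_\mathfrak{t}$ has the form $(\alpha\otimes I)\otimes w\in N_{n-1,i-1}\otimes W^0(\lambda)^\mu$ via~\eqref{isomni-1}. A diagrammatic computation (using that a bottom cap in a composite acts as zero on $W^0(\lambda)$ by triviality of $B_+$) shows that the $N_{n-1,i+1}\otimes W^0(\lambda)$-components of $x_n^0 v_\mathfrak{t}$ and $y_n v_\mathfrak{t}$ cancel via the sign convention on caps from~\ref{SecMaDi}; this cancellation is forced, since such components would correspond to $v_\mathfrak{s}$ with $\mathfrak{s}^{(n-1)}\in\AAAA(\lambda)$, i.e.\ $\mathfrak{s}\lhd\mathfrak{t}$ by~\eqref{BigDO}. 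The residual $N_{n-1,i-1}$-part is the $\mk\mS_n$-action of $x_n^0$ on $w\in W^0(\lambda)$, which by the classical Murphy formula \cite[Thm.~3.32]{Mathas} gives $\con(b)\,w$ modulo $W^0(\lambda)^{\widehat{\mu}}$; since $c_\mathfrak{t}(n)=\resi(b)$ and higher Specht-Murphy terms lift to $v_\mathfrak{s}$ with $\mathfrak{s}\rhd\mathfrak{t}$, this subcase is complete. If instead $\mu\in\AAAA(\lambda)$ with $\mu=\lambda\cup b$, then $v_\mathfrak{t}$ corresponds under~\eqref{eqddd} to an element of $e_{n-1}^*\overline{N}e_{i+1}^*\otimes IW^0(\lambda)^\mu$; one computes $x_n v_\mathfrak{t}$ by pushing $x_n$ past the cup generator $a$ of~\ref{SecNni}, obtaining a commutator expansion of the schematic form $x_n a = a\widetilde{x} + a$ modulo lower-rank diagrams, where $\widetilde{x}$ is a Jucys-Murphy element on the larger space $IW^0(\lambda)^\mu\cong W^0(\mu)$. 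The summand $+a$ is the source of the ``$+1$'' shift and comes from a single cup-cap cancellation with the skew-symmetric sign of~\ref{SecMaDi} (cf.~\eqref{eq13}); reducing via the $\mk\mS_{i+1}$-Murphy formula then yields eigenvalue $\con(b)+1=c_\mathfrak{t}(n)$ modulo $v_\mathfrak{s}$ with $\mathfrak{s}\rhd\mathfrak{t}$.

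The principal obstacle is the subcase $\mu\in\AAAA(\lambda)$: extracting the ``$+1$'' shift in the eigenvalue demands careful bookkeeping with the skew-symmetric cap convention of~\ref{SecMaDi}. A secondary but essential point is the cancellation of $\AAAA(\lambda)$-directed contributions in the subcase $\mu\in\RR(\lambda)$, which relies on the placement of $\AAAA(\lambda)$ below $\RR(\lambda)$ in the order~\eqref{BigDO}.
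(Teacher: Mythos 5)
Your proposal follows essentially the same route as the paper: induction on $n$ handles $l<n$ via the compatibility of the Murphy basis with restriction to $A_{n-1}$, while for $l=n$ the paper isolates in Lemma~\ref{Lemd2} exactly the two diagrammatic facts you describe (cancellation of the cup-directed terms of $x_n^0d$ against those of $y_nd$ when $\mathfrak{t}^{(n-1)}\in\RR(\lambda)$, and the identity $(x_n-1)d\equiv (d^{(1)}\otimes I)(d^{(2)}x^0_{i+1}\otimes I)a$ modulo lower terms when $\mathfrak{t}^{(n-1)}\in\AAAA(\lambda)$), then concludes with symmetric-group Jucys--Murphy theory as you do. Two small corrections of detail: the ``$+1$'' arises from the transposition term $(l_0,n)d=d$ (the cup is symmetric under swapping its endpoints) together with $\overline{(l_0,n)}d=0$ (closed loop), not from a cup--cap cancellation; and since $x^0_{i+1}$ is not central in $\mk\mS_{i+1}$, the paper extracts the eigenvalue $\resi(b)+1$ on the section $W^0(\mu)$ of the induced module via the central element $\sum_{j}x^0_j$ rather than by a direct appeal to the Murphy formula.
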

 
 We start the proof with the following lemma, for which we use the notation of \ref{SecNni}.
 
 \begin{lemma}\label{Lemd2}${}$
 \begin{enumerate}
 \item For a diagram $d\in N_{n-1,i-1}\subset e_n^\ast \overline{N}e_i^\ast$, we have $x_nd-dx_i^0\in e_n^\ast CB_{\plus}e_i^\ast$.
 \item For a diagram $d\in N_{n-1,i+1}\subset e_n^\ast \overline{N}e_i^\ast$, we have
 $$(x_n-1)d-(d^{(1)}\otimes I)\circ (d^{(2)}x_{i+1}^0\otimes I)\circ a\;\in\; N_{n-1,i-1}\mS_{i}.$$
 \end{enumerate}
 \end{lemma}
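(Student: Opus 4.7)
The plan is to prove both parts by an explicit, case-by-case tracing of how each term of $x_n$ acts on the diagram $d$, working modulo the respective ideal ($e_n^\ast CB_{\plus}e_i^\ast$ or $N_{n-1,i-1}\mS_i$) to discard diagrams whose shape rules them out.

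For part~(1), write $d=d_0\otimes I$ under the isomorphism \eqref{isomni-1}, so that top dot $n$ is linked straight down to bottom dot $i$. Expand $x_nd=\sum_{j<n}\bigl[(j,n)d+\overline{(j,n)}d\bigr]$ and classify each $j<n$ by the strand at top $j$ in $d_0$: either a propagating line (to some bottom $m\le i-1$) or a cup (to some top $k<n$). In the \emph{propagating-line} case, composing $(j,n)$ with $d$ swaps the two propagating strands, producing exactly the diagram $d\cdot(m,i)$ up to the sign from~\ref{SecMaDi}; this matches a term of $dx_i^0$. The companion term $\overline{(j,n)}d$ short-circuits the prop lines from top $j$ and top $n$ via the cap of $\overline{(j,n)}$, so the underlying diagram acquires a cap at the bottom between positions $m$ and $i$, placing it in $e_n^\ast CB_{\plus}e_i^\ast$.

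In the \emph{cup} case, for each unordered pair $\{j,k\}$ that forms a cup in $d_0$, the four contributions $(j,n)d$, $(k,n)d$, $\overline{(j,n)}d$, $\overline{(k,n)}d$ pair up into two pairs of diagrams with identical underlying Brauer shape: $\{(j,n)d,\overline{(k,n)}d\}$ both give [cup $(n,k)$, prop line top $j\to$ bot $i$], and $\{(k,n)d,\overline{(j,n)}d\}$ both give [cup $(n,j)$, prop line top $k\to$ bot $i$]. The signs coming from the marking procedure of~\ref{SecMaDi}, together with the skew-symmetry of caps, make each pair cancel. Summing contributions from both cases gives $x_nd\equiv dx_i^0\pmod{e_n^\ast CB_{\plus}e_i^\ast}$.

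For part~(2), let $p$ be the cup-partner of $n$ in $d$; the cup at top $n$ corresponds (under the factorization $d=(d^{(1)}\otimes I)(d^{(2)}\otimes I)a$) to the cup $(i{+}1,i{+}2)$ of $a$ transported by $d^{(2)}$. Two terms in $x_nd$ are special: $(p,n)d$ swaps the two endpoints of a single cup, and by the periplectic sign convention this contributes $+d$, which is exactly what the ``$-1$'' in $(x_n-1)d$ cancels; while $\overline{(p,n)}d$ creates a cap-cup bubble at the top and vanishes because $\delta=0$ in the periplectic Brauer category. For each $j\ne p$, the terms $(j,n)d$ and $\overline{(j,n)}d$ rearrange the cup at top $n$: those resulting diagrams that keep top $n$ as a cup endpoint are identified, by pushing through the factorization, with the corresponding term of $(d^{(1)}\otimes I)(d^{(2)}(l,i{+}1)\otimes I)a$ for the appropriate $l$. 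Summing over $j\ne p$ therefore yields $(d^{(1)}\otimes I)(d^{(2)}x_{i+1}^0\otimes I)a$, and all remaining contributions have top $n$ as a propagating line and hence lie in $N_{n-1,i-1}\mS_i$.

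The main obstacle will be the sign bookkeeping for the marked diagrams, especially the cancellation of pairs in the cup case of part~(1) and the identification of the correction term in part~(2). Both require carefully tracking how the diamond (cup) and arrow (cap) markings evolve under the composition procedure of~\ref{SecMaDi}, and exploiting skew-symmetry of caps. Working modulo the specified subspaces is what makes the analysis tractable, since diagrams which acquire an extra cap at the bottom (part~1) or which break the ``cup at top $n$'' pattern (part~2) become irrelevant and need not be tracked further.
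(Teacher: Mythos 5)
Your strategy is the same as the paper's: expand $x_nd=\sum_{j<n}\bigl((j,n)+\overline{(j,n)}\bigr)d$, use the identity $(j,n)d=-\overline{(k,n)}d$ when $\{j,k\}$ is a cup of $d$, and work modulo the indicated subspace to discard the irrelevant diagrams. Your part (1) coincides with the paper's argument and is correct.

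In part (2) there is a gap in the bookkeeping. You assert that, summing over $j\ne p$, \emph{all} resulting diagrams which keep the top dot $n$ on a cup assemble into $(d^{(1)}\otimes I)\circ(d^{(2)}x_{i+1}^0\otimes I)\circ a$. But $x_{i+1}^0$ has exactly $i$ terms, one for each propagating line of $d$, and these are accounted for precisely by the $i$ terms $\overline{(j,n)}d=(j,p)d$ with $j$ the top endpoint of a propagating line. The remaining cup-keeping contributions — namely $(j,n)d$ and $\overline{(j,n)}d$ for $j$ lying on a cup of $d$ \emph{other than} the one containing $n$ — do not correspond to any term of the correction; counting them, there are $2(n-i-2)$ such extra diagrams, so as written your identification cannot hold. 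These extra terms must instead cancel in pairs via the same identity $(j,n)d=-\overline{(k,n)}d$ that you invoked in the cup case of part (1); your closing paragraph flags this cancellation only for part (1), which suggests the omission is genuine rather than notational. Once this cancellation is inserted (together with your correct observations that $(p,n)d=d$, that $\overline{(p,n)}d=0$ by the closed-loop rule, and that $(j,n)d$ for $j$ propagating lands in $N_{n-1,i-1}\mS_i$), the argument closes and agrees with the paper's proof.
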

 \begin{proof}
If $d\in N_{n-1,i-1}$ has a cup ending in dots $l$ and $j$, then $l,j<n$ and $(j,n)d=-\overline{(l,n)}d$. Hence
 $$x_n\,d\;=\; \left(\sum_{k\in D} (k,n) +\overline{(k,n)}\right)d,$$
 where~$D$ is the subset of~$\{1,2,\ldots,n-1\}$ corresponding to dots on the upper line which have a propagating line. Clearly $\overline{(k,n)}d$ is a diagram with one cap, so $\overline{(k,n)}d\in e_n^\ast CB_{\plus}e_i^\ast$. Part~(1) then follows from
 $\sum_{k\in D} (k,n)d=dx_i^0.$

Take $d\in N_{n-1,i+1}$ and $l_0$ such that the $l_0$th dot from the left on the upper line of $d$ is connected to the right-most (the $n$th) dot by a cap. Take $1\le l<n$ arbitrarily with~$l\not=l_0$. We have $(l,n)d\in N_{n-1,i-1}\mS_{i}$ unless $l$ is on a cap. On the other hand, if~$l$ is connected to some~$j$ by a cap, we have $(l,n)d+\overline{(j,n)}d=0$. Furthermore, we have $\overline{(l,n)}d=(l,l_0)d$ if~$l$ is not the end point of a cap. Finally, we have $(l_0,n)d=d$ and $\overline{(l_0,n)}d=0$. Hence we obtain
$$x_n d-d-\sum_{j\in D}(j,l_0)d\;\in\; N_{n-1,i-1}\mS_{i},$$
where~$D$ corresponds again to all dots which are not on the cups. 
By definition, $(j,l_0)(d^{(1)}\otimes I)=(d^{(1)}\otimes I)(j',l_0')$, where~$k'$ for $k\in D\cup\{l_0\}$ denotes the position, starting from the left, of the dot corresponding to~$k$ on the upper line, when ignoring the dots not in~$D\cup\{l_0\}$. 
Part (2) then follows.
 \end{proof}
 
We recall some basic facts concerning the Jucys-Murphy elements $\{x_j^0\}$ for the symmetric group.
 \begin{lemma}\label{JMSi}Fix $\lambda\vdash i$.
 \begin{enumerate}
 \item For any~$v\in W^0(\lambda)$, we have $\sum_{j=2}^{i} x_j^0v\;=\; |\resi(\lambda)| v.$
 \item For any~$\mu\in \RR(\lambda)$ and $v\in W^0(\lambda)^\mu$, we have
 $$x^0_i v-\resi(b)v\;\in \;W^0(\lambda)^{\widehat\mu},\qquad\mbox{where}\,\;\lambda=\mu\cup b.$$
 \end{enumerate}
 \end{lemma}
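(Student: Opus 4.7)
The plan is to establish~(1) by reducing to characteristic zero via an integral form, and then obtain~(2) as a short telescoping consequence of~(1) applied to both $\lambda$ and $\mu$.

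For~(1), I would first observe that
$$z_i\;:=\;\sum_{j=2}^i x_j^0\;=\;\sum_{1\le k<j\le i}(k,j)$$
is the full sum of transpositions in $\mS_i$, hence lies in the centre of $\mk\mS_i$. Over $\dbQ$, the Jucys-Murphy elements act diagonally on the seminormal basis of $W^0(\lambda)_{\dbQ}$: each basis vector $v_{\mathfrak{t}}$ indexed by a standard $\lambda$-tableau $\mathfrak{t}$ satisfies $x_j^0 v_{\mathfrak{t}} = c_{\mathfrak{t}}(j)v_{\mathfrak{t}}$, where $c_{\mathfrak{t}}(j)$ is the content of the box of $\mathfrak{t}$ containing $j$. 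Summing over $j$ gives $z_i v_{\mathfrak{t}} = |\resi(\lambda)|\,v_{\mathfrak{t}}$, independently of $\mathfrak{t}$, so $z_i - |\resi(\lambda)|$ annihilates $W^0(\lambda)_{\dbQ}$. Since the dual Specht module admits a $\mZ$-free integral form $W^0(\lambda)_{\mZ}$ (e.g.\ via its Murphy basis) whose rational extension is $W^0(\lambda)_{\dbQ}$, the annihilation passes to $W^0(\lambda)_{\mZ}$ and then to $W^0(\lambda)=\mk\otimes_{\mZ} W^0(\lambda)_{\mZ}$ for arbitrary $\mk$, yielding~(1).

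For~(2), decompose $x_i^0 = z_i - z_{i-1}$ with $z_{i-1}:=\sum_{j=2}^{i-1}x_j^0\in\mk\mS_{i-1}$. By~(1) applied to $\lambda$, the element $z_i$ acts on all of $W^0(\lambda)$ by the scalar $|\resi(\lambda)|$. By the branching setup of~\ref{SecRevS}, the quotient $(W^0(\lambda)^{\widehat\mu}\oplus W^0(\lambda)^\mu)/W^0(\lambda)^{\widehat\mu}$ is isomorphic to $W^0(\mu)$ as an $\mk\mS_{i-1}$-module, so applying~(1) inside $\mk\mS_{i-1}$ gives that $z_{i-1}$ acts on this quotient by $|\resi(\mu)|$. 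Hence for $v\in W^0(\lambda)^\mu$ one has $z_{i-1}v-|\resi(\mu)|v\in W^0(\lambda)^{\widehat\mu}$. Subtracting from $z_i v = |\resi(\lambda)|v$ and using $|\resi(\lambda)|-|\resi(\mu)|=\resi(b)$ (immediate from $\lambda=\mu\cup b$) gives $x_i^0 v - \resi(b)v\in W^0(\lambda)^{\widehat\mu}$, which is~(2).

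The only nontrivial content is in~(1); even there the heavy lifting is done by the classical Jucys-Murphy theory over $\dbQ$ combined with the existence of a $\mZ$-free integral form, both of which are standard from \cite{Mathas}. Part~(2) is then a one-line telescoping argument, and I do not expect any genuine obstacle — the bookkeeping is entirely in aligning conventions between $z_i$ acting on $W^0(\lambda)$ and $z_{i-1}$ acting on the $\mk\mS_{i-1}$-section $W^0(\mu)$.
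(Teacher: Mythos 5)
Your argument is correct. For part~(1) you follow essentially the same route as the paper: the element $z_i=\sum_{j=2}^i x_j^0$ is the full class sum of transpositions, hence central (the paper cites \cite[Corollary~3.27]{Mathas}); it therefore acts by a scalar on the Specht module in characteristic zero, and the claim transfers to arbitrary $\mk$ by a field-independence argument, which you make precise via the $\mZ$-form. Where you genuinely diverge is part~(2). The paper disposes of it in one line by citing \cite[Theorem~3.32]{Mathas}, i.e.\ the full triangular action of the Jucys--Murphy elements on the Murphy basis in arbitrary characteristic, of which the statement is ``a weaker version of a special case.'' You instead write $x_i^0=z_i-z_{i-1}$ and telescope: $z_i$ acts on $W^0(\lambda)$ by $|\resi(\lambda)|$, while $z_{i-1}\in\mk\mS_{i-1}$ preserves the $\mk\mS_{i-1}$-submodule $W^0(\lambda)^{\widehat\mu}\oplus W^0(\lambda)^\mu$ and acts on its quotient $W^0(\mu)$ by $|\resi(\mu)|$, so $x_i^0v-\bigl(|\resi(\lambda)|-|\resi(\mu)|\bigr)v\in W^0(\lambda)^{\widehat\mu}$, and $|\resi(\lambda)|-|\resi(\mu)|=\resi(b)$. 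This is a valid and more self-contained derivation: it needs only the central-element computation of part~(1) together with the branching filtration of \ref{SecRevS} (which the paper has already set up), and avoids invoking the modular Jucys--Murphy triangularity theorem. What the paper's citation buys in exchange is brevity and the stronger statement (full triangularity with respect to the dominance order on tableaux, not just on the removable node $\mu$), which is in the spirit of how Theorem~\ref{ThmFamJM} is later proved; but for the lemma as stated your weaker input suffices.
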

 \begin{proof}
 The element $\sum_{j=2}^{i} x_j^0\in\mZ\mS_i$ is central in $\mk\mS_i$ by \cite[Corollary~3.27]{Mathas} and thus acts as a constant on (the simple) Specht modules in characteristic zero. This remains true in arbitrary characteristic as this claim clearly does not depend on the field. The constant through which it acts then follows from \cite[Theorem~3.32]{Mathas}.
   Part (2) is a weaker version of a special case of \cite[Theorem~3.32]{Mathas}.
 \end{proof}

 We can now extend Lemma~\ref{JMSi}(2) to periplectic Brauer algebras.
 \begin{lemma}\label{LemFamJM}
Consider~$\lambda\in \mathbf{L}_{A_n}$ and $\mu\in\left(\RR(\lambda)\sqcup \AAAA(\lambda)\right)\cap \mathbf{L}_{A_{n-1}}$. For any~$v\in W(\lambda)^{\mu}$, we have
 $$x_n v -cv\in W(\lambda)^{\widehat\mu},\;\,\mbox{ with $c\in\mk$ given as}$$
 \begin{enumerate}
\item $c=\resi(b)$, if~$\lambda=\mu\cup b$;
\item $c=1+\resi(b)$, if~$\mu=\lambda\cup b$.
\end{enumerate}
 \end{lemma}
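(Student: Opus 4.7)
The plan is to handle the two cases separately. In each, I work with pure tensors $d\otimes w$ inside the realisation $W_n(\lambda)=e_n^\ast C\otimes_B W^0(\lambda)$, using the splitting $e_n^\ast\overline N e_i^\ast=N_{n-1,i-1}\oplus N_{n-1,i+1}$ of Section~\ref{SecNni} to decide which part of Lemma~\ref{Lemd2} to invoke. This transfers the action of $x_n$ on $v$ to that of a symmetric-group Jucys--Murphy element on the Specht factor, where Lemma~\ref{JMSi}(2) (or its induced-module analogue) takes over.

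In~(1), where $\mu=\lambda\setminus b\in\RR(\lambda)$ and $i=|\lambda|$, the realisation gives $W_n(\lambda)^\mu=N_{n-1,i-1}\otimes W^0(\lambda)^\mu$. For $d\in N_{n-1,i-1}$ and $w\in W^0(\lambda)^\mu$, Lemma~\ref{Lemd2}(1) writes $x_n d\equiv d x_i^0 \pmod{e_n^\ast CB_{+}e_i^\ast}$; the remainder vanishes in the tensor over $B$ since $B_{+}$ annihilates $W^0(\lambda)$, so $x_n(d\otimes w)=d\otimes x_i^0 w$. Lemma~\ref{JMSi}(2) then gives $x_n v-\resi(b)v\in N_{n-1,i-1}\otimes W^0(\lambda)^{\widehat\mu}=\bigoplus_{\kappa\in\RR(\lambda),\,\kappa\rhd\mu}W_n(\lambda)^\kappa$. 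Every $\nu\in\AAAA(\lambda)$ has $|\nu|=i+1>i-1=|\mu|$ and hence $\nu\rhd\mu$ by~\eqref{BigDO}, so this subspace is automatically contained in $W_n(\lambda)^{\widehat\mu}$.

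In~(2), where $\mu=\lambda\cup b\in\AAAA(\lambda)$, the identification~\eqref{eqddd} gives $W_n(\lambda)^\mu\cong e_{n-1}^\ast\overline N e_{i+1}^\ast\otimes IW^0(\lambda)^\mu$ inside $N_{n-1,i+1}\otimes W^0(\lambda)$, so I work with a typical pure tensor $d\otimes w$ there. Lemma~\ref{Lemd2}(2) gives $(x_n-1)d\equiv(d^{(1)}\otimes I)(d^{(2)}x_{i+1}^0\otimes I)a\pmod{N_{n-1,i-1}\mS_i}$; after tensoring with $w$, the remainder lies in $N_{n-1,i-1}\otimes W^0(\lambda)\subseteq W_n(\lambda)^{\widehat\mu}$ by the same size comparison as in~(1). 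The leading term, transported back through~\eqref{eqddd}, reduces the claim to showing that $x_{i+1}^0$ acts on $IW^0(\lambda)^\mu$ by the scalar $\resi(b)$ modulo $IW^0(\lambda)^{\widehat\mu}$.

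This last step is the principal obstacle: it is the induced-module analogue of Lemma~\ref{JMSi}(2), and does not follow formally from that lemma because the quotient $(IW^0(\lambda)^{\widehat\mu}\oplus IW^0(\lambda)^\mu)/IW^0(\lambda)^{\widehat\mu}\cong W^0(\mu)$ does not carry $x_{i+1}^0$ as a scalar. I expect to handle it by transporting the Murphy basis from the Specht modules $W^0(\nu)$, $\nu\in\AAAA(\lambda)$, to a Murphy-style basis of $\ind^{\mk\mS_{i+1}}W^0(\lambda)$ indexed by standard $\nu$-tableaux with the entry $i+1$ placed in the box $\nu\setminus\lambda$, and invoking the Jucys--Murphy eigenvalue formula~\cite[Theorem~3.32]{Mathas}: a basis vector of shape $\mu$ is an $\resi(b)$-eigenvector modulo Mathas-dominance-higher basis vectors, and the restriction of that order to our index set is compatible with the filtration by shape, so the corrections lie in $IW^0(\lambda)^{\widehat\mu}$. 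Combining the two steps then yields $x_n v\equiv(1+\resi(b))v\pmod{W_n(\lambda)^{\widehat\mu}}$, completing the proof.
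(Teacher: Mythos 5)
Your case (1) and the first reduction in case (2) via Lemma~\ref{Lemd2}(2) coincide with the paper's argument, and you have correctly located the crux; but the repair you sketch for it would fail. First, the operator that Lemma~\ref{Lemd2}(2) hands you on $\ind^{\mk\mS_{i+1}}W^0(\lambda)$ is not left multiplication by $x_{i+1}^0$ but the map $d^{(2)}\otimes u\mapsto d^{(2)}x_{i+1}^0\otimes u$ (insertion of $x_{i+1}^0$ on the right of the $\mk\mS_{i+1}$-factor, well defined since $x_{i+1}^0$ centralises $\mk\mS_i$). For the left action your claim is false: take $\lambda=(2)$, $i=2$; then $\ind^{\mk\mS_3}W^0(2)$ is the $3$-dimensional permutation module, left multiplication by $x_3^0$ has eigenvalues $2,1,-1$, and the eigenvalue $1$ occurs on the section $IW^0(\lambda)^{(2,1)}$, whereas $\resi\bigl((2,1)\setminus(2)\bigr)=-1$. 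Second, even for the correct operator, triangularity of Jucys--Murphy elements on a Murphy basis in the sense of \cite[Theorem~3.32]{Mathas} produces correction terms indexed by more dominant tableaux \emph{of the same shape} $\mu$; these lie in $IW^0(\lambda)^{\mu}$, not in $IW^0(\lambda)^{\widehat\mu}$, so that route gives at best a triangular action on the section, whereas the lemma asserts an exact scalar. (Your proposed indexing set, standard $\nu$-tableaux with $i+1$ in the box $\nu\setminus\lambda$, also has cardinality $|\AAAA(\lambda)|\cdot\dim W^0(\lambda)$ rather than $(i+1)\dim W^0(\lambda)$, so it cannot be a basis.)

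The paper closes the gap with a centrality trick that avoids any basis analysis. Writing $x_{i+1}^0=T^0-\sum_{j=2}^{i}x_j^0$ with $T^0:=\sum_{j=2}^{i+1}x_j^0$ central in $\mk\mS_{i+1}$, one has
$$d^{(2)}x_{i+1}^0\otimes u\;=\;T^0\,(d^{(2)}\otimes u)\;-\;d^{(2)}\otimes\Bigl(\sum_{j=2}^{i}x_j^0\Bigr)u\;=\;\bigl(T^0-|\resi(\lambda)|\bigr)(d^{(2)}\otimes u),$$
where the second sum passes through $\otimes_{\mk\mS_i}$ and acts on $W^0(\lambda)$ as $|\resi(\lambda)|$ by Lemma~\ref{JMSi}(1). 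Being central, $T^0$ preserves the filtration by the $IW^0(\lambda)^{\nu}$ and acts on each section $\cong W^0(\nu)$ as the exact scalar $|\resi(\nu)|$, again by Lemma~\ref{JMSi}(1); hence the relevant operator is the scalar $|\resi(\nu)|-|\resi(\lambda)|=\resi(b)$ on the $\nu=\lambda\cup b$ section, giving $c=1+\resi(b)$. A minor further point: your reading of~\eqref{BigDO} in case (1) is reversed ($|\nu|>|\mu|$ gives $\nu\lhd\mu$, not $\nu\rhd\mu$); that remark is superfluous there, but the containment you actually need in case (2), namely $\bigoplus_{\kappa\in\RR(\lambda)}W_n(\lambda)^{\kappa}\subseteq W_n(\lambda)^{\widehat\mu}$ for $\mu\in\AAAA(\lambda)$, holds precisely because partitions of smaller size are larger in this order.
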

  \begin{proof}
Set $i:=|\lambda|$. For $\mu$ as in part~(1), the space $W_n(\lambda)^\mu$ is spanned by
 elements $d\otimes u$, with~$u\in W^0(\lambda)^{\mu}$ and $d\in N_{n-1,i-1}$. By Lemma~\ref{Lemd2}(1), we have
 $$x_n(d\otimes u)\;=\; d\otimes x_i^0u,$$
By Lemma~\ref{JMSi}(2), this is equal to~$\resi(b)d\otimes u$, up to elements $d\otimes u'$ with~$u'\in W^0(\lambda)^{\hat\mu}$, proving part~(1).

 For part~(2) we start by considering arbitrary elements in~$\oplus_{\nu\in \AAAA(\lambda)}W(\lambda)^{\nu}$. These are spanned by~$d\otimes u$, with~$d\in N_{n-1,i+1}$ and $u\in W^0(\lambda)$. By Lemma~\ref{Lemd2}(2) we have
 $$(x_n-1)(d\otimes u)-\left( (d^{(1)}\otimes I)\circ (d^{(2)}x^0_{i+1}\otimes I)\circ a\right) \otimes u\;\;\in\;\bigoplus_{\kappa\in\RR(\lambda)}W(\lambda)^{\kappa}.$$
Set $T^0:=\sum_{j=2}^{i+1}x_j^0$.
Using Lemma~\ref{JMSi}(1) and the fact that~$T^0$ is central in~$\mk\mS_{i+1}$, we have
 $$(d^{(2)}x^0_{i+1}\otimes I)\circ a \otimes u\;=\; (T^0d^{(2)}\otimes I)\circ a\otimes u - |\resi(\lambda)|(d^{(2)}\otimes I)\circ a \otimes u.$$
So far we have thus proved that
$$(x_n-1+|\resi(\lambda)|)(d\otimes u)-\left( (d^{(1)}\otimes I)\circ (T^0\otimes I)\circ (d^{(2)}\otimes I)\circ a\right) \otimes u\;\in\;\bigoplus_{\kappa\in\RR(\lambda)}W(\lambda)^{\kappa}.$$
By our definition~\eqref{BigDO} of the dominance order on partitions, the space in the right-hand side is contained in~$W(\lambda)^{\widehat\nu}$ for all~$\nu\in\AAAA(\lambda)$. By equations~\eqref{isoIW}, \eqref{eqCompl} and Lemma~\ref{JMSi}(1) we then find
$$\left(x_n-1+|\resi(\lambda)|-|\resi(\nu)|\right)v\;\in\; W(\lambda)^{\widehat\nu},$$
for all $v\in W(\lambda)^{\nu}$,
which concludes the proof.
  \end{proof}

 \begin{proof}[Proof of Theorem~\ref{ThmFamJM}]
 For $n=1$ the statement is trivial. Now we assume it is true for $n-1$. For $l<n$, the action of~$x_l$ on~$W(\lambda)$ is determined by~$\res_{A_{n-1}} W(\lambda)$. For $v_{\mathfrak{t}}$ with~$\mu:=\mathfrak{t}^{(n-1)}$, its image in~$(W_n(\lambda)^\mu\oplus W_n(\lambda)^{\hat{\mu}}) /W_n(\lambda)^{\hat{\mu}}$ is given by~$v_{\mathfrak{t'}}$. By the induction hypothesis, we have
 $$x_l v_{\mathfrak{t'}}-c_{\mathfrak{t}'}(l)v_{\mathfrak{t'}}\;\in\; \bigoplus_{\mathfrak{u}\in\St_{n-1}(\mu)\,|\, \mathfrak{u}\rhd \mathfrak{t}'}\mk v_{\mathfrak{u}}.$$
 As~$c_{\mathfrak{t}'}(l)=c_{\mathfrak{t}}(l)$, this implies that
 $$x_l v_{\mathfrak{t}}-c_{\mathfrak{t}}(l)v_{\mathfrak{t}}\;\in\; \bigoplus_{\mathfrak{s}\in\St_{n}(\lambda)\,|\, \mathfrak{s}^{(n-1)}=\mu\,,\, \mathfrak{s}'\rhd \mathfrak{t}'}\mk v_{\mathfrak{s}}\;\oplus \; W_n(\lambda)^{\hat{\mu}}\;\subset\; \bigoplus_{\mathfrak{s}\in\St_{n}(\lambda)\,|\, \mathfrak{s}\rhd \mathfrak{t}}\mk v_{\mathfrak{s}}.$$
The last inclusion follows from the definition of~$\unlhd$ on~$\St_n(\lambda)$ and equation~\eqref{FPMB}.

The case $l=n$ follows from Lemma~\ref{LemFamJM}.
 \end{proof}

 \begin{cor}\label{CorResi}
For $\lambda\in \mathbf{L}_A$ and $\mu\in \Lambda_A$, if
 $[W_n(\lambda):L_n(\mu)]\not=0$, there exist $\mathfrak{t}\in\St_n(\lambda)$ and $\mathfrak{s}\in\St_n(\mu)$ with~$c_\mathfrak{t}=c_{\mathfrak{s}}$.
 \end{cor}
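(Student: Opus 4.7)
The plan is to use Theorem \ref{ThmFamJM} to read off the generalized eigencharacters of the commutative subalgebra $\Gamma_n$ of Lemma \ref{LemJM} on cell and simple modules, and to exploit the fact that these characters pass between composition factors.

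For a finite-dimensional $A_n$-module $M$, let $\mathrm{ch}(M)$ denote the set of characters $\chi:\Gamma_n\to\mk$ with nonzero generalized $\chi$-eigenspace in $M$. The first step is to establish
$$\mathrm{ch}(W_n(\lambda))\;=\;\{\chi_{\mathfrak{t}}\,|\,\mathfrak{t}\in\St_n(\lambda)\},\qquad\text{where}\quad \chi_{\mathfrak{t}}(x_l):=c_{\mathfrak{t}}(l).$$
Theorem \ref{ThmFamJM} precisely says that each generator $x_l$ acts on $W_n(\lambda)$ upper triangularly in the Murphy basis (ordered by $\unlhd$), with diagonal entry $c_{\mathfrak{t}}(l)$ at position $\mathfrak{t}$. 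Since $\Gamma_n$ is generated by the $x_l$ and upper triangularity is preserved under algebra operations, the entire algebra acts upper triangularly; moreover, the diagonal of a product of upper triangular matrices is the product of diagonals, so $x_l\mapsto c_{\mathfrak{t}}(l)$ extends uniquely to an algebra homomorphism $\chi_{\mathfrak{t}}:\Gamma_n\to\mk$, which is then precisely the character governing the associated graded piece at position $\mathfrak{t}$.

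The second step is essentially automatic. Because $\Gamma_n$ is commutative, taking generalized eigenspaces is an exact functor on finite-dimensional modules, hence $\mathrm{ch}(\cdot)$ is monotone under taking subquotients. The hypothesis $[W_n(\lambda):L_n(\mu)]\neq 0$ therefore gives $\mathrm{ch}(L_n(\mu))\subseteq\mathrm{ch}(W_n(\lambda))$. On the other hand, $L_n(\mu)$ is a quotient of the cell module $W_n(\mu)$, so the first step also yields $\mathrm{ch}(L_n(\mu))\subseteq\mathrm{ch}(W_n(\mu))=\{\chi_{\mathfrak{s}}\,|\,\mathfrak{s}\in\St_n(\mu)\}$. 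Since $L_n(\mu)\neq 0$, the set $\mathrm{ch}(L_n(\mu))$ is nonempty, so we may pick $\mathfrak{s}\in\St_n(\mu)$ with $\chi_{\mathfrak{s}}\in\mathrm{ch}(L_n(\mu))\subseteq\mathrm{ch}(W_n(\lambda))$. The first step then furnishes $\mathfrak{t}\in\St_n(\lambda)$ with $\chi_{\mathfrak{t}}=\chi_{\mathfrak{s}}$, i.e.\ $c_{\mathfrak{t}}=c_{\mathfrak{s}}$.

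No genuine obstacle is anticipated, since the essential content is already encoded in Theorem \ref{ThmFamJM}. The only piece needing care is the upper-triangular/character correspondence used in the first step, but this is a standard observation about commuting upper triangular actions on a finite-dimensional vector space.
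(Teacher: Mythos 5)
Your argument is correct and is essentially the paper's own proof: both restrict the cell module to the commutative subalgebra $\Gamma_n$ of Lemma~\ref{LemJM}, use Theorem~\ref{ThmFamJM} to see that the one-dimensional $\Gamma_n$-constituents (equivalently, the generalized eigencharacters) of $W_n(\lambda)$ and $W_n(\mu)$ lie among $\{\chi_{\mathfrak{t}}\}$ and $\{\chi_{\mathfrak{s}}\}$ respectively, and then match a character of $L_n(\mu)$ occurring in both. Note that only the containments $\mathrm{ch}(W_n(\nu))\subseteq\{\chi_{\mathfrak{t}}\,|\,\mathfrak{t}\in\St_n(\nu)\}$ are needed, which follow directly from the upper triangularity, so your slightly stronger equality claim is harmless.
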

 \begin{proof}
 The proof follows \cite[\S 2]{Mathas}.
 Consider~$W(\lambda)$ as a module over~$\Gamma_n$ from Lemma~\ref{LemJM}. All simple finite dimensional modules over~$\Gamma_n$ are one-dimensional. Since $L(\mu)$ is the top of~$W(\mu)$, the property $[W(\lambda):L(\mu)]\not=0$ implies in particular that there is a simple $\Gamma_n$-module which appears as a subquotient both in~$W(\mu)$ and $W(\lambda)$.
  Theorem~\ref{ThmFamJM} thus completes the argument.
 \end{proof}

 \begin{cor}
 \label{CorResi2}
Assume $\charr(\mk)\not\in[2,n]$. For $\lambda\in \mathbf{L}_{A_n}$ and $\mu\vdash n$, if
 $$[W_n(\lambda):L_n(\mu)]\not=0,$$
 then~$\lambda\subset \mu$ and the boxes in~$\mu\backslash\lambda$ can be paired in a way that the contents of each pair differ by one.
 \end{cor}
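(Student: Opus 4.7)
The plan is to use Corollary~\ref{CorResi} as the starting point. Since $[W_n(\lambda):L_n(\mu)]\neq 0$, there exist paths $\mathfrak{t}\in\St_n(\lambda)$ and $\mathfrak{s}\in\St_n(\mu)$ with $c_\mathfrak{t}=c_\mathfrak{s}$. Because $|\mu|=n$ and partition sizes along any path change by $\pm 1$ at each step, every step of $\mathfrak{s}$ must be an addition; thus $\mathfrak{s}$ corresponds to a genuine standard $\mu$-tableau, and writing $\mathfrak{s}^{(l)}=\mathfrak{s}^{(l-1)}\cup b_l$, we have $c_\mathfrak{s}(l)=\resi(b_l)$ for $2\le l\le n$.

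My first target is the inclusion $\lambda\subseteq\mu$, which I would prove by induction on $l$ with the stronger statement $\mathfrak{t}^{(l)}\subseteq\mathfrak{s}^{(l)}$. The base $l=1$ is trivial. For the inductive step, a removal in $\mathfrak{t}$ at step $l$ preserves the inclusion since $\mathfrak{t}^{(l)}\subsetneq\mathfrak{t}^{(l-1)}\subseteq\mathfrak{s}^{(l-1)}\subsetneq\mathfrak{s}^{(l)}$. If step $l$ adds a box $b$ to $\mathfrak{t}^{(l-1)}$, then either $b\in\mathfrak{s}^{(l-1)}$ (and the inclusion is immediate), or $b$ is addable to $\mathfrak{s}^{(l-1)}$ with $\resi(b)=\resi(b_l)$. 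In this second subcase, since $|\mathfrak{s}^{(l-1)}|\le n-1$, the addable corners of $\mathfrak{s}^{(l-1)}$ have integer contents confined to a range of length at most $n$, and the characteristic hypothesis $\charr(\mk)\notin[2,n]$ forces their residues in $\mk$ to be pairwise distinct; this pins down $b=b_l$ and preserves the inclusion. Taking $l=n$ gives $\lambda\subseteq\mu$.

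For the pairing of $\mu\setminus\lambda$, I would track the difference $D_l:=\mathfrak{s}^{(l)}\setminus\mathfrak{t}^{(l)}$ through the induction and build up a partition of $D_l$ into pairs of boxes whose contents differ by $1$. The transitions fall into three cases: an addition with $b=b_l$ leaves $D_l=D_{l-1}$ and the pairing is unchanged; an addition with $b\in\mathfrak{s}^{(l-1)}$, $b\ne b_l$, performs a swap $D_l=(D_{l-1}\cup\{b_l\})\setminus\{b\}$, and the existing pair containing $b$ (if any) is updated by replacing $b$ by $b_l$; a removal at step $l$ enlarges $D_l$ by the two elements $\{b,b_l\}$, which I would add as a new pair. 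The end result $D_n=\mu\setminus\lambda$ is then paired as required.

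The technical crux, which I expect to be the main obstacle, is showing that these pairs really consist of boxes whose contents differ by $1$ in $\mathbb{Z}$, rather than merely modulo $\charr(\mk)$. Concretely, I need $\con(b_l)=\con(b)+1$ at each newly formed pair in a removal step, and $\con(b)=\con(b_l)$ at each swap. In both cases the identity of residues in $\mk$ is given by $c_\mathfrak{t}=c_\mathfrak{s}$, so the task is to upgrade these to integer identities. The plan here is to exploit that at step $l$ the relevant boxes lie inside partitions of size at most $l-1\le n-1$, so their contents are confined to an interval of length at most $n$, and to combine the characteristic hypothesis with the finer combinatorial control granted by the inclusion $\mathfrak{t}^{(l-1)}\subseteq\mathfrak{s}^{(l-1)}$ and the strictly decreasing / interleaving behavior of addable and removable corner contents of a single partition. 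This is the same kind of uniqueness argument that produced the inclusion in the first step, applied now to the pair of boxes appearing at each transition.
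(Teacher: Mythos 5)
Your proposal is correct, but it reaches the two conclusions by a genuinely different route than the paper. For the inclusion $\lambda\subset\mu$, the paper does not argue combinatorially from the Jucys--Murphy data at all: it restricts $W_n(\lambda)\cong e_n^\ast\Delta_{C_n}(\lambda)$ to $\mk\mS_n$, identifies this restriction with $\ind^{\mS_n}(L^0(\lambda)\boxtimes K)$, and invokes the Littlewood--Richardson rule, which forces every constituent $L^0(\mu)$ to satisfy $\lambda\subset\mu$. For the pairing, the paper then makes a single global multiset comparison: $c_{\mathfrak{t}}$ consists of the residues of $\lambda$ (minus the initial box) together with pairs of elements differing by one, while $c_{\mathfrak{s}}$ consists of the residues of $\mu$; equating forces the residues of $\mu\setminus\lambda$ to pair up. You instead extract everything from the tuple equality $c_{\mathfrak{t}}=c_{\mathfrak{s}}$ of Corollary~\ref{CorResi} by a step-by-step induction along the paths, proving $\mathfrak{t}^{(l)}\subseteq\mathfrak{s}^{(l)}$ and simultaneously maintaining a pairing of $D_l=\mathfrak{s}^{(l)}\setminus\mathfrak{t}^{(l)}$. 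The key subsidiary facts you need all check out: a box addable to $\mathfrak{t}^{(l-1)}$ and absent from $\mathfrak{s}^{(l-1)}\supseteq\mathfrak{t}^{(l-1)}$ is indeed addable to $\mathfrak{s}^{(l-1)}$; distinct addable corners of a partition of size at most $n-1$ have contents spread over at most $n$ consecutive integers, so their residues are distinct when $\charr(\mk)\notin[2,n]$; and all boxes appearing in your swap and removal transitions lie in the single partition $\mathfrak{s}^{(l)}$ of size at most $n$, whose contents span fewer than $n$ integers, so the residue identities upgrade to the integer identities $\con(b)=\con(b_l)$ (swap) and $\con(b_l)=\con(b)+1$ (removal). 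The final upgrading step is the same device the paper uses. What each approach buys: the paper's argument is shorter because the inclusion comes for free from standard representation theory, whereas yours is self-contained modulo Corollary~\ref{CorResi} (no LR rule needed), is purely combinatorial, and produces an explicit pairing constructed along the path rather than a mere existence statement from a multiset identity.
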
 
 \begin{proof}
 The condition~$\lambda\subset\mu$ is immediate by the LR rule, since
 $$\res_{\mk\mS_n}W_n(\lambda)\;\cong\;\res_{\mk\mS_n} e_n^\ast \Delta_{C_n}(\lambda)\;\cong\;\ind^{\mS_n}\left(L^0(\lambda)\boxtimes K\right),$$
 with~$K$ the $\mk\mS_{n-i}$-module $\Hom_{\cA}(0,n-i)$, for $i=|\lambda|$.

 By definition \eqref{defct}, any~$c_{\mathfrak{t}}$ for $\mathfrak{t}\in\St_n(\lambda)$ consists of the residues of $\lambda$ (excluding that of the box in position (0,0)), together with pairs of elements in $\mk$ which differ by one. Similarly $c_{\mathfrak{s}}$, for $\mathfrak{s}\in \St_n(\mu)=\St(\mu)$, consists only of the residues of $\mu$. By Corollary~\ref{CorResi}, the residues in~$\mu$ which do not appear in~$\lambda$ must thus pair up into pairs of elements which differ by one.
 
 Furthermore, as the difference between the largest and smallest content of $\mu$ is strictly lower than~$n$, the condition~$\charr(\mk)\not\in[2,n]$ implies that when two residues differ by one, the corresponding contents must also differ by one. This concludes the proof.
 \end{proof}

 \subsection{Some commutation relations}
 In order to investigate which elements of~$\Gamma_n$ from Lemma~\ref{LemJM} belong to the centre of~$A_n$, we calculate some relations with the generators.
 
  \begin{lemma}\label{Lemexx}
 For $1\le k<n~$, we have
 \begin{enumerate}
 \item $\varepsilon_k(x_{k}-x_{k+1})=\varepsilon_k=-(x_{k}-x_{k+1})\varepsilon_k,$
 \item $s_kx_ks_k =x_{k+1}-s_k-\varepsilon_k,$
 \item $s_k(x_{k}-x_{k+1})s_k=-2s_k-(x_{k}-x_{k+1})$,
 \end{enumerate}
 \end{lemma}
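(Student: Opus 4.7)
\textbf{Proof plan for Lemma~\ref{Lemexx}.} All three identities are equalities inside $A_n$ and reduce to direct computations with (marked) Brauer diagrams. The key observation is that both $\varepsilon_k$ and $s_k$ are supported on the two adjacent strands $k$ and $k+1$, so they commute with, or act very simply on, any diagram whose cups/caps/crossings do not touch positions $k$ or $k+1$. This localizes the calculation to a handful of small diagrams.

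For (1), I would expand
\[
x_k - x_{k+1} \;=\; -\bigl(s_k + \varepsilon_k\bigr) \;+\; \sum_{j=1}^{k-1}\Bigl[\bigl((j,k)-(j,k+1)\bigr) + \bigl(\overline{(j,k)}-\overline{(j,k+1)}\bigr)\Bigr],
\]
using that the only new summand in $x_{k+1}$ relative to $x_k$ with index $k$ is $(k,k+1)+\overline{(k,k+1)} = s_k+\varepsilon_k$. Left-multiplying by $\varepsilon_k$, I would check diagrammatically that for each $j<k$ both pairs cancel: the cap of $\varepsilon_k$ absorbs the transposition $(j,k)$ into the same diagram as $\varepsilon_k\,(j,k+1)$, and analogously for the cup-cap summands. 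Hence only $-\varepsilon_k(s_k+\varepsilon_k)$ survives, and invoking the periplectic relations $\varepsilon_k^2 = 0$ and $\varepsilon_k s_k = -\varepsilon_k$ (read off from the generating relations of~$\cA$ in~\cite[Thm.~3.2.1]{Kujawa}) gives $\varepsilon_k$. The right-multiplication identity is symmetric: the only asymmetry is $s_k\varepsilon_k = +\varepsilon_k$ versus $\varepsilon_k s_k = -\varepsilon_k$, which is precisely the skew-symmetry of the cap mentioned in \ref{SecMaDi}, and this produces the opposite overall sign.

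For (2) I would conjugate $x_k$ by $s_k$ summand-by-summand. For $j<k$, straightforward diagrammatic verification shows $s_k(j,k)s_k = (j,k+1)$ (the usual symmetric-group conjugation) and, after a careful comparison of standard markings, $s_k\overline{(j,k)}s_k = \overline{(j,k+1)}$ with no sign. Summing yields $\sum_{j<k}\bigl[(j,k+1)+\overline{(j,k+1)}\bigr]$, which is precisely $x_{k+1} - s_k - \varepsilon_k$. For (3), an identical calculation gives $s_k x_{k+1} s_k = x_k + s_k + s_k\varepsilon_k s_k$; combined with the identity $s_k\varepsilon_k s_k = -\varepsilon_k$ (again a small diagrammatic check, equivalently derived from $s_k\varepsilon_k = \varepsilon_k$ together with $\varepsilon_k s_k = -\varepsilon_k$), we get $s_k x_{k+1} s_k = x_k + s_k - \varepsilon_k$. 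Subtracting this from the formula in (2) yields
\[
s_k(x_k-x_{k+1})s_k \;=\; (x_{k+1}-s_k-\varepsilon_k)-(x_k+s_k-\varepsilon_k) \;=\; -(x_k-x_{k+1})-2s_k,
\]
as required; note that the $\varepsilon_k$ contributions cancel between (2) and its analogue for~$x_{k+1}$.

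The main obstacle is the sign bookkeeping in the marked-diagram formalism. Each of the elementary identities $\varepsilon_k s_k = -\varepsilon_k$, $s_k\varepsilon_k = \varepsilon_k$, $s_k \varepsilon_k s_k = -\varepsilon_k$, $\varepsilon_k^2 = 0$, and $s_k \overline{(j,k)} s_k = \overline{(j,k+1)}$ that the argument relies on must be confirmed by applying the procedure of \ref{SecMaDi}, where the skew-symmetry of caps and the parity of cup/cap reorderings produce the correct signs. Once those small-diagram computations are tabulated, the three stated identities follow by bookkeeping, with no further geometric input.
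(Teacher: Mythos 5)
Your overall strategy is exactly the paper's: expand $x_k-x_{k+1}$ so that the $j<k$ terms are killed by $\varepsilon_k$, leaving $-\varepsilon_k(s_k+\varepsilon_k)=\varepsilon_k$ via $\varepsilon_ks_k=-\varepsilon_k$ and $\varepsilon_k^2=0$; prove (2) by conjugating summand-by-summand using $s_k(j,k)s_k=(j,k+1)$ and $s_k\overline{(j,k)}s_k=\overline{(j,k+1)}$; and deduce (3) from (2). All of the elementary relations you invoke ($\varepsilon_ks_k=-\varepsilon_k$, $s_k\varepsilon_k=\varepsilon_k$, $s_k\varepsilon_ks_k=-\varepsilon_k$, $\varepsilon_k^2=0$) are correct, and your derivation of (3) is sound.

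The one concrete error is the cancellation pattern you describe in part (1). You claim that $\varepsilon_k(j,k)$ and $\varepsilon_k(j,k+1)$ give the same diagram (and likewise $\varepsilon_k\overline{(j,k)}$ and $\varepsilon_k\overline{(j,k+1)}$). This is false: in $\varepsilon_k(j,k)$ the cap of $\varepsilon_k$ lands on the bottom dots $\{j,k+1\}$, while in $\varepsilon_k(j,k+1)$ it lands on $\{j,k\}$, so the underlying Brauer diagrams differ. The correct identities are the \emph{crossed} ones, $\varepsilon_k(j,k)=\varepsilon_k\overline{(j,k+1)}$ and $\varepsilon_k(j,k+1)=\varepsilon_k\overline{(j,k)}$ — a transposition pairs with a bar element, not with the other transposition. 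The net conclusion is unaffected, since
$$\varepsilon_k\bigl[(j,k)-(j,k+1)+\overline{(j,k)}-\overline{(j,k+1)}\bigr]=\varepsilon_k\bigl[(j,k)-\overline{(j,k+1)}\bigr]+\varepsilon_k\bigl[\overline{(j,k)}-(j,k+1)\bigr]=0,$$
and the diagrammatic check you propose to carry out would reveal the right pairing; but as written the stated mechanism does not hold, so you should correct it before writing the proof out in full.
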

 \begin{proof}
 We have $\varepsilon_k(i,k)=\varepsilon_k\overline{(i,k+1)}$ and $\varepsilon_k(i,k+1)=\varepsilon_k\overline{(i,k)}$ for all $i<k$. Furthermore, $\varepsilon_k s_k=-\varepsilon_k$ and $\varepsilon_k\varepsilon_k=0$, yielding the first equation in part~(1), with the second following similarly.

For $i<k$, we find $s_k(i,k)s_k=(i,k+1)$ and $s_k\overline{(i,k)}s_k=\overline{(i,k+1)}$.
Part (2) then follows immediately and part~(3) is immediate consequence of part~(2).
 \end{proof}
 
\begin{cor}\label{Corx2}
 For $1\le k<n~$, we have 
 \begin{enumerate}
 \item $s_k(x_{k}-x_{k+1})^2=(x_{k}-x_{k+1})^2s_k$,
 \item $\epsilon_k(x_{k}-x_{k+1})^2=\epsilon_k=(x_{k}-x_{k+1})^2\epsilon_k$,
 \item $s_k(x_{k}+x_{k+1})s_k=(x_{k}+x_{k+1}) -2 \varepsilon_k$,
\item $s_k(x_kx_{k+1}) = (x_kx_{k+1}) s_k +x_k \varepsilon_k+\varepsilon_k x_k.$
 \end{enumerate}
 \end{cor}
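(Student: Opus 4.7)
The plan is to derive each of the four identities as a direct algebraic consequence of Lemma~\ref{Lemexx}, using only the facts that $s_k^2=1$ (so $s_k$ is its own inverse), that $\varepsilon_k s_k = -\varepsilon_k = s_k \varepsilon_k$, and that $x_k$ and $x_{k+1}$ commute (a special case of Lemma~\ref{LemJM}). The single most useful intermediate identity, obtained from Lemma~\ref{Lemexx}(3) by right multiplication by $s_k$, is the anti-commutation relation
\[ s_k\, y + y\, s_k = -2, \qquad\text{where } y := x_k - x_{k+1}. \]

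For part~(1), I would simply compute $s_k y^2 = (s_k y)\, y = (-2 - y s_k)\, y = -2y - y (s_k y) = -2y - y(-2 - y s_k) = y^2 s_k$. For part~(2), Lemma~\ref{Lemexx}(1) gives $\varepsilon_k y = \varepsilon_k$ and $y\varepsilon_k = -\varepsilon_k$, and iterating each of these yields $\varepsilon_k y^2 = \varepsilon_k$ and $y^2 \varepsilon_k = \varepsilon_k$. These two parts are essentially bookkeeping.

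For parts~(3) and (4), the key preliminary step is to derive the ``partner'' to Lemma~\ref{Lemexx}(2), namely
\[ s_k x_{k+1} s_k \;=\; x_k + s_k - \varepsilon_k, \]
which follows by subtracting Lemma~\ref{Lemexx}(3) from Lemma~\ref{Lemexx}(2). Summing this with Lemma~\ref{Lemexx}(2) immediately gives part~(3). For part~(4), I would right-multiply each of the two boxed relations by $s_k$ and use $\varepsilon_k s_k = -\varepsilon_k$ to obtain
\[ s_k x_k = x_{k+1} s_k - 1 + \varepsilon_k, \qquad s_k x_{k+1} = x_k s_k + 1 + \varepsilon_k. \]
Applying these successively to compute $s_k x_k x_{k+1}$ and using $x_k x_{k+1} = x_{k+1} x_k$, the sign-one terms cancel and one is left with $x_k x_{k+1} s_k + x_{k+1}\varepsilon_k + \varepsilon_k x_{k+1}$.

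The only remaining subtlety, and what I expect is the main bookkeeping point, is that this expression has $x_{k+1}$'s multiplying $\varepsilon_k$ rather than the asserted $x_k$'s. This is resolved by invoking Lemma~\ref{Lemexx}(1) in the form $\varepsilon_k x_k - \varepsilon_k x_{k+1} = \varepsilon_k$ and $x_k \varepsilon_k - x_{k+1}\varepsilon_k = -\varepsilon_k$, whose sum yields $x_k \varepsilon_k + \varepsilon_k x_k = x_{k+1}\varepsilon_k + \varepsilon_k x_{k+1}$. Substituting completes part~(4). No part of the argument requires a new structural input beyond Lemma~\ref{Lemexx} and the basic relations just mentioned; the difficulty is purely in tracking the asymmetric signs in the periplectic relations.
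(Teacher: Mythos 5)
Your derivation is correct and is precisely the intended one: the paper states this as an immediate corollary of Lemma~\ref{Lemexx} without giving a proof, and each of your four computations (the anti-commutation relation $s_k y+ys_k=-2$ for $y=x_k-x_{k+1}$, the iteration of Lemma~\ref{Lemexx}(1), the partner relation $s_kx_{k+1}s_k=x_k+s_k-\varepsilon_k$, and the final symmetrisation $x_k\varepsilon_k+\varepsilon_kx_k=x_{k+1}\varepsilon_k+\varepsilon_kx_{k+1}$) checks out. One correction to your list of background facts: in $A_n$ one has $\varepsilon_k s_k=-\varepsilon_k$ but $s_k\varepsilon_k=+\varepsilon_k$ (the cap is skew-symmetric while the cup is not; this asymmetry is forced by the internal consistency of Lemma~\ref{Lemexx}(2) and (3), and is used explicitly in the paper in the step $\varepsilon_k(x_k+x_{k+1})^b\varepsilon_k=\varepsilon_k(x_k+x_{k+1})^bs_k\varepsilon_k$ of the corollary following Lemma~\ref{Lemkl}). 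Your asserted identity $s_k\varepsilon_k=-\varepsilon_k$ is therefore false, but it is harmless here because none of your steps actually invokes it: everything rests only on $\varepsilon_ks_k=-\varepsilon_k$, $s_k^2=1$, the commutativity of $x_k$ and $x_{k+1}$ from Lemma~\ref{LemJM}, and Lemma~\ref{Lemexx} itself, so the proof stands as written once that parenthetical claim is deleted.
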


 \begin{lemma}\label{Lemkl}
 For $1\le k< n$ and $l\not\in \{k,k+1\}$, we have $\varepsilon_k x_{l}=x_l\varepsilon_k,$ and $s_k x_{l}=x_ls_k$.
 \end{lemma}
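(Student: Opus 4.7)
The plan is to split into two cases based on the position of $l$ relative to $\{k,k+1\}$. Since $x_1=0$, we may assume $l\ge 2$, so either $l\ge k+2$ or $l\le k-1$.

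For the case $l\ge k+2$, the result is immediate from Lemma~\ref{LemJM}: the element $x_l$ commutes with every element of $A_{l-1}$, and since $l-1\ge k+1$ both $s_k$ and $\varepsilon_k$ lie in $A_{k+1}\subseteq A_{l-1}$. So this case requires no further work.

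For $l\le k-1$ the argument is by disjoint supports. Writing $x_l=\sum_{j=1}^{l-1}\bigl((j,l)+\overline{(j,l)}\bigr)$, each summand has diagrammatic support in positions $\{1,\ldots,l\}$, whereas $s_k$ and $\varepsilon_k$ are supported in $\{k,k+1\}$, and these sets are disjoint. To make this precise, under the embedding $A_l\hookrightarrow A_n$ from \ref{embed} each summand of $x_l$ becomes $d\otimes e_{n-l}^\ast$ for some $d\in A_l$, and via the monoidal structure of $\cA$ from \ref{anti} one rewrites $s_k=e_l^\ast\otimes s_{k-l}^{(n-l)}$ and $\varepsilon_k=e_l^\ast\otimes \varepsilon_{k-l}^{(n-l)}$, where the second factors live in $A_{n-l}$ and act on positions $k-l,k-l+1$ of $n-l$. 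The super interchange law then yields, modulo possible super-signs, the identity $(d\otimes e_{n-l}^\ast)\circ(e_l^\ast\otimes y)=d\otimes y=(e_l^\ast\otimes y)\circ(d\otimes e_{n-l}^\ast)$ for $y\in\{s_{k-l}^{(n-l)},\varepsilon_{k-l}^{(n-l)}\}$.

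The main obstacle is the super-sign bookkeeping in the second case, which I would resolve by the following observation: every element of $A_m=\End_{\cA}(m)$ is even in the super-category $\cA$, because any $(m,m)$-Brauer diagram contains an equal number of cups and caps, and hence an even total number of odd generators. Consequently the super interchange law applies with no sign correction, and the commutation follows. If one prefers an argument entirely within the diagram calculus, the same conclusion may be reached by direct computation of the sign $\gamma$ of \ref{ObHom} for the two compositions $\overline{(j,l)}\cdot\varepsilon_k$ and $\varepsilon_k\cdot\overline{(j,l)}$; in both orders the decoration is brought to standard form by the same even number of elementary swaps, so the resulting signs agree.
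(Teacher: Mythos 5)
Your proof is correct and follows essentially the same route as the paper: the case $l>k+1$ is handled exactly as in the paper via Lemma~\ref{LemJM}, and the case $l<k$ is the disjoint-support observation that the paper states as immediate. Your extra care with the super-signs (noting that every $(m,m)$-Brauer diagram has equally many cups and caps and is therefore even, so the interchange law carries no sign) correctly fills in the detail the paper leaves implicit.
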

 \begin{proof}
 For $l>k+1$, we have $s_k,\varepsilon_k\in A_{l-1}\subset A_n$, so the equations follow from Lemma~\ref{LemJM}. For $l<k$, it follows immediately that~$x_l$, being an element of~$ A_l$, commutes with~$s_k$ and $\varepsilon_k$, concluding the proof.
 \end{proof}
 
 

\begin{cor}
Assume $\charr(\mk)\not=2$ and let $f$ be an arbitrary element of~$\Gamma_n$, we have
$$\varepsilon_k \,f\,\varepsilon_k=0,\qquad\;\mbox{ for all $\;1\le k< n$.}$$
\end{cor}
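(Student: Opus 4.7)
The plan is to first reduce to a polynomial in the two variables $x_k$ and $x_{k+1}$, then change variables to $u=x_k+x_{k+1}$, and finally exploit the conjugation formula $s_k u s_k = u - 2\varepsilon_k$ from Corollary~\ref{Corx2}(3) to run a strong induction. I will use throughout $\varepsilon_k s_k = -\varepsilon_k$ and $s_k\varepsilon_k = \varepsilon_k$ (established in the proof of Lemma~\ref{Lemexx}(1)) together with $\varepsilon_k^2 = 0$.

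First, since $\Gamma_n$ is commutative and $\varepsilon_k$ commutes with each $x_j$ for $j\notin\{k,k+1\}$ by Lemma~\ref{Lemkl}, any $f\in \Gamma_n$ can be expanded as $f=\sum_\alpha g_\alpha h_\alpha$ with $g_\alpha$ a polynomial in $\{x_j:j\neq k,k+1\}$ and $h_\alpha\in\mk[x_k,x_{k+1}]$; the $g_\alpha$'s can be pulled outside the sandwich, so the problem reduces to showing $\varepsilon_k h\varepsilon_k = 0$ for every polynomial $h$ in the commuting pair $x_k,x_{k+1}$.

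Next, introduce $u:=x_k+x_{k+1}$ and $v:=x_k-x_{k+1}$; since $\charr(\mk)\neq 2$, the pair $(u,v)$ generates $\mk[x_k,x_{k+1}]$, so any such $h$ can be written uniquely as $h=\sum_j p_j(u)v^j$. The identity $v\varepsilon_k=-\varepsilon_k$ from Lemma~\ref{Lemexx}(1), combined with the fact that $u$ and $v$ commute in $\Gamma_n$, gives $\varepsilon_k p_j(u)v^j\varepsilon_k = (-1)^j\varepsilon_k p_j(u)\varepsilon_k$. The problem thus collapses to showing $\varepsilon_k u^i\varepsilon_k = 0$ for every $i\ge 0$.

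For this last step I would argue by strong induction on $i$. The base $i=0$ is just $\varepsilon_k^2=0$. For the inductive step, compute $\varepsilon_k s_k u^i s_k\varepsilon_k$ in two ways. On the one hand, $(\varepsilon_k s_k)u^i(s_k\varepsilon_k) = -\varepsilon_k u^i\varepsilon_k$. On the other hand, $s_k u^i s_k = (u-2\varepsilon_k)^i$, whose expansion is a sum of ordered monomials (since $u$ and $\varepsilon_k$ do not commute). The pure-$u^i$ term contributes $\varepsilon_k u^i\varepsilon_k$, and each remaining monomial has the shape $u^{a_0}\varepsilon_k u^{a_1}\cdots\varepsilon_k u^{a_r}$ with $r\ge 1$ and $a_0+\cdots+a_r=i-r\le i-1$; in particular $a_0<i$, so that
\[
\varepsilon_k\,u^{a_0}\varepsilon_k u^{a_1}\cdots\varepsilon_k u^{a_r}\,\varepsilon_k \;=\;\bigl(\varepsilon_k u^{a_0}\varepsilon_k\bigr)\cdot\bigl(u^{a_1}\varepsilon_k\cdots u^{a_r}\varepsilon_k\bigr)\;=\;0
\]
by the induction hypothesis. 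Equating the two evaluations yields $-\varepsilon_k u^i\varepsilon_k = \varepsilon_k u^i\varepsilon_k$, hence $2\,\varepsilon_k u^i\varepsilon_k = 0$, and the assumption $\charr(\mk)\ne 2$ closes the induction.

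The main obstacle I anticipate is purely bookkeeping, namely controlling the non-commutative expansion of $(u-2\varepsilon_k)^i$ to ensure that every term other than $u^i$ contains an $\varepsilon_k$ with an $a_0<i$ block to its left; this works because inserting even a single letter $\varepsilon_k$ leaves strictly fewer than $i$ factors of $u$ in front of it.
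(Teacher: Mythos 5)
Your proposal is correct and follows essentially the same route as the paper: reduce via Lemma~\ref{Lemkl} to polynomials in $x_k,x_{k+1}$, strip off the $(x_k-x_{k+1})$-factors using Lemma~\ref{Lemexx}(1), and then induct on the power of $x_k+x_{k+1}$ by conjugating with $s_k$ via Corollary~\ref{Corx2}(3), closing with $\charr(\mk)\neq 2$. The only cosmetic difference is that you spell out the non-commutative expansion of $(u-2\varepsilon_k)^i$ and the role of strong induction, which the paper leaves implicit.
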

 \begin{proof}
 By Lemma~\ref{Lemkl}, it suffices to prove that the claim is true for $f$ an element in the subalgebra of~$A_n$ generated by~$x_{k}$ and $x_{k+1}$. Such an element is a linear combination of elements
 $$f_{ab}:=(x_{k}-x_{k+1})^a(x_k+x_{k+1})^b\quad \mbox{ for }\; a,b\in\mN.$$
 From Lemma~\ref{Lemexx}(1), we find $\varepsilon_k f_{ab}=\varepsilon_k f_{0b}$. To deal with~$f_{0b}$, we proceed by induction on~$b$. We thus assume $\varepsilon_k f_{0j}\varepsilon_k=0$ for $j<b$. Using Corollary~\ref{Corx2}(3), we calculate
 $$\varepsilon_k (x_k+x_{k+1})^b \varepsilon_k\,=\, \varepsilon_k (x_k+x_{k+1})^b s_k\varepsilon_k\,=\,\varepsilon_ks_k (x_k+x_{k+1}-2\varepsilon_k)^b \varepsilon_k\,=\,-\varepsilon_k (x_k+x_{k+1}-2\varepsilon_k)^b\varepsilon_k.$$ 
 The induction hypothesis can be used to show that the right-hand side is equal to~$-\varepsilon_k (x_k+x_{k+1})^b\varepsilon_k$, which concludes the proof.
 \end{proof}
 
 The properties in Corollary~\ref{Corx2} motivate the introduction of the following element,
 \begin{equation}\label{eqThetaT}
 \Theta:=\prod_{2\le i\not=j\le n}\left((x_i-x_j)-1\right)=\prod_{2\le i<j\le n}\left(1-(x_i-x_j)^2\right)\;\in\; \Gamma_n\,\subset\,A_n,
 \end{equation}
assuming that~$n>2$. For $n=2$, we just set $\Theta=0$.

\subsection{The centre}
 
 \subsubsection{}
 For the Hecke and Brauer algebras, many polynomials in the Jucys-Murphy elements are central see~\cite[Corollary~3.27]{Mathas} and~\cite[Corollary~2.4]{Naz}. For $A_n$ we find that the natural sufficient condition for elements of~$\Gamma_n$ to be central in~$A_n$ is far more restrictive. This is logical, as the centre of~$A_n$ is expected to be very small, as a consequence of the trivial centre of the universal enveloping algebra of the periplectic superalgebra, see \cite{Go}.
Let $\mk[\underline{x}]^{\mS_{n\minus 1}}\subset\Gamma_n$ denote the symmetric polynomials in~$\{x_2,x_3,\ldots,x_n\}$. In particular, $\Theta\in \mk[\underline{x}]^{\mS_{n\minus 1}}$.
 \begin{thm}\label{ThmGZ}
 The subalgebra
 $$\mk1 \;\oplus\; \Theta\, \mk[\underline{x}]^{\mS_{n\minus 1}},$$
with~$\Theta$ introduced in~\eqref{eqThetaT}, belongs to the centre of~$A_n$.
\end{thm}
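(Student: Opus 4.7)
The plan is to verify that every element $\Theta f$ with $f\in\mk[\underline{x}]^{\mS_{n\minus 1}}$ commutes with each of the generators $s_k$ and $\varepsilon_k$ of $A_n$, for $1\le k<n$; the subspace described in the theorem is then automatically a subalgebra, since $(\Theta f)(\Theta g)=\Theta\cdot(\Theta fg)$ with $\Theta fg$ again symmetric in $x_2,\ldots,x_n$. A useful preliminary identity I will use throughout is $s_k\varepsilon_k=\varepsilon_k$, which follows from Lemma~\ref{Lemexx}(2) and Corollary~\ref{Corx2}(3) by comparing two expressions for $s_kx_{k+1}s_k$; recall also that $\varepsilon_ks_k=-\varepsilon_k$, as used in the proof of Lemma~\ref{Lemexx}.

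The first step is to show that $\varepsilon_k\Theta=\Theta\varepsilon_k=0$ for every $1\le k<n$. When $k\ge 2$ this is immediate: the factor $E_k:=1-(x_k-x_{k+1})^2$ appears in $\Theta$, Corollary~\ref{Corx2}(2) gives $\varepsilon_kE_k=E_k\varepsilon_k=0$, and all other factors of $\Theta$ commute with $E_k$ inside $\Gamma_n$. For $k=1$ the analogous role is played by $1-(x_2-x_3)^2$: using $\varepsilon_1x_2=-\varepsilon_1$ and $x_2\varepsilon_1=\varepsilon_1$ from Lemma~\ref{Lemexx}(1) (with $x_1=0$), together with $\varepsilon_1x_3=x_3\varepsilon_1=0$ from~\eqref{eq13}, a direct expansion yields $\varepsilon_1\bigl(1-(x_2-x_3)^2\bigr)=\bigl(1-(x_2-x_3)^2\bigr)\varepsilon_1=0$, and commutativity of $\Gamma_n$ finishes the job. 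Since $\Theta$ and $f$ commute inside $\Gamma_n$, associativity then gives $\varepsilon_k(\Theta f)=(\varepsilon_k\Theta)f=0$ and $(\Theta f)\varepsilon_k=(f\Theta)\varepsilon_k=f(\Theta\varepsilon_k)=0$.

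For commutation with $s_k$ when $k\ge 2$, I factor $\Theta=E_k\Theta_k$. Because the $(k,k+1)$-factor has been removed, $\Theta_k$ is symmetric in $x_k,x_{k+1}$, and hence so is $\Theta_kf$; write it as $\sum h_{ab}p^ar^b$ in $p:=x_k+x_{k+1}$ and $r:=(x_k-x_{k+1})^2$, where each $h_{ab}\in\mk[x_l:l\ne k,k+1]$ commutes with $s_k$ and $\varepsilon_k$ by Lemma~\ref{Lemkl}. Corollary~\ref{Corx2}(1) gives $[s_k,r]=0$ and $[s_k,E_k]=0$, while Corollary~\ref{Corx2}(3) combined with $\varepsilon_ks_k=-\varepsilon_k$ yields $[s_k,p]=2\varepsilon_k$. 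Expanding $[s_k,\Theta_kf]$ by Leibniz, every summand acquires a factor $\varepsilon_k$ sandwiched between elements of $\Gamma_n$; multiplying such a summand by $E_k$ on the left and commuting $E_k$ past the $\Gamma_n$-factor to its right produces $E_k\varepsilon_k=0$, so every summand vanishes. Hence $[s_k,\Theta f]=E_k[s_k,\Theta_kf]=0$.

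The remaining, and subtlest, case is $k=1$, where $x_2$ has no partner with which to symmetrise; I expect this to be the main obstacle. Here one exploits the identity $x_2=s_1+\varepsilon_1$, which falls out of Lemma~\ref{Lemexx}(2) with $x_1=0$. Writing $\Theta f=\sum_aG_ax_2^a$ with $G_a\in\mk[x_3,\ldots,x_n]$ (hence commuting with $s_1,\varepsilon_1$ by Lemma~\ref{Lemkl}), a short induction using $s_1\varepsilon_1=\varepsilon_1$, $\varepsilon_1s_1=-\varepsilon_1$, $x_2\varepsilon_1=\varepsilon_1$ and $\varepsilon_1x_2=-\varepsilon_1$ gives $[s_1,x_2^a]=\bigl((-1)^{a-1}+1\bigr)\varepsilon_1$, whence
\[[s_1,\Theta f]\;=\;2\Bigl(\sum_{a\text{ odd}}G_a\Bigr)\varepsilon_1.\]
On the other hand, the already-established identities $(\Theta f)\varepsilon_1=0=\varepsilon_1(\Theta f)$ unfold, via $x_2^a\varepsilon_1=\varepsilon_1$ and $\varepsilon_1x_2^a=(-1)^a\varepsilon_1$, into $(\sum_aG_a)\varepsilon_1=0$ and $(\sum_a(-1)^aG_a)\varepsilon_1=0$; their difference is exactly $2(\sum_{a\text{ odd}}G_a)\varepsilon_1$, which therefore vanishes and cancels the right-hand side above. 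Hence $[s_1,\Theta f]=0$ in every characteristic, completing the proof.
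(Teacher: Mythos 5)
Your proposal is correct for $\charr(\mk)\ne 2$ and follows essentially the same route as the paper's proof (Proposition~\ref{PropThm}): check commutation with the generators $s_k,\varepsilon_k$; kill $\varepsilon_k$ on both sides using the factor $1-(x_k-x_{k+1})^2$ (resp.\ $\varepsilon_1x_3=0=x_3\varepsilon_1$ when $k=1$); and observe that the commutator of $s_k$ with a symmetric polynomial in $x_k,x_{k+1}$ only produces terms containing $\varepsilon_k$, which that same factor annihilates. The one step that does not survive in characteristic $2$ is writing the symmetric element $\Theta_kf$ as $\sum h_{ab}\,p^ar^b$ with $p=x_k+x_{k+1}$ and $r=(x_k-x_{k+1})^2$: since $x_kx_{k+1}=\tfrac{1}{4}(p^2-r)$, the pair $(p,r)$ generates the symmetric polynomials only when $2$ is invertible, whereas the theorem carries no characteristic hypothesis. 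The paper sidesteps this by taking $x_k+x_{k+1}$ and $x_kx_{k+1}$ as generators and invoking Corollary~\ref{Corx2}(4), whose extra terms $x_k\varepsilon_k+\varepsilon_kx_k$ are again killed by the prefactor $1-(x_k-x_{k+1})^2$; substituting this into your Leibniz expansion repairs the gap with no other change. Your treatment of $k=1$ --- expanding in powers of $x_2$ and cancelling $[s_1,\Theta f]=2(\sum_{a\ \mathrm{odd}}G_a)\varepsilon_1$ against the two already-established identities $\varepsilon_1(\Theta f)=0=(\Theta f)\varepsilon_1$ --- is a clean, characteristic-free alternative to the paper's rewriting of $g$ as $(x_3^2-2x_3x_2)h'(x_2)$ via $x_2^2=1$.
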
 
In general, the central elements $\Theta\, \mk[\underline{x}]^{\mS_{n\minus 1}}$ will belong to the Jacobson radical of~$A_n$, and might well be zero for $n>3$.
\begin{prop}\label{PropTheta0}
We have $\Theta L_{A_n}(\lambda)=0$, unless $n=3$, $\charr(\mk)\not=3$ and $\lambda=(2,1)$.
\end{prop}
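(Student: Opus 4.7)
The plan is to combine the centrality of $\Theta$ (Theorem~\ref{ThmGZ}) with the triangular action of the Jucys--Murphy elements (Theorem~\ref{ThmFamJM}). Because $\Theta$ lies in $Z(A_n)$, it acts on every simple $L_n(\lambda)$, $\lambda\in\Lambda_A$, as a scalar $\theta_\lambda\in\mk$. By Theorem~\ref{ThmFamJM} the operators $x_l$ act upper-triangularly on the cell module $W_n(\lambda)$ in the Murphy basis indexed by $\St_n(\lambda)$, with diagonal entries $c_{\mathfrak{t}}(l)$. Hence $\Theta$ itself is upper-triangular with diagonal entries
\[
\theta_{\mathfrak{t}}\;:=\;\prod_{2\le i<j\le n}\bigl(1-(c_{\mathfrak{t}}(i)-c_{\mathfrak{t}}(j))^2\bigr),\qquad \mathfrak{t}\in\St_n(\lambda).
\]

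For $\lambda\in\Lambda_A$ the module $W_n(\lambda)$ has simple head $L_n(\lambda)$ and is therefore indecomposable. A central element acts on an indecomposable module with a unique generalized eigenvalue, so all $\theta_{\mathfrak{t}}$ coincide with $\theta_\lambda$. Thus it will suffice to exhibit, for each non-exceptional $(n,\lambda)$, a single path $\mathfrak{t}\in\St_n(\lambda)$ together with indices $2\le i<j\le n$ for which $c_{\mathfrak{t}}(i)-c_{\mathfrak{t}}(j)=\pm 1$ in $\mk$. Note that an integer content difference of $\pm 1$ always produces a residue difference of $\pm 1$ in $\mk$, irrespective of $\charr(\mk)$.

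The case $n=2$ is trivial since $\Theta=0$ by definition. For $n=3$ the four labels in $\Lambda_{A_3}$ are checked directly: the path $(1)\to\varnothing\to(1)$ gives $c=(1,0)$ for $\lambda=(1)$; the unique standard tableau yields $c=(1,2)$ or $c=(-1,-2)$ for $\lambda=(3)$ or $(1,1,1)$ (giving $\theta=0$ in each case); while the two standard tableaux of $(2,1)$ both give $c=(\pm 1,\mp 1)$, whence $\theta_{(2,1)}=1-4=-3$, which vanishes in $\mk$ precisely when $\charr(\mk)=3$. This accounts for the sole exception.

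For $n\ge 4$ the proof splits on the size of $\lambda$. If $|\lambda|=n$, a short case analysis on the shape of $\lambda$ shows it must contain one of $(1,3)$, $(3,1)$, or $(2,2)$ (otherwise $\lambda\subseteq\{(1,1),(1,2),(2,1)\}$, of size at most $3$); in each situation $\lambda\setminus\{(1,1)\}$ contains two boxes of consecutive contents (e.g.\ $(1,2),(1,3)$; or $(2,1),(3,1)$; or $(1,2),(2,2)$), and any standard tableau labels them by two indices in $\{2,\dots,n\}$, producing a vanishing factor. If instead $|\lambda|=i\in\JJJ^0(n)$ with $i\le n-2$, I fix an addable box $b$ of $\lambda$, construct a path that reaches $\lambda$ through $i-1$ minimal additions, and pad the remaining (necessarily even) $n-i$ steps with $(n-i)/2$ alternating add--remove moves of~$b$. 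Each add--remove pair at steps $l,l+1$ contributes $c_{\mathfrak{t}}(l)=\resi(b)$ and $c_{\mathfrak{t}}(l+1)=\resi(b)+1$, so their difference is $-1$; the hypothesis $i\le n-2$ places at least one such pair inside $\{2,\dots,n\}$, making the corresponding factor of $\theta_{\mathfrak{t}}$ vanish.

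The main obstacle is the combinatorial claim for $|\lambda|=n\ge 4$, namely that every such $\lambda$ has a pair of boxes in $\lambda\setminus\{(1,1)\}$ of consecutive contents; the rest is then a clean application of Theorems~\ref{ThmGZ} and~\ref{ThmFamJM} together with the explicit padding construction.
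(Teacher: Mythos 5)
Your proof is correct, but it takes a genuinely different route from the paper's. The paper's argument is a two-line reduction: Proposition~\ref{PropThm} gives not just centrality but the stronger statement $\Theta I=0=I\Theta$ for the ideal $I$ of~\ref{sseqI}, so $\Theta$ annihilates every simple module on which $I$ acts nontrivially (since $IL=L$ by simplicity); the remaining simples are exactly those inflated from $A_n/I\cong\mk\mS_n$, i.e.\ $\lambda\vdash n$, where $\Theta$ acts as $\Theta^0=\prod_{i<j}\bigl(1-(x_i^0-x_j^0)^2\bigr)$ and the vanishing, together with the sole exception $(n,\lambda)=(3,(2,1))$ with $\charr(\mk)\neq 3$, is read off from the classical eigenvalues of the symmetric-group Jucys--Murphy elements on Specht modules. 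You instead use only centrality (Theorem~\ref{ThmGZ}) combined with the triangularity of Theorem~\ref{ThmFamJM}: since $W_n(\lambda)$ has simple head $L_n(\lambda)$ and is therefore indecomposable, the central element $\Theta$ has a single generalized eigenvalue on $W_n(\lambda)$, which must equal every diagonal entry $\theta_{\mathfrak{t}}$ as well as the scalar on $L_n(\lambda)$, so it suffices to exhibit one path with two entries of $c_{\mathfrak{t}}$ differing by $1$. Your case analysis is complete: the $\{(1,3),(3,1),(2,2)\}$ argument for $|\lambda|=n\ge 4$ is airtight, the add--remove padding for $|\lambda|\le n-2$ produces a valid Bratteli path with a consecutive pair in positions $i+1,i+2\subseteq\{2,\dots,n\}$, and the $n\le 3$ computations match the stated exception. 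What your approach buys is a uniform treatment of all $\lambda\in\Lambda_A$ entirely inside the cell-module framework, using only the weaker fact that $\Theta$ is central; what the paper's buys is brevity, since $\Theta I=0$ disposes of all $|\lambda|<n$ at once and outsources the rest to the symmetric group. One cosmetic point: for $n=3$ and $\charr(\mk)\in\{2,3\}$ the set $\Lambda_{A_3}$ has fewer than four elements, but checking the superfluous labels is harmless.
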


Now we start the proofs of the theorem and proposition. Recall the definition of~$I$ in~\ref{sseqI}. The following proposition implies Theorem~\ref{ThmGZ}.

 \begin{prop}\label{PropThm}
 Let $f=f(x_2,x_3,\ldots,x_n)$ be a symmetric polynomial in~$n-1$ indeterminates evaluated in~$\{x_2,x_3,\ldots,x_n\}$. Then 
$f\,\Theta\,\in \,A_n$
belongs to the centre of~$A_n$. More precisely, we have
$$w\, f\Theta\;=\; f\Theta \, w\;\,\mbox{ for }\;w\in \mS_n\qquad\mbox{and}\qquad a\, f\Theta\;=\;0\;=\; f\Theta\, a\;\,\mbox{ for }\;a\in I.$$
 \end{prop}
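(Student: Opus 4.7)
The plan is to prove the two stated equations separately via the vector-space decomposition $A_n = \mk\mS_n \oplus I$ of \ref{sseqI}, and combine them into full centrality of $f\Theta$.

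The preliminary step is to establish $\varepsilon_k \Theta = 0 = \Theta \varepsilon_k$ for every $1 \le k < n$. For $k \ge 2$, commutativity of $\Gamma_n$ (Lemma \ref{LemJM}) lets me isolate the factor $1 - (x_k - x_{k+1})^2$ of $\Theta$, move it adjacent to $\varepsilon_k$, and kill the product by Corollary \ref{Corx2}(2). For $k = 1$ the factor $1 - (x_1 - x_2)^2$ is absent from $\Theta$, so I instead push $\varepsilon_1$ through each factor $1 - (x_2 - x_j)^2$ for $j \ge 3$ using $\varepsilon_1 x_2 = -\varepsilon_1$ (Lemma \ref{Lemexx}(1) with $x_1 = 0$) and $\varepsilon_1 x_j = x_j \varepsilon_1$ (Lemma \ref{Lemkl}), obtaining $\varepsilon_1(1 - (x_2 - x_j)^2) = -x_j(2 + x_j) \varepsilon_1$; multiplying these together with the remaining factors of $\Theta$ (which commute with $\varepsilon_1$) and exploiting commutativity of $\Gamma_n$ to bring $x_3$ adjacent to $\varepsilon_1$ on the right yields an expression containing $x_3 \varepsilon_1$ as a factor, which vanishes by the periplectic identity $x_3 \varepsilon_1 = 0$ of \eqref{eq13}. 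The symmetric argument gives $\Theta \varepsilon_1 = 0$. Because $f$ and $\Theta$ commute (Lemma \ref{LemJM}), this gives $\varepsilon_k(f\Theta) = \varepsilon_k \Theta f = 0$ and $(f\Theta)\varepsilon_k = f \Theta \varepsilon_k = 0$.

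For the commutation $s_k(f\Theta) = (f\Theta)s_k$, I use the derivation identity
$$s_k(f\Theta) - (f\Theta)s_k \;=\; (s_k f - f s_k)\Theta \,+\, f(s_k \Theta - \Theta s_k)$$
and show both summands vanish. For the first: write $f$ as a polynomial in the elementary symmetric polynomials $e_i = e_i(x_2, \ldots, x_n)$, decompose each $e_i = x_k x_{k+1} A + (x_k + x_{k+1}) B + C$ with $A, B, C$ symmetric polynomials in the remaining variables (which commute with $s_k$ and $\varepsilon_k$ by Lemma \ref{Lemkl}), and apply Corollary \ref{Corx2}(3,4) to obtain
$$s_k e_i - e_i s_k \;=\; (x_k \varepsilon_k + \varepsilon_k x_k) A + 2\varepsilon_k B,$$
a sum of terms of the shape $\gamma_1 \varepsilon_k \gamma_2$ with $\gamma_i \in \Gamma_n$; commuting the $\gamma$'s past $\Theta$ and using $\varepsilon_k \Theta = 0$ makes the product vanish, and induction via the Leibniz rule extends this to arbitrary symmetric $f$. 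For the second summand, I prove $s_k \Theta = \Theta s_k$ directly: factors of $\Theta$ disjoint from $\{k, k+1\}$ commute with $s_k$ (Lemma \ref{Lemkl}); the factor $1 - (x_k - x_{k+1})^2$ commutes by Corollary \ref{Corx2}(1); and the remaining factors group into pairs $P_l := (1 - (x_k - x_l)^2)(1 - (x_{k+1} - x_l)^2)$ for $l \notin \{k, k+1\}$, each satisfying $s_k P_l s_k = P_l$ by direct expansion using $s_k x_k s_k = x_{k+1} - s_k - \varepsilon_k$ (Lemma \ref{Lemexx}(2)), the companion relation $s_k x_{k+1} s_k = x_k + s_k - \varepsilon_k$ (derived from $s_k \varepsilon_k = \varepsilon_k$ and $\varepsilon_k s_k = -\varepsilon_k$), and the skew relations of Lemma \ref{Lemexx}(1); the correction terms $\pm s_k$ and $\pm \varepsilon_k$ appearing in the conjugates cancel symmetrically in the product.

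The main obstacle is this last explicit verification that $s_k P_l s_k = P_l$, which is routine but intricate and requires careful bookkeeping of the sign conventions in the periplectic Brauer algebra. Once the preceding steps are in place, $f\Theta$ commutes with every generator of $A_n$ and is therefore central; then for any $a \in I = A_n \varepsilon_1 A_n$, writing $a = \sum_i \alpha_i \varepsilon_1 \beta_i$ gives $af\Theta = \sum_i \alpha_i \varepsilon_1 (f\Theta) \beta_i = 0$ by centrality and the $\varepsilon_1$-annihilation, and similarly $f\Theta a = 0$.
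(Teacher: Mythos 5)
Your handling of the annihilation $\varepsilon_k\, f\Theta = 0 = f\Theta\,\varepsilon_k$ and of the summand $(s_kf-fs_k)\Theta$ is sound and essentially matches the paper. The gap is in the second summand, where you assert $s_k\Theta=\Theta s_k$ via the pairwise identity $s_kP_ls_k=P_l$. That identity is false. Writing $\tau(y)=s_kys_k$, Corollary~\ref{Corx2}(3),(4) give $\tau(x_k+x_{k+1})=x_k+x_{k+1}-2\varepsilon_k$ and $\tau(x_kx_{k+1})=x_kx_{k+1}-x_k\varepsilon_k-\varepsilon_kx_{k+1}-\varepsilon_k$, so $\tau$ fixes symmetric polynomials in $x_k,x_{k+1}$ only modulo terms $\gamma_1\varepsilon_k\gamma_2$ with $\gamma_i\in\Gamma_n$, and these error terms do not vanish. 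Carrying the twisted Leibniz rule through $P_l=1-\sigma^2+2p+p^2$ (with $\sigma=u+v$, $p=uv$, $u=x_k-x_l$, $v=x_{k+1}-x_l$) yields
$$s_kP_ls_k-P_l\;=\;2Y-pY-Yp,\qquad Y:=u\varepsilon_k+\varepsilon_ku,$$
which, after eliminating $x_{k+1}$ via $\varepsilon_k(x_k-x_{k+1})=\varepsilon_k=-(x_k-x_{k+1})\varepsilon_k$, equals
$$2u\varepsilon_k+2\varepsilon_ku-u^2\varepsilon_k+\varepsilon_ku^2-u^3\varepsilon_k-\varepsilon_ku^3-u^2\varepsilon_ku-u\varepsilon_ku^2.$$
This is a nonzero element of $I$: the two-sided terms $u^2\varepsilon_ku$ and $u\varepsilon_ku^2$ expand into diagrams with nontrivial permutations on both sides of the cup--cap pair and cannot cancel against the one-sided terms. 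The corrections are not cancelled "symmetrically in the product"; they are only killed after multiplication by the factor $1-(x_k-x_{k+1})^2$, since $(1-(x_k-x_{k+1})^2)\varepsilon_k=0$ by Corollary~\ref{Corx2}(2). That absorption is exactly the mechanism of the paper's proof, which keeps $f\Theta$ in the form $\left((x_k-x_{k+1})^2-1\right)h(x_k,x_{k+1})$ with $h$ symmetric and lets the first factor annihilate the $\varepsilon_k$-errors produced by conjugating $h$. Your architecture is reparable (prove that $s_kRs_k-R$ is a sum of terms $\gamma_1\varepsilon_k\gamma_2$ for $R$ the product of all remaining factors, then multiply by the special factor), but the step as written fails.

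A second, independent gap is the case $k=1$ of the $s_k$-commutation. Since $\Theta$ is a product over $2\le i<j\le n$, there is no factor $1-(x_1-x_2)^2$, and the factors $1-(x_2-x_j)^2$ for $j\ge 3$ do not group into your pairs $P_l$, so the decomposition you describe simply does not apply to $s_1$. This case genuinely requires the extra relation $\varepsilon_1x_3=0=x_3\varepsilon_1$ of equation~\eqref{eq13}, which you invoke only for the $\varepsilon$-annihilation: the paper rewrites $f\Theta=(x_3^2-2x_3x_2)h'$ using $x_2^2=1$ and commutes $s_1$ past $x_2$ at the cost of $2\varepsilon_1$, which is then killed by the factor $x_3$.
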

  \begin{proof}
 Set $g:= f\Theta$. First we prove that~$\varepsilon_kg=0=g\varepsilon_k$. For $2\le k<n$, $g$ is of the form
 \begin{equation}\label{eqgh}g\;=\; \left((x_{k}-x_{k+1})^2 -1\right)h\left(x_{k},x_{k+1}\right)\end{equation}
 with~$h$ a symmetric polynomial in two variables, with values in the algebra generated by~$\{x_l\,|\, l\not\in\{k,k+1\}\}.$ The claim thus follows from Corollary~\ref{Corx2}(2). Using $x_2^2=1$, we can also write $g$ as
\begin{equation}\label{eqghh}((x_{2}-x_{3})^2 -1)h'(x_2)=(x_3^2-2x_3x_2)h'(x_2),\end{equation}
 for some polynomial $h'$ in one variable with values in the algebra generated by~$\{x_l\,|\, l>2\}$. That $g$ is annihilated by left and right multiplication with~$\varepsilon_1$ then follows from equation~\eqref{eq13}. 
 
 The above proves that~$ag=0=ga$ for all $a\in I$. Now we prove that~$s_k g=gs_k$ for all $1\le k<n$. For $2\le k$, we consider again the expression in~\eqref{eqgh}. The first factor commutes with~$s_k$ by Corollary~\ref{Corx2}(1). Lemma~\ref{Corx2}(3) and (4) imply that~$s_k$ commutes with any symmetric polynomial in~$x_k,x_{k+1}$, up to terms containing $\varepsilon_k$. This and Lemma~\ref{Lemkl} imply that the second factor in~\eqref{eqgh} commutes with~$s_k$ up to terms which cancel the first factor. Hence, we find indeed $s_k g=gs_k$. For $k=1$, we consider again the expression in~\eqref{eqghh}. By Lemma~\ref{Lemkl}, $s_1$ commutes with all factors in all terms except with~$x_2$. By Lemma~\ref{Lemexx}(2), we have $s_1x_2=x_2s_1+2\varepsilon_1$. Hence,~$s_1$ commutes with~$x_2$ up to a term which cancels $x_3$ by equation~\eqref{eq13}, so $s_1g=gs_1$.
 \end{proof}

\begin{proof}[Proof of Proposition~\ref{PropTheta0}]
By Proposition~\ref{PropThm}, $\Theta I=0$. We can thus consider the action on~$A_n/I\cong \mk\mS_n$, on which $\Theta$ acts as 
$$\Theta^0:=\prod_{2\le i<j\le n}\left(1-(x^0_i-x^0_j)^2\right).$$
The results thus follow immediately from \cite[Corollary~3.7]{Mathas} and \cite[Theorem~3.32]{Mathas}.\end{proof}

In sharp contrast to the symmetric group and Brauer algebra, there is no linear term in~$\Gamma_n$ contained in the centre of~$A_n$.  This will be conceptually explained in~\ref{Cas}.
 \begin{lemma}
If $\charr(\mk)=2$, the element
 $\sum_{i=1}^n x_i\,\in A_n$
 is central. When~$\charr(\mk)\not=2$, the only linear combinations of the Jucys-Murphy elements which are central in~$A_n$ are zero.
 \end{lemma}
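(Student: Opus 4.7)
The plan is to verify both parts by explicitly computing the commutators $[s_k, f]$ and $[\varepsilon_k, f]$ for $f$ a linear combination of Jucys-Murphy elements and $1 \le k < n$. By Lemma~\ref{Lemkl}, these commutators reduce to commutators of $s_k$ (resp.\ $\varepsilon_k$) with $c_k x_k + c_{k+1}x_{k+1}$, since $s_k$ and $\varepsilon_k$ commute with $x_l$ for $l \notin \{k, k+1\}$.

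For the characteristic $2$ direction, I would show that $X := \sum_{i=1}^n x_i$ commutes with each generator of $A_n$. Corollary~\ref{Corx2}(3), right-multiplied by $s_k$ and combined with $\varepsilon_k s_k = -\varepsilon_k$, produces
\[
[s_k, x_k + x_{k+1}] = 2\varepsilon_k,
\]
which vanishes modulo $2$. A parallel calculation using Lemma~\ref{Lemexx}(1) yields
\[
[\varepsilon_k, x_k + x_{k+1}] = 2\varepsilon_k + 2(\varepsilon_k x_{k+1} - x_{k+1}\varepsilon_k),
\]
again annihilated in characteristic $2$. Hence $X$ commutes with $s_k$ and $\varepsilon_k$ for all $k$, so $X$ is central.

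For the other direction, I would suppose $\charr(\mk) \neq 2$ and $f = \sum c_i x_i$ is central, then derive $f = 0$. Using Lemma~\ref{Lemexx}(2), (3) together with $\varepsilon_k s_k = -\varepsilon_k$, a direct calculation yields
\[
[s_k,\, c_k x_k + c_{k+1}x_{k+1}] = (c_{k+1}-c_k)(x_k - x_{k+1})s_k + (c_{k+1}-c_k) + (c_k + c_{k+1})\varepsilon_k.
\]
The key step is then to expand $(x_k - x_{k+1})s_k$ in the Brauer diagram basis: from the definition of $x_l$ and $\varepsilon_k s_k = -\varepsilon_k$, one gets
\[
(x_k - x_{k+1})s_k = -1 + \varepsilon_k + R_k,
\]
where $R_k$ is a linear combination of Brauer diagrams of the form $(j,k)s_k$, $(j,k+1)s_k$, $\overline{(j,k)}s_k$, $\overline{(j,k+1)}s_k$ with $j < k$, none of which coincides with $1$ or $\varepsilon_k$. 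Substituting, the identity coefficient of $[s_k, f]$ cancels automatically, while the coefficient of $\varepsilon_k$ becomes $2c_{k+1}$. Since the Brauer diagrams form a basis and $[s_k, f] = 0$, this forces $2c_{k+1} = 0$, i.e.\ $c_{k+1} = 0$ in characteristic $\neq 2$. Ranging $k$ over $\{1, \ldots, n-1\}$ gives $c_i = 0$ for all $i \ge 2$, and since $x_1 = 0$, this yields $f = 0$.

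The main obstacle will be executing the commutator computation and the Brauer-basis expansion of $(x_k - x_{k+1})s_k$ with correct signs; beyond this, no new input is needed, as everything reduces to the relations already established in Section~\ref{SecJM}.
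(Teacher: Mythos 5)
The paper states this lemma without proof, so there is nothing to compare against; your argument is correct and is the natural one. I verified the key identities: $[s_k,x_k+x_{k+1}]=2\varepsilon_k$ and $[\varepsilon_k,x_k+x_{k+1}]=2\varepsilon_k+2(\varepsilon_kx_{k+1}-x_{k+1}\varepsilon_k)$ give the characteristic~$2$ direction, and in the other direction the expansion $(x_k-x_{k+1})s_k=-1+\varepsilon_k+R_k$ with $R_k$ supported on diagrams whose cup or cap meets a dot $j<k$ (hence never equal to $1$ or $\varepsilon_k$) correctly isolates the coefficient $2c_{k+1}$ of $\varepsilon_k$ in $[s_k,f]$, forcing $c_{k+1}=0$ for all $k$ and hence $f=0$ since $x_1=0$. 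One detail worth making explicit when you execute the computation: besides $\varepsilon_ks_k=-\varepsilon_k$ you need $s_k\varepsilon_k=+\varepsilon_k$ (the cup is symmetric while the cap is antisymmetric); this is forced by Corollary~\ref{Corx2}(3) together with Lemma~\ref{Lemexx}, and it is what makes $[s_k,\varepsilon_k]=2\varepsilon_k$ rather than $0$ — with the wrong sign the whole $s_k$-based argument in the second half would collapse.
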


 
 \section{On composition multiplicities and blocks}\label{SecDM}
 In this section we determine the blocks of the periplectic Brauer algebra~$A_n$ over fields with characteristic $0$ or higher than~$n$. The result is very different from the corresponding one for Brauer algebras in~\cite[Corollary~6.7]{blocks}. As an extra result we obtain several decomposition multiplicities. The latter result will be completed in~\cite{PB2}.
 
{\em For the entire section, we assume $\charr(\mk)\not\in[2,n]$.}
 \subsection{The blocks of~$A_n$}
 The main result will be stated in terms of~$2$-cores.
 \subsubsection{}\label{2core}The $2$-core of a partition, see \cite[\S~5.3]{Mathas}, is the partition obtained by iteratively removing rim $2$-hooks (2-ribbons) from its Young diagram, until no more can be removed. Rim 2-hooks are just two adjacent boxes 
$${ {\tiny\begin{ytableau}{}&     \end{ytableau}}}\qquad\mbox{or}\qquad {\tiny\begin{ytableau}
 {}\\
 {}     
\end{ytableau}}\;,$$
such that both boxes lie on the lower-right edge of the Young diagram of the partition. The possible 2-cores are given by~$\partial^{i}:=(i,i-1,\ldots,1)\,\in\Par_{\frac{1}{2}i(i+1)}$, for $i\in\mN$, so 
$$\partial^{0}:=\varnothing,\quad\partial^{1}:=\yng(1),\quad\partial^{2}:=\yng(2,1),\quad\partial^{3}:=\yng(3,2,1),\ldots.$$

\begin{thm}\label{ThmBlock}
Assume $\charr(\mk)\not\in[2,n]$. 
\begin{enumerate}
\item For $\lambda,\mu\in \Lambda_A$, the simple $A_n$-modules $L_A(\lambda)$ and $L_A(\mu)$ belong to the same block in~$A_n${\rm -mod} if and only if~$\lambda$ and $\mu$ have the same $2$-core.
\item For $\lambda,\mu\in \Lambda_C$, the simple $C_n$-modules $L_C(\lambda)$ and $L_C(\mu)$ belong to the same block in~$C_n${\rm -mod} if and only if~$\lambda$ and $\mu$ have the same $2$-core.
\end{enumerate}
\end{thm}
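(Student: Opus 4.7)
The proof splits into a necessary direction (same block $\Rightarrow$ same 2-core), handled by the Jucys--Murphy theory of Section~\ref{SecJM}, and a sufficient direction (same 2-core $\Rightarrow$ same block), handled by exhibiting explicit nonzero decomposition multiplicities. First I would reduce (2) to (1) when $n$ is odd via the Morita equivalence of Theorem~\ref{ThmMor}(1); otherwise I would treat both parts in parallel, using Theorem~\ref{ThmMor}(2) so that for $n$ even (and $n>4$) the functor $F=e_n^\ast-$ identifies the block partition of $A_n$ with the block partition of $C_n$ outside the possibly isolated $\emptyset$-block.

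For the necessary direction, introduce the classical complete invariant of the $2$-core
\[
\epsilon(\lambda) := \#\{b\in\lambda : \con(b)\text{ even}\} - \#\{b\in\lambda : \con(b)\text{ odd}\},
\]
which is additive under adding rim $2$-hooks and determines the $2$-core uniquely among $2$-cores. Under the hypothesis $\charr(\mk)\not\in[2,n]$, the residues $c_{\mathfrak{t}}(l)\in\mk$ arising from a path in the Bratteli diagram~\eqref{Brat} all lift uniquely to integers of absolute value $<n$, so their parities $(-1)^{c_\mathfrak{t}(l)}\in\{\pm1\}$ are well-defined. A direct inductive check on paths, using the definition~\eqref{defct} and the fact that adding (resp.\ removing) a box of content $c$ changes $\epsilon$ by $(-1)^c$ (resp.\ $(-1)^{c+1}$), yields
\[
\epsilon(\lambda)\;=\;1+\sum_{l=2}^{n}(-1)^{c_\mathfrak{t}(l)},\qquad\text{for every }\mathfrak{t}\in\St_n(\lambda).
\]
Consequently, if $[W_n(\nu):L_n(\lambda)]\neq 0$ then $\epsilon(\nu)=\epsilon(\lambda)$ by Corollary~\ref{CorResi}. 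Taking the transitive closure of the linkage relation, $\epsilon$ (and hence the $2$-core) is constant on each block, proving necessity for both $A_n$ and $C_n$.

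For the sufficient direction, it is enough to link $L(\lambda)$ and $L(\mu)$ whenever $\mu$ is obtained from $\lambda$ by adding a single rim $2$-hook, and then iterate. I would exhibit a nonzero composition multiplicity $[W_n(\lambda):L_n(\mu)]$ by computing the restriction of the cell module to $\mk\mS_n$: using \eqref{eqWA} and the triangular decomposition of Corollary~\ref{CorTriang},
\[
\res_{\mk\mS_n}W_n(\lambda)\;\cong\;\ind_{\mk(\mS_{|\lambda|}\times\mS_{n-|\lambda|})}^{\mk\mS_n}\!\bigl(W^0(\lambda)\boxtimes K_{n-|\lambda|}\bigr),
\]
where $K_j$ denotes the $\mk\mS_j$-module structure on $\Hom_{\cA}(0,j)$ induced by the action on the lower row of dots. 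The Littlewood--Richardson rule (\ref{SecLR}) then identifies which $L^0(\mu)$ with $\mu\vdash n$ occur in the restriction, and Corollary~\ref{CorLAn} translates that into nonvanishing of $[W_n(\lambda):L_n(\mu)]$. For composition factors $L_n(\mu)$ with $|\mu|<n$, one either reduces to the top level by appending rim $2$-hooks in $\lambda\subset\mu'\supset\mu$, or applies the same analysis to the subalgebra $A_{|\mu|}\subset A_n$ via the idempotent in Lemma~\ref{LemConnectP}. The $\emptyset$-block of $C_n$ (for $n$ even) is handled separately by the same restriction argument applied to $\Delta_C(\emptyset)\cong Ce_0^\ast$, whose $\mk\mS_n$-restriction is isomorphic to $K_n$.

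The main obstacle is the identification of the $\mk\mS_j$-module $K_j=\Hom_{\cA}(0,j)$: the periplectic sign conventions encoded in the marked diagrams of~\ref{SecMaDi} make this subtler than the Brauer analogue $B_n(0)$, where the corresponding module is a sum of sign-twisted Specht modules indexed by partitions with all even rows. Once the correct periplectic decomposition of $K_j$ is established, the Littlewood--Richardson step and the identification of $2$-ribbon additions with nonvanishing multiplicities become routine, and the theorem follows by combining the two directions with the reduction in the first paragraph.
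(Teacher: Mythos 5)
Your overall architecture --- a Jucys--Murphy/content-parity invariant for necessity and rim-$2$-hook linkage for sufficiency --- is the paper's, but the sufficient direction as you set it up has a genuine gap in two places. First, for a \emph{vertical} domino (two boxes added in the same column) the multiplicity $[W_n(\lambda):L_n(\mu)]$ you propose to exhibit is actually zero: by Lemma~\ref{rednj}, the $\mk\mS_n$-restriction of $e_n^\ast\Delta_n(\lambda)$ for $\lambda\vdash n-2$ omits exactly the $L^0(\mu)$ with $\mu\setminus\lambda$ a column domino (see also Lemma~\ref{Corn2}), so no Littlewood--Richardson computation of the restriction can produce that linkage. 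The paper links such pairs in the \emph{opposite} direction: Proposition~\ref{PropMult1} gives $[\Delta_n(\lambda^t):L_n(\mu^t)]=1$ for the transposed (row) domino, and the reciprocity~\eqref{eqBGGC} converts this into $(P_n(\mu):\Delta_n(\lambda))\neq0$, hence $[P_n(\mu):L_n(\lambda)]\neq0$. You never invoke the reciprocity, and without it the column-domino case is unreachable. Second, even for a row domino, nonvanishing of $[\res_{\mk\mS_n}W_n(\lambda):L^0(\mu)]$ does not imply $[W_n(\lambda):L_n(\mu)]\neq0$: composition factors $L_n(\nu)$ with $|\nu|<n$ also contribute $L^0(\mu)$-constituents to the restriction, so the restriction multiplicity is only an upper bound. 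Proposition~\ref{PropMult1} closes this by showing that the copy of $L^0(\mu)$ inside $e_n^\ast\Delta_n(\lambda)$ is annihilated by every cap (using $c^{\mu}_{\lambda,(1,1)}=0$) and hence is an honest submodule. Your worry about decomposing $K_j=\Hom_{\cA}(0,j)$ is, by contrast, avoidable: only $j=2$ is ever needed.

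The necessary direction is essentially Lemma~\ref{LemBlock2}, but two steps need care. Your identity $\epsilon(\lambda)=1+\sum_l(-1)^{c_{\mathfrak t}(l)}$ holds over $\mZ$, but for $n<p\le 2n-2$ a residue $c_{\mathfrak t}(l)\in\mF_p$ does \emph{not} determine the parity of its integer lift: two contents occurring along different paths can differ by exactly $p$, which is odd, so $c_{\mathfrak t}=c_{\mathfrak s}$ in $\mk^{n-1}$ does not immediately give $\epsilon(\nu)=\epsilon(\lambda)$. The paper avoids this by first proving $\lambda\subset\mu$ via the LR rule and then comparing contents inside the single partition $\mu$, whose content window has width $<n<p$ (Corollary~\ref{CorResi2}, Lemma~\ref{LemRed1}(2)). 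Finally, ``taking the transitive closure of the linkage relation'' presumes that the block relation of the merely standardly based algebra $A_n$ is generated by co-occurrence in cell modules; this again uses the cell filtrations of projectives together with Theorem~\ref{ThmSB}(4), so that $(P(\lambda):W(\alpha))\neq0$ forces $\epsilon(\alpha)=\epsilon(\lambda)$ --- which is how Lemma~\ref{LemBlock2} argues --- rather than the $1$-cover property of Theorem~\ref{ThmMor}(2), which controls $\Hom$ and $\Ext^1$ only on $\cF(\overline{\Delta})$ and does not by itself identify the block partitions.
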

 The rest of this section is devoted to the proof.
 
 \subsection{Decomposition multiplicities for $C$}
\subsubsection{} For the standardly based algebra~$A_n$, we study the multiplicities
\begin{equation}\label{eqWDeltaL}[W_A(\lambda):L_A(\mu)]=[\Delta_C(\lambda):L_C(\mu)]\quad\mbox{
 for }\;\lambda\in\mathbf{L}_A=\Lambda_C\;\mbox{ and }\;\mu\in\Lambda_A,\end{equation} 
where the equality follows from \ref{Maschke} and equations~\eqref{eqWA} and~\eqref{eqCorLambda0}.
 We will thus focus on~$C_n$, which allows to work with quasi-hereditary algebras, and introduce the short-hand notation
 $$\Delta_n(\lambda)=\Delta_{C_n}(\lambda)\qquad\mbox{and}\qquad L_n(\lambda)=L_{C_n}(\lambda),$$
 for standard and simple modules over~$C_n$.

 \begin{lemma}\label{LemRed1}
Consider $\lambda,\mu\in \Lambda_{C_n}=\mathbf{L}_{C_n}$.
\begin{enumerate}
\item If $i=|\mu|$, we have
 $$[\Delta_n(\lambda):L_n(\mu)]=\begin{cases} [\Delta_i(\lambda):L_i(\mu)]&\mbox{if }\; |\lambda|\le i\\
 0&\mbox{if }\; |\lambda|> i.\end{cases}$$
 \item If $[\Delta_n(\lambda):L_n(\mu)]\not=0$, then~$\lambda\subset\mu$ such that the boxes in~$\mu\backslash\lambda$ can be paired up in a way that the contents of each pair differ by one.
 \end{enumerate}
 \end{lemma}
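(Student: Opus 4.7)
The approach for part~(1) is to introduce a Schur-type functor reducing $C_n$ to $C_i$. Define the idempotent
$$f_i\;:=\;\sum_{k\in\JJJ(i)}e_k^\ast\;\in\;C_n,$$
and observe that $f_iC_nf_i=C_i$ as subalgebras of $C_n$, since both sides are spanned by those Brauer diagrams whose source and target lie in $\JJJ(i)\subseteq\JJJ(n)$. The associated Schur functor $f_i\cdot\colon C_n\text{-mod}\to C_i\text{-mod}$ is exact, and by Lemma~\ref{LemTriv} each simple $L_n(\nu)$ is sent either to $L_i(\nu)$ or to $0$; consequently composition multiplicities are preserved, in the sense that $[M:L_n(\nu)]=[f_iM:L_i(\nu)]$ whenever $f_iL_n(\nu)\neq 0$.

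To evaluate this functor on cell and simple modules, I would use the triangular decomposition of Corollary~\ref{CorTriang} to write $\Delta_n(\lambda)\cong\overline{N}e_{|\lambda|}^\ast\otimes L^0(\lambda)$ as a vector space. Applying $f_i$ extracts the subspace $\bigoplus_{j\in\JJJ(i)}e_j^\ast\overline{N}e_{|\lambda|}^\ast\otimes L^0(\lambda)$; a direct diagrammatic check (the relevant cup-and-propagating-line diagrams with source and target in $\JJJ(i)$ are exactly those forming the analogous subspace inside $C_i$) identifies this with $\Delta_i(\lambda)$ when $|\lambda|\le i$ and with $0$ when $|\lambda|>i$. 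On the simple side, $L_n(\mu)$ is a quotient of $\Delta_n(\mu)$, and the vanishing $e_k^\ast\overline{\Delta}(\mu)=0$ for $k<|\mu|$ noted before equation~\eqref{eqCorLambda0} forces $f_iL_n(\mu)=0$ when $|\mu|>i$; when $|\mu|\le i$, Lemma~\ref{LemTriv} then gives $f_iL_n(\mu)\cong L_i(\mu)$.

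Combining these computations yields part~(1): for $|\mu|=i$ and $|\lambda|\le i$, exactness and multiplicity-preservation give
$$[\Delta_n(\lambda):L_n(\mu)]\;=\;[f_i\Delta_n(\lambda):L_i(\mu)]\;=\;[\Delta_i(\lambda):L_i(\mu)],$$
while $|\lambda|>i$ gives $f_i\Delta_n(\lambda)=0$ and hence multiplicity zero (this also follows directly from the quasi-hereditary order, since $\mu\le\lambda$ requires $|\mu|\ge|\lambda|$). For part~(2), part~(1) reduces us to the case $|\lambda|\le i=|\mu|$; the degenerate case $\mu=\varnothing$ forces $\lambda=\varnothing$ trivially, and otherwise equation~\eqref{eqWDeltaL} at level~$i$ rewrites $[\Delta_i(\lambda):L_i(\mu)]$ as $[W_{A_i}(\lambda):L_{A_i}(\mu)]$, so the content-pairing condition on the boxes of $\mu\setminus\lambda$ follows from Corollary~\ref{CorResi2} applied with $n$ replaced by~$i$ (which is legitimate since $\charr(\mk)\not\in[2,i]$).

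There is no serious obstacle here; the most delicate step is verifying that $f_i\Delta_n(\lambda)$ equals $\Delta_i(\lambda)$ as a $C_i$-module (not merely as a vector space), which reduces to matching the subalgebra identity $B_i=f_iBf_i$ with the triangular decomposition of Corollary~\ref{CorTriang}.
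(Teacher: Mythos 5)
Your proposal is correct and takes essentially the same route as the paper: the paper also uses the Schur functor attached to the idempotent $e^\ast=\sum_{k\in\JJJ(i)}e_k^\ast$ with $e^\ast C_ne^\ast\cong C_i$, the identifications $e^\ast\Delta_n(\nu)\cong\Delta_i(\nu)$ and $e^\ast L_n(\nu)\cong L_i(\nu)$, the quasi-hereditary order for the case $|\lambda|>i$, and Corollary~\ref{CorResi2} for part~(2). Your extra care with $f_i\Delta_n(\lambda)$ as a $C_i$-module and with the degenerate case $\mu=\varnothing$ is sound but not a different method.
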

 \begin{proof}
 The case $|\lambda|>i$ of part (1) is immediate by Theorem~\ref{ThmQH}, as then~$\lambda< \mu$. Assume therefore that~$|\lambda|\le i$.
 We have an exact Schur functor 
 $$C_n\mbox{-mod}\;\to \; C_i\mbox{-mod},$$
 corresponding to the idempotent $e^\ast:=\sum_{k\in\JJJ(i)}e_k^\ast$, since by construction~$C_i\cong e^\ast C_ne^\ast$. By equation~\eqref{defDover}, we have
 $e^\ast \Delta_n(\nu)\cong \Delta_i(\nu)$
 for each $\nu\in \Lambda_{C_i}$. As~$\Delta_n(\nu)$ has simple top $L_n(\nu)$ and $\Delta_i(\nu)$ has simple top $L_i(\nu)$, we furthermore find 
 $e^\ast L_n(\nu)\cong L_i(\nu).$
 This concludes the proof of part~(1).
 
 Part (2) follows immediately from part~(1) and Corollary~\ref{CorResi2}.
 \end{proof}
 
 Now we determine some properties of the $\mk\mS_n$-module structure of the extremal degree in~$\Delta_n(\lambda)$, which will be applied throughout the rest of the paper.
 
  \begin{lemma}\label{rednj}
 For $\lambda\vdash n-2$, we have
 $$\res_{\mk \mS_n}e_n^\ast\Delta_n(\lambda)\;\cong\;\bigoplus_{\mu\in P} L^0(\mu),$$
 where~$P$ is the set of $\nu\vdash n$ with~$\lambda\subset \nu$, such that the two boxes added to the Young diagram of~$\lambda$ to create $\nu$ are not in the same column.
 \end{lemma}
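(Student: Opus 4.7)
My plan rests on the restriction formula established in the proof of Corollary~\ref{CorResi2}: for any $\lambda\vdash i$ with $i\in\JJJ(n)$ one has
\[
  \res_{\mk\mS_n}\bigl(e_n^\ast\Delta_n(\lambda)\bigr)\;\cong\;\ind_{\mk\mS_i\times\mk\mS_{n-i}}^{\mk\mS_n}\!\bigl(L^0(\lambda)\boxtimes K\bigr),
\]
where $K=\Hom_{\cA}(0,n-i)$ is viewed as a $\mk\mS_{n-i}$-module via the natural permutation of the $n-i$ top dots. Specialising to $i=n-2$ reduces the problem to identifying $K=\Hom_{\cA}(0,2)$ as a $\mk\mS_2$-module and then invoking Pieri's rule.

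The first step is therefore to compute $K$. The space $\Hom_{\cA}(0,2)$ is one-dimensional, spanned by the unique Brauer diagram with no bottom dots and a single cup joining the two top dots. The non-identity element $s_1\in\mk\mS_2$ acts by swapping these two endpoints; I claim that $s_1\cdot\cup=\cup$, so that $K\cong L^0((2))$ is the trivial $\mk\mS_2$-module. This is the symmetry of cups in the periplectic Brauer category, in contrast with the skew-symmetry of caps already exploited in Lemma~\ref{Lemexx}(1). Explicitly, the second equality of that lemma, $-(x_k-x_{k+1})\varepsilon_k=\varepsilon_k$, combined with $x_{k+1}-x_k=s_k+\varepsilon_k$ and $\varepsilon_k^{\,2}=0$, reduces to $s_k\varepsilon_k=\varepsilon_k$; writing $\varepsilon_k=\cup_k\cap_k$ and comparing shows $s_k\cdot\cup_k=\cup_k$, as claimed.

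The second step is classical: once $K\cong L^0((2))$ is identified, Pieri's rule (the special case of the Littlewood--Richardson rule recalled in~\ref{SecLR}) gives
\[
  \ind_{\mk\mS_{n-2}\times\mk\mS_2}^{\mk\mS_n}\!\bigl(L^0(\lambda)\boxtimes L^0((2))\bigr)\;\cong\;\bigoplus_{\mu}L^0(\mu),
\]
the sum ranging over those $\mu\vdash n$ with $\mu\supset\lambda$ such that $\mu\setminus\lambda$ is a horizontal strip of size $2$, that is, two boxes no two of which lie in the same column. This is exactly the set $P$ in the statement, and combining with the restriction formula and the identification of $K$ yields the desired decomposition of $\res_{\mk\mS_n}e_n^\ast\Delta_n(\lambda)$.

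The argument is essentially a two-step reduction and presents no serious obstacle: the only genuine computation is the identification of $K$ as the trivial $\mk\mS_2$-module, which is a direct consequence of the periplectic relations. It is nevertheless crucial to get the sign right, for if $K$ were the sign representation then Pieri would produce vertical two-strips (two boxes in a single column) rather than horizontal ones, giving the opposite answer.
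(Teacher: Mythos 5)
Your proof is correct and follows essentially the same route as the paper's, which likewise identifies $\res_{\mk\mS_n}e_n^\ast\Delta_n(\lambda)$ with $\ind^{\mk\mS_n}\bigl(L^0(\lambda)\boxtimes L^0((2))\bigr)$ and applies the Pieri/Littlewood--Richardson rule; your only addition is the explicit check that $\Hom_{\cA}(0,2)$ carries the trivial $\mk\mS_2$-action, which the paper takes for granted. One small caveat: the identity $x_{k+1}-x_k=s_k+\varepsilon_k$ you invoke holds only for $k=1$, but that case suffices because the symmetry $X\circ\cup=\cup$ of the cup is a local (monoidal) relation in $\cA$.
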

 \begin{proof}
We have
$$\res_{\mk \mS_n}e_n^\ast\Delta_n(\lambda)\;\cong\; \ind^{\mk \mS_n} (L^0(\lambda)\boxtimes L^0((2)))   \;\cong\;\bigoplus_{\mu\vdash n}L^0(\mu)^{\oplus c^\mu_{\lambda,(2)}},$$
so the claim then follows from the LR rule.
 \end{proof}

 Although we will only use the following lemma in the subsequent example, we note that it can be proved
by a direct computation as in the proof of Lemma~\ref{Lemd2}.
\begin{lemma}
For $n=2k$, if~$[\res_{\mk\mS_{n}}e_{n}^\ast \Delta_{n}(\varnothing):L^0(\lambda)]\not=0$, then~$|\resi(\lambda)|=k$.
\end{lemma}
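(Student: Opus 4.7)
As in the proof of Corollary~\ref{CorResi2}, $\res_{\mk\mS_n} e_n^\ast\Delta_n(\varnothing)\cong K:=\Hom_\cA(0,n)$ as $\mk\mS_n$-modules. The central element $T^0:=\sum_{1\le j<l\le n}(j,l)=\sum_{l=2}^n x_l^0$ acts on every Specht module $W^0(\mu)$, and in particular on every simple $L^0(\mu)$, as the scalar $|\resi(\mu)|$ by Lemma~\ref{JMSi}(1). Hence it suffices to show that $T^0$ acts on $K$ as multiplication by $k$, which then forces $|\resi(\lambda)|=k$ for every simple summand $L^0(\lambda)$ of $K$.

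By Corollary~\ref{CorDeltaA} with $i=0$ (noting $Ac_{-2}^\ast=0$) there is an $A_n$-isomorphism $e_n^\ast\Delta_n(\varnothing)\cong A_nc_0^\ast$ carrying the ``all cups'' generator of $K$ to $c_0^\ast$. Since $\mk\mS_n$ acts transitively on perfect matchings of $\{1,\ldots,n\}$, the element $c_0^\ast$ already generates $A_nc_0^\ast$ as a $\mk\mS_n$-module, and then centrality of $T^0$ reduces the task to the single identity $T^0\cdot c_0^\ast=k\,c_0^\ast$ in $A_n$. The cups of $c_0^\ast$ are the $k$ pairs $\{2i-1,2i\}$, and for each of them the local identity $s_1\cdot\cup=\cup$ (immediate from the marking conventions of~\ref{SecMaDi}, or implicit in $\varepsilon_1 s_1=-\varepsilon_1$ from Lemma~\ref{Lemexx}) gives $(2i-1,2i)\cdot c_0^\ast=c_0^\ast$, contributing exactly $k\,c_0^\ast$. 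It remains to show that the off-cup transpositions sum to zero.

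The plan is to group the off-cup transpositions by the pair of cup indices $(i,i')$ with $i<i'$ that they connect: each such pair contributes the four transpositions $(2i-1,2i'-1),\,(2i,2i'),\,(2i-1,2i'),\,(2i,2i'-1)$. Up to sign, the first two produce the same Brauer diagram (cups $\{2i'-1,2i\}$ and $\{2i-1,2i'\}$, all others unchanged) and so do the last two (cups $\{2i-1,2i'-1\}$ and $\{2i,2i'\}$). The main technical obstacle is the sign analysis: one must verify that within each of these pairs the signs from restandardising the $\Diamond$-markings of the top cups are opposite. Because all caps of $c_0^\ast$ sit uniformly below the cups, no $\Diamond/\rhd$ cancellation operations arise, so the sign is entirely determined by the cyclic permutation of cup markings required to reorder them by leftmost dot; a direct count yields sign $(-1)^{i'-i}$ when $j=2i-1$ and $(-1)^{i'-i-1}$ when $j=2i$. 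The two transpositions in each pair therefore differ by a factor of $-1$, their contributions cancel, and summing over all $(i,i')$ gives $T^0\cdot c_0^\ast=k\,c_0^\ast$, as required.
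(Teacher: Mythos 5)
Your proof is correct, and it is exactly the kind of direct diagrammatic computation the paper alludes to (it gives no proof, only the remark that the lemma follows ``by a direct computation as in the proof of Lemma~\ref{Lemd2}''): you reduce, via centrality of $T^0=\sum_l x_l^0$ and the identification $e_n^\ast\Delta_n(\varnothing)\cong A_nc_0^\ast$ with $c_0^\ast$ a $\mk\mS_n$-generator, to the single identity $T^0c_0^\ast=kc_0^\ast$, and the sign bookkeeping (each on-cup transposition contributing $+c_0^\ast$ since $X\circ\cup=\cup$, and the off-cup transpositions cancelling in the pairs you describe) checks out against the marked-diagram conventions of~\ref{SecMaDi}. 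This is a complete and valid filling-in of the computation the paper leaves to the reader.
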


There is only one partition~$\lambda\vdash4$ for which $|\resi(\lambda)|=2$, if~$\charr(\mk)\not\in\{2,3\}$, which is $\lambda=(3,1)$. 
This leads to the following example.
\begin{ex}\label{ExLem4}
For $n=4$, we have
$e_4^\ast\Delta_4(\varnothing)\cong L^0(3,1)$. For $n=5$, we have $$\quad e_5^\ast\Delta_5(1)\cong L^0(4,1)\oplus L^0(3,2)\oplus L^0(3,1,1) .$$
\end{ex}
The decomposition of~$e_5^\ast\Delta_5(1)$ follows immediately from $e_4^\ast\Delta_4(\varnothing)$ and the LR rule.

  \begin{prop}\label{PropMult1}
 Assume $\lambda\vdash i$ and $i+2\in\JJJ(n)$. We have
 $$[\Delta_n(\lambda):L_n(\mu)]=1$$
 for any~$\mu\vdash i+2$ such that its Young diagram is obtained from that of~$\lambda$ by adding two boxes in the same row.
 \end{prop}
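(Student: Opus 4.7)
My plan is first to reduce to the case $n = i+2$ via Lemma~\ref{LemRed1}(1): since $|\mu| = i+2 \le n$, we have $[\Delta_n(\lambda):L_n(\mu)] = [\Delta_{i+2}(\lambda):L_{i+2}(\mu)]$, so we may assume $n = i+2$, in which case $\mu \vdash n$ and $\lambda \vdash n-2$.

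The key ingredient is Lemma~\ref{rednj}, which gives the multiplicity-free $\mk\mS_n$-decomposition
\begin{equation*}
\res_{\mk\mS_n} e_n^\ast \Delta_n(\lambda) \;\cong\; \bigoplus_{\nu \in P} L^0(\nu),
\end{equation*}
where $P$ consists of all $\nu \vdash n$ with $\nu \supset \lambda$ whose two added boxes are not in the same column. Our hypothesis places $\mu \in P$ (two boxes in the same row are trivially in different columns), so $L^0(\mu)$ occurs once. By Lemma~\ref{LemRed1}(2) combined with the quasi-hereditary order of Theorem~\ref{ThmQH}, the composition factors of $\Delta_n(\lambda)$ are $L_n(\lambda)$ (once, as the simple top) and various $L_n(\nu)$ with $\nu \vdash n$. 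Since $e_n^\ast L_n(\nu) \cong L^0(\nu)$ as an $\mk\mS_n$-module for $\nu \vdash n$ (Corollary~\ref{CorLAn} combined with the identification of simples under $F$ in equation~\eqref{eqCorLambda0}), comparing multiplicities of $L^0(\mu)$ on both sides yields the balance identity
\begin{equation*}
1 \;=\; [\Delta_n(\lambda):L_n(\mu)] \;+\; [\res_{\mk\mS_n} L_A(\lambda) : L^0(\mu)].
\end{equation*}
The upper bound $[\Delta_n(\lambda):L_n(\mu)] \le 1$ follows immediately.

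The main obstacle is showing that the remainder term vanishes, i.e., that the unique $L^0(\mu)$-isotypic component of the cell module $W_A(\lambda) = e_n^\ast \Delta_n(\lambda)$ lies in the radical and so dies in the simple quotient $L_A(\lambda)$. I would argue this by induction on $|\lambda|$: the base case $\lambda = \varnothing$, $n = 2$ is immediate since $L_A(\varnothing) = e_2^\ast L_{C_2}(\varnothing) = 0$ ($L_{C_2}(\varnothing)$ being concentrated in degree zero). For the inductive step I would exploit the restriction of $W_A(\lambda)$ to $A_{n-1}$ provided by Corollary~\ref{CorSS}, combined with the classical branching of Specht modules recalled in Section~\ref{SecRevS}, to reduce the vanishing to analogous statements for the smaller cell modules $W_{n-1}(\nu')$ where the inductive hypothesis applies. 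An alternative approach uses the Murphy basis and the Jucys-Murphy content analysis of Theorem~\ref{ThmFamJM}: the $L^0(\mu)$-component corresponds to specific Murphy basis vectors indexed by paths $\mathfrak{t} \in \St_n(\lambda)$ whose last two steps add and then remove the boxes of $\mu \setminus \lambda$, and one shows by a direct calculation with elements of the ideal $I \subset A_n$ (say using Lemma~\ref{Lemexx}) that such a vector is sent into a proper $A_n$-submodule of $W_A(\lambda)$.
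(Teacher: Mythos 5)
Your reduction to $n=i+2$ via Lemma~\ref{LemRed1}(1), the use of Lemma~\ref{rednj}, and the resulting balance identity
$1=[\Delta_n(\lambda):L_n(\mu)]+[\res_{\mk\mS_n}L_A(\lambda):L^0(\mu)]$
are all correct and coincide with the opening of the paper's argument; in particular the upper bound $\le 1$ is fine. The gap is that the entire content of the proposition lies in the step you only sketch, and neither of your two sketched strategies visibly works. The decisive point is that the ``same row'' hypothesis must enter somewhere beyond membership in the set $P$: adding two boxes in the same \emph{column} also produces a pair whose contents differ by one (so the Jucys--Murphy/content analysis of Theorem~\ref{ThmFamJM} and Corollary~\ref{CorResi2} cannot distinguish the two cases), yet in that case the multiplicity is $0$, not $1$ (compare Lemma~\ref{Corn2} and Proposition~\ref{PropBlock}, where the same-column case is reached only through transposition and reciprocity). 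So your ``alternative approach'' via JM contents cannot close the argument. Your primary approach, induction via Corollary~\ref{CorSS}, is also not set up to succeed: that corollary filters the restriction of the \emph{cell} module $W_n(\lambda)$ by smaller cell modules, but the quantity you must control is $[\res_{\mk\mS_n}L_A(\lambda):L^0(\mu)]$, and the restriction of the \emph{simple} module $L_A(\lambda)$ to $A_{n-1}$ is not governed by that filtration; you give no mechanism for transporting the inductive hypothesis to it.

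What the paper actually does at this point is a short sign computation that is exactly where ``same row'' is used: one checks that the copy of $L^0(\mu)$ inside $e_n^\ast\Delta_n(\lambda)$ is annihilated by the single cap diagram $d\in e_{n-2}^\ast C_ne_n^\ast$ (and hence by all diagrams with a cap, so it forms a $C_n$-submodule isomorphic to $L_n(\mu)$, giving the lower bound $\ge 1$). The mechanism is that $d=-ds_{n-1}$, so $d$ kills $\mS_2$-antisymmetric-free vectors, and left multiplication by $d$ defines a $\mk\mS_{n-2}\times\mS_2$-map whose vanishing on $L^0(\mu)$ is equivalent to $[\res_{\mk\mS_{n-2}\times\mS_2}L^0(\mu):L^0(\lambda)\boxtimes L^0(1,1)]=0$, i.e.\ to $c^{\mu}_{\lambda,(1,1)}=0$ --- which holds precisely because the two added boxes lie in the same row. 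Some argument of this kind, using the skew-symmetry of the cap, is unavoidable; without it your proof is incomplete.
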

 \begin{proof}
By Lemma~\ref{LemRed1}(1) we can assume $n=i+2$. By Lemma~\ref{rednj}, we have 
$$[\res_{\mk \mS_{n}}e_n^\ast\Delta_n(\lambda):L^0(\mu)]=1.$$
It thus remains to prove that the subspace $L^0(\mu)$ is annihilated by left multiplication with any diagram containing a cap and hence constitutes a $C_n$-submodule. As~$L^0(\mu)$ forms a $\mk\mS_n$-submodule it actually suffices to prove that it is annihilated by~$d\in e_{n-2}^\ast C_ne_n^\ast$, defined as
\begin{equation*}\label{defBB}
\begin{tikzpicture}[scale=0.6,thick,>=angle 90]
\begin{scope}[xshift=8cm]
\node at (0,0.5) {$d:=$};
\draw (.7,0) -- +(0,1);
\draw (1.3,0) -- +(0,1);
\draw [dotted] (1.6,.5) -- +(1,0);
\draw (2.9,0) -- +(0,1);
\draw (3.5,0) to [out=90,in=-180] +(.3,.3) to [out=0,in=90] +(.3,-.3);
\end{scope}
\end{tikzpicture}
\end{equation*}

This left multiplication actually yields a $\mk\mS_{n-2}\times \mS_1\times \mS_1$-module morphism from $L^0(\mu)$ to~$L^0(\lambda)$. Furthermore, $d=-d s_{n-1}$, which implies that left multiplication with~$d$ annihilates any element of~$L^0(\mu)$ which is $\mS_1^{\times n-2}\times \mS_2$-invariant. Hence, left multiplication with~$d$ will be zero on~$L^0(\mu)$ if
$$[\res_{\mk\mS_{n-2}\times \mS_2} L^0(\mu): L^0(\lambda)\boxtimes L^0(1,1)]=0.$$
For the proposed choices of~$\mu$ this is satisfied, as the LR rule implies $c_{\lambda,(1,1)}^\mu=0$.
 \end{proof}
 
In the following lemma and corollary, we take the convention that~$L_n(\lambda)=0$ when~$\lambda$ can not be sensibly interpreted as a partition.
\begin{lemma}\label{Corn2}
We have short exact sequences
$$0\,\to\, L_n(n)\oplus L_n(n-2,2)\,\to\, \Delta_n(n-2)\,\to\, L_n(n-2)\,\to\, 0,$$
$$0\,\to\,  L_n(3,1^{n-3})\,\to\, \Delta_n(1^{n-2})\,\to\, L_n(1^{n-2})\,\to\, 0.$$
\end{lemma}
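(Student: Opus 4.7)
The plan is to first enumerate the composition factors of $\Delta_n(\lambda)$ for $\lambda\in\{(n-2),(1^{n-2})\}$ and then identify the radical as a sum of simple submodules produced by Proposition~\ref{PropMult1}. Lemma~\ref{LemRed1}(1) restricts attention to composition factors $L_n(\mu)$ with $|\mu|\in\{n-2,n\}$; at level $n-2$ only the simple top $L_n(\lambda)$ appears, with multiplicity one. At level $n$, since $F=e_n^\ast-$ is exact and $L_{A_n}(\mu)\cong L^0(\mu)$ as an $\mk\mS_n$-module for $\mu\vdash n$ (Corollary~\ref{CorLAn}), the multiplicity $[\Delta_n(\lambda):L_n(\mu)]$ is bounded above by the $\mk\mS_n$-multiplicity $[\res_{\mk\mS_n}e_n^\ast\Delta_n(\lambda):L^0(\mu)]$, which is computed by Lemma~\ref{rednj} via Pieri's rule.

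For $\lambda=(n-2)$ this yields the candidates $(n)$, $(n-1,1)$, $(n-2,2)$, each of multiplicity at most one. The two boxes added to reach $(n-1,1)$ have contents $n-2$ and $-1$, differing by $n-1$, so Lemma~\ref{LemRed1}(2) rules this one out; the remaining two partitions are obtained from $(n-2)$ by inserting two boxes into a single row, so Proposition~\ref{PropMult1} both pins down the multiplicities and exhibits $L_n(n)$ and $L_n(n-2,2)$ as $C_n$-submodules of $\Delta_n(n-2)$. Since these two simples are non-isomorphic and each occurs with multiplicity one, their sum inside $\Delta_n(n-2)$ is internal direct and equals the socle; the quotient has sole composition factor $L_n(n-2)$, yielding the first short exact sequence. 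The argument for $\lambda=(1^{n-2})$ is strictly parallel: Lemma~\ref{rednj} leaves the candidates $(2,1^{n-2})$ and $(3,1^{n-3})$; the contents of the boxes added to produce the former differ by $n-1$ and are killed by Lemma~\ref{LemRed1}(2), while $(3,1^{n-3})$ arises from $(1^{n-2})$ by appending two boxes to the first row, so Proposition~\ref{PropMult1} both gives the multiplicity and embeds $L_n(3,1^{n-3})$ in $\Delta_n(1^{n-2})$. This gives the second short exact sequence.

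The key technical point is ensuring no additional simples appear at level $n$, and this is where the horizontal-strip condition from Lemma~\ref{rednj} combines with the content-difference condition from Lemma~\ref{LemRed1}(2). The former eliminates $(n-2,1,1)$ in the first case, and $(1^n)$ together with $(2,2,1^{n-4})$ in the second, since in each case the two added boxes lie in the same column; the latter eliminates the remaining non-trivial residue candidates $(n-1,1)$ and $(2,1^{n-2})$. Together these reduce the candidate lists to exactly those asserted in the statement, and once the embeddings from Proposition~\ref{PropMult1} are in place, multiplicity-freeness together with the identification of $L_n(\lambda)$ as the simple top forces the socle--top decomposition claimed.
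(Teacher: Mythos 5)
Your proof is correct, and its skeleton coincides with the paper's: both compute $e_n^\ast\Delta_n(\lambda)$ as an $\mS_n$-module via Lemma~\ref{rednj}, and both invoke (the proof of) Proposition~\ref{PropMult1} to exhibit $L^0(n)\oplus L^0(n-2,2)$, resp.\ $L^0(3,1^{n-3})$, as a semisimple $C_n$-submodule concentrated in degree $n$. The one step you handle differently is the disposal of the remaining $\mS_n$-constituent $L^0(n-1,1)$, resp.\ $L^0(2,1^{n-2})$: you rule it out as a composition factor by the Jucys--Murphy content criterion (Lemma~\ref{LemRed1}(2), ultimately Corollary~\ref{CorResi2}), noting that the two added boxes have contents differing by $n-1\neq 1$. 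The paper instead argues that this constituent cannot lie in the radical at all: it is $\mS_n$-simple and $e_n^\ast L_n(\lambda)\neq 0$ by \eqref{eqCorLambda0}, so it must be absorbed into the simple top. The paper's step is more elementary (no appeal to the machinery of Section~\ref{SecJM}) and identifies $e_n^\ast L_n(n-2)$ as $L^0(n-1,1)$ in passing; your step is the one that scales (it is the same mechanism that drives Lemma~\ref{LemBlock2} and Proposition~\ref{PropA5}), and you also make explicit via Lemma~\ref{LemRed1}(1) why nothing beyond the top occurs in degree $n-2$, which the paper leaves to quasi-heredity. Both arguments are complete; your assertion that the submodule ``equals the socle'' is not needed for the short exact sequences and is best omitted, but it does no harm.
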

\begin{proof}
By Lemma~\ref{rednj}, we have
$$e_n^\ast \Delta_n(n-2)\;\cong\; L^0(n)\oplus L^0(n-1,1)\oplus L^0(n-2,2)\quad\, \mbox{ and}$$
$$e_n^\ast \Delta_n(n-2,1^{n-2})\;\cong\; L^0(3,1^{n-3})\oplus L^0(2,1^{n-2}).$$
By Proposition~\ref{PropMult1}, $L^0(n)\oplus L^0(n-2,2)$ forms a $C_n$-submodule of~$\Delta_n(n-2)$, while $L^0(3,1^{n-3})$ forms a $C_n$-submodule of~$\Delta_n(1^{n-2})$. In both cases, the given subspace in~$e_n^\ast \Delta_n(\lambda)$ which complements this submodule is simple as a $\mk\mS_n$-module. When $n=2$ it is actually zero, when $n>2$, we have $e_n^\ast L(\lambda)\not=0$ for $\lambda$ equal to $(n-2)$ or $1^{n-2}$, by equation~\eqref{eqCorLambda0}. This shows that the remaining subspace in~$e_n^\ast \Delta_n(\lambda)$ is not in the radical of $\Delta_n(\lambda)$, which concludes the proof.
\end{proof}

\begin{cor}\label{Lemn2}
If~$j+2\in\JJJ(n)$, we have
$$[\Delta_n(j):L_n(j+2)]=1=[\Delta_n(j):L_n(j,2)]=[\Delta_n(1^j):L_n(3,1^{j-1})],$$
and all other multiplicities for $L_n(\lambda)$ with~$\lambda\vdash j+2$ vanish.
\end{cor}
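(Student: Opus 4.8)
The statement to be proved is Corollary~\ref{Lemn2}: for $j+2 \in \JJJ(n)$, the only nonzero decomposition multiplicities $[\Delta_n(j):L_n(\lambda)]$ with $\lambda \vdash j+2$ are
$$[\Delta_n(j):L_n(j+2)] = [\Delta_n(j):L_n(j,2)] = [\Delta_n(1^j):L_n(3,1^{j-1})] = 1.$$
The plan is to combine the computation of the top layer $e_n^\ast \Delta_n(\lambda)$ (Lemma~\ref{rednj}), the submodule structure already extracted in Lemma~\ref{Corn2}, and the content-pairing constraint of Lemma~\ref{LemRed1}(2). First I would invoke Lemma~\ref{LemRed1}(1) to reduce to $n = j+2$, so that $\lambda = (j)$ or $\lambda = (1^j)$ are partitions of $n-2$ and we are asking about composition factors $L_n(\mu)$ with $\mu \vdash n$, i.e. exactly the top graded degree $e_n^\ast \Delta_n(\lambda)$ carries all such factors (since for $|\mu| = n = |\Delta_n|$-max, $[\Delta_n(\lambda):L_n(\mu)] = [\res_{\mk\mS_n} e_n^\ast \Delta_n(\lambda) : e_n^\ast L_n(\mu)]$, and $e_n^\ast L_n(\mu) = L^0(\mu)$ for $\mu \vdash n$ by Corollary~\ref{CorLAn}/equation~\eqref{eqCorLambda0}).

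Next, for $\lambda = (j)$: by Lemma~\ref{rednj}, $\res_{\mk\mS_n} e_n^\ast \Delta_n(j) \cong L^0(j+2) \oplus L^0(j+1,1) \oplus L^0(j,2)$ (adding two boxes to the one-row partition $(j)$, not in the same column, gives exactly these three). By Lemma~\ref{Corn2}, $L^0(j+2) \oplus L^0(j,2)$ is a $C_n$-submodule, each summand being a composition factor $L_n(j+2)$, $L_n(j,2)$ of multiplicity one; and the quotient $L^0(j+1,1)$ is identified with the simple top $L_n(j)$, hence contributes no factor indexed by a partition of $n$. That accounts for all $\mu \vdash n$ appearing, giving the first two multiplicities and the vanishing of the rest. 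For $\lambda = (1^j)$: Lemma~\ref{rednj} gives $\res_{\mk\mS_n} e_n^\ast \Delta_n(1^j) \cong L^0(3,1^{j-1}) \oplus L^0(2,1^j)$ (adding two boxes to the column $(1^j)$, not in the same column — one box extends the first row to length $2$ and the other goes either below or in row $1$; the admissible results are $(3,1^{j-1})$ and $(2,1^j)$). Again Lemma~\ref{Corn2} tells us $L^0(3,1^{j-1})$ is the submodule, contributing $L_n(3,1^{j-1})$ with multiplicity one, and $L^0(2,1^j)$ is the top, equal to $L_n(1^j)$, contributing nothing indexed by $\vdash n$. This yields the third multiplicity and the remaining vanishing.

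The only thing requiring a small argument beyond citing Lemma~\ref{Corn2} is checking that these are genuinely \emph{all} the $\mu \vdash n$ that can occur, i.e. that no composition factor $L_n(\mu)$ with $\mu \vdash n$ hides outside the top degree — but this is immediate since $e_n^\ast$ is an exact (Schur) functor and $e_n^\ast L_n(\mu) \ne 0$ precisely when $\mu \ne \varnothing$, so every $L_n(\mu)$ with $\mu \vdash n$ in a composition series of $\Delta_n(\lambda)$ shows up in $e_n^\ast \Delta_n(\lambda)$ with the same multiplicity, and conversely. I expect no real obstacle here: the corollary is essentially a bookkeeping consequence of Lemma~\ref{rednj} and Lemma~\ref{Corn2}, the latter of which has already done the structural work of splitting off the relevant submodules. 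The one point to be careful about is the Littlewood–Richardson bookkeeping in enumerating $\AAAA^2(\lambda)$ for the two specific shapes $(j)$ and $(1^j)$, but these are tiny and transparent.

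\medskip

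\noindent\textit{Proof.} By Lemma~\ref{LemRed1}(1), it suffices to treat $n = j+2$, so we must determine $[\Delta_n(\lambda):L_n(\mu)]$ for $\lambda \in \{(n-2),(1^{n-2})\}$ and $\mu \vdash n$. Since the Schur functor $e_n^\ast$ is exact and $e_n^\ast L_n(\mu) \cong L^0(\mu) \ne 0$ for every $\mu \vdash n$ by \eqref{eqCorLambda0} and Corollary~\ref{CorLAn}, we have $[\Delta_n(\lambda):L_n(\mu)] = [\res_{\mk\mS_n} e_n^\ast \Delta_n(\lambda) : L^0(\mu)]$ for all $\mu \vdash n$.

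For $\lambda = (n-2)$, Lemma~\ref{Corn2} gives the short exact sequence
$$0 \to L_n(n) \oplus L_n(n-2,2) \to \Delta_n(n-2) \to L_n(n-2) \to 0,$$
and since $n-2$ is not a partition of $n$, the submodule $L_n(n)\oplus L_n(n-2,2)$ carries all composition factors indexed by partitions of $n$, each with multiplicity one. Hence $[\Delta_n(n-2):L_n(n)] = [\Delta_n(n-2):L_n(n-2,2)] = 1$, and $[\Delta_n(n-2):L_n(\mu)] = 0$ for every other $\mu \vdash n$.

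For $\lambda = (1^{n-2})$, Lemma~\ref{Corn2} gives
$$0 \to L_n(3,1^{n-3}) \to \Delta_n(1^{n-2}) \to L_n(1^{n-2}) \to 0,$$
and since $1^{n-2}$ is not a partition of $n$, we get $[\Delta_n(1^{n-2}):L_n(3,1^{n-3})] = 1$ and $[\Delta_n(1^{n-2}):L_n(\mu)] = 0$ for every other $\mu \vdash n$. Writing $j = n-2$, this is exactly the assertion of the corollary. \qed
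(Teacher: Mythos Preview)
Your proof is correct and follows exactly the paper's approach: reduce to $n=j+2$ via Lemma~\ref{LemRed1}(1) and read off the multiplicities from the short exact sequences of Lemma~\ref{Corn2}. The displayed equality $[\Delta_n(\lambda):L_n(\mu)] = [\res_{\mk\mS_n} e_n^\ast \Delta_n(\lambda) : L^0(\mu)]$ is in general only an inequality (since $e_n^\ast L_n(\nu)$ for $|\nu|<n$ could in principle contain $L^0(\mu)$ as an $\mk\mS_n$-constituent), but you never actually use it---Lemma~\ref{Corn2} already gives the full composition series---so the argument stands and that sentence can simply be deleted.
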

\begin{proof}
This follows immediately from Lemmata~\ref{Corn2} and~\ref{LemRed1}(1).
\end{proof}

\subsection{Proof of Theorem~\ref{ThmBlock}}
First we observe that, by definition and the fact that all 2-cores are their own transpose, the $2$-cores of~$\lambda$ and $\lambda^t $ are identical. We will use this property freely.

\begin{prop}\label{PropBlock}
If $\lambda,\mu\in\Lambda_C$ are partitions with the same 2-core, the simple modules $L_n(\lambda)$ and $L_n(\mu)$ belong to the same block in~$C_n${\rm -mod}.
\end{prop}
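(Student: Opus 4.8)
The plan is to use the equivalence relation ``$L_n(\lambda)$ and $L_n(\mu)$ lie in the same block of $C_n$-mod'' on $\Lambda_C$ and to connect any two partitions with a common $2$-core by a chain passing through that $2$-core. The engine is an \emph{elementary step}: if $\mu'$ is obtained from $\mu$ by removing a single rim $2$-hook (so $\mu$ is $\mu'$ with one domino added and $|\mu|=|\mu'|+2$), then $L_n(\mu)$ and $L_n(\mu')$ lie in the same block. Granting this, I would argue by induction on $|\nu|-|\partial^k|$ that every $\nu\in\Lambda_C$ with $2$-core $\partial^k$ has $L_n(\nu)$ in the same block as $L_n(\partial^k)$: if $\nu\neq\partial^k$ then $\nu$ has a removable rim $2$-hook, whose removal yields $\nu'$ with the same $2$-core; here $\nu'\in\Lambda_C$ because $|\nu'|\le n$ and $|\nu'|\equiv n \bmod 2$, so $|\nu'|\in\JJJ(n)$, and $\nu'$ is $p$-restricted automatically since $p\not\in[2,n]$. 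Combining the elementary step with the inductive hypothesis finishes the induction, and the same reasoning places $\partial^k$ itself in $\Lambda_C$; applying this to $\lambda$ and to $\mu$ and using transitivity gives the proposition.

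For the elementary step, recall that a rim $2$-hook is a horizontal or a vertical domino. Suppose first the added domino is horizontal, i.e.\ $\mu$ is obtained from $\mu'$ by adding two boxes in one row. Since $|\mu'|+2=|\mu|\in\JJJ(n)$, Proposition~\ref{PropMult1} gives $[\Delta_n(\mu'):L_n(\mu)]=1$. Together with $[\Delta_n(\mu'):L_n(\mu')]=1$ and the fact that the standard module $\Delta_n(\mu')$ is indecomposable (being a nonzero quotient of $P_{C_n}(\mu')$, hence with simple top $L_n(\mu')$), this shows $L_n(\mu')$ and $L_n(\mu)$ are composition factors of one indecomposable module and so lie in the same block.

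If the added domino is vertical, then $\mu^t$ is obtained from $(\mu')^t$ by adding two boxes in one row, so by the horizontal case $L_n((\mu')^t)$ and $L_n(\mu^t)$ lie in the same block. I would then invoke the contravariant self-equivalence $\Upsilon_\phi$ of $C_n$-mod from~\ref{StandNonsense}, which satisfies $\Upsilon_\phi(L_n(\nu))\cong L_n(\nu^t)$ — this is the identification $\nu^\phi=\nu^t$ established, via Proposition~\ref{NewProp}, in Lemma~\ref{LemDN}. Since an equivalence of abelian categories respects the decomposition into blocks, it follows that $L_n(\mu')$ and $L_n(\mu)$ lie in the same block, completing the elementary step and hence the proof.

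I do not expect a real obstacle here: the combinatorial content is the classical fact that a partition is carried to its $2$-core by successive rim $2$-hook removals, and the representation-theoretic input is entirely Proposition~\ref{PropMult1} plus the transpose duality $\Upsilon_\phi$. The only points needing attention are verifying that all partitions appearing in the chains (in particular the $2$-core) indeed lie in $\Lambda_C$, and keeping track of transposes when reducing the vertical-domino case to the horizontal one.
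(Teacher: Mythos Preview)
Your proof is correct and follows essentially the same route as the paper: reduce to the $2$-core by successive rim $2$-hook removals, handle horizontal dominoes via Proposition~\ref{PropMult1}, and handle vertical dominoes by transposing. The only cosmetic difference is that for the vertical case the paper invokes the BGG reciprocity~\eqref{eqBGGC} to deduce $(P_n(\nu):\Delta_n(\kappa))\neq 0$, whereas you apply the duality $\Upsilon_\phi$ directly to the block decomposition; both arguments rest on the same identification $\nu^\phi=\nu^t$ from Lemma~\ref{LemDN}.
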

\begin{proof}
Assume that~$\kappa$ is obtained from a partition~$\nu$ by removing a rim two-hook, we claim that
\begin{equation}\label{eqrim2}[P_n(\nu):L_n(\kappa)]\not=0\qquad\mbox{or}\qquad [P_n(\kappa):L_n(\nu)]\not=0.\end{equation}
From this it follows that any~$L_n(\nu)$ is in the same block as the simple module for its $2$-core, proving the proposition.

Now we prove equation~\eqref{eqrim2}. Assume first that~$\nu$ is obtained from $\kappa$ by adding two boxes in the same row. Proposition~\ref{PropMult1} implies that~$[\Delta_n(\kappa):L_n(\nu)]\not=0$, so in particular $[P_n(\kappa):L_n(\nu)]\not=0$. Now assume that~$\nu$ is obtained from $\kappa$ by adding two boxes in the same column. Then we have $[\Delta_n(\kappa^t):L_n(\nu^t)]\not=0$, so by equation~\eqref{eqBGGC} we have $(P_n(\nu):\Delta_n(\kappa))\not=0$, so in particular $[P_n(\nu):L_n(\kappa)]\not=0$. This concludes the proof.
\end{proof}

Theorem~\ref{ThmBlock}(1) now follows from the following two lemmata. Theorem~\ref{ThmBlock}(2) follows similarly from Proposition~\ref{PropBlock} and the analogue of Lemma~\ref{LemBlock2}.
\begin{lemma}\label{LemBlock1}
If $\lambda,\mu\in\Lambda_A$ have the same $2$-core, $L(\lambda)$ and $L(\mu)$ belong to the same block in~$A_n${\rm-mod}.
\end{lemma}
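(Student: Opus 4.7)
The plan is to transfer the block linkage already established for $C_n$ in Proposition~\ref{PropBlock} to $A_n\mbox{-mod}$ via the exact functor $F=e_n^\ast-$ of Section~\ref{SecAC}; the sole obstruction is that $F$ kills $L_C(\varnothing)$ whereas $\varnothing\notin\Lambda_A$.

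First I would treat the case where the common 2-core $\partial$ of $\lambda,\mu$ is nonempty. Pick a chain $\lambda=\nu_0,\nu_1,\ldots,\nu_k=\mu$ of partitions with successive pairs differing by a rim 2-hook, running down to $\partial$ and back up; since $|\nu_i|\ge|\partial|\ge 1$ for all $i$, every $\nu_i$ lies in $\Lambda_A$. For each step, the proof of Proposition~\ref{PropBlock} provides a linkage of $C_n$-simples: either $L_C(\nu_{i+1})$ is a composition factor of $P_C(\nu_i)$ (via Proposition~\ref{PropMult1}, when the two boxes are added in the same row), or $L_C(\nu_i)$ is a composition factor of $P_C(\nu_{i+1})$ (via the BGG reciprocity \eqref{eqBGGC}, when they are added in the same column). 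Using $FL_C(\alpha)\cong L_A(\alpha)$ and $FP_C(\alpha)\cong P_A(\alpha)$ for $\alpha\neq\varnothing$ (equation \eqref{eqCorLambda0}) and the exactness of $F$, each linkage transfers verbatim to $A_n\mbox{-mod}$, placing $L_A(\lambda)$ and $L_A(\mu)$ in the same block.

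The main obstacle is the case $\partial=\varnothing$, where every chain of rim-2-hook reductions passes through the forbidden vertex $\varnothing$; here $|\lambda|$ and $|\mu|$ are both even, forcing $n$ to be even. I would show that every $L_A(\nu)$ with $\nu\in\Lambda_A$ of 2-core $\varnothing$ lies in the block of $L_A((1,1))$. The heart of this is the direct link $L_A((2))\sim L_A((1,1))$: since $(1,1)$ is obtained from $\varnothing$ by adding a rim 2-hook in the same column, the argument inside the proof of Proposition~\ref{PropBlock} combined with Proposition~\ref{PropMult1} gives
\[
(P_C((1,1)):\Delta_C(\varnothing))\;=\;[\Delta_C(\varnothing):L_C((2))]\;=\;1,
\]
so $L_C((2))$ is a composition factor of $P_C((1,1))$. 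Applying $F$ kills only the $L_C(\varnothing)$ factors, leaving $L_A((2))$ as a composition factor of $P_A((1,1))=FP_C((1,1))$, whose top is $L_A((1,1))$.

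Finally, for an arbitrary $\nu\in\Lambda_A$ of 2-core $\varnothing$ with $|\nu|\ge 4$, a chain of rim-2-hook removals may be truncated at some $\nu'\in\{(2),(1,1)\}$ before reaching $\varnothing$, with every intermediate partition of size $\ge 2$ and hence in $\Lambda_A$; the first-case argument then yields $L_A(\nu)\sim L_A(\nu')$, and combining with the link $L_A((2))\sim L_A((1,1))$ gives $L_A(\nu)\sim L_A((1,1))$. Applying this to both $\lambda$ and $\mu$ completes the proof.
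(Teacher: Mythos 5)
Your proposal is correct and follows essentially the same route as the paper: for a nonempty common $2$-core it transports the rim-$2$-hook linkage \eqref{eqrim2} from $C_n$-mod to $A_n$-mod via \eqref{eqCorLambda0}, and in the $2$-core $\varnothing$ case it truncates the removal chain at $(2)$ or $(1,1)$ and supplies the missing link $[P_A(1,1):L_A(2)]\neq 0$ from $[\Delta_C(\varnothing):L_C(2)]=1$ together with reciprocity. The paper records this last link as $[P_A(1,1):L_A(2)]\ge (P_A(1,1):W_A(\varnothing))[W_A(\varnothing):L_A(2)]=1$, which is the same computation carried out after applying $F$ rather than before.
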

\begin{proof}
If $\lambda$ and $\mu$ have the same $2$-core, which is not $\partial^0=\varnothing$, the result follows immediately from equations~\eqref{eqrim2} and~\eqref{eqCorLambda0}.

If $\lambda$ has $2$-core $\varnothing$, the above reasoning shows that~$L_A(\lambda)$ will be in the block of either $L_A(2)$ or $L_A(1,1)$. 
The claim then follows frome 
$$[P_A(1,1):L_A(2)]\ge (P_A(1,1):W_A(\varnothing))[W_A(\varnothing):L_A(2)]=1,$$ see Proposition~\ref{PropMult1}.
 \end{proof}
 
 \begin{lemma}\label{LemBlock2}
If $L(\lambda)$ and $L(\mu)$ belong to the same block in~$A_n${\rm-mod}, then one of the following equivalent conditions must hold:
\begin{enumerate}
\item the number of even minus the number of odd contents of~$\lambda$ equals that of~$\mu$;
\item $\lambda$ and $\mu$ have the same $2$-core.
\end{enumerate}
\end{lemma}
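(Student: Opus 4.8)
The plan is to route everything through the single integer invariant
$$d(\lambda):=\#\{b\in\lambda\mid \con(b)\ \text{even}\}-\#\{b\in\lambda\mid \con(b)\ \text{odd}\},$$
which is exactly the quantity occurring in condition~(1). First I would dispose of the equivalence of~(1) and~(2). A rim $2$-hook consists of two boxes whose contents differ by one, hence one of each parity, so removing a rim $2$-hook leaves $d$ unchanged; iterating, $d(\lambda)$ equals $d$ of the $2$-core of~$\lambda$. Grouping the boxes of the staircase $\partial^i=(i,i-1,\ldots,1)$ according to the value of $r+c$ (all boxes with $r+c=s$ have content-parity $\equiv s$, and there are $s-1$ of them for $2\le s\le i+1$) gives $d(\partial^i)=\sum_{m=1}^{i}(-1)^{m+1}m=(-1)^{i+1}\lceil i/2\rceil$, together with $d(\partial^0)=0$; these values $0,1,-1,2,-2,\dots$ are pairwise distinct. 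Hence $d(\lambda)=d(\mu)$ if and only if $\lambda$ and $\mu$ have the same $2$-core, which is precisely the equivalence of~(1) and~(2). It therefore suffices to prove that $L(\lambda)$ and $L(\mu)$ in the same block of~$A_n$ forces $d(\lambda)=d(\mu)$.

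Next I would establish this first for~$C_n$, which is quasi-hereditary by Theorem~\ref{ThmQH}. The crucial input is already available as Lemma~\ref{LemRed1}(2): if $[\Delta_n(\alpha):L_n(\kappa)]\neq 0$, then $\kappa$ is obtained from~$\alpha$ by adding boxes which pair up into pairs of contents differing by one, and since each such pair contributes $0$ to~$d$, we get $d(\alpha)=d(\kappa)$. Now combine this with the Humphreys--BGG reciprocity~\eqref{eqBGGC}, $(P_n(\kappa):\Delta_n(\alpha))=[\Delta_n(\alpha^t):L_n(\kappa^t)]$, and with $d(\nu^t)=d(\nu)$ (transposition negates all contents, preserving parity, equivalently $\nu$ and $\nu^t$ have the same $2$-core). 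If $\Delta_n(\alpha)$ occurs in a $\Delta$-filtration of $P_n(\kappa)$, then $[\Delta_n(\alpha^t):L_n(\kappa^t)]\neq 0$, so $d(\alpha)=d(\alpha^t)=d(\kappa^t)=d(\kappa)$; and if $L_n(\kappa')$ occurs in that $\Delta_n(\alpha)$ then $d(\alpha)=d(\kappa')$. Hence $d$ takes the constant value $d(\kappa)$ on every composition factor of $P_n(\kappa)$. Since the blocks of the quasi-hereditary algebra $C_n$ are the classes of the equivalence relation on simples generated by ``two simples lie in a common (indecomposable) projective'', $d$ is constant on blocks of~$C_n$.

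Finally I would transfer this to~$A_n$. For $n$ odd, $F$ is a Morita equivalence by Theorem~\ref{ThmMor}(1), so blocks match and we are done. For $n$ even, $F=e_n^\ast-$ is exact with $FL_C(\nu)=L_A(\nu)$ and $FP_C(\nu)\cong P_A(\nu)$ for all $\nu\in\Lambda_A$ (none of which is $\varnothing$), whence $[P_A(\nu):L_A(\kappa)]=[P_C(\nu):L_C(\kappa)]$ for all $\nu,\kappa\in\Lambda_A$. Thus a chain of projectives witnessing that $L_A(\lambda)$ and $L_A(\mu)$ lie in one block of~$A_n$ lifts verbatim to a chain witnessing that $L_C(\lambda)$ and $L_C(\mu)$ lie in one block of~$C_n$, so $d(\lambda)=d(\mu)$ by the previous paragraph; together with the equivalence of the first paragraph, both conditions~(1) and~(2) hold. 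The only genuinely delicate point is the middle paragraph: one must check carefully that reciprocity and Lemma~\ref{LemRed1}(2) together let $d$ propagate both along the $\Delta$-sections of a projective cover and along the composition factors inside each $\Delta$; the staircase computation and the Schur-functor bookkeeping are routine.
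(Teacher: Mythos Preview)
Your proof is correct and follows essentially the same strategy as the paper: use Lemma~\ref{LemRed1}(2) to see that $d$ is preserved under passing from a standard/cell module to any of its composition factors, combine with BGG reciprocity and the invariance $d(\nu^t)=d(\nu)$ to propagate $d$ across composition factors of projective covers, and verify the equivalence of~(1) and~(2) by showing that $d$ is invariant under removing rim $2$-hooks and distinguishes all staircases. The only cosmetic difference is that the paper works directly in $A_n$ via Theorem~\ref{ThmSB}(4) (i.e.\ Lemma~\ref{LemBGGA}), whereas you establish block-constancy of $d$ in $C_n$ first and then pull it back along the Schur functor $F$; since Lemma~\ref{LemBGGA} is itself obtained by applying $F$ to~\eqref{eqBGGC}, the two routes are the same argument in slightly different packaging.
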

\begin{proof}
Assume first that~$[W(\nu):L(\kappa)]\not=0$ for some $\nu,\kappa\in\Lambda_A$. By Lemma~\ref{LemRed1}(2) and equation~\eqref{eqWDeltaL}, the number of even minus the number of odd contents of~$\nu$ equals that of~$\kappa$. Note also that this difference between the number of odd and even contents is the same for $\nu$ and $\nu^t$.

If $L(\lambda)$ and $L(\mu)$ are in the same block we must be able to construct a sequence $\lambda=\nu_0,\nu_1,\ldots,\nu_k=\mu$ of partitions such that~$\nu_i$ corresponds to a simple subquotient in the projective cover corresponding to~$\nu_{i-1}$. Theorem~\ref{ThmSB}(4) (with Remark~\ref{RemSB}) and the previous paragraph imply that condition~(1) must be satisfied.

Now we prove the equivalence of (1) and (2). Denote by~$\gamma(\kappa)$ the number of boxes in the Young diagram of $\kappa$ with even content minus the number of boxes with odd content. By construction, $\gamma(\nu)=\gamma(\lambda)$ when $\nu$ is obtained from $\lambda$ by removing rim 2-hooks. We also have
$$\gamma(\partial^{2i-1})=i \qquad\mbox{and}\qquad \gamma(\partial^{2i})=-i,\qquad\mbox{for $i\in\mN$}.$$
In conclusion, $\gamma$ of a partition is the same as $\gamma$ of its 2-core, and~$\gamma$ is different for each $2$-core. Hence, conditions (1) and~(2) are equivalent. 
\end{proof}

\begin{rem}
It follows from Lemma~\ref{lemeta}(2) that any block in~$A_n$-mod which does not correspond to the 2-core $\varnothing$ is equivalent to one in~$C_n$-mod. Hence, $A_n$ decomposes into a number of blocks, such that at most one is not quasi-hereditary.
\end{rem}

 
 \section{Connections with the periplectic Lie superalgebra}\label{Secpm}
 Fix $m\in\mN$. We set $V=\mk^{m|m}$, the $m|m$-dimensional superspace ($\mF_2=\mZ/2$-graded space) and we choose an {\em odd} supersymmetric non-degenerate bilinear form $\langle\cdot,\cdot\rangle$ on~$V$. In this section, we will {\em always assume that~$\charr(\mk)=0$}, even though several statements are independent of characteristic. For any superspace $W$ and $v\in W_i$, with $i\in\mF_2$, we write $|v|=i$.

  \subsection{Actions of $\mathfrak{gl}(V)$ and $\pe(V)$ on tensor space}\label{acAn}
 
 \subsubsection{}\label{SecAnV} The space $\End_{\mk}(V)$, with multiplication given by the super commutator is the Lie superalgebra~$\mathfrak{gl}(V)$. We define the Killing form $(\cdot|\cdot)$ on~$\mathfrak{gl}(V)$ as
 $$(X|Y):=\Str_{V} (XY),\qquad\mbox{for }\;X,Y\in\mathfrak{gl}(V),$$
 the super trace of~$XY$ interpreted as an operator on~$V$. For a (homogeneous) basis $\{X_a\}$ of~$\mathfrak{gl}(V)$, we denote by~$\{X_a^\dagger\}$ the basis satisfying $(X_a|X_b^\dagger)=\delta_{ab}$.
 Note that~$(X^\dagger_a)^\dagger =(-1)^{|X_a|}X_a$.

 For $X\in\mathfrak{gl}(V)$ and $0\le l< n$, we let $\Id^{\otimes l}\otimes X\otimes\Id^{\otimes n-l-1}$ act from the {\em left} on~$V^{\otimes n}$ by
$$v_1\otimes v_2\otimes\cdots \otimes v_n\;\mapsto\; (-1)^{|X|(|v_1|+\cdots+|v_{l}|)} v_1\otimes \cdots \otimes v_{l}\otimes Xv_{l+1}\otimes v_{l+2}\otimes\cdots \otimes v_n.$$
For $1\le k\le n$, we have the corresponding tensor product action, leading to a morphism of associative algebras
$$\Delta_k:\; U(\mathfrak{gl}(V)) \to \End_{\mk}(V^{\otimes n});\;\quad \Delta_k(X)=\sum_{0\le l< k}\Id^{\otimes l}\otimes X\otimes \Id^{\otimes (n-l-1)},\quad\mbox{for }X\in \mathfrak{gl}(V).$$

\subsubsection{} 
We define an involutive anti-algebra automorphism~$\theta$ of~$\mathfrak{gl}(V)$, which is determined by the property
 $$\langle Xv,w\rangle\;=\;- (-1)^{|v||X|}\langle v,\theta(X)w\rangle,$$
 for all $v,w\in V$, for a fixed $X\in\mathfrak{gl}(V)$.
The {\em periplectic Lie superalgebra} $\mathfrak{pe}(V)$ is the subalgebra of~$\mathfrak{gl}(V)$ of elements satisfying $\theta(X)=X$. It follows that~$\dim_{\mk}\mathfrak{pe}(V)=m^2|m^2$ and $\mathfrak{pe}(V)$ is a maximal degenerate subspace for $(\cdot|\cdot)$.

We consider~$V^{\otimes n}$ as a $\mathfrak{pe}(V)$-module, for the restriction of~$\Delta_n$. Denote by $\bT_V$ the category with objects $\{V^{\otimes i}\,|\, i\in\mN\}$, with convention $V^{\otimes 0}=\mk$, and as morphisms all $\mathfrak{pe}(V)$-linear morphisms of vector spaces. Note that we do not require morphisms to respect the $\mF_2$-grading.

\subsubsection{}We set
$$T=\sum_{j=1}^{2m}(-1)^{|u_j|} u_j\otimes u_j^\ast\; \in \;V\otimes V,$$ for $\{u_j\}$ some homogeneous basis of~$V$, with dual basis $u_j^\ast$ with respect to~$\langle\cdot,\cdot\rangle$. Note that left and right dual basis mean the same thing for an odd supersymmetric form, and $(u_j^\ast)^\ast=u_j$.

The following follows immediately from work of Deligne - Lehrer - Zhang and Kujawa - Tharp, and extends earlier work by Moon.

\begin{lemma}\cite{DLZ, Kujawa}\label{LemDLZ}
There exists a full contravariant functor
$F:\cA\to \bT_V$, which satisfies $F(i)=V^{\otimes i}$ for all $i\in\mN$ and $F(f\otimes g)=F(f)\otimes F(g)$, for all morphisms $f,g$. Furthermore, we have $F(\cup)=\langle\cdot,\cdot\rangle\in \Hom_{\mathfrak{pe}(V)}(V^{\otimes 2},\mk)$ and $F(\cap)\in \Hom_{\pe(V)}(\mk,V^{\otimes 2})$ is given by $1\mapsto T$. Finally, $F(X)\in \Hom_{\mathfrak{pe}(V)}(V^{\otimes 2},V^{\otimes 2})$ is given by $v\otimes w\mapsto (-1)^{|v||w|}w\otimes v$.
\end{lemma}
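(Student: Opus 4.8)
The plan is to construct the functor $F:\cA\to\bT_V$ directly from the monoidal presentation of $\cA$ recalled in~\ref{anti}, namely the generating morphisms $I,X,\cup,\cap$ and the relations of~\cite[Theorem~3.2.1]{Kujawa}. On objects we are forced to set $F(i)=V^{\otimes i}$; on the four generators we are forced, by the statement we want, to set $F(I)=\Id_V$, $F(X)$ equal to the (signed) super-flip $v\otimes w\mapsto(-1)^{|v||w|}w\otimes v$, $F(\cup)$ equal to the form $\langle\cdot,\cdot\rangle:V^{\otimes 2}\to\mk$, and $F(\cap):\mk\to V^{\otimes 2}$ the map $1\mapsto T$. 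Since $F$ is to be contravariant and monoidal, these assignments determine $F$ on all diagrams, provided the relations of $\cA$ are respected after applying the contravariant, monoidal extension. So the proof reduces to two things: (i) checking the four morphisms above are $\pe(V)$-linear, and (ii) checking that the defining relations of the marked Brauer category hold in $\bT_V$ under these assignments, with the appropriate signs; fullness is then the third, separate point.

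First I would verify $\pe(V)$-invariance of the building blocks. The form $\langle\cdot,\cdot\rangle$ is $\pe(V)$-invariant essentially by definition: the condition $\theta(X)=X$ for $X\in\pe(V)$ together with $\langle Xv,w\rangle=-(-1)^{|v||X|}\langle v,\theta(X)w\rangle$ is exactly the statement that $\langle\cdot,\cdot\rangle$ is killed by the $\Delta_2$-action of $\pe(V)$ (taking care that the odd form contributes the extra minus sign, which matches the ``$\cup\mapsto-\cap$'' asymmetry in $\varphi$). Invariance of $T\in V\otimes V$ is the dual statement: $T$ is the image of $1$ under the coevaluation determined by the nondegenerate pairing, and invariance of the pairing forces invariance of $T$; concretely one expands $\Delta_2(X)T$ in the basis $\{u_j\}$ and uses $\langle Xu_i,u_j^\ast\rangle=-(-1)^{|u_i||X|}\langle u_i,\theta(X)u_j^\ast\rangle$ to see the sum telescopes to zero. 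Invariance of the super-flip $F(X)$ is the standard fact that the braiding on $V^{\otimes 2}$ commutes with the diagonal $\mathfrak{gl}(V)$-action, hence a fortiori with $\pe(V)$; the sign $(-1)^{|v||w|}$ is precisely what makes this work in the super setting. This step is routine but the bookkeeping of Koszul signs must be done carefully.

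Next I would check the relations. The relations of $\cA$ in~\cite[Theorem~3.2.1]{Kujawa} are: $X^2=I\otimes I$ (or rather the braid/symmetry relation), the ``snake''/zig-zag identities relating $\cup$, $\cap$ and $I$, the interaction of $X$ with $\cup$ and $\cap$ (sliding a crossing past a cap/cup), and the crucial scalar relation that a closed loop evaluates to $\delta=0$ together with the skew-symmetry of the cap — this last is where $\cA$ differs from the ordinary Brauer category and is exactly what the ``$\minus1$'' in $\cB(0,\minus1)$ encodes. Under $F$: $F(X)^2=\Id$ because applying the super-flip twice gives $(-1)^{|v||w|}(-1)^{|w||v|}=1$; the zig-zag identities become the statement that $(\langle\cdot,\cdot\rangle\otimes\Id)\circ(\Id\otimes T)=\pm\Id_V$, which is just nondegeneracy of the form written out in a dual basis; the closed-loop relation becomes $\sum_j(-1)^{|u_j|}\langle u_j,u_j^\ast\rangle=\Str_V(\Id)=m-m=0$, matching $\delta=0$; and skew-symmetry of the cap corresponds to the super-flip acting as $-\Id$ on $T$, i.e.\ $\tau(T)=-T$, which one checks from $(u_j^\ast)^\ast=u_j$ and the odd-symmetry of the form. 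Applying the contravariant monoidal extension then automatically produces the claimed values $F(\cup)=\langle\cdot,\cdot\rangle$ and $F(\cap)=(1\mapsto T)$, since $\varphi$ swaps $\cup\leftrightarrow\cap$ (up to sign) and $F$ on $\cA$ is defined as (plain functor on $\cA$) $\circ\,\varphi$, matching the intended contravariance.

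Finally, fullness: this is really the content imported from Deligne--Lehrer--Zhang and Kujawa--Tharp, namely the first fundamental theorem of invariant theory for $\pe(V)$, which says that $\Hom_{\pe(V)}(V^{\otimes i},V^{\otimes j})$ is spanned by the images of Brauer diagrams. I would simply cite \cite{DLZ} and \cite{Kujawa} for surjectivity of $F$ on Hom-spaces and not reprove it. The main obstacle in writing this out is not any single deep point but the consistent handling of the Koszul sign conventions: one must fix the super-interchange law, the sign in $\Delta_k$, and the sign conventions in $\varphi$ once and for all, and then confirm that the three relations (braid, zig-zag, loop/skew-symmetry) come out with exactly the signs present in~\cite[Theorem~3.2.1]{Kujawa}. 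Given that the needed invariance and relation checks are all present in the cited literature, the honest proof is a short verification plus citations, and I would present it as such rather than redoing the invariant theory.
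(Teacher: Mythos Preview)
Your approach is correct and aligns with the paper's, which simply cites \cite[Theorem~5.2.1]{Kujawa} for the construction of $F$ and then reduces fullness, via \cite[\S5.3]{Kujawa}, to surjectivity of $\Hom_{\cA}(0,i)\to\Hom_{\pe(V)}(V^{\otimes i},\mk)$, the latter being \cite[Proposition~4.10]{DLZ}. You expand the first step by sketching the verification of the generating relations and $\pe(V)$-invariance, which is fine but is precisely the content of the Kujawa reference; one small caution is that your description of contravariance as ``(plain functor)$\,\circ\,\varphi$'' is not how it arises in \cite{Kujawa}---there the functor is covariant for the reversed composition in $\cA$, which is a bookkeeping convention rather than precomposition with the anti-autoequivalence $\varphi$ (the latter would introduce spurious signs on $X$ and $\cup$).
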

\begin{proof}
The functor $F$ is defined in \cite[Theorem~5.2.1]{Kujawa}. Note that {\it loc. cit.}
 the functor is considered as covariant because contravariant composition of morphisms in $\cA$ is used. By \cite[\S 5.3]{Kujawa}, the functor $F$ is full if and only if $F$ induces epimorphisms $\Hom_{\cA}(0,i)\to \Hom_{\mathfrak{pe}(V)}(V^{\otimes i},\mk)$, for all $i\in\mN$. The latter is precisely~\cite[Proposition~4.10]{DLZ}.
\end{proof}

In particular, $F$ restricts to surjective algebra morphisms
\begin{equation}\label{eqpi2}\pi_n:\,A_n\;\tto\;\End_{\pe(V)}(V^{\otimes n})^{\op}\quad\mbox{and}\quad
\widetilde{\pi}_n:\,C_n\;\tto\; \End_{\pe(V)}(\bigoplus_{i\in\JJJ(n)}V^{\otimes i})^{\op}.\end{equation}

\subsubsection{}\label{Isom}If $ n\le m$, then~$\pi_n$ in~\eqref{eqpi2} is even an isomorphism by \cite[Theorems~4.1 and~4.5]{Moon}. This has recently been extended to the case $n<\frac{1}{2}(m+1)(m+2)$ in \cite[Theorem~8.3.1]{TenIde}.

 The algebra~$A_n$ hence represents the full centraliser of the action of~$U(\mathfrak{pe}(m))$ on~$V^{\otimes n}$ under those conditions. Whether we have a double centraliser property is an interesting open question.
\begin{qu}\label{question}
For which $m,n\in\mN$ is the algebra morphism~$U(\mathfrak{pe}(m))\to \End_{A_n}(V^{\otimes n})$ surjective?
\end{qu}
For $n\in\{2,3\}$ and $m\ge n$, this morphism is a surjection, as can be checked directly from the explicit description of the $\mathfrak{pe}(m)$-module $V^{\otimes n}$ in~\cite[Section~6]{Moon}.

 \subsubsection{}\label{CommD}By construction, we have a commutative diagram 
\[
\xymatrix{
 *+\txt{$A_n$} \ar[rr]^{\pi_n}&& *+\txt{$\End_{\mathfrak{pe}(V)}(V^{\otimes n})^{\op}$} \\
*+\txt{$\mk\mS_n$}\ar[rr]^{\pi_n}\ar@{^{(}->}[u]&&*+\txt{$\End_{\mathfrak{gl}(V)}(V^{\otimes n})^{\op}$.}\ar@{^{(}->}[u]
}
\]

\subsection{The category $\cF_m$} Set $\pe(m):=\pe(V)$.
\subsubsection{} Let $\cF_m$ be the category of finite dimensional integrable modules over~$\mathfrak{pe}(m)$. Such modules are automatically $\mF_2$-gradable, see {\it e.g.}~\cite[\S 2.3]{Chen}. However, we consider all  $\mathfrak{pe}(m)$-linear morphisms, not only those preserving grading. It is well-known that this category is abelian and has enough projective and injective objects, where the latter two classes of modules coincide, see {\it e.g.}~\cite[\S 2]{Chen}.

\subsubsection{} We consider the standard triangular decomposition~$\mathfrak{pe}(m)=\fn^{\minus}\oplus\fh\oplus\fn^{\plus}$, of {\it e.g.} \cite[\S 2]{Vera}, which satisfies
$$\dim_{\mk}\fn^{\plus}=\frac{1}{2}m(m-1)|\frac{1}{2}m(m+1)\quad\mbox{and}\quad \dim_{\mk}\fn^{\minus}=\frac{1}{2}m(m-1)|\frac{1}{2}m(m-1).$$
The positive odd roots are given by~$\delta_i+\delta_j$, for $1\le i\le j\le m$, and the negative odd roots are $-\delta_i-\delta_j$, for $1\le i< j\le m$.

With respect to this system of positive roots, we denote the simple module with highest weight $\alpha\in\fh^\ast$ by~$\cL(\alpha)$. For instance, we have $V\cong \cL(\delta_1)$ and $\mk\cong\cL(0)$. The necessary and sufficient condition for $\cL(\alpha_1\delta_1+\cdots+\alpha_m\delta_m)$ to be finite dimensional is $\alpha_i-\alpha_j\in\mN$. In order to understand all finite dimensional $\cL(\alpha)$ it is sufficient to restrict to those satisfying $\alpha_1\in \zeta+\mZ$ for one fixed $\zeta\in[0,1[$. We choose $\zeta=0$ and hence $\alpha_j\in\mZ$. We call the corresponding highest weights integral dominant and denote the set by~$X^+\subset\fh^\ast$.
The simple objects of~$\cF_m$ are, up to isomorphism, given by~$\{\cL(\alpha)\,|\,\alpha\in X^+\}$. 

To a partition $\lambda\vdash d$ which satisfies $\lambda_{m+1}=0$, we associate an integral dominant weight
$$\underline\lambda\,:=\,\sum_{i=1}^m\lambda_i\delta_i\;\in X^+.$$

\subsection{The blocks of~$\mathfrak{pe}(m)$}
In \cite{Chen} some interesting partial results concerning the block decomposition of~$\cF_m$ were obtained, which we complete in the following theorem. We recall the equivalence relation $\sim$ on $X^+$ introduced in~\cite[Definition~5.1]{Chen}. This relation is transitively generated by
$$\begin{cases}\alpha\sim \alpha +2\delta_k, &\mbox{ for $1\le k\le m$, provided $\alpha+ 2\delta_k\in X^+$;}\\
\alpha\sim \alpha + (\delta_k+\delta_l),&\mbox{ for $1\le k\le l\le m$, provided $\alpha + (\delta_k+\delta_l)\in X^+$ and $\alpha_k=\alpha_l$.}
\end{cases}$$
Recall also the $2$-cores $\{\partial^{i}\,|\,i\in\mN\}$ from \ref{2core}.
\begin{thm}\label{ThmBlocksP}
The modules $\cL(\alpha)$ and $\cL(\beta)$ are in the same block of~$\cF_m$ if and only if $\alpha\sim\beta$. The block decomposition of~$\cF_m$ is thus given by~$$\cF_m\;=\;\bigoplus_{i=0}^{m} \cF_m(\underline{\partial}^i)$$
\end{thm}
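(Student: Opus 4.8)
The strategy is to transport the block decomposition of the periplectic Brauer algebras, established in Theorem~\ref{ThmBlock}, to the category $\cF_m$ via the surjections $\pi_n$ of~\eqref{eqpi2} and the contravariant functor $F:\cA\to\bT_V$ of Lemma~\ref{LemDLZ}. First I would record that $\cF_m$ decomposes into at least $m+1$ blocks: this is essentially Chen's result, since $\gamma$ of the $2$-core---equivalently the quantity ``number of even minus number of odd contents''---is a genuine invariant distinguishing the weights $\underline{\partial}^0,\underline{\partial}^1,\ldots,\underline{\partial}^m$, and these are the only $2$-cores that can appear as $\underline{\partial}^i$ with $i\le m$ (once $i>m$ the partition $\partial^i$ has more than $m$ rows and is not of the form $\underline\lambda$). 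So the content of the theorem is the reverse inequality: \emph{any} two simple modules $\cL(\alpha)$, $\cL(\beta)$ with the same associated $2$-core lie in the same block. Equivalently, one must show $\cL(\alpha)\sim\cL(\beta)$ forces them into the same block (which is by definition the meaning of $\sim$ being the block relation once linkage is proved), and that $\sim$ has exactly $m+1$ classes.

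The key step is to realise the generators of the equivalence relation $\sim$ as instances of linkage coming from the action on tensor space. Every integral dominant $\alpha\in X^+$ with nonnegative entries is of the form $\underline\lambda$ for a partition $\lambda$ with at most $m$ rows, and $\cL(\underline\lambda)$ appears as a composition factor of $V^{\otimes d}$ for $d=|\lambda|$ (via the $\mathfrak{gl}(V)$-side of the commutative diagram in~\ref{CommD} and the classical Schur--Weyl picture). I would argue: the $A_n$-module structure on $V^{\otimes n}$ induced by $\pi_n$ shows that two $\mathfrak{pe}(m)$-composition factors of $V^{\otimes n}$ which correspond to simple $A_n$-modules in the same $A_n$-block must lie in the same $\mathfrak{pe}(m)$-block, because a nonsplit extension of $A_n$-modules pulls back to a nonsplit extension of $\mathfrak{pe}(m)$-modules under the surjection $\pi_n$ (restriction along a surjective algebra map preserves the block partition up to refinement in the correct direction). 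By Theorem~\ref{ThmBlock}, the simple $A_n$-modules $L(\lambda)$, $L(\mu)$ are in the same block iff $\lambda,\mu$ have the same $2$-core; hence the corresponding $\cL(\underline\lambda),\cL(\underline\mu)$ lie in the same $\mathfrak{pe}(m)$-block whenever $\lambda,\mu$ have the same $2$-core, $\lambda,\mu$ have at most $m$ rows, and $n$ is taken large enough (e.g.\ $n=\max(|\lambda|,|\mu|)$, possibly after padding). One must also handle weights $\alpha$ with negative entries: here I would use that tensoring with the one-dimensional modules given by powers of the Berezinian (or directly the translation $\alpha\mapsto\alpha+2\delta_k$, which is one of the generators of $\sim$ and is realised by an autoequivalence of $\cF_m$ twisting by a character) reduces everything to the dominant-with-nonnegative-entries case without changing the block structure; alternatively one invokes Chen's reduction that it suffices to understand $\cL(\alpha)$ with all $\alpha_j\ge 0$.

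Then I would check that the generating moves of $\sim$ are exactly captured: the move $\underline\lambda\sim\underline\lambda+2\delta_k$ and the move $\underline\lambda\sim\underline\lambda+(\delta_k+\delta_l)$ with $\lambda_k=\lambda_l$ both add two boxes to the Young diagram of $\lambda$---either two boxes in the same row (when $k$ with a repeated part is incremented twice, or $k=l$) or two boxes in the same column (two equal parts each gaining one box)---hence do not change the $2$-core; conversely, any two partitions with the same $2$-core are connected by a chain of such two-box additions/removals (adding/removing rim $2$-hooks), so $\sim$ has exactly the $m+1$ classes indexed by $\underline\partial^0,\ldots,\underline\partial^m$. Combining: same $2$-core $\iff$ $\alpha\sim\beta$ $\Rightarrow$ same block (by the pullback argument above plus Theorem~\ref{ThmBlock}), and different $2$-core $\Rightarrow$ different block (by the $\gamma$-invariant, i.e.\ Chen's direction). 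This yields the stated decomposition $\cF_m=\bigoplus_{i=0}^m\cF_m(\underline{\partial}^i)$.

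\textbf{Main obstacle.} The delicate point is the surjectivity hypothesis needed to pull extensions back along $\pi_n$: for this argument one wants $\pi_n:A_n\tto\End_{\pe(V)}(V^{\otimes n})^{\op}$, and more importantly one wants enough of the simple $\mathfrak{pe}(m)$-modules in a given block to actually show up as summands/composition factors of a single tensor power $V^{\otimes n}$ with their $A_n$-labels controlled. By~\ref{Isom}, $\pi_n$ is an isomorphism only for $n<\tfrac12(m+1)(m+2)$, so for large partitions (many rows, or parts exceeding that bound) one cannot directly use an $A_n$ with $\pi_n$ iso; one must instead either (i) argue that a nonsplit self-extension of $\mathfrak{pe}(m)$-modules already exists inside $V^{\otimes n}$ for $n$ just large enough that the relevant two simples both occur (which is guaranteed once $n\ge|\lambda|,|\mu|$), combined with the fact that $\pi_n$ surjects (Lemma~\ref{LemDLZ}) so that the $A_n$-block structure refines the $\mathfrak{pe}(m)$-block structure on those composition factors---and this refinement is in the helpful direction: $A_n$-block-connected $\Rightarrow$ $\mathfrak{pe}(m)$-block-connected; or (ii) invoke Chen's partial results to bootstrap only the finitely many ``boundary'' linkages not covered by small tensor powers. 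I expect step (i), correctly set up, to suffice, and the honest work is in verifying that ``in the same $A_n$-block'' indeed forces ``in the same $\mathfrak{pe}(m)$-block'' for the composition factors of $V^{\otimes n}$, i.e.\ that the block quiver of $\End_{\pe(V)}(V^{\otimes n})$ is a quotient-compatible image of that of $A_n$ via the surjection $\pi_n$.
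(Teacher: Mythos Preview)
You have the two halves of the argument reversed. Chen's contribution (recorded as Lemma~\ref{LemChen}) is precisely the \emph{linkage} direction: $\alpha\sim\beta$ implies that $\cL(\alpha)$ and $\cL(\beta)$ lie in the same block, so $\cF_m$ has \emph{at most} $m+1$ blocks. What the paper supplies is the \emph{separation} direction, Proposition~\ref{Propm1}: the blocks $\cF_m(\underline{\partial}^i)$ for $0\le i\le m$ are pairwise distinct, so there are \emph{at least} $m+1$. Your proposal attributes separation to Chen (via a putative $\gamma$-invariant) and then tries to prove linkage using $A_n$.

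That linkage attempt has a genuine gap. Under the surjection $\pi_n:A_n\tto B_n:=\End_{\pe(V)}(V^{\otimes n})^{\op}$, the block decomposition of $B_n$ \emph{refines} that of $A_n$: same $B_n$-block implies same $A_n$-block, not the converse. Your implication ``same $A_n$-block $\Rightarrow$ same $\mathfrak{pe}(m)$-block'' is therefore exactly what the surjection does \emph{not} hand you; pushing it through would essentially require the double centraliser of Question~\ref{question}, which is open. The paper uses the block idempotents of $C_n$ in the opposite way: their images under $\widetilde{\pi}_n$ are orthogonal central idempotents in $\End_{\pe(m)}(\oplus_j V^{\otimes j})$, yielding a decomposition $\oplus_j V^{\otimes j}=\oplus_{i=0}^m M_i^n$ into $\mathfrak{pe}(m)$-summands with no morphisms between them (Proposition~\ref{Propim}, via Theorem~\ref{ThmBlock}(2) and Lemma~\ref{CorIdAnn}). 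The ingredient you are missing entirely is Lemma~\ref{LemITP}: every injective envelope in $\cF_m$ is a direct summand of some tensor power $V^{\otimes n}$. This is what lets the Hom-orthogonal decomposition of tensor space propagate to all of $\cF_m$ and delivers the lower bound.
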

The rest of this section is devoted to the proof.

\begin{lemma}\label{CorIdAnn}
For any~$\lambda\in\Lambda_{A_n}$, the corresponding primitive idempotent $e_\lambda\in A_n$ is not annihilated by~$\pi_n$ if $\lambda_{m+1}=0$.
Moreover, we have $[V^{\otimes n}e_\lambda:\cL(\underline\lambda)]\not=0$.
\end{lemma}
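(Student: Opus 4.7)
The plan is to produce inside $V^{\otimes n}e_\lambda$ a highest weight vector for $\pe(V)$ of weight $\underline\lambda$; this will simultaneously yield $\pi_n(e_\lambda)\ne 0$ and the composition factor $\cL(\underline\lambda)$.

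First I would reduce to the case $n=i:=|\lambda|$. By Lemma~\ref{LemConnectP}, $e_\lambda = a_i e^{(i)}_\lambda b_i$ with $e^{(i)}_\lambda\in A_i$ primitive for $L_{A_i}(\lambda)$. Treating $V^{\otimes n}$ as a right $A_n$-module via $\pi_n$ through $v\cdot x := F(x)(v)$, the contravariance of $F$ from Lemma~\ref{LemDLZ} gives $F(e_\lambda)=F(b_i)\circ F(e^{(i)}_\lambda)\circ F(a_i)$, and Lemma~\ref{Lemab} yields $F(a_i)\circ F(b_i)=\Id_{V^{\otimes i}}$. Thus $F(a_i)$ is surjective, $F(b_i)$ is an injective $\pe(V)$-linear map, and
$$V^{\otimes n}e_\lambda \;=\; F(b_i)\bigl(V^{\otimes i}\,e^{(i)}_\lambda\bigr) \;\cong\; V^{\otimes i}\,e^{(i)}_\lambda$$
as $\pe(V)$-modules, reducing us to the case $n=i=|\lambda|$.

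Now assume $n=|\lambda|$ and decompose $e_\lambda=\bar e_\lambda+i_\lambda$ along $A_n=\mk\mS_n\oplus I$ from~\ref{sseqI}. By Corollary~\ref{CorLAn}, for $\mu\vdash n$ we have $\bar e_\lambda L^0(\mu)=e_\lambda L_{A_n}(\mu)=\delta_{\lambda,\mu}\mk$, so $\bar e_\lambda$ is a primitive idempotent of $\mk\mS_n$ for $L^0(\lambda)$. The key observation is that the ideal $I$ acts as zero on the weight space $V^{\otimes n}_{\underline\lambda}$: for a weight-$\underline\lambda$ basis tensor, counting the numbers $k_j$ and $l_j$ of factors of weights $\delta_j$ and $-\delta_j$, the relations $\sum_j(k_j-l_j)=\sum_j\lambda_j=n=\sum_j(k_j+l_j)$ force $\sum_j l_j=0$, so every factor lies in the even part of $V$; since $\langle\cdot,\cdot\rangle$ is odd it vanishes on pairs of even vectors, and any $d\in I$ has $F(d)$ factoring through $V^{\otimes k}$ with $k<n$ via a first arrow that contracts a pair of factors using $\langle\cdot,\cdot\rangle$. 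Therefore $V^{\otimes n}_{\underline\lambda}\,e_\lambda=V^{\otimes n}_{\underline\lambda}\,\bar e_\lambda$.

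Finally, I would invoke classical super Schur--Weyl duality for $\mathfrak{gl}(V)=\mathfrak{gl}(m|m)$ (whose action on $V^{\otimes n}$ commutes with $\mk\mS_n$ by~\ref{CommD}): under the hypothesis $\lambda_{m+1}=0$, one has $V^{\otimes n}\bar e_\lambda\cong L_{\mathfrak{gl}(V)}(\underline\lambda)$ as $\mathfrak{gl}(V)$-modules, with a unique up to scalar highest weight vector $v_+$ of weight $\underline\lambda$. Then $v_+\in V^{\otimes n}_{\underline\lambda}$ lies in the even tensor power of $V$; the odd positive root vectors of $\pe(V)$ (for roots $\delta_i+\delta_j$) send the odd part of $V$ to the even part and thus annihilate $v_+$, while the even positive root vectors of $\pe(V)$ lie in $\mathfrak{gl}$ of the even part of $V$ and annihilate $v_+$ by $\mathfrak{gl}(V)$-highestness. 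Hence $v_+\in V^{\otimes n}_{\underline\lambda}\bar e_\lambda=V^{\otimes n}_{\underline\lambda}e_\lambda\subseteq V^{\otimes n}e_\lambda$ is a $\pe(V)$-highest weight vector of weight $\underline\lambda$, and $U(\pe(V))v_+\subseteq V^{\otimes n}e_\lambda$ admits $\cL(\underline\lambda)$ as a simple quotient; this gives $[V^{\otimes n}e_\lambda:\cL(\underline\lambda)]\ge 1$ and in particular $\pi_n(e_\lambda)\ne 0$. The main obstacle is the bookkeeping around the contravariance of $F$ combined with the right-action convention in the reduction, and invoking the super Schur--Weyl identification in precisely the form needed under our hypothesis $\lambda_{m+1}=0$.
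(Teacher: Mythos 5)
Your proof is correct and follows essentially the same route as the paper: the reduction to $n=|\lambda|$ via Lemma~\ref{LemConnectP} and the (co/contra)variance of $F$, the decomposition $A_n=\mk\mS_n\oplus I$ together with the weight count showing that morphisms factoring through $V^{\otimes n-2}$ kill the $\underline\lambda$-weight space, and the Berele--Regev identification of $V^{\otimes n}\bar e_\lambda$. The only real difference is presentational: you exhibit the singular vector $v_+$ explicitly and check it is fixed by $e_\lambda$, which makes the ``moreover'' statement (that $\cL(\underline\lambda)$ occurs in $V^{\otimes n}e_\lambda$ itself, not merely in $V^{\otimes n}e$) more transparent than the paper's brief assertion.
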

\begin{proof}
Assume first that~$\lambda\vdash n$ and let $e$ be a primitive idempotent in $\mk\mS_n$ corresponding to $L^0(\lambda)$. Under the inclusion~$\mk\mS_n\hookrightarrow A_n$, the idempotent $e$ does not necessarily remain primitive, but in its decomposition as a sum of orthogonal primitive idempotents in~$A_n$, $e_\lambda$ (up to conjugation) must appear, by Corollary~\ref{CorLAn}. Considering $A_n\tto \mk\mS_n$ as in \ref{sseqI} thus shows that~$e-e_\lambda\in I$.

Now assume that~$\pi_n(e_\lambda)=0$, which thus implies that~$\pi_n(e)\in \pi_n(I)$. By Lemma~\ref{LemDLZ}, every $\mathfrak{pe}(m)$-morphism of~$V^{\otimes n}$ in~$\pi_n(I)$ factors through $V^{\otimes n-2}$. Now \cite{Berele} implies that $V^{\otimes n}e$ is a simple $\mathfrak{gl}(m|m)$-module with highest weight $\underline{\lambda}$.  Since $V^{\otimes n-2}$ does not contain (non-zero) weight vectors of weight $\underline{\lambda}$, we obtain a contradiction. Hence $\pi_n(e_\lambda)\not=0$. Moreover, since the simple $\mathfrak{gl}(m|m)$-module wight highest weight $\underline{\lambda}$ restricts to a $\mathfrak{pe}(m)$-module which contains $\cL(\underline\lambda)$ as a constituent, we have~$[V^{\otimes n}e_\lambda:\cL(\underline\lambda)]\not=0$.

Now we consider the general case $\lambda\vdash i\in\JJJ^0(n)$. By Lemma~\ref{LemConnectP}, we can take $e_\lambda =a_i e_\lambda^{(i)} b_i$.
By Lemma~\ref{LemDLZ}, $F(a_i):V^{\otimes n}\to V^{\otimes i}$ is surjective and $F(b_i): V^{\otimes i}\to V^{\otimes n}$ injective. We can therefore reduce the case $i<n$ to the above paragraph. 
\end{proof}

\begin{prop}\label{Propim}
Let $n$ be strictly bigger than~$\frac{1}{2}m(m+1)$. The algebra
$\End_{\mathfrak{pe}(m)}(\bigoplus_{j=0}^nV^{\otimes j})$
has at least $m+1$ blocks. Hence, we have a decomposition of $\mathfrak{pe}(m)$-modules
$$\bigoplus_{j=0}^nV^{\otimes j}=\bigoplus_{i=0}^mM_i^n,\qquad\mbox{with}\quad \Hom_{\pe(m)}(M_i^n,M_j^n)=0,\;\mbox{ if $i\not=j$.}$$
Furthermore, $[M^n_i:\cL(\underline{\partial}^{i})]\not=0$, for $0\le i\le m$.
\end{prop}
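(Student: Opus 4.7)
The plan is to transfer the block decomposition of $C_n$ and $C_{n-1}$ (Theorem~\ref{ThmBlock}(2)) across the surjections in \eqref{eqpi2}. First I would use a parity argument: all roots of $\pe(m)$, both even ($\delta_i-\delta_j$) and odd ($\pm(\delta_i+\delta_j)$), have total coordinate sum $\equiv 0\pmod 2$, so the parity of the total weight is a block-invariant of $\cF_m$. Since $V^{\otimes j}$ has weights of total sum $j$, one has $\Hom_{\pe(m)}(V^{\otimes j},V^{\otimes k})=0$ whenever $j\not\equiv k\pmod 2$. Writing $X_k := \bigoplus_{j\in\JJJ(k)}V^{\otimes j}$, this yields
$$\bigoplus_{j=0}^n V^{\otimes j} \;=\; X_n \oplus X_{n-1},$$
with no non-zero morphisms between the two summands, and accordingly $\End_{\pe(m)}(\bigoplus_j V^{\otimes j}) = \End_{\pe(m)}(X_n) \oplus \End_{\pe(m)}(X_{n-1})$, with both summands surjective images of $C_n$ and $C_{n-1}$ via \eqref{eqpi2}.

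Next, for each $0\le i\le m$ I would locate a non-vanishing block of $2$-core $\partial^i$ in the endomorphism algebra. The bound $n>m(m+1)/2$ ensures $|\partial^i|=i(i+1)/2<n$, so $|\partial^i|\in\JJJ(n)\sqcup\JJJ(n-1)$; let $\tau(i)\in\{n,n-1\}$ be the index with $|\partial^i|\in\JJJ(\tau(i))$. By Theorem~\ref{ThmBlock}(2), $C_{\tau(i)}$ has a block of $2$-core $\partial^i$, with central primitive idempotent $z_i$. Choose a primitive idempotent $f_i\in A_{|\partial^i|}$ corresponding to $L_{A_{|\partial^i|}}(\partial^i)$. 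Using $A_{|\partial^i|}=e_{|\partial^i|}^{\ast}C_{\tau(i)}e_{|\partial^i|}^{\ast}$ one sees $f_iC_{\tau(i)}f_i=f_iA_{|\partial^i|}f_i$, a local ring, so $f_i$ is primitive in $C_{\tau(i)}$ as well and corresponds there to $L_{C_{\tau(i)}}(\partial^i)$; in particular $z_if_i=f_i$. Because $\partial^i$ has only $i\le m$ rows, Lemma~\ref{CorIdAnn} gives $\pi_{|\partial^i|}(f_i)\neq 0$, hence $\widetilde{\pi}_{\tau(i)}(f_i)\neq 0$, and consequently $\widetilde{\pi}_{\tau(i)}(z_i)\neq 0$. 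The same lemma yields $[V^{\otimes|\partial^i|}f_i:\cL(\underline{\partial}^i)]\neq 0$.

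Finally, the images $\scrE_i:=\widetilde{\pi}_{\tau(i)}(z_i)$, extended by zero on the other parity summand, are $m+1$ non-zero, pairwise orthogonal central idempotents of $\End_{\pe(m)}(\bigoplus_j V^{\otimes j})$: orthogonality within a fixed value of $\tau(i)$ comes from orthogonality of the $z_i$'s in $C_{\tau(i)}$, and across different values of $\tau(i)$ it comes from the parity split. This gives at least $m+1$ blocks. Setting $M_i^n:=\scrE_i\cdot\bigl(\bigoplus_j V^{\otimes j}\bigr)$ for $1\le i\le m$ and $M_0^n:=\bigl(1-\sum_{i\ge 1}\scrE_i\bigr)\cdot\bigl(\bigoplus_j V^{\otimes j}\bigr)$ (so that $M_0^n$ absorbs the $\scrE_0$-component together with any further $2$-core blocks) produces the required decomposition, and $[M_i^n:\cL(\underline{\partial}^i)]\neq 0$ follows from the previous paragraph. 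The principal obstacle is establishing $\widetilde{\pi}_{\tau(i)}(z_i)\neq 0$ — that the blocks of $C_{\tau(i)}$ do not collapse under passage to the endomorphism algebra — which is precisely the content of the step relying on Lemma~\ref{CorIdAnn}.
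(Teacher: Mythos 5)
Your proposal is correct and follows essentially the same route as the paper: split the endomorphism algebra by parity, realise it as a quotient of $C_n\oplus C_{n-1}$ via $\widetilde{\pi}_n\oplus\widetilde{\pi}_{n-1}$, and use Lemma~\ref{CorIdAnn} together with Theorem~\ref{ThmBlock}(2) to see that $m+1$ distinct blocks survive the surjection. The only (harmless) variation is that you test survival of each block with the primitive idempotent $f_i\in A_{|\partial^i|}\subset C_{\tau(i)}$ rather than with $e_{\partial^i}\in A_n\oplus A_{n-1}$ as in the paper; both reduce to the same lemma.
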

\begin{proof}
We clearly have
$$\End_{\mathfrak{pe}(m)}(\bigoplus_{j=0}^nV^{\otimes j})\;=\;\End_{\mathfrak{pe}(m)}(\bigoplus_{j\in\JJJ(n)}V^{\otimes j})\oplus \End_{\mathfrak{pe}(m)}(\bigoplus_{j\in\JJJ(n-1)}V^{\otimes j}),$$
as $\mathfrak{h}$ already separates between even and odd tensor powers.
We consider the surjective morphism
$$\widetilde{\pi}_n\oplus\widetilde{\pi}_{n-1}:\;C_n\oplus C_{n-1}\;\tto\; \End_{\mathfrak{pe}(m)}(\bigoplus_{j=0}^nV^{\otimes j}).$$

For $0\le i\le m$, the idempotents $e_{\partial^i}\in A_n\oplus A_{n-1}\subset C_n\oplus C_{n-1}$, 
are not in the kernel of $\widetilde{\pi}_n\oplus\widetilde{\pi}_{n-1}$, by Lemma~\ref{CorIdAnn}. Theorem~\ref{ThmBlock}(2) then implies that~$m+1$ blocks of~$C_n\oplus C_{n-1}$ are not annihilated by~$\widetilde{\pi}_n\oplus \widetilde{\pi}_{n-1}$.
The conclusion about the non-vanishing multiplicities for $\cL(\underline{\partial}^i)$ then follows from Lemma~\ref{CorIdAnn}.
\end{proof}

The following observation about the injective modules in~$\cF_m$ is a well-known general property of faithful representations of affine algebraic supergroup schemes. Alternatively it can be obtained from the corresponding $\mathfrak{gl}(V)$-property by exploiting restriction and induction functors. 
\begin{lemma}\label{LemITP}
Every injective envelope in~$\cF_m$ is a direct summand of some power~$V^{\otimes n}$.
\end{lemma}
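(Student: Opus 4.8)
The plan is to run the standard argument showing that, for an affine algebraic supergroup $G$ admitting a faithful finite‑dimensional representation $V$ and whose category of finite‑dimensional rational modules has injective objects coinciding with the projective ones, every indecomposable injective is a direct summand of a tensor power of $V$ together with its dual; the feature special to the periplectic setting is that the invariant \emph{odd} form forces $V^{\ast}\cong V$, so that only the powers $V^{\otimes n}$ are needed. This is the ``general property of faithful representations'' referred to in the remark; I sketch it below and comment at the end on the alternative route through $\mathfrak{gl}(V)$.

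First I would fix the setting. Since $\charr\mk=0$, the category $\cF_m$ is the category of finite‑dimensional rational modules over the affine algebraic supergroup $G$ with $\Lie G=\pe(m)$ (equivalently, of finite‑dimensional comodules over the Hopf superalgebra $\mk[G]$), and $V$ is a faithful $G$‑module. I would then invoke the standard structural fact for faithful modules: every finite‑dimensional $G$‑module is a subquotient of a finite direct sum of modules $V^{\otimes a}\otimes(V^{\ast})^{\otimes b}$ — this holds because the matrix coefficients of $V$ and of $V^{\ast}$ generate $\mk[G]$ as an algebra, so $\mk[G]$ is the directed union of coefficient spaces of such tensor products, and every finite‑dimensional module embeds into a finite sum of copies of $\mk[G]$. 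Next, by Lemma~\ref{LemDLZ} the morphism $F(\cup)\in\Hom_{\pe(V)}(V^{\otimes 2},\mk)$ is a non‑degenerate $\pe(m)$‑invariant pairing (equivalently, $\theta$ restricts to the identity on $\pe(m)$), so the induced map $V\to V^{\ast}$ is an isomorphism of objects of $\cF_m$ once the $\mathbb{F}_2$‑grading is ignored. Combining the two observations, every finite‑dimensional object of $\cF_m$ is a subquotient of a finite direct sum $T$ of modules of the form $V^{\otimes n}$.

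Now let $S$ be a simple object of $\cF_m$ and $I(S)$ its injective hull; recall that in $\cF_m$ the injective and the projective objects coincide, so $I(S)$ is also projective. By the previous paragraph $I(S)$ is a subquotient of some finite direct sum $T=\bigoplus_i V^{\otimes n_i}$; writing $I(S)\cong N/N'$ with $N'\subseteq N\subseteq T$ yields a monomorphism $\iota\colon I(S)\hookrightarrow Q$ into the quotient $\pi\colon T\twoheadrightarrow Q:=T/N'$. Using projectivity of $I(S)$ I would lift $\iota$ along $\pi$ to a map $g\colon I(S)\to T$ with $\pi g=\iota$; since $\pi g$ is injective, so is $g$. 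Finally, injectivity of $I(S)$ forces the monomorphism $g$ to split, so $I(S)$ is a direct summand of $T=\bigoplus_i V^{\otimes n_i}$; as $I(S)$ is indecomposable, the Krull–Schmidt theorem (valid in $\cF_m$, whose $\Hom$‑spaces are finite‑dimensional) shows that $I(S)$ is in fact a summand of a single $V^{\otimes n}$. An arbitrary injective object of $\cF_m$ is a finite direct sum of such $I(S)$'s, hence a summand of a finite direct sum of powers of $V$.

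The only step that is not pure formalism is the passage from ``$I(S)$ is a \emph{subquotient} of $\bigoplus_i V^{\otimes n_i}$'' to ``$I(S)$ is a \emph{direct summand} of $\bigoplus_i V^{\otimes n_i}$'': this is precisely where one exploits that in $\cF_m$ injectives and projectives coincide — projectivity to promote the embedding $I(S)\hookrightarrow Q$ to an embedding into $T$, and injectivity to split it off. Everything else (the faithful‑representation fact, Krull–Schmidt, $V\cong V^{\ast}$) is standard or already available in the paper. Alternatively, as the remark indicates, one can instead start from the analogous statement for $\mathfrak{gl}(V)=\mathfrak{gl}(m|m)$ — every indecomposable injective in the category of finite‑dimensional integrable $\mathfrak{gl}(V)$‑modules is a summand of some $V^{\otimes a}\otimes(V^{\ast})^{\otimes b}$ — and transport it across the restriction functor to $\cF_m$, again using $V^{\ast}\cong V$ over $\pe(m)$ to discard the dual factors; I would nonetheless prefer the direct argument above.
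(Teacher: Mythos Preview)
Your proposal is correct and is precisely what the paper has in mind. The paper does not actually prove this lemma: it only remarks that the statement is ``a well-known general property of faithful representations of affine algebraic supergroup schemes'' and that ``alternatively it can be obtained from the corresponding $\mathfrak{gl}(V)$-property by exploiting restriction and induction functors.'' You have written out the first of these two routes in full detail---matrix coefficients of a faithful module generate the coordinate ring, so every module is a subquotient of tensor powers of $V$ and $V^\ast$; the odd invariant form gives $V\cong V^\ast$ in $\cF_m$ (where odd morphisms are allowed); and the coincidence of projectives and injectives in $\cF_m$ upgrades ``subquotient'' to ``direct summand''---and you correctly flag the second route as the alternative. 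There is nothing to add.
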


 We let $\cF_m(\alpha)$ denote the block containing $\cL(\alpha)$, for any~$\alpha\in X^+$.

\begin{prop}\label{Propm1}
$\cF_m$ contains at least $m+1$ blocks, given by~$\cF_m(\underline\partial^i)$, for $0\le i\le m$.
\end{prop}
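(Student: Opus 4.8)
\textbf{Proof proposal for Proposition~\ref{Propm1}.}

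The plan is to transfer the lower bound of~$m+1$ blocks from the ``diagram side'' established in Proposition~\ref{Propim} to the category~$\cF_m$ itself. First I would observe that every block of~$\cF_m$ is, by Lemma~\ref{LemITP}, detected by the injective modules it contains, and each such injective is a summand of some~$V^{\otimes n}$; since~$\cF_m$ has enough injectives and the injectives and projectives coincide, two simple modules~$\cL(\alpha)$ and~$\cL(\beta)$ lie in the same block of~$\cF_m$ if and only if their injective envelopes lie in the same block, which happens if and only if~$\Hom_{\pe(m)}$ between the corresponding summands of a sufficiently large tensor space is nonzero in a chain-connected sense. Thus the block structure of~$\cF_m$ is governed by the block structure of the endomorphism algebras~$\End_{\pe(m)}(\bigoplus_{j=0}^n V^{\otimes j})$ for~$n$ large.

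Next I would fix~$n$ strictly bigger than~$\tfrac{1}{2}m(m+1)$ and invoke Proposition~\ref{Propim}: the algebra~$\End_{\mathfrak{pe}(m)}(\bigoplus_{j=0}^n V^{\otimes j})$ has at least~$m+1$ blocks, yielding the decomposition~$\bigoplus_{j=0}^n V^{\otimes j}=\bigoplus_{i=0}^m M_i^n$ with no homomorphisms between distinct summands, and with~$[M^n_i:\cL(\underline{\partial}^{i})]\not=0$ for~$0\le i\le m$. Because distinct~$M_i^n$ have no homomorphisms between them and each is a (finite) direct sum of indecomposable injectives of~$\cF_m$ (as a summand of a tensor power), the constituents of~$M_i^n$ and~$M_{i'}^n$ for~$i\ne i'$ lie in different blocks of~$\cF_m$. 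In particular~$\cL(\underline{\partial}^0),\ldots,\cL(\underline{\partial}^m)$ lie in~$m+1$ pairwise distinct blocks~$\cF_m(\underline\partial^0),\ldots,\cF_m(\underline\partial^m)$, so~$\cF_m$ has at least~$m+1$ blocks.

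The one point requiring care — and the main obstacle — is the passage from ``no homomorphisms between the summands~$M_i^n$ for a fixed~$n$'' to ``the constituents lie in genuinely different blocks of~$\cF_m$'': a priori two indecomposable injectives that appear in~$M_i^n$ and~$M_{i'}^n$ respectively could still be linked through an injective \emph{not} occurring in this particular tensor space. This is resolved by Lemma~\ref{LemITP}: every indecomposable injective of~$\cF_m$ occurs as a summand of~$V^{\otimes N}$ for some~$N$, and by enlarging~$n$ if necessary (the decomposition~$\bigoplus M_i^n$ is compatible with increasing~$n$, since the block idempotents~$e_{\partial^i}$ are stable under the embeddings~$A_k\hookrightarrow A_{k'}$ and~$\widetilde\pi_n$ is surjective) any finite chain of injectives witnessing a block-link already lives inside a single~$\bigoplus_{j=0}^{n}V^{\otimes j}$, where Proposition~\ref{Propim} forbids it. Hence the~$m+1$ blocks~$\cF_m(\underline\partial^i)$ are distinct, completing the proof.
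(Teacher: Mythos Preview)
Your proposal is correct and follows essentially the same route as the paper: use Proposition~\ref{Propim} to decompose $\bigoplus_{j\le n}V^{\otimes j}$ into orthogonal pieces $M_i^n$, then argue by contradiction that if two of the $\cL(\underline\partial^i)$ were in the same block, a finite linking chain of injectives would (by Lemma~\ref{LemITP}) live inside one such tensor space and hence inside a single $M_{j_0}^n$, contradicting $[M_i^n:\cL(\underline\partial^i)]\ne 0$ for distinct~$i$.

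Two minor points of imprecision are worth cleaning up. First, your assertion that each $M_i^n$ is a direct sum of indecomposable injectives is neither needed nor justified (tensor powers of $V$ are not asserted to be injective; Lemma~\ref{LemITP} only says injectives are \emph{summands} of tensor powers). Second, the parenthetical about ``block idempotents $e_{\partial^i}$ stable under $A_k\hookrightarrow A_{k'}$'' is off: the $e_{\partial^i}$ are primitive idempotents, not central block idempotents, and they are not literally preserved under these embeddings. The paper sidesteps both issues by simply choosing $n$ large enough \emph{from the start} so that the injective envelopes $I_j,I_{j'}$ and the entire finite linking chain $K_1,\ldots,K_k$ are already summands of $\bigoplus_{l\le n}V^{\otimes l}$; then indecomposability forces each to lie in a single $M_{j_0}^n$, and the nonvanishing multiplicity $[M_i^n:\cL(\underline\partial^i)]\ne 0$ (which yields a nonzero map $M_i^n\to I_i$) pins $I_i$ to $M_i^n$.
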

\begin{proof}
Let $I_i$ denote the injective envelope of $\cL(\underline{\partial}^i)$.
Assume now that $I_j$ and $I_{j'}$ belong to the same block in~$\cF_m$, for $j\not=j'$.
There must be a finite collection of injective modules $\{K_1,K_2,\ldots,K_k\}$ in~$\cF_m$ such that there is always a morphism~$K_i\to K_{i+1}$ or $K_{i+1}\to K_i$ and such that there is some morphism between~$I_j$ and $K_1$, and between~$I_{j'}$ and $K_k$. By Lemma~\ref{LemITP}, there exists $n\in\mN$ (which we take to satisfy $n>\frac{1}{2}m(m+1)$) such that all of these injective modules are direct summands of
$\oplus_{l\le n}V^{\otimes l}$. We use the notation and results of Proposition~\ref{Propim}. By the existence of morphisms between the injective hulls, they must all be contained in~$M_{j_0}^{n}$ for one fixed $j_0$. However, we clearly have that $I_j$, resp. $I_{j'}$, is a direct summand of $M_j^n$, resp. $M_{j'}^n$. This implies $j=j_0=j'$, a contradiction.
\end{proof}

Theorem~\ref{ThmBlocksP} now follows from Proposition~\ref{Propm1} and the
following result of~\cite[\S5]{Chen} and \cite[Theorem~A.2]{Chen}.
\begin{lemma}[C.W. Chen]\label{LemChen}
If $\alpha\sim\beta$, for $\alpha,\beta\in X^+$, the modules $\cL(\alpha)$ and $\cL(\beta)$ belong to the same block in $\cF_m$. Hence, there are at most $m+1$ blocks in $\cF_m$.\end{lemma}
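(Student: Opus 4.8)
The statement is due to Chen, and the plan is to follow the strategy of \cite[\S5]{Chen} and its appendix. First I would reduce the linkage claim to the two elementary moves generating $\sim$: it suffices to prove that whenever $\beta=\alpha+2\delta_k\in X^+$, or $\beta=\alpha+(\delta_k+\delta_l)\in X^+$ with $\alpha_k=\alpha_l$, the simple modules $\cL(\alpha)$ and $\cL(\beta)$ lie in a common block of $\cF_m$; transitivity of ``lying in the same block'' then gives the statement for all $\alpha\sim\beta$. The bound ``at most $m+1$ blocks'' reduces to a purely combinatorial statement, since every block of $\cF_m$ contains some $\cL(\alpha)$: one checks that $\sim$ has at most $m+1$ equivalence classes, the invariant being the $2$-core of the associated partition datum (in parallel with Lemma~\ref{LemBlock2}), with representatives $\underline{\partial}^0,\dots,\underline{\partial}^m$. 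This I would verify by an abacus argument: on the $2$-runner abacus each of the two moves amounts, after cancellation, to sliding a single bead two positions along its runner, so it preserves the $2$-core, and there are exactly $m+1$ $2$-cores representable with $m$ beads.

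For the elementary moves themselves the tool is the translation functor $\Theta:=V\otimes V\otimes(-)$ on $\cF_m$. Since the form on $V$ is odd and non-degenerate, $V\cong\Pi V^{\ast}$, so $V^{\otimes 2}$ is self-dual as a $\pe(m)$-module; hence $\Theta$ is exact, self-adjoint, and sends projective (= injective) modules to projective modules. I would then analyse the module structure of $\Theta(\cL(\alpha))$: restricting along $\pe(m)\hookrightarrow\mathfrak{gl}(m|m)$ and combining Lemma~\ref{LemDLZ} with the Littlewood--Richardson description of $V^{\otimes 2}\otimes(-)$ for $\mathfrak{gl}(m|m)$ shows that $\Theta(\cL(\alpha))$ carries a filtration whose sections are, up to parity shift, $\cL(\alpha)$ together with the modules $\cL(\alpha+2\delta_k)$ and $\cL(\alpha+\delta_k+\delta_l)$ for the admissible pairs (these being the dominant weights among $\alpha$ plus a weight of $V^{\otimes 2}$). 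The crucial structural input is that $\mathrm{sdim}\,V=0$: the map $\mk\hookrightarrow V^{\otimes 2}$ given by $F(\cap)$ is \emph{not} split, because its composite with $F(\cup)$ is multiplication by $\Str(\Id_V)=0$. This non-splitting, a hallmark of the $\delta=0$ situation, should tie $\cL(\alpha)$ to the weight-shifted constituents inside one indecomposable summand of $\Theta(\cL(\alpha))$, forcing the corresponding blocks to coincide.

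The step I expect to be the main obstacle is making that last point rigorous: knowing that $\cL(\alpha)$ and $\cL(\beta)$ both occur as composition factors of $\Theta(\cL(\alpha))$ is not enough, since $\Theta(\cL(\alpha))$ splits over blocks, so one must verify that the indecomposable block summand containing the distinguished submodule $\mk\otimes\cL(\alpha)\cong\cL(\alpha)$ actually has $\cL(\beta)$ among its composition factors. I would establish this by producing a nonzero homomorphism, or a nonzero $\Ext^1$, between the relevant Kac-type standard and costandard modules in the appropriate $\fh$-weight range, using that $\cF_m$ is a highest weight category and reducing the rank by induction on $m$. With the linkage in hand, combining it with Proposition~\ref{Propm1} (which provides at least $m+1$ blocks) and the combinatorial bound of at most $m+1$ classes shows that $\cF_m$ has exactly $m+1$ blocks, indexed by the $2$-cores $\underline{\partial}^i$ with $0\le i\le m$, which also completes the proof of Theorem~\ref{ThmBlocksP}.
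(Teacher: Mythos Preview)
The paper does not give its own proof of this lemma: it is stated as a result of Chen and cited to \cite[\S5]{Chen} and \cite[Theorem~A.2]{Chen}. So there is no argument in the paper to compare your proposal against; you are effectively trying to reconstruct Chen's proof.

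Your overall strategy---reduce to the generating moves of $\sim$, use translation by $V^{\otimes 2}$, and exploit that $\mathrm{sdim}\,V=0$ to force non-splitting---is in the right spirit, and your combinatorial argument for ``at most $m+1$ classes'' via $2$-cores is fine. But the core representation-theoretic step has a genuine gap. Your assertion that $\Theta(\cL(\alpha))=V^{\otimes 2}\otimes\cL(\alpha)$ has a filtration whose sections are the \emph{simple} modules $\cL(\alpha)$, $\cL(\alpha+2\delta_k)$, $\cL(\alpha+\delta_k+\delta_l)$ is not correct: those are the dominant weights that can occur as highest weights of subquotients of a Kac-type filtration, not simple sections. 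Tensoring a simple module with $V^{\otimes 2}$ in a non-semisimple category like $\cF_m$ gives something whose Jordan--H\"older factors are far from being just those weight-shifted simples, and the Littlewood--Richardson input from $\mathfrak{gl}(m|m)$ controls characters, not module structure over $\pe(m)$.

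More seriously, you correctly identify the obstacle---that knowing both $\cL(\alpha)$ and $\cL(\beta)$ occur in $\Theta(\cL(\alpha))$ does not by itself give linkage---but your proposed resolution is not a proof. The non-splitting of $\mk\hookrightarrow V^{\otimes 2}$ shows only that $\cL(\alpha)$ sits non-trivially inside $\Theta(\cL(\alpha))$; it does not tell you which other composition factors share its indecomposable summand. Your fallback (``produce a nonzero Hom or $\Ext^1$ between suitable standard and costandard modules, by induction on $m$'') is precisely the hard part, and you have not indicated how to do it. Chen's actual argument requires a careful case-by-case analysis of specific Kac modules and their radical/socle layers, particularly for the move $\alpha\mapsto\alpha+\delta_k+\delta_l$ with $\alpha_k=\alpha_l$, which is the delicate one. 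As written, your proposal is a plausible outline but not a proof.
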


 \subsection{Alternative realisation of $A_n$ and the Jucys-Murphy elements}\label{JMpn}
 Using $\mathfrak{gl}(m|m)$ and $\mathfrak{pe}(m)$, we introduce some operators on~$ V^{\otimes n}$. 
 
 \subsubsection{} We choose a homogeneous basis $\{X_a\,|\, a=1,\ldots,m^2\}$ of~$\mathfrak{pe}(m)$. Recall $\theta$ and $\dagger$ from Section~\ref{acAn} and take the homogeneous basis 
 $$\{X_{m^2+b}:= X^\dagger_{b}\,|\, 1\le b\le m^2\}$$
 of the eigenspace of~$\mathfrak{gl}(m|m)$ for $\theta$ with eigenvalue $-1$. For clarity, we will use $i$ as an index to sum from $1$ to~$2m^2$ and $a$ or $b$ to sum from $1$ to~$m^2$, so $X_{a}$ and $X_{m^2+b}^{\dagger}$ will be elements in~$\mathfrak{pe}(m)$.

 \subsubsection{}\label{altes} For $1\le k<n$, we set
 $$\sigma_k:=\sum_{1\le i \le 2m^2}\,\Id^{\otimes k-1}\otimes X_i^\dagger\otimes X_i\otimes \Id^{\otimes n-k-1}\;\in\,\End_{\mk}(V^{\otimes n}).$$
 It is easily checked, for instance by considering a basis of~$V$, that~$\sigma_k=\pi_n(s_k)$. We also set
 $$c_k:=\sum_{1\le a \le 2m^2}\,\Id^{\otimes k-1}\otimes\left((-1)^{|X_a|} X_a\otimes X_a^\dagger-X_a^\dagger\otimes X_a\right)\otimes \Id^{\otimes n-k-1} \;\in\,\End_{\mk}(V^{\otimes n}).$$
 It follows from a direct computation that~$c_k=\pi_n(\varepsilon_k)$.
 
 We have thus found an alternative realisation of the algebra $\pi_n(A_n)$. When we take $m>>$ (so that $n<\frac{1}{2}(m+1)(m+2)$), this thus realises $A_n$.
 
\subsubsection{} Now, for $2\le k\le n$, we introduce
 $$\xi_k:=2\sum_{1\le a \le m^2}(-1)^{|X_a|}\,\Delta_{k-1}( X_a)\otimes X_a^\dagger\otimes \Id^{\otimes n-k}.$$
It follows from~\ref{altes} that~$\xi_k=\pi_n(x_k)$, so $\{\xi_k\,|\,2\le k\le n\}$ are the image of the Jucys-Murphy elements of~$A_n$.
 Consider the embedding of~$A_{k-1}$ in~$A_n$ of \ref{embed}. We have commuting subalgebras
 $$\pi_n(A_{k-1})=\pi_{k-1}(A_{k-1})\otimes \Id_V^{\otimes n-k+1}\quad\mbox{and}\quad \Delta_{k-1}(U(\mathfrak{pe}(m)))\otimes \End_{\mk}(V^{\otimes n-k+1})$$
 in $\End_{\mk}(V^{\otimes n})$. In particular, $\pi_n(A_{k-1})$ commutes with $\xi_k=\pi_n(x_k)$.
  For $\charr(\mk)=0$, we hence find an alternative proof of Lemma~\ref{LemJM} by taking $m>>$.

\subsubsection{}\label{Cas} The restriction of the Killing form from $\mathfrak{gl}(m|m)$ to~$\mathfrak{pe}(m)$ is zero, instead of non-degenerate. Consequently, the elements $X_a^\dagger$ do not belong to~$\mathfrak{pe}(m)$, meaning that~$\sum_{k=2}^n\xi_k$ does not correspond to~$\Delta_n(\cC_2)$, with~$\cC_2$ a Casimir operator, as would be the case for $\mathfrak{osp}(p|2q)$, see {\it e.g.}~\cite[\S2]{Naz}.

 \section{Some examples}\label{SecEx}
We will determine algebra structures and decomposition multiplicities for $A_n$ with~$n\le 5$. 
 \subsection{The algebras $A_2$ and $C_2$}\label{SecA2}

 \begin{thm}\label{ThmA2}
If $\charr(\mk)\not=2$, the algebra~$A_2$ is the hereditary algebra given by the path algebra of the quiver~$\cQ_2$:
\begin{displaymath}
    \xymatrix{
         \bullet_{{\tiny\begin{ytableau}{}\\{}     \end{ytableau}}}\ar[r] &\bullet_{{\tiny\begin{ytableau}{}&     \end{ytableau}}}   }.
\end{displaymath}
\end{thm}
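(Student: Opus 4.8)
First I would make the structure of $A_2=\End_\cA(2)$ explicit: it is three-dimensional, with $\mk$-basis the three $(2,2)$-Brauer diagrams, namely the identity $1$, the transposition $s_1=(1,2)$, and $\varepsilon_1=\overline{(1,2)}$. From the definition of composition in $\cA$ one reads off the multiplication: $s_1^2=1$ (permutations compose without a sign and no loop is created), $\varepsilon_1^2=0$ (a closed loop is created), and $\varepsilon_1 s_1=-\varepsilon_1$; applying the anti-automorphism $\varphi$ of \ref{anti}, which acts on $A_2$ by $s_1\mapsto -s_1$ and $\varepsilon_1\mapsto -\varepsilon_1$, to the last relation then gives $s_1\varepsilon_1=\varepsilon_1$.

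Since $\charr(\mk)\ne 2$, I would set $f_{\pm}:=\tfrac12(1\pm s_1)$. These are orthogonal idempotents with $f_++f_-=1$, and the relations above give $\varepsilon_1 f_+=0$, $\varepsilon_1 f_-=\varepsilon_1$, $f_+\varepsilon_1=\varepsilon_1$, $f_-\varepsilon_1=0$, hence $\varepsilon_1=f_+\varepsilon_1 f_-$ and $\varepsilon_1^2=0$. Moreover $\{f_+,f_-,\varepsilon_1\}$ is obtained from $\{1,s_1,\varepsilon_1\}$ by an invertible linear substitution (as $2$ is invertible), so it is again a $\mk$-basis of $A_2$.

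The isomorphism is now immediate: there is an algebra homomorphism $\mk\cQ_2\to A_2$ sending the vertex idempotent at $(1,1)$ to $f_-$, the vertex idempotent at $(2)$ to $f_+$, and the arrow $(1,1)\to(2)$ to $\varepsilon_1$, because the defining relations of the path algebra ($f_{\pm}^2=f_{\pm}$, $f_+f_-=f_-f_+=0$, $f_++f_-=1$, $f_+\varepsilon_1 f_-=\varepsilon_1$) all hold; since it carries a basis to a basis it is an isomorphism. To verify that the labelling coincides with the one in the statement (and not its opposite), I would invoke Corollary~\ref{CorLAn} to see that $s_1$ acts by $+1$ on $L_A((2))$ and by $-1$ on $L_A((1,1))$, so $f_+$ and $f_-$ are the primitive idempotents attached to $L_A((2))$ and $L_A((1,1))$; then $A_2f_+\cong L_A((2))$ is projective while $A_2f_-=\mk f_-\oplus\mk\varepsilon_1$ is the two-dimensional projective cover of $L_A((1,1))$ with radical $\mk\varepsilon_1\cong L_A((2))$, so the unique arrow of the Gabriel quiver runs from $(1,1)$ to $(2)$. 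Finally $\cQ_2$ is acyclic, so $\mk\cQ_2$ has global dimension at most $1$, i.e.\ $A_2$ is hereditary.

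All of this is a short computation once the $3\times 3$ multiplication table is written down. The only points needing a little care---the ``main obstacle'', such as it is---are pinning down the signs in $s_1\varepsilon_1$ and $\varepsilon_1 s_1$ correctly (for which I use $\varphi$) and making sure the two vertices of $\cQ_2$ are named exactly as the statement requires, which is why I would cross-check against the action of $s_1$ on the simple modules from Corollary~\ref{CorLAn}.
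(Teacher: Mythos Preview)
Your proof is correct and follows essentially the same approach as the paper: both identify the primitive idempotents $\tfrac12(1\pm s_1)$, observe that $\varepsilon_1$ lies in the off-diagonal Peirce component $e_2 A_2 e_1$, and match the vertices with the partitions $(2)$ and $(1,1)$. You supply more detail than the paper (the explicit multiplication table, the use of $\varphi$ to pin down $s_1\varepsilon_1$, and the appeal to Corollary~\ref{CorLAn} for the labelling), but the argument is the same.
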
 
\begin{proof}
The algebra~$A_2$ has a basis given by~$1$, $s=s_1$ and $\varepsilon=\varepsilon_1$. We have orthogonal primitive idempotents
$$e_1= \frac{1}{2}(1-s)\quad\mbox{and}\quad e_2=\frac{1}{2}(1+s),$$
for which we find $\varepsilon=e_2\varepsilon e_1$ and $1=e_1+e_2$. The identification with the labelling set $\Par_2$ of simple modules in Theorem~\ref{ThmSB}(1) follows easily, which
completes the proof.
\end{proof}

Similarly one proves the following theorem.
 \begin{thm}\label{ThmC2}
If $\charr(\mk)\not=2$, the algebra~$C_2$ is the hereditary algebra given by the path algebra of the quiver~$\overline{\cQ}_2$:
\begin{displaymath}
    \xymatrix{
         \bullet_{{\tiny\begin{ytableau}{}\\{}     \end{ytableau}}}\ar[r] &\bullet_{\varnothing }\ar[r] &\bullet_{{\tiny\begin{ytableau}{}&     \end{ytableau}}}   }.
\end{displaymath}
\end{thm}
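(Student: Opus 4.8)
The plan is to mimic the argument for $A_2$ in Theorem~\ref{ThmA2}, now with three idempotents instead of two. First I would recall that $C_2=\bigoplus_{i,j\in\{0,2\}}\Hom_{\cA}(i,j)$, so a basis of $C_2$ is given by the elements $e_0^\ast$, $e_2^\ast$, $s=s_1$, $\varepsilon=\varepsilon_1$, together with the unique cup diagram $u\in\Hom_{\cA}(0,2)=e_2^\ast C_2 e_0^\ast$ and the unique cap diagram $v\in\Hom_{\cA}(2,0)=e_0^\ast C_2 e_2^\ast$; note $\varepsilon=uv$ and $vu=0$ (a closed loop evaluates to $0$ in $\cA$), and $\dim C_2=6$. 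By Theorem~\ref{Thmisocl} the simple modules are labelled by $\Lambda_C=\Par_0\sqcup\Par_2=\{\varnothing\}\sqcup\{(2),(1,1)\}$, three in total, so a hereditary $C_2$ has the same dimension as $k\overline{\cQ}_2$, namely $3+1+1+1=6$, which is a useful consistency check.

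Next I would produce the three orthogonal primitive idempotents. Inside $A_2=e_2^\ast C_2 e_2^\ast=\mathrm{span}\{e_2^\ast,s,\varepsilon\}$, as in the proof of Theorem~\ref{ThmA2}, set $f_{(1,1)}=\tfrac12(e_2^\ast-s)$ and $f_{(2)}=\tfrac12(e_2^\ast+s)$; these are orthogonal primitive idempotents of $A_2$ with $\varepsilon=f_{(2)}\varepsilon f_{(1,1)}$ (using $\varepsilon s=-\varepsilon=s\varepsilon$). Together with $e_0^\ast$ these give a complete set $\{e_0^\ast,f_{(1,1)},f_{(2)}\}$ of orthogonal primitive idempotents of $C_2$ summing to $e_0^\ast+e_2^\ast=1_{C_2}$. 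Identifying the vertices of $\overline{\cQ}_2$ with $(1,1)$, $\varnothing$, $(2)$ in that order, I would then compute the spaces $f_i C_2 f_j$ for all pairs. The arrows should come out as: $f_{\varnothing}C_2 f_{(1,1)}=e_0^\ast C_2 f_{(1,1)}$ is one-dimensional (spanned by $v f_{(1,1)}$, or equivalently $v$ up to the idempotent), giving the arrow $(1,1)\to\varnothing$, and $f_{(2)}C_2 f_{\varnothing}=f_{(2)} C_2 e_0^\ast$ is one-dimensional (spanned by $f_{(2)}u$), giving the arrow $\varnothing\to(2)$; meanwhile $f_{(2)}C_2 f_{(1,1)}$ is one-dimensional as well (spanned by $\varepsilon=uv$) but this composite is $(f_{(2)}u)(vf_{(1,1)})$, i.e. it is the product of the two arrows and hence \emph{not} a new generator. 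One checks the remaining off-diagonal spaces $f_{(1,1)}C_2 f_{\varnothing}$, $f_{\varnothing}C_2 f_{(2)}$, $f_{(1,1)}C_2f_{(2)}$ are all zero (no diagram type survives), so there are no relations and no arrows backwards; $C_2$ is therefore isomorphic to $k\overline{\cQ}_2$ via the evident map sending the arrows to $vf_{(1,1)}$ and $f_{(2)}u$.

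Finally I would note that the labelling of the simple modules by $\Par_0\sqcup\Par_2$ matching the vertices as drawn follows exactly as in Theorem~\ref{ThmA2}: restricting along $\mk\mS_2\hookrightarrow A_2\hookrightarrow C_2$, the idempotents $f_{(2)},f_{(1,1)}$ are the images of the primitive idempotents of $\mk\mS_2$ corresponding to $L^0(2)$ and $L^0(1,1)$, while $e_0^\ast$ clearly corresponds to $L_C(\varnothing)$, consistent with \ref{simplesH} and \eqref{eqCorLambda0}. The only mild subtlety — "the main obstacle", such as it is — is the bookkeeping of the six off-diagonal Hom-spaces and verifying which are zero and which are one-dimensional, i.e. checking that $vu=0$ forces no further relations and that $C_2$ has no oriented cycles, so that the path algebra is honestly hereditary of the right dimension $6$; this is a finite direct computation in the diagram calculus of Section~\ref{SecMaDi}, entirely analogous to the $A_2$ case, which is why the statement can legitimately be asserted with "similarly one proves".
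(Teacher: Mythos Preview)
Your proposal is correct and follows exactly the route the paper intends: the paper's entire proof is the sentence ``Similarly one proves the following theorem'', and your argument is precisely the analogue of the $A_2$ computation with the extra idempotent $e_0^\ast$ and the cup and cap morphisms $u,v$ adjoined.

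One small slip to fix: in the parenthetical you write ``using $\varepsilon s=-\varepsilon=s\varepsilon$'', but in this category the cap is skew while the cup is symmetric, so $vs=-v$ and $su=u$, giving $\varepsilon s=-\varepsilon$ and $s\varepsilon=\varepsilon$ (cf.\ the proof of Lemma~\ref{Lemexx} and of Proposition~\ref{PropMult1}). Had $s\varepsilon=-\varepsilon$ held, you would get $f_{(2)}\varepsilon=0$, contradicting your own (correct) conclusion $\varepsilon=f_{(2)}\varepsilon f_{(1,1)}$. With the corrected signs your computations of $vf_{(1,1)}=v$, $f_{(2)}u=u$, $f_{(1,1)}u=0$, $vf_{(2)}=0$ all go through, and the rest of the argument stands as written.
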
 
 
\subsubsection{}Continue assuming $\charr(\mk)\not=2$. By Theorem~\ref{ThmA2}, we find that~$A_2$ is quasi-hereditary for the two linear orders on its poset $\Lambda_A=\Par_2$. Each case gives a corresponding standardly based structure, see \ref{StBQH}, for which the cell modules are the standard modules and hence form a standard system. However, the standardly based structure of Theorem~\ref{ThmSB}(2) is given in terms of the set
 $L=\{\varnothing,(2),(1,1)\}=\Lambda_C$, totally ordered by~$\unlhd$. Theorem~\ref{ThmC2} allows to calculate the standard modules of~$C_2$ for this order, from which we find the corresponding cell modules of~$A_2$:
\begin{equation}\label{cellA2}W_A(\varnothing)\cong L_A(2)\cong W_A(2)\quad\mbox{and}\quad W_A(1,1)\cong L_A(1,1).\end{equation}

\begin{cor}\label{CorA2}
Set $p:=\charr(\mk)\ge 0$.
\begin{enumerate}
\item The cell modules of~$A_2$ from Theorem~\ref{ThmSB}(2) do not form a standard system.
\item The algebra~$A_2$ admits a cellular datum as in~\cite[Definition~1.1]{CellAlg} if and only if~$p=2$.
\item If $p\not=2$, the double centraliser property in Theorem~\ref{ThmDC} is not true for $A_2$ and $C_2$.
\end{enumerate}
\end{cor}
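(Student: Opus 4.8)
\textbf{Proof plan for Corollary~\ref{CorA2}.} The three parts all follow from the explicit path algebra description in Theorem~\ref{ThmA2} together with the cell module computation \eqref{cellA2}, so the plan is essentially to compare the combinatorics of that description with the abstract requirements in the definitions.

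For part (1), I would argue directly from \eqref{cellA2}. The cell modules are $W_A(\varnothing)\cong W_A(2)\cong L_A(2)$ and $W_A(1,1)\cong L_A(1,1)$, indexed by the totally ordered set $L=\{\varnothing\lhd(2)\lhd(1,1)\}$ (recall $\varnothing$ is ``largest'' in $\unlhd$ since it has the most relative size; more precisely $(1,1)\lhd(2)\lhd\varnothing$ under \eqref{BigDO}). A standard system requires in particular that $\Hom_{A_2}(W(\alpha),W(\beta))=0$ unless $\alpha\unrhd\beta$ in the appropriate direction; but $W_A(\varnothing)$ and $W_A(2)$ are isomorphic (both equal to the simple $L_A(2)$), so $\Hom_{A_2}(W_A(\varnothing),W_A(2))=\mk\neq 0$, and there is an equality of objects with a nontrivial $\Hom$ in both directions between two distinct elements of the poset. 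This violates condition (2) in the definition of a standard system (and also the rigidity forcing $\alpha=\beta$ when both one-sided $\Hom$'s are nonzero). Hence the cell modules of Theorem~\ref{ThmSB}(2) do not form a standard system. This is the part I expect to be cleanest.

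For part (2), the ``if'' direction uses \ref{ObHom}: when $\charr(\mk)=2$ the periplectic Brauer category $\cA$ coincides with the Brauer category $\cB(0)$, so $A_2=B_2(0)$, which is cellular by \cite[Example~1.2]{CellAlg} (the ordinary Brauer algebra is cellular in all characteristics via the flip involution on diagrams). For the ``only if'' direction, I would invoke Remark~\ref{3rem}: a cell datum in the sense of \cite[Definition~1.1]{CellAlg} is in particular a standardly based structure equipped with an anti-involution $\ast$ fixing the basis and compatible with the cell modules, and such a structure forces $\Upsilon$ of the relevant involution to fix each simple, i.e.\ $\lambda^\ast=\lambda$. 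But by Proposition~\ref{NewProp} (or directly from the anti-automorphism $\varphi_2$ of Remark~\ref{3rem}, for which ${}^{\varphi_2}L(\lambda)\cong L(\lambda^t)$) the reciprocity in Theorem~\ref{ThmSB}(4)/Lemma~\ref{LemBGGA} carries the transpose twist $\lambda\mapsto\lambda^t$, which is nontrivial on $\Par_2=\{(2),(1,1)\}$ precisely when these are distinct simples, i.e.\ when $A_2$ is not local — and when $\charr(\mk)\neq 2$ we have $A_2\cong\mk\oplus\mk\oplus M$ with two distinct simples $L_A(2),L_A(1,1)$ swapped by the twist. More concretely still, one can argue: if $A_2$ had a cell datum, the cell modules would form a standard system (for quasi-hereditary algebras every cell datum does), contradicting part (1); and by part (1)'s failure being caused exactly by $\charr(\mk)\neq 2$ (when $\charr=2$ the diagram $C_2$ collapses and the cell structure is the Brauer one), we conclude $p=2$. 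I would spell this out using that for $p\neq 2$ the idempotents $e_1=\tfrac12(1-s),e_2=\tfrac12(1+s)$ exist and give the quiver with one arrow, which has no algebra anti-automorphism fixing a basis and swapping the two vertices appropriately — the transpose duality on simples obstructs it.

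For part (3), the double centraliser property of Theorem~\ref{ThmDC} was proved only for $n>2$, so one must check $n=2$ separately. Here I would compute both sides. By Theorem~\ref{ThmA2}, $A_2$ (for $p\neq 2$) is the path algebra of $\bullet\to\bullet$, which has global dimension $1$ and dimension $3$. By Theorem~\ref{ThmC2}, $C_2$ is the path algebra of $\bullet\to\bullet\to\bullet$, of dimension $6$. With $X=e_2^\ast C_2$ the $(A_2,C_2)$-bimodule, one has $A_2\cong\End_{C_2}(X)$ automatically; the question is whether $C_2\cong\End_{A_2}(X)^{\op}$. But $\dim_\mk X = \dim e_2^\ast C_2 = \dim\big(\bigoplus_{i\in\JJJ(2)}e_2^\ast C_2 e_i^\ast\big)$, and one computes $\End_{A_2}(X)$ directly: since $A_2$ has finite global dimension $1$ and only $3$-dimensional, while $C_2$ is $6$-dimensional, a dimension count (e.g.\ $\dim\End_{A_2}(e_2^\ast C_2 e_0^\ast) < \dim e_0^\ast C_2 e_0^\ast$, exactly the inequality that \emph{fails} to be an equality because Lemma~\ref{NewLemmaNew} requires $n>2$) shows the canonical map $C_2\to\End_{A_2}(X)^{\op}$ is not surjective — equivalently, $C_2$ is not a Schur algebra of $A_2$ in this degenerate case. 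The cleanest route: the cover statement in Theorem~\ref{ThmMor}(2) is also restricted to $n\notin\{2,4\}$, and a genuine quasi-hereditary $1$-cover would force the cell modules of $A_2$ to form a standard system (as the cell modules of $C_2$ do, by Theorem~\ref{ThmCp}(2)), contradicting part (1). So a double centraliser property of the stated form cannot hold for $A_2,C_2$ when $p\neq 2$.

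\textbf{Main obstacle.} The delicate point is part (2)'s ``only if'': one must be sure that the transpose twist $\lambda\mapsto\lambda^t$ in Humphreys–BGG reciprocity genuinely obstructs a cell datum, rather than merely the \emph{particular} standardly based structure of Theorem~\ref{ThmSB}(2). The argument via ``a cell datum $\Rightarrow$ cell modules form a standard system $\Rightarrow$ contradiction with (1)'' sidesteps this cleanly, provided one first checks that the cell modules attached to \emph{any} cell datum on a quasi-hereditary algebra (which $A_2$ is, for $p\neq2$) coincide with the standard modules up to relabeling and hence form a standard system; this is standard but should be cited carefully (e.g.\ via \ref{StBQH} and \cite{DR}).
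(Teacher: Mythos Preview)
Your part~(1) is essentially the paper's argument when $p\neq 2$; note that you have not addressed $p=2$, where \eqref{cellA2} is unavailable and one must argue separately (the paper cites an external lemma, or one can use Remark~\ref{Rem2} directly).

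Part~(2) has a genuine gap. Your core strategy is ``a cell datum on a quasi-hereditary algebra forces the cell modules to be the standard modules (hence a standard system), contradicting~(1)''. But~(1) concerns the cell modules of the \emph{specific} standardly based structure of Theorem~\ref{ThmSB}(2); it says nothing about the cell modules of a hypothetical \emph{different} cellular structure on $A_2$. The reference to~\ref{StBQH} only says that a quasi-hereditary algebra \emph{admits} a standardly based structure with $\mathbf{L}=\Lambda$; it does not say every standardly based (or cellular) structure on it must have $\mathbf{L}=\Lambda$. Your alternative sketch via anti-automorphisms is closer to the truth but not carried through: you would need to show that \emph{no} anti-involution on $A_2$ can support a cell basis, and the transpose twist on simples does not by itself obstruct this (indeed $A_2$ does admit anti-involutions, e.g.\ one swapping the two primitive idempotents). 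The paper's argument is short and entirely different: by \cite[(C1)]{CellAlg} the dimension of a cellular algebra is a sum of squares, and $3=1^2+1^2+1^2$ is the only option; by \cite[(C2)]{CellAlg} a cell basis consisting of singletons forces the anti-involution to be the identity, hence $A_2$ would have to be commutative --- which it is not for $p\neq 2$.

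Part~(3) is also not quite right. The double centraliser property $C\cong\End_A(X)^{\op}$ is only a $0$-cover condition, not a $1$-cover, so your inference ``double centraliser $\Rightarrow$ $1$-cover $\Rightarrow$ cell modules of $A_2$ form a standard system'' does not go through. The paper's argument is direct and avoids covers entirely: by \eqref{eqinjiso} and \eqref{enD0} one has $X=e_2^\ast C\cong A\oplus W_A(\varnothing)\cong A\oplus L_A(2)$, and since $L_A(2)$ is projective (it sits at the sink of the quiver $\cQ_2$), $X$ is a projective $A$-module; thus $\End_A(X)^{\op}$ is Morita equivalent to $A$, which has two simples, whereas $C$ has three.
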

\begin{proof}
Assume $p\not=2$. By equation~\eqref{cellA2}, there are two cell modules which are isomorphic, so they do not form a standard system. Assume $p=2$, then the conclusion in part~(1) follows from \cite[Lemma~9.3.2(1)]{Borelic}, or directly from Remark~\ref{Rem2}. This proves part~(1). 

By \cite[(C1)]{CellAlg}, the dimension of a cellular algebra is the sum of squares of integers. As the dimension of~$A_2$ is $3$, the only possible such sum is $3=1^2+1^2+1^2$. However, \cite[(C2)]{CellAlg} then implies that~$A_2$ must admit an anti-involution which is the identity ({\it i.e.} $A_2$ must be commutative). As~$A_2$ is not commutative when~$p\not=2$, it does not admit a cellular datum. When~$p=2$, it is well-known that~$A_2\cong B_2(0)$ admits a cellular datum, see~\cite[Theorem~4.10]{CellAlg}. This proves part (2).

For part (3), equations~\eqref{eqinjiso} and~\eqref{enD0} show that $X=e_2^\ast C\cong A\oplus W(\varnothing)$, which is a projective $A$-module. Hence $\End_A(X)^{\op}$ is Morita equivalent to $A$, so different from $C$.
\end{proof}

\begin{rem}\label{Rem2}
For completeness, we mention that, when~$\charr(\mk)=2$, we have
$$A_2\,\cong\, B_2(0)\cong\, \mk[x,y]/(x^2,xy,y^2).$$
\end{rem}

\subsection{The algebra~$A_3$}

\begin{thm}\label{ThmA3}
If $\charr(\mk)\not\in\{2,3\}$, the algebras $A_3$ and $C_3$ are hereditary algebras, Morita equivalent to the path algebra of the quiver~$\cQ_3$:
\begin{displaymath}
    \xymatrix{
         \bullet_{{\tiny\begin{ytableau}{}\\{}\\{}     \end{ytableau}}}\ar[r] &\bullet_{{\tiny\begin{ytableau}{}     \end{ytableau}}}\ar[r]&\bullet_{{\tiny\begin{ytableau}{}&&     \end{ytableau}}} &&\bullet_{{\tiny\begin{ytableau}{}&\\{}     \end{ytableau}}}  }.
\end{displaymath}
\end{thm}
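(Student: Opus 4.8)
The plan is to reduce the whole statement to the quasi-hereditary algebra $C_3$ and then read off its indecomposable projectives from multiplicities already computed. Since $n=3$ is odd, Theorem~\ref{ThmMor}(1) gives that the functor $F$ is an equivalence between $C_3$-mod and $A_3$-mod; moreover $0\notin\JJJ(3)$, so $\Lambda_{C_3}=\Lambda_{A_3}=\{(3),(2,1),(1^3),(1)\}$, and it suffices to establish the description for $C_3$. By Theorem~\ref{ThmQH}, $(C_3,\le)$ is quasi-hereditary, where $(1)$ is the unique $\le$-maximal weight and $(3),(2,1),(1^3)$ are $\le$-minimal and pairwise incomparable. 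Hence $\Delta_3(\lambda)=L_3(\lambda)$ for $\lambda\vdash 3$, while $\Delta_3(1)=P_3(1)$, so everything hinges on identifying the single nonsimple standard module $\Delta_3(1)$ together with the multiplicities $(P_3(\lambda):\Delta_3(1))$.

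For $\Delta_3(1)$ I would use the second short exact sequence of Lemma~\ref{Corn2} at $n=3$, namely $0\to L_3(3)\to\Delta_3(1)\to L_3(1)\to 0$ (the first sequence of that lemma yields the same conclusion once one notes that $L_3(1,2)=0$, as $(1,2)$ is not a partition). Since every standard module has a simple top, $\Delta_3(1)$ is uniserial of length $2$ with socle $L_3(3)$, so $[\Delta_3(1):L_3(\mu)]$ equals $1$ for $\mu\in\{(1),(3)\}$ and $0$ otherwise. Feeding this into the Humphreys--BGG reciprocity~\eqref{eqBGGC}, $(P_3(\lambda):\Delta_3(\mu))=[\Delta_3(\mu^t):L_3(\lambda^t)]$, the only nonzero off-diagonal $\Delta$-multiplicity is $(P_3(1^3):\Delta_3(1))=[\Delta_3(1):L_3(3)]=1$ (using $(1^3)^t=(3)$), whereas $(P_3(3):\Delta_3(1))=(P_3(2,1):\Delta_3(1))=0$. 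Consequently $P_3(3)=L_3(3)$ and $P_3(2,1)=L_3(2,1)$ are simple projective, $P_3(1)=\Delta_3(1)$ is uniserial of length $2$, and the exact sequence $0\to\Delta_3(1)\to P_3(1^3)\to L_3(1^3)\to 0$ shows $\rad P_3(1^3)\cong\Delta_3(1)$, so $P_3(1^3)$ is uniserial with radical layers $L_3(1^3),L_3(1),L_3(3)$.

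From these radical layers I would then read off the Gabriel quiver of $C_3$: the layers $\rad P_3(1^3)/\rad^2 P_3(1^3)\cong L_3(1)$ and $\rad P_3(1)/\rad^2 P_3(1)\cong L_3(3)$ give precisely the arrows $(1^3)\to(1)\to(3)$, while $P_3(2,1)=L_3(2,1)$ contributes an isolated vertex, and no further arrows occur; hence the Gabriel quiver is $\cQ_3$. To see that no relations are needed, observe that $\rad^2 P_3(1^3)=\rad\Delta_3(1)\cong L_3(3)\ne 0$, so the unique length-$2$ path $(1^3)\to(1)\to(3)$ does not vanish, while $\rad^3 C_3=0$; thus the basic algebra of $C_3$ is the entire path algebra $\mk\cQ_3$. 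As a consistency check, $\dim\mk\cQ_3=7=1+1+2+3=\sum_{\lambda}\ell\!\left(P_3(\lambda)\right)$, the dimension of the basic algebra of $C_3$. Finally, $\cQ_3$ has no oriented cycles, so $\mk\cQ_3$ is hereditary, and since global dimension is a Morita invariant, $C_3$ and $A_3$ are hereditary and Morita equivalent to $\mk\cQ_3$.

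The argument is essentially forced once the standard modules are pinned down, so I do not anticipate genuine difficulty; the two places demanding care are the degeneration of Lemma~\ref{Corn2} at $n=3$ (checking that $(n-2,2)$ becomes an invalid partition, so that the two exact sequences there are mutually consistent) and the verification that the length-$2$ path in $\cQ_3$ survives, so that the path algebra requires no relation. These are the main, if minor, obstacles.
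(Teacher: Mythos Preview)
Your proposal is correct and follows essentially the same approach as the paper: reduce to $C_3$ via Theorem~\ref{ThmMor}(1), use quasi-heredity and Lemma~\ref{Corn2} to determine $\Delta_3(1)$, then apply the reciprocity of Theorem~\ref{ThmQH} to obtain the $\Delta$-filtrations of the projectives. The only notable difference is in pinning down the structure of $P_3(1^3)$: the paper invokes the vanishing of $\Ext^1(L(1^3),L(3))$ coming from quasi-heredity (both weights are $\le$-minimal), whereas you argue directly that $\rad P_3(1^3)=\Delta_3(1)$ is uniserial with top $L_3(1)$, which is equally valid and arguably more explicit; your added checks that the length-$2$ path survives and that dimensions match are not in the paper but are welcome.
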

\begin{proof}
By Theorems \ref{ThmQH} and \ref{ThmMor}(1), $A_3$ is Morita equivalent to~$C_3$ and quasi-hereditary.
We work with~$C_3$. If $\alpha\not=(1)$, then~$\Delta(\alpha)$ is simple. Lemma~\ref{Corn2} implies that the unique proper submodule of
$\Delta(1)$ is $L(3).$
Using the reciprocity in Theorem~\ref{ThmQH} then implies
$$P(2,1)\cong L(2,1),\quad P(3)\cong L(3)\quad\mbox{and}\quad P(1)\cong \Delta(1),$$
as well as a short exact sequence
$$0\to \Delta(1)\to P(1,1,1)\to L(1,1,1)\to 0.$$
As~$(C_3,\le)$ is quasi-hereditary, there is no first extension between~$L(1,1,1)$ and $L(3)$. Hence the structure of all indecomposable projective modules is completely determined and corresponds to the path algebra. 
\end{proof}

\subsection{The algebras $A_4$ and $C_4$}

\begin{thm}\label{ThmC4}
If $\charr(\mk)\not\in\{2,3\}$, the algebra~$C_4$ is Morita equivalent to the path algebra of the quiver~$\overline{\cQ}_4$:
\begin{displaymath}
    \xymatrix{
 &&&&   \bullet_{{{\varnothing }}}\ar[lld]^{d_1} &\\
       &&  \bullet_{{\tiny\begin{ytableau}{}&     \end{ytableau}}}\ar[dll]^{d_2}\ar[drr]^{d_3} &&&&\bullet_{{\tiny\begin{ytableau}
 {}\\
 {}     
\end{ytableau}}}\ar[ull]^{u_1}\ar[d]^{d_4} \\
         \bullet_{{\tiny\begin{ytableau}
 {}&   &  &\end{ytableau}}} &&\bullet_{{\tiny\begin{ytableau}
 {}&    \\
 {}\\
 {}\end{ytableau}}}\ar[u]^{u_4}  && \bullet_{{\tiny\begin{ytableau}
 {}&     \\
 {}&\end{ytableau}}}\ar[rru]^{u_3}&&\bullet_{{\tiny\begin{ytableau}
 {}& &  \\
 {}\end{ytableau}}} &&\bullet_{{\tiny\begin{ytableau}
 {}   \\
 {}\\
 {}\\{}\end{ytableau}}}\ar[ull]^{u_2}
}
\end{displaymath}
with relations $d_2\circ d_1=0$, $d_3\circ d_1=0$, $u_1\circ u_2=0$, $u_3\circ d_3=0$ and $u_1\circ u_3=0$.
\end{thm}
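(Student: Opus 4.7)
The strategy is to exploit the quasi-hereditary structure of $C_4$ (Theorem~\ref{ThmQH}) under the order in which partitions of $4$ are minimal, $(2)$ and $(1,1)$ are intermediate and $\varnothing$ is maximal, together with the BGG reciprocity~\eqref{eqBGGC}. First I would determine the composition factors of every standard module $\Delta(\lambda)$; reciprocity then yields $\Delta$-filtrations of the projective covers $P(\lambda)$, and reading off $\mathrm{rad}\,P(\lambda)/\mathrm{rad}^2 P(\lambda)$ recovers the Gabriel quiver and relations of the basic algebra Morita equivalent to~$C_4$.

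For $\lambda\vdash 4$ the standard $\Delta(\lambda)=L(\lambda)$ is automatic. Corollary~\ref{Lemn2} immediately supplies
\[
0\to L(4)\oplus L(2,2)\to\Delta(2)\to L(2)\to 0,\qquad 0\to L(3,1)\to \Delta(1,1)\to L(1,1)\to 0.
\]
For $\Delta(\varnothing)$, Proposition~\ref{PropMult1} gives $[\Delta(\varnothing):L(2)]=1$, and the Schur functor to $C_2$ from the proof of Lemma~\ref{LemRed1}(1), combined with Theorem~\ref{ThmC2}, forces $[\Delta(\varnothing):L(1,1)]=0$. Reciprocity then mechanically produces $P(\varnothing)=\Delta(\varnothing)$, $P((2))=\Delta(2)$, $P((4))=L(4)$, $P((3,1))=L(3,1)$, while $P((1,1))$, $P((2,2))$, $P((2,1,1))$, $P((1^4))$ each admit two $\Delta$-sections, namely $\Delta(\varnothing)$ under $\Delta(1,1)$, $\Delta(1,1)$ under $\Delta(2,2)$, $\Delta(2)$ under $\Delta(2,1,1)$ and $\Delta(1,1)$ under $\Delta(1^4)$ respectively. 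The tops of the radicals then give the eight arrows $d_1,\ldots,d_4,u_1,\ldots,u_4$ of $\overline{\cQ}_4$; each listed relation records the absence of a composition factor two layers below the top of the corresponding $P(\lambda)$ (e.g.\ $d_2d_1=d_3d_1=0$ because $P(\varnothing)=\Delta(\varnothing)$ has no factor $L(4)$ or $L(2,2)$, $u_3 d_3=0$ because $P((2))$ has no $L(1,1)$, and so on), and the dimension count $\sum_\lambda\dim P(\lambda)=21$ matches that of the stated path algebra modulo the listed relations, showing no further relations are needed.

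The main obstacle is ruling out composition factors $L(\mu)$ with $\mu\vdash 4$ in $\Delta(\varnothing)$: this is covered neither by Corollary~\ref{Lemn2} nor by Proposition~\ref{PropMult1}. My plan is to apply the exact functor $e_4^\ast(-)$ to the short exact sequence for $\Delta(2)$, use Lemma~\ref{rednj} to compute $e_4^\ast\Delta(2)\cong L^0(4)\oplus L^0(3,1)\oplus L^0(2,2)$, and Corollary~\ref{CorLAn} (which gives $e_4^\ast L(\mu)\cong L^0(\mu)$ for $\mu\vdash 4$) to deduce $e_4^\ast L(2)\cong L^0(3,1)$. Example~\ref{ExLem4} asserts $e_4^\ast\Delta(\varnothing)\cong L^0(3,1)$, and since the already identified factor $L(2)$ of $\Delta(\varnothing)$ alone contributes all of $L^0(3,1)$ in degree~$4$, no composition factor $L(\mu)$ with $\mu\vdash 4$ can be present.
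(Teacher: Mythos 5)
Your proposal is correct and follows essentially the same route as the paper: composition factors of the standard modules via Corollary~\ref{Lemn2}/Lemma~\ref{Corn2} and Proposition~\ref{PropMult1}, the key exclusion of factors $L(\mu)$ with $\mu\vdash 4$ in $\Delta(\varnothing)$ via Example~\ref{ExLem4} and exactness of $e_4^\ast(-)$, then reciprocity~\eqref{eqBGGC} to assemble the projectives and read off the quiver with relations. Your variant of the key step (computing $e_4^\ast L(2)\cong L^0(3,1)$ from the sequence for $\Delta(2)$ and comparing dimensions) is an equivalent reformulation of the paper's observation that $e_4^\ast\Delta(\varnothing)$ is $\mk\mS_4$-simple and hence sits inside the single constituent $L(2)$.
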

 \begin{proof}
 By Corollary~\ref{Lemn2}, we have $[\Delta(\varnothing):L(2)]=1$ and $[\Delta(\varnothing):L(1,1)]=0$. By Example~\ref{ExLem4}, $e_4^\ast\Delta(\varnothing)$ is a simple $\mk\mS_4$-module and hence belongs to one simple $C_4$-subquotient. As~$e_4^\ast L(2)\not=0$, by equation~\eqref{eqCorLambda0}, we find $e_4^\ast\Delta(\varnothing)=e_4^\ast L(2)$, proving~$[\Delta(\varnothing):L(\lambda)]=0$, for any~$\lambda\vdash 4$.

By Lemma~\ref{Corn2}, the maximal submodule of~$\Delta(2)$ decomposes into~$L(4)$ and $L(2,2)$. Also by Lemma~\ref{Corn2}, the unique proper submodule of~$\Delta(1,1)$ is  $L(3,1)$. Using the reciprocity in Theorem~\ref{ThmQH} then determines the standard filtrations of the projective modules.
The requirement that~$(C_4,\le)$ be quasi-hereditary then allows to derive the explicit structure of the projective modules and these correspond to the projective modules of the proposed path algebra. \end{proof}

Theorem~\ref{ThmC4} and equation~\eqref{eqCorLambda0} then immediately lead to the following result.

\begin{thm}\label{ThmA4}
If $\charr(\mk)\not\in\{2,3\}$, the algebra~$A_4$ is Morita equivalent to the path algebra of the quiver~$\cQ_4$:
\begin{displaymath}
    \xymatrix{
       &&  \bullet_{{\tiny\begin{ytableau}{}&     \end{ytableau}}}\ar[dll]^{d_2}\ar[drr]^{d_3} &&&&\bullet_{{\tiny\begin{ytableau}
 {}\\
 {}     
\end{ytableau}}}\ar[llll]^{l_1}\ar[d]^{d_4} \\
         \bullet_{{\tiny\begin{ytableau}
 {}&   &  &\end{ytableau}}} &&\bullet_{{\tiny\begin{ytableau}
 {}&    \\
 {}\\
 {}\end{ytableau}}}\ar[u]^{u_4}  && \bullet_{{\tiny\begin{ytableau}
 {}&     \\
 {}&\end{ytableau}}}\ar[rru]^{u_3}&&\bullet_{{\tiny\begin{ytableau}
 {}& &  \\
 {}\end{ytableau}}} &&\bullet_{{\tiny\begin{ytableau}
 {}   \\
 {}\\
 {}\\{}\end{ytableau}}}\ar[ull]^{u_2}
}
\end{displaymath}
with relations $d_2\circ l_1=0$, $d_3\circ l_1=0$, $l_1\circ u_2=0$, $u_3\circ d_3=0$ and $l_1\circ u_3=0$.
\end{thm}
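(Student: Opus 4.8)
\emph{Plan.} The idea is to transport the structure of $C_4$ from Theorem~\ref{ThmC4} to $A_4$ through the exact functor $F=e_4^\ast-\colon C_4\text{-mod}\to A_4\text{-mod}$ of \eqref{FunF}. By \eqref{eqCorLambda0} we have $FL_{C_4}(\lambda)\cong L_{A_4}(\lambda)$ and $FP_{C_4}(\lambda)\cong P_{A_4}(\lambda)$ for every $\lambda\in\Lambda_C\setminus\{\varnothing\}=\Lambda_A$, while $FL_{C_4}(\varnothing)=e_4^\ast L_{C_4}(\varnothing)=0$. Since each $P_{A_4}(\lambda)$ is a projective cover it has simple top $L_{A_4}(\lambda)$, hence $\rad P_{A_4}(\lambda)=F(\rad P_{C_4}(\lambda))$, and iterating, the radical filtration of $P_{A_4}(\lambda)$ is the $F$-image of that of $P_{C_4}(\lambda)$. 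So it suffices to compute the seven modules $FP_{C_4}(\lambda)$, $\lambda\in\Par_4\sqcup\Par_2$, from the presentation $\overline{\cQ}_4$ and to match them with the indecomposable projectives over $\mk\cQ_4$ modulo the stated relations.

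The key step is to locate $L_{C_4}(\varnothing)$ inside the projectives of $\overline{\cQ}_4$. As $\varnothing$ is the $\unlhd$-maximal weight, $P_{C_4}(\varnothing)=\Delta_{C_4}(\varnothing)$, which (by the composition factors found in the proof of Theorem~\ref{ThmC4}) is uniserial with head $L_{C_4}(\varnothing)$ and socle $L_{C_4}(2)$; and since $\varnothing$ carries only the single out-arrow $d_1\colon\varnothing\to(2)$ and the single in-arrow $u_1\colon(1,1)\to\varnothing$, the factor $L_{C_4}(\varnothing)$ occurs only in $P_{C_4}(\varnothing)$ and in $P_{C_4}(1,1)$, with $\rad P_{C_4}(1,1)\cong\Delta_{C_4}(\varnothing)\oplus L_{C_4}(3,1)$ (the two summands generated by $u_1$ and by $d_4$). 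Applying $F$: $FP_{C_4}(\varnothing)\cong L_{A_4}(2)$; for $\lambda\notin\{\varnothing,(1,1)\}$ the module $FP_{C_4}(\lambda)$ has the same Loewy diagram as $P_{C_4}(\lambda)$, no factor being destroyed; and $\rad P_{A_4}(1,1)=F\Delta_{C_4}(\varnothing)\oplus L_{A_4}(3,1)=L_{A_4}(2)\oplus L_{A_4}(3,1)$ is semisimple. Against $\overline{\cQ}_4$ this is exactly the effect of deleting the vertex $\varnothing$ and replacing the path $(1,1)\xrightarrow{u_1}\varnothing\xrightarrow{d_1}(2)$ by a single arrow $l_1\colon(1,1)\to(2)$, whereby $d_2 d_1=0$, $d_3 d_1=0$, $u_1 u_2=0$, $u_1 u_3=0$ become $d_2 l_1=0$, $d_3 l_1=0$, $l_1 u_2=0$, $l_1 u_3=0$, and $u_3 d_3=0$ is unaffected.

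Carrying this out for all seven weights is then routine: $P_{A_4}(4)=L_{A_4}(4)$ and $P_{A_4}(3,1)=L_{A_4}(3,1)$ are simple projective; $P_{A_4}(2)$ has radical $L_{A_4}(4)\oplus L_{A_4}(2,2)$; $P_{A_4}(1,1)$ has radical $L_{A_4}(2)\oplus L_{A_4}(3,1)$; $P_{A_4}(2,2)$ and $P_{A_4}(1^4)$ are uniserial with layers $L(2,2),L(1,1),L(3,1)$, resp. $L(1^4),L(1,1),L(3,1)$; and $P_{A_4}(2,1,1)$ has layers $L(2,1,1)$, then $L(2)$, then $L(4)\oplus L(2,2)$. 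These are the indecomposable projectives of $\mk\cQ_4$ with the indicated relations, whence the Morita equivalence (exactly as in the proofs of Theorems~\ref{ThmA3} and~\ref{ThmC4}).

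The one place that needs care is the vertex $(1,1)$: the naive recipe of "erasing every $L_{C_4}(\varnothing)$ from a linear Loewy picture of $P_{C_4}(1,1)$'' would wrongly leave $P_{A_4}(1,1)$ uniserial of length three and miss the new arrow $l_1$ and the relations $d_2 l_1=d_3 l_1=0$. The correct input is that $\rad P_{C_4}(1,1)$ is already a \emph{direct sum} $\Delta_{C_4}(\varnothing)\oplus L_{C_4}(3,1)$, so that killing $L_{C_4}(\varnothing)$ inside the summand $\Delta_{C_4}(\varnothing)=[L_{C_4}(\varnothing);L_{C_4}(2)]$ collapses it to $L_{A_4}(2)$ and renders $\rad P_{A_4}(1,1)$ semisimple. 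Everything else uses only exactness of $F$ and the identity $\rad(eCe)=e\,\rad(C)\,e$.
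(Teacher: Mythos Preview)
Your approach is exactly the one the paper intends: the paper's entire proof is the single sentence ``Theorem~\ref{ThmC4} and equation~\eqref{eqCorLambda0} then immediately lead to the following result'', and you have supplied the details of that transport through $F$. Your treatment of the delicate vertex $(1,1)$ is correct and is the heart of the matter.

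There is, however, one genuine gap in the justification. You write that ``iterating, the radical filtration of $P_{A_4}(\lambda)$ is the $F$-image of that of $P_{C_4}(\lambda)$'' and that for $\lambda\notin\{\varnothing,(1,1)\}$ this holds because ``no factor is destroyed''. But exactness of $F$ together with $\rad(eCe)=e\,\rad(C)\,e$ does \emph{not} give $\rad_A(FM)=F(\rad_C M)$ for arbitrary $M$; in general one only has $\rad_A(FM)\subseteq F(\rad_C M)$, since $F$ could in principle split an extension and enlarge the head. Concretely, for $M=\rad P_{C_4}(2,2)$ (uniserial with layers $L(1,1),L(3,1)$) you must rule out that $FM$ becomes semisimple. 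The clean fix is to observe that $\rad P_{C_4}(2,2)$, $\rad P_{C_4}(1^4)$ and $\rad P_{C_4}(2,1,1)$ are each quotients of some $P_{C_4}(\mu)$ with $\mu\neq\varnothing$ (namely $\mu=(1,1),(1,1),(2)$ respectively), hence their images under $F$ are quotients of $P_{A_4}(\mu)$ and therefore have simple top; this forces the claimed Loewy structure.

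Alternatively, and more conceptually: Proposition~\ref{PropHom} applies for $n=4$, so $\eta$ is an isomorphism on projectives and $F$ is fully faithful on $\add(C_4)$. Hence the basic algebra of $A_4$ is $\End_{A_4}\!\big(\bigoplus_{\lambda\neq\varnothing}P_{A_4}(\lambda)\big)^{\op}\cong\End_{C_4}\!\big(\bigoplus_{\lambda\neq\varnothing}P_{C_4}(\lambda)\big)^{\op}=(1-e_\varnothing)(\mk\overline{\cQ}_4/\overline{J})(1-e_\varnothing)$, and computing this idempotent truncation (with $l_1:=d_1u_1$) gives $\mk\cQ_4/J$ directly, bypassing any Loewy-layer bookkeeping.
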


Part (1) in the following corollary shows that there is no partial order $\preceq$ for which $(A_4,\preceq)$ is quasi-hereditary.
\begin{cor}\label{CorA4}
Assume $\charr(\mk)\not\in\{2,3\}$.
\begin{enumerate}
\item The algebra~$A_4$ has infinite global dimension.
\item The cell modules of~$A_4$ from Theorem~\ref{ThmSB} do not form a standard system.
\end{enumerate}
\end{cor}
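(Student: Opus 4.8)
The plan is to read both statements off the explicit presentation of $A_4$ in Theorem~\ref{ThmA4}. Since global dimension and $\Ext$-groups are Morita invariant, I would work inside the basic algebra $B:=\Bbbk\cQ_4/I$, with vertex idempotents $e_\lambda$ (one for each $\lambda\in\Lambda_{A_4}=\Par_4\sqcup\Par_2$), simple modules $S_\lambda=L_A(\lambda)$ and indecomposable projectives $P_\lambda=Be_\lambda$. The radical layers of each $P_\lambda$ are determined by the arrows leaving $\lambda$ together with the five relations, all of which are length-two paths: for instance $\mathrm{rad}\,P_{(2)}\cong S_{(4)}\oplus S_{(2,2)}$, because $d_2,d_3$ are the only arrows out of $(2)$ and $u_3\circ d_3=0$ kills every longer path from $(2)$; the vertices $(4)$ and $(3,1)$ are sinks, so $P_{(4)}=S_{(4)}$ and $P_{(3,1)}=S_{(3,1)}$; $P_{(2,2)}$ is uniserial with composition series $S_{(2,2)},S_{(1,1)},S_{(3,1)}$ from top to bottom; and $\mathrm{rad}\,P_{(1,1)}\cong S_{(2)}\oplus S_{(3,1)}$ (again using $d_2\circ l_1=d_3\circ l_1=0$).

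For (1) I would compute the syzygies of $S_{(2)}$: one has $\Omega S_{(2)}=\mathrm{rad}\,P_{(2)}=S_{(4)}\oplus S_{(2,2)}$, the projective summand $S_{(4)}$ drops out, $\Omega S_{(2,2)}=\mathrm{rad}\,P_{(2,2)}$ is the uniserial module $M$ with top $S_{(1,1)}$ and socle $S_{(3,1)}$, and finally $\Omega M=\ker\bigl(P_{(1,1)}\tto M\bigr)\cong S_{(2)}$. Hence $\Omega^{3}S_{(2)}\cong S_{(2)}$, so the minimal projective resolution of $S_{(2)}$ is eventually $3$-periodic and never terminates; as $S_{(2)}$ is not projective, $\mathrm{pd}_{A_4}S_{(2)}=\infty$, so $A_4$ has infinite global dimension. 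In particular no partial order makes $A_4$ quasi-hereditary, since quasi-hereditary algebras have finite global dimension.

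For (2) I would single out the cell module $W_A(\varnothing)$, whose label $\varnothing$ does not lie in $\Lambda_A$. Since $\charr\Bbbk\notin[2,4]$, equation~\eqref{eqWA} together with exactness of $F=e_4^\ast-$ gives $W_A(\lambda)=F(\Delta_{C}(\lambda))$. The proof of Theorem~\ref{ThmC4} shows $\Delta_{C}(\varnothing)$ has exactly the two composition factors $L_{C}(\varnothing)$ (its top) and $L_{C}(2)$; combined with $FL_{C}(\varnothing)=0$ and $FL_{C}(2)=L_{A}(2)$ from \eqref{eqCorLambda0}, this yields $W_A(\varnothing)\cong L_{A}(2)=S_{(2)}$. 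Likewise $\Delta_C(\mu)=L_C(\mu)$ for $\mu\vdash4$ ($\mu$ minimal for the order $\le$ on $\Lambda_C$), so $W_A(\mu)=L_A(\mu)=S_\mu$. Now $\cQ_4$, being the Gabriel quiver of $A_4$, has the arrow $d_2\colon(2)\to(4)$, whence $\Ext^1_{A_4}(S_{(2)},S_{(4)})\neq0$, i.e. $\Ext^1_{A_4}\bigl(W_A(\varnothing),W_A(4)\bigr)\neq0$. But $\varnothing$ is the maximum of $(\mathbf{L}_A,\unlhd)$ — every $\lambda$ of positive size satisfies $\lambda\lhd\varnothing$ — so $\varnothing\not\lhd(4)$, and condition~(3) in the definition of a standard system fails.

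I do not anticipate a genuine obstacle; the argument is bookkeeping. The two points requiring care are keeping the composition convention for arrows consistent with the way the relations of $\cQ_4$ are recorded in Theorem~\ref{ThmA4} (so that the radical filtrations of the $P_\lambda$, and hence the syzygy computation in (1), come out correctly), and noting that $W_A(\varnothing)$ really is one of the cell modules even though $\varnothing\notin\Lambda_A$: just as the coincidence $W_A(\varnothing)\cong W_A(2)$ broke the standard system for $A_2$ in Corollary~\ref{CorA2}, here this "extra" cell module is precisely the one that produces the forbidden extension.
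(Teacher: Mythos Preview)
Your proof is correct and follows essentially the same approach as the paper. For (1) the paper packages your syzygy computation as a single exact sequence $0\to L(2)\to P(1,1)\to P(4)\oplus P(2,2)\to P(2)\to L(2)\to 0$ and deduces $\Ext^{3k}_{A_4}(L(2),L(2))\neq 0$, which is exactly your $\Omega^3 S_{(2)}\cong S_{(2)}$; for (2) the paper likewise identifies $W_A(\varnothing)\cong L_A(2)$ and $W_A(4)\cong L_A(4)$ and reads off the forbidden $\Ext^1$ from the Gabriel quiver.
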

\begin{proof}
From Theorem~\ref{ThmA4}, it follows that we can construct an exact sequence
\begin{equation}\label{eqA4}0\to L(2)\to P(1,1)\to P(4)\oplus P(2,2)\to P(2)\to L(2)\to 0.\end{equation}
From this it easily follows that
$$\Ext^{3k}_{A_4}(L(2),L(2))\,\not=\,0,\quad\mbox{for all }\; k\in\mN,$$
proving part~(1).

From the structure of the standard modules for $C_4$ in the proof of Theorem~\ref{ThmC4}, it follows that~$W(\varnothing )\cong L(2)$ and $W(4)\cong L(4).$
The Gabriel quiver in Theorem~\ref{ThmA4} thus clearly shows that 
$$\Ext^1_{A_4}(W(\varnothing ),W(4))\not=0,$$
even though $(4)\unlhd \varnothing $, concluding the proof of part~(2).
\end{proof}

\subsection{Koszulity}
Let~$\cQ$ be a quiver and $J$ an ideal in the (free) path algebra~$\mk\cQ$ generated by elements $\beta\circ\alpha$ for arrows $\alpha$ and $\beta$ in~$\cQ$. The algebra~$\mk\cQ/J$ is then automatically quadratic in the sense of \cite[Definition~1.2.2]{BGS}, so in particular positively graded. All quotients of free path algebras constructed above are of this form. They are even Koszul, in the sense of \cite[Definition~1.2.1]{BGS}.

\begin{prop}
For $n<5$ and $\charr(\mk)\not\in\{2,3\}$, the algebras $A_n$ and $C_n$ are Morita equivalent to Koszul algebras. More precisely:
\begin{enumerate}
\item If $\charr(\mk)\not=2$, the algebras $A_2\cong \mk\cQ_2$ and $C_2\cong\mk\overline{\cQ}_2$ are Koszul. The algebra~$A_2$ is Koszul self-dual, but $C_2$ is not.
\item If $\charr(\mk)\not\in\{2,3\}$, the algebra~$\mk\cQ_3$ is a Koszul, but not Koszul self-dual.
\item Let $J$, resp. $\overline{J}$, be the ideals generated by the relations in Theorems~\ref{ThmA4}, resp.~\ref{ThmC4}. If $\charr(\mk)\not\in\{2,3\}$, the algebras $\mk\cQ_4/J$ and $\mk\overline{\cQ}_4/\overline{J}$ are Koszul, but not Koszul self-dual.
\end{enumerate}
\end{prop}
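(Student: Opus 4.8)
The plan is to verify Koszulity for each of the three families of (Morita-equivalent) algebras by exhibiting in each case the quadratic presentation $\mk\cQ/J$ and checking the Koszul condition directly, using that the quivers are small and the relations are exactly the ``two composable arrows'' monomial relations. The overarching observation is that every algebra appearing in Theorems~\ref{ThmA2}, \ref{ThmC2}, \ref{ThmA3}, \ref{ThmA4} and \ref{ThmC4} is either hereditary or a quadratic monomial algebra, and both classes are well known to be Koszul: a path algebra $\mk\cQ$ of a finite acyclic quiver is Koszul (it is already concentrated in Loewy degrees $0$ and $1$, so the Yoneda algebra is generated in degrees $0,1,2$ trivially), and a quadratic monomial algebra $\mk\cQ/J$ with $J$ generated by paths of length $2$ is Koszul by the standard theory (see \cite{BGS} or the monomial-algebra criterion), since for such algebras the minimal projective resolution of each simple can be written down explicitly in terms of the ``admissible paths'' avoiding the relations, and it is automatically linear. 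So for part (1) the algebras $\mk\cQ_2$ and $\mk\overline{\cQ}_2$ are hereditary path algebras of $A_2$- and $A_3$-type quivers, hence Koszul; for part (2) $\mk\cQ_3$ is again a hereditary path algebra; and for part (3) $\mk\cQ_4/J$ and $\mk\overline{\cQ}_4/\overline{J}$ have $J,\overline{J}$ generated by length-two paths by construction (all relations listed in Theorems~\ref{ThmA4} and~\ref{ThmC4} are of the form $\beta\circ\alpha$), hence are quadratic monomial and Koszul.

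Next I would settle the self-duality statements, which require computing the quadratic dual $(\mk\cQ/J)^{!}$ in each case and comparing with the original algebra. For a quadratic algebra $\mk\cQ/(R)$ with $R\subseteq \mk\cQ_2$, the dual is $\mk\cQ^{\mathrm{op}}/(R^{\perp})$, where $R^{\perp}$ is the orthogonal complement inside the space of length-two paths of the opposite quiver with respect to the natural pairing. For $A_2\cong\mk\cQ_2$: the quiver $\cQ_2$ is $\bullet\to\bullet$ with no relations, so $R=0$ and the space of length-two paths is already zero; the dual is the path algebra of $\cQ_2^{\mathrm{op}}$ with the relation space being all of $\mk(\cQ_2)_2=0$, i.e. again a single arrow, which is isomorphic to $\mk\cQ_2$ — so $A_2$ is Koszul self-dual. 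For $C_2\cong\mk\overline{\cQ}_2$: the quiver is $\bullet\to\bullet\to\bullet$ of type $A_3$, again hereditary so $R=0$, but now there is a nonzero length-two path, and its orthogonal complement is $0$, so the dual algebra is $\mk\overline{\cQ}_2^{\mathrm{op}}/(\text{the length-two path})$, which has dimension $4$ rather than $6$ and is therefore not isomorphic to $C_2$ — not self-dual. For $\mk\cQ_3$ one checks similarly that the dual has a relation (the composite of the two arrows in the $A_3$-subquiver) and hence is not isomorphic to the hereditary $\mk\cQ_3$. For part (3), I would compute $J^{\perp}$ and $\overline{J}^{\perp}$ explicitly: at each vertex with two incoming and one outgoing arrow (or the mirror), the relation space and its complement have complementary dimensions, and a direct bookkeeping of which composites survive shows the quadratic dual has a different Loewy structure (e.g. a different number of relations, or relations at vertices where the original had none), so neither $\mk\cQ_4/J$ nor $\mk\overline{\cQ}_4/\overline{J}$ is Koszul self-dual.

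Concretely, the steps in order are: (i) recall the two general facts — hereditary finite-dimensional algebras are Koszul, and quadratic monomial algebras are Koszul — with references to \cite{BGS}; (ii) observe that $\mk\cQ_2$, $\mk\overline{\cQ}_2$, $\mk\cQ_3$ are hereditary and that $\mk\cQ_4/J$, $\mk\overline{\cQ}_4/\overline{J}$ are quadratic monomial (the relations being precisely length-two paths), concluding Koszulity in all cases; (iii) for each algebra compute the quadratic dual by passing to the opposite quiver and taking the orthogonal complement of the relation space inside the degree-two part, then compare dimensions/Loewy lengths with the original to decide self-duality. The main obstacle is step (iii) for the $n=4$ algebras: one has to be careful with the signs and with the bilinear pairing used to define $R^{\perp}$ at vertices of valency $3$, and one must correctly identify the quadratic dual quiver — but since the quivers are explicitly drawn and small, this is a finite (if slightly fiddly) linear-algebra computation, and the conclusion ``not Koszul self-dual'' follows already from a dimension count once one verifies that $\dim J^{\perp}\ne \dim J$ (equivalently that the dual algebra and the original have different total dimension), which is immediate from the asymmetry of the quivers $\cQ_4$ and $\overline{\cQ}_4$ (they are not isomorphic to their own opposites with the dual relations).
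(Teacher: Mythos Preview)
Your argument for Koszulity is correct and in fact tidier than the paper's: observing that all relations are length-two monomials and invoking the standard fact that quadratic monomial algebras are Koszul is cleaner than the paper's ``careful construction of minimal projective resolutions'', though of course the latter is exactly how one proves the monomial fact. Your treatment of self-duality for parts (1) and (2) also agrees with the paper.

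The gap is in part (3), specifically for $\mk\overline{\cQ}_4/\overline{J}$. Your proposed shortcut ``$\dim J^\perp\neq\dim J$'' works for $\mk\cQ_4/J$ (there are nine length-two paths and five relations, so $\dim J=5\neq 4=\dim J^\perp$), but it fails for $\mk\overline{\cQ}_4/\overline{J}$: a direct count gives ten length-two paths and five relations, hence $\dim\overline{J}=5=\dim\overline{J}^\perp$. So your claimed dimension count does not distinguish $C_4$ from its quadratic dual, and the parenthetical ``equivalently that the dual algebra and the original have different total dimension'' is not justified here. You would need the more refined comparison you allude to, and that is genuinely fiddly because both algebras have the same number of vertices, arrows, relations, and even the same graded dimensions in degrees $0,1,2$.

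The paper sidesteps this entirely with two homological observations. For $\mk\cQ_4/J$: the algebra $A_4$ has infinite global dimension (Corollary~\ref{CorA4}(1)), so its Koszul dual $\Ext^\bullet_{A_4}(A_4/\rad,A_4/\rad)$ is infinite dimensional, hence certainly not isomorphic to the finite-dimensional $A_4$. For $\mk\overline{\cQ}_4/\overline{J}$: one checks that $\Ext^4_{C_4}(L(\varnothing),L(\varnothing))\neq 0$, so the Koszul dual has a nonzero degree-four component, whereas $C_4$ itself has graded length three (no paths of length $\ge 3$ survive the relations). Both arguments avoid any explicit comparison of quivers and are worth knowing as a general technique: to disprove Koszul self-duality, compare the Loewy length of $A$ with the global dimension of $A$.
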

\begin{proof}
The algebra~$\mk\cQ_2$ is quadratic with 
$$(\mk\cQ_2)_{1}\otimes_{(\mk\cQ_2)_{0}}(\mk\cQ_2)_{1}=0.$$
Such an algebra its own quadratic dual, see~\cite[Definition~2.8.1]{BGS}. The algebra is clearly Koszul and the Koszul self-duality follows from~\cite[Theorem~2.10.1]{BGS}. The Koszulity of~$\mk\overline{\cQ}_2$ is also obvious. Since
$$(\mk\overline{\cQ}_2)_2=(\mk\overline{\cQ}_2)_1\otimes_{(\mk\overline{\cQ}_2)_0}(\mk\overline{\cQ}_2)_0\not=0$$ the algebra is not quadratic self-dual and hence not Koszul self-dual. This concludes part~(1). We have $\mk\cQ_3\cong \mk\overline{\cQ}_2\oplus \mk$, so part~(2) follows from part~(1).

The Koszulity in part~(3) follows from careful construction of the minimal projective resolutions of the simple modules. The algebra~$A_4$ has infinite global dimension, by Corollary~\ref{CorA4}(1). The Koszul dual of~$\cQ_4/J$ is therefore infinite dimensional, disproving Koszul self-duality. The projective resolution of~$L(\varnothing)$ shows that~$\Ext^4_A(L(\varnothing),L(\varnothing))$ does not vanish, while the graded length of~$C_4$ is only three. This prevents Koszul self-duality and thus concludes the proof of part~(3).
\end{proof}

\begin{rem}
It is easily checked that the Koszul dual algebra of~$\mk\cQ_3$ is actually isomorphic to the Ringel dual of~$\mk\cQ_3$, for the quasi-hereditary structure of Theorem~\ref{ThmQH}.
\end{rem}

\subsection{Dimensions} Recall that the global dimension of an algebra $A$, denoted by $\gd A\in\mN\cup\{\infty\}$, is the highest $i$ for which $\Ext^i_A(-,-)$ is non-trivial. The dominant dimension, $\dd A\in\mN\cup\{\infty\}$, is the maximal $i$ for which all $I^j$ with $0\le j<i$ are projective in the minimal injective coresolution $I^\bullet$ of the left regular representation. The injective dimension, $\id A\in\mN\cup\{\infty\}$, of the left regular representation is the maximal $i$ for which $I^i$ is non-zero. With these conventions we have $\dd A\minus 1\le\id A\le \gd A$. An algebra $A$ is Iwanaga-Gorenstein if both $\id A$ and $\id A^{\op}$ are finite.

\begin{prop} Assume that $\charr(\mk)\not\in\{2,3\}$.
\begin{enumerate}
\item We have $\gd A_n=\dd A_n= \id A_n=1$, for $n\in\{2,3\}$.
\item We have $\gd A_4=\infty$, $\dd A_4=0$ and $\id A_4=\infty$. In particular, $A_4$ is not Iwanaga-Gorenstein. 
\end{enumerate}
\end{prop}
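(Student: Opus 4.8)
The plan is to read off everything from the explicit path-algebra descriptions of $A_2$, $A_3$ and $A_4$ obtained in Theorems~\ref{ThmA2}, \ref{ThmA3} and \ref{ThmA4}, together with the exact sequences derived in Corollaries~\ref{CorA2} and~\ref{CorA4}. For part~(1), when $n\in\{2,3\}$ the algebra $A_n$ is hereditary by Theorems~\ref{ThmA2} and~\ref{ThmA3}, so $\gd A_n\le 1$; since $A_n$ is not semisimple (the quivers $\cQ_2$ and $\cQ_3$ have at least one arrow), $\gd A_n=1$. It then remains to compute $\id A_n$ and $\dd A_n$. Using the conventions $\dd A\minus 1\le \id A\le\gd A$ recalled just before the statement, it suffices to show $\dd A_n\ge 2$, i.e. that in the minimal injective coresolution $0\to A_n\to I^0\to I^1\to\cdots$ the terms $I^0$ and $I^1$ are projective; equivalently, that $A_n$ is a (one-sided) Morita algebra in the sense that $I^0$ and $I^1$ lie in $\add$ of the projective-injective modules. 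For the hereditary quiver algebras $\mk\cQ_2$ and $\mk\cQ_3$ this is a direct check on the (at most two-vertex, linear) quivers: the indecomposable projectives are $P(i)$ and each has an injective coresolution of length at most $1$ whose terms are direct sums of injective modules that are simultaneously projective. Concretely, for $\cQ_2$ with the single arrow, $P(1)\hookrightarrow I(1)$ with $I(1)=P(1)$ projective-injective and cokernel $I(2)=L(2)$ which is projective-injective as well; similarly for $\cQ_3$. Hence $\dd A_n\ge 2$, forcing $\id A_n\ge \dd A_n\minus 1 = 1$ and $\id A_n\le\gd A_n = 1$, so all three invariants equal $1$.

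For part~(2), the key input is Corollary~\ref{CorA4}(1), which already gives $\gd A_4=\infty$ via the exact sequence~\eqref{eqA4} and the resulting nonvanishing of $\Ext^{3k}_{A_4}(L(2),L(2))$ for all $k$. To see $\id A_4=\infty$, I would dualise: apply the contravariant equivalence $\Upsilon_{\varphi_4}$ (the duality coming from the anti-automorphism $\varphi_n$ of $A_n$, cf.~Remark~\ref{3rem}), or simply observe that the minimal injective coresolution of the left regular module $A_4$ must be infinite because otherwise $\id A_4<\infty$ would bound the projective dimensions of all modules of finite injective dimension and, combined with the self-injectivity patterns on the quiver $\cQ_4$, would contradict $\gd A_4=\infty$. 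More directly, from the quiver $\cQ_4$ with its relations one sees that the simple module $L(2)$ appears as a composition factor in $I^j$ for arbitrarily large $j$ in the injective coresolution of $A_4$: running the same periodic-resolution argument as in~\eqref{eqA4} for injectives rather than projectives (the relations $d_2\circ l_1 = 0$, $l_1\circ u_2 = 0$, $u_3\circ d_3=0$, etc.\ are symmetric enough to produce a $3$-periodic tail) yields $\Ext^{3k}_{A_4}(L(2),L(2))\ne 0$ read off the injective side, hence $I^{3k}\ne 0$ for all $k$, so $\id A_4=\infty$. Finally $\dd A_4=0$: I would exhibit an indecomposable projective $A_4$-module whose injective hull is not projective. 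From the quiver $\cQ_4$, the projective $P(4)$ (corresponding to the one-box-in-a-row vertex) has simple socle $L(2)$, and the injective hull $I(2)$ of $L(2)$ is not projective — this follows from the structure of the projective modules listed implicitly in the proof of Theorem~\ref{ThmC4}, since $P(2)$ has a $2$-dimensional top-minus-socle behaviour incompatible with being the injective hull of $L(2)$. Therefore $I^0$ is already non-projective, so $\dd A_4=0$. Since $\id A_4=\infty$, the algebra is not Iwanaga-Gorenstein.

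The main obstacle I anticipate is the explicit identification of the injective hulls and the first terms of the injective coresolutions directly from the quiver-with-relations presentations — in particular, confirming that $I^0, I^1$ are projective-injective in the hereditary cases $n=2,3$ (which requires knowing which indecomposable injectives coincide with indecomposable projectives on these small quivers) and, for $n=4$, pinning down that the injective hull of $L(2)$ is non-projective and that the injective coresolution of $A_4$ has an infinite $L(2)$-tail. All of this is routine representation theory of the finite-dimensional (indeed Koszul) algebras already described, but it does require writing out the small modules carefully; there is no conceptual difficulty, only bookkeeping, and the periodicity in~\eqref{eqA4} does most of the work once transported to the injective side via $\varphi_4$.
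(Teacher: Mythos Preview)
Your plan for part~(2) is close to the paper's: the paper also invokes the periodicity from~\eqref{eqA4} and the anti-automorphism~$\varphi_4$ to transport the infinite projective dimension of an injective hull into $\id P(2)=\infty$, and for $\dd A_4=0$ it simply checks from the quiver that \emph{no} indecomposable module is simultaneously projective and injective. Your version is a little looser (e.g.\ the claim that $P(4)$ has socle $L(2)$ is wrong: in $\cQ_4$ the vertex $(4)$ is a sink, so $P(4)=L(4)$), but the strategy is the same and would go through once the small modules are written out correctly.

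There is, however, a genuine error in part~(1). You aim to prove $\dd A_n=1$ by showing $\dd A_n\ge 2$, and then conclude ``all three invariants equal $1$''. This is self-contradictory: with the paper's convention, $\dd A_n\ge 2$ means precisely that both $I^0$ and $I^1$ are projective, which would give $\dd A_n=2$, not $1$. In fact $\dd A_n\ge 2$ is \emph{false} for $A_2$ and $A_3$. For $A_2\cong\mk\cQ_2$ with the arrow $(1,1)\to(2)$, one has $P(1,1)=I(2)$ projective-injective, but $I(1,1)=L(1,1)$ is not projective; the minimal injective coresolution of $A_2=P(1,1)\oplus P(2)$ is
\[
0\to A_2 \to P(1,1)\oplus P(1,1)\to I(1,1)\to 0,
\]
so $I^0$ is projective while $I^1=L(1,1)$ is not. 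Hence $\dd A_2=1$ exactly. Your specific claim that ``$I(2)=L(2)$ is projective-injective'' is also wrong: $L(2)=P(2)$ is projective but not injective. The correct argument for part~(1) is the direct computation just sketched (and its analogue for $\cQ_3$), which is what the paper means by ``easily calculated using Theorems~\ref{ThmA2} and~\ref{ThmA3}''.
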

\begin{proof}
The dimensions of $A_2$ and $A_3$ are easily calculated using Theorems~\ref{ThmA2} and~\ref{ThmA3}. Now we consider $n=4$. From Theorem~\ref{ThmA4} we can construct an exact sequence
$$0\to L(2)\to P(1,1)\oplus P(3,1)\to P(2,2)\oplus P(1,1,1,1)\to I(1,1)\to 0.$$
By equation~\eqref{eqA4}, this implies that the injective hull $I(1,1)$ has infinite projective dimension. Applying the anti-automorphism $\varphi_4$ of $A_4$ in Remark~\ref{3rem} shows that this is equivalent to $\id P(2)=\infty$. This implies $\gd A_4=\id A_4=\infty$.
Constructing the indecomposable injective modules of $A_4$ from Theorem~\ref{ThmA4} shows that there are no modules simultaneously projective and injective, which immediately implies $\dd A_4=0$.
\end{proof}

\subsection{The algebras $A_5$ and $C_5$}
 We determine all composition multiplicities for $A_5$.
\begin{prop}\label{PropA5}
Assume $\charr(\mk)\not\in\{2,3,5\}$. The standard modules for $A_5$ or $C_5$ have the following Jordan-H\"older multiplicities: 
$$[\Delta(1)]\;=\;[L(1)]+[L(3)]+[L(3,2)],\;\quad [\Delta(2,1)]\;=\; [L(2,1)]+[L(4,1)],$$
$$[\Delta(3)]\;=\; [L(3)]+[L(5)]+[L(3,2)],\;\quad [\Delta(1,1,1)]\;=\; [L(1,1,1)]+[L(3,1,1)],$$
$$\mbox{and}\;\, [\Delta(\lambda)]=[L(\lambda)],\quad\mbox{ for all }\; \lambda\vdash 5.$$
\end{prop}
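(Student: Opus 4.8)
The plan is to reduce everything to $C_5$ (since $A_5$ and $C_5$ share the relevant decomposition numbers by \eqref{eqWDeltaL}, and for $n=5$ odd the functor $F$ is even a Morita equivalence by Theorem~\ref{ThmMor}(1)), and then to compute the composition factors of each $\Delta_n(\lambda)$ with $\lambda\in\JJJ(5)=\{5,3,1\}$ using the tools already assembled: Lemma~\ref{LemRed1}(1) to shrink $n$, Corollary~\ref{CorResi2}/Lemma~\ref{LemRed1}(2) for the content-pairing constraint on which $L(\mu)$ can occur, Corollary~\ref{Lemn2} and Proposition~\ref{PropMult1} for the ``add two boxes in one row/column'' multiplicities, and the explicit data of Example~\ref{ExLem4}. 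First I would dispose of the easy cases: for $\lambda\vdash 5$ the standard module $\Delta(\lambda)=\Delta_5(\lambda)$ is simple because by Lemma~\ref{LemRed1}(1) any factor $L(\mu)$ with $\mu\ne\lambda$ would need $|\mu|\in\{5,3,1\}$ and $|\mu|\ge|\lambda|=5$, forcing $|\mu|=5$ and $\lambda\subsetneq\mu$, impossible; hence $[\Delta(\lambda)]=[L(\lambda)]$ for all $\lambda\vdash 5$.

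Next I would handle $\lambda=(2,1)\vdash 3$ and $\lambda=(1,1,1)\vdash 3$. By Lemma~\ref{LemRed1}(1) the only possible composition factors are $L(2,1)$ (resp.\ $L(1,1,1)$) itself and $L(\mu)$ with $\mu\vdash 5$. By Lemma~\ref{LemRed1}(2), such $\mu$ must contain $\lambda$ with the two added boxes having contents differing by $1$. For $\lambda=(2,1)$ the candidate is $\mu=(4,1)$ (two boxes added in row $1$, contents $2,3$), and $(3,2)$ is excluded since its added boxes would have contents $2$ and $-1$. Corollary~\ref{Lemn2} (with $j=3$, $j+2=5$) gives $[\Delta(1^3):L(3,1^2)]=1$ and the analogous statement, obtained by transpose from $[\Delta_n(j):L_n(j+2)]=1$ via the reciprocity \eqref{eqBGGC}, handles $[\Delta(2,1):L(4,1)]=1$; one should double check that $(4,1)=(2,1)\cup$ two boxes in a row so Proposition~\ref{PropMult1} applies directly and gives multiplicity exactly $1$, with all other $L(\mu)$, $\mu\vdash 5$, vanishing by the content constraint. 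This yields $[\Delta(2,1)]=[L(2,1)]+[L(4,1)]$ and $[\Delta(1,1,1)]=[L(1,1,1)]+[L(3,1,1)]$.

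The main work is $\lambda\in\{(3),(1)\}$, i.e.\ $\Delta_5(3)$ and $\Delta_5(1)$. Here factors $L(\mu)$ can have $|\mu|\in\{3,5\}$. For $|\mu|=3$: by Lemma~\ref{LemRed1}(1), $[\Delta_5(\lambda):L_3(\mu)]=[\Delta_3(\lambda):L_3(\mu)]$, and Theorem~\ref{ThmA3} tells us $C_3$ is hereditary with $\Delta_3(3)=L_3(3)$, $\Delta_3(1)$ has unique proper submodule $L_3(3)$, so $[\Delta_5(3):L(3)]=1$, $[\Delta_5(1):L(1)]=1$ and $[\Delta_5(1):L(3)]=1$, with no other degree-$3$ factors. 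For $|\mu|=5$: apply $e_5^\ast$ and use Example~\ref{ExLem4}, which gives $e_5^\ast\Delta_5(1)\cong L^0(4,1)\oplus L^0(3,2)\oplus L^0(3,1,1)$ as $\mk\mS_5$-modules, and similarly for $\Delta_5(3)$ one computes $e_5^\ast\Delta_5(3)\cong\res_{\mk\mS_5}\ind^{\mk\mS_5}(L^0(3)\boxtimes L^0((2)))\cong L^0(5)\oplus L^0(4,1)\oplus L^0(3,2)$ by the LR rule as in Lemma~\ref{rednj}. One then decides, for each $L^0(\mu)$ summand, whether it is killed by all diagrams containing a cap (hence is a genuine $C_5$-submodule contributing $L(\mu)$) or lies in the radical of $\Delta_5(\lambda)$. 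By Proposition~\ref{PropMult1}, $L^0(5)$ (two boxes added to $(3)$ in a row) and $L^0(3,1,1)$ (two boxes added to $(1)$ in a column, via transposing $(3,1,1)^t=(3,1,1)$ — actually check: $(3,1,1)$ minus two boxes in one column leaves... I would instead use that $(1,1,1)$ plus two boxes in a row is $(3,1,1)^t$; Corollary~\ref{Lemn2} directly gives $[\Delta_5(1):L(3,1,1)]\ne 0$ via the second exact sequence there with $n=5$) appear as submodules; the point to pin down is the factor $L^0(3,2)$. Using $c^{(3,2)}_{(3),(1,1)}=c^{(3,2)}_{(1),(2,1)}$ and the LR-rule argument of Proposition~\ref{PropMult1}, I would show that $L^0(3,2)$ is \emph{not} a $C_5$-submodule of either standard module — rather it survives as a composition factor with multiplicity one in both $\Delta_5(3)$ and $\Delta_5(1)$ (its residue sequence $c_{\mathfrak s}$ for $(3,2)$ matches a $c_{\mathfrak t}$ for $(3)$ resp.\ $(1)$ by Corollary~\ref{CorResi2}, so it is \emph{allowed}, and a dimension/socle count forces it to actually occur). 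Meanwhile $L^0(4,1)$ appears in both $e_5^\ast\Delta_5(1)$ and $e_5^\ast\Delta_5(3)$; but $(4,1)$ is obtained from $(3)$ by adding boxes of contents $3,4$ — not differing by $1$ from a content \emph{in $(3)$}... actually $(4,1)\supset(3)$ adds boxes at $(1,4),(2,1)$ of contents $3,-1$, which differ by $4$, excluded by Corollary~\ref{CorResi2} for $\Delta_5(3)$; for $\Delta_5(1)$, $(4,1)\supset(1)$ adds $(1,2),(1,3),(1,4),(2,1)$ — that's four boxes, wrong count, so $(4,1)\not\supset(1)$ with the right shape either. Hence $L^0(4,1)$ is killed by all cap-diagrams and does \emph{not} descend to a composition factor $L(4,1)$ of the $C_5$-module $\Delta_5(\lambda)$; it must instead be absorbed — i.e.\ $e_5^\ast L(\mu)$ for some larger-index $\mu$ picks it up, or it sits inside $L(3,2)$ or $L(3)$ as a $\mk\mS_5$-constituent of a $C_5$-composition factor. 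The careful bookkeeping of exactly which $\mk\mS_5$-summands glue into which $C_5$-composition factor — equivalently computing $e_5^\ast L_5(\mu)$ for $\mu\in\{(3),(1),(3,2),(5),(3,1,1)\}$ — is the real obstacle; I would resolve it by combining the known multiplicities $[\Delta_5(3):L(5)]=[\Delta_5(3):L(3,2)]=1$ and $[\Delta_5(1):L(3,1,1)]=1$ from Corollary~\ref{Lemn2}, the fact that $e_5^\ast$ is exact and nonzero on all $L_5(\mu)$ with $\mu\ne\varnothing$, and the block constraint of Theorem~\ref{ThmBlock} (all listed partitions have $2$-core $(1)$ or $\varnothing$, which rules out spurious factors), until the two decompositions $[\Delta(3)]=[L(3)]+[L(5)]+[L(3,2)]$ and $[\Delta(1)]=[L(1)]+[L(3)]+[L(3,2)]$ are forced.
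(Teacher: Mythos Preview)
Your treatment of $\lambda\vdash 5$ and of $\Delta(3)$, $\Delta(1,1,1)$ is fine (and for the latter two Lemma~\ref{Corn2} already gives the full short exact sequences directly, so your detour through $e_5^\ast$ is unnecessary). For $\Delta(2,1)$ there is a minor slip: the content-pairing criterion of Lemma~\ref{LemRed1}(2) does \emph{not} exclude $\mu=(2,1,1,1)$, since the added boxes have contents $-2,-3$. You need the extra observation that $L^0(2,1,1,1)$ simply does not occur in $e_5^\ast\Delta(2,1)$ (Lemma~\ref{rednj}: the two boxes are in the same column). The paper handles this case instead via Theorem~\ref{ThmBlock}, noting that $(3,2),(3,1,1),(2,2,1)$ all have $2$-core $(1)\ne(2,1)$, so none can give a $C_5$-composition factor; only $L^0(4,1)$ survives, and Proposition~\ref{PropMult1} pins it down as a genuine submodule.

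The genuine gap is $\Delta(1)$. Your invocations of Corollary~\ref{Lemn2} here are wrong: that corollary only concerns $[\Delta_n(j):L_n(\mu)]$ for $\mu\vdash j+2$, so for $j=1$ it says nothing about $\mu\vdash 5$; in particular it does \emph{not} give $[\Delta_5(1):L(3,1,1)]=1$ (in fact that multiplicity is $0$). Once you know $e_5^\ast\Delta_5(1)\cong L^0(4,1)\oplus L^0(3,2)\oplus L^0(3,1,1)$, that $e_5^\ast L(3)=L^0(4,1)$ (read off from Lemma~\ref{Corn2} for $\Delta_5(3)$), and that $e_5^\ast L(1)\ne 0$, both $(3,2)$ and $(3,1,1)$ still pass the content and block tests; your ``bookkeeping'' cannot decide among the three scenarios $e_5^\ast L(1)\in\{L^0(3,2),\,L^0(3,1,1),\,L^0(3,2)\oplus L^0(3,1,1)\}$. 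The paper breaks this deadlock by an explicit construction: it locates a two-dimensional space of $\mS_3\times\mS_2$-invariants inside $L^0(4,1)\oplus L^0(3,2)\subset e_5^\ast\Delta(1)$, exhibits a vector $v$ in it annihilated by every diagram containing a cap, and observes that $\mk\mS_5\cdot v$ is therefore a $C_5$-submodule; since $L^0(4,1)$ is already accounted for by $L(3)$ (and is not itself a $C_5$-submodule), this forces $\mk\mS_5\cdot v=L^0(3,2)$, giving $[\Delta(1):L(3,2)]=1$. The remaining summand $L^0(3,1,1)$ is then forced to be $e_5^\ast L(1)$. Your proposal contains no substitute for this step. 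Also note your parenthetical ``$L^0(4,1)$ is killed by all cap-diagrams'' is the wrong conclusion: that summand is \emph{not} cap-annihilated; it is $e_5^\ast$ of the composition factor $L(3)$, not an independent $L(4,1)$.
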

\begin{proof}
The statements for $\Delta(3)$ and $\Delta(1,1,1)$ follow from Lemma~\ref{Corn2}. Now we consider~$\Delta(2,1)$. By Lemma~\ref{rednj}, we have
$$e_5^\ast \Delta(2,1)\;\cong\; L^0(4,1)\oplus L^0(3,2)\oplus L^0(3,1,1)\oplus L^0(2,2,1).$$
By Proposition \ref{PropMult1}, $L^0(4,1)$ constitutes a $C_5$-submodule. The other spaces are prohibited to form subquotients by Theorem~\ref{ThmBlock}.

By Corollary~\ref{Lemn2}, we have $[\Delta(1):L(3)]=1$ and $[\Delta(1):L(3,\lambda)]=0$ if~$|\lambda|=3$ and $\lambda\not=(3)$. By Example~\ref{ExLem4} we have
$$e_5^\ast\Delta(1)\;\cong\; L^0(4,1)\oplus L^0(3,2)\oplus L^0(3,1,1).$$
From the proof of Lemma~\ref{Corn2}, we find that~$L^0(4,1)$ belongs to~$e_5^\ast L(3)$. Using the LR rule it follows that there is a two-dimensional space of~$\mS_3\times \mS_2$-invariants in~$e_5^\ast\Delta(1)$, contained in~$L^0(4,1)\oplus L^0(3,2)$. One easily constructs such an invariant $v$ which is annihilated by all diagrams containing a cap. Then~$\mS_5 v$ forms a $C_5$-submodule. As~$L^0(4,1)$ does not constitute a submodule, $ L^0(3,2)$ constitutes a submodule. Equation~\eqref{eqCorLambda0} implies that~$e_5^\ast L(1)\not=0$, so the remaining space $L^0(3,1,1)$ belongs to~$L(1)$. This implies that~$\Delta(1)$ does not contain any further simple $C_5$-subquotients.
\end{proof}


\subsection*{Acknowledgement}
The research was supported by Australian Research Council Discover-Project Grant DP140103239 and an FWO postdoctoral grant. The author thanks Andrew Mathas and Ruibin Zhang for useful discussions.

\begin{flushleft}
	K. Coulembier\qquad \url{kevin.coulembier@sydney.edu.au}
	
	School of Mathematics and Statistics, University of Sydney, NSW 2006, Australia

\end{flushleft}

\end{document}